\tikzstyle{morphism}=[fill=white, draw=black, shape=rectangle]
\tikzstyle{medium box}=[fill=white, draw=black, shape=rectangle, minimum width=0.7cm, minimum height=0.7cm]
\tikzstyle{large morphism}=[fill=white, draw=black, shape=rectangle, minimum width=1.7cm, minimum height=1cm]
\tikzstyle{bn}=[fill=black, draw=black, shape=circle, inner sep=1.5pt]
\tikzstyle{state}=[fill=white, draw=black, regular polygon, regular polygon sides=3, minimum width=0.8cm, shape border rotate=180, inner sep=0pt]
\tikzstyle{medium state}=[fill=white, draw=black, regular polygon, regular polygon sides=3, minimum width=1.3cm, inner sep=0pt, shape border rotate=180]
\tikzstyle{large state}=[fill=white, draw=black, regular polygon, regular polygon sides=3, minimum width=2.2cm, shape border rotate=180, inner sep=0pt]
\tikzstyle{wide state}=[fill=white, draw=black, shape=isosceles triangle, minimum width=0.8cm, shape border rotate=270, inner sep=1.4pt, minimum height=0.5cm, isosceles triangle apex angle=80]
\tikzstyle{wn}=[fill=white, draw=black, shape=circle, inner sep=1.5pt]
\tikzstyle{blue morphism}=[fill=white, draw={rgb,255: red,15; green,0; blue,150}, shape=rectangle, text={rgb,255: red,15; green,0; blue,150}, tikzit category=blue]
\tikzstyle{blue state}=[fill=white, draw={rgb,255: red,15; green,0; blue,150}, shape=circle, regular polygon, regular polygon sides=3, minimum width=0.8cm, shape border rotate=180, inner sep=0pt, text={rgb,255: red,15; green,0; blue,150}, tikzit category=blue]
\tikzstyle{blue node}=[fill={rgb,255: red,15; green,0; blue,150}, draw={rgb,255: red,15; green,0; blue,150}, shape=circle, tikzit category=blue, inner sep=1.5pt]
\tikzstyle{blue}=[text={rgb,255: red,15; green,0; blue,150}, tikzit draw={rgb,255: red,191; green,191; blue,191}, tikzit category=blue, tikzit fill=white, inner sep=0mm]
\tikzstyle{blue wide state}=[fill=white, draw={rgb,255: red,15; green,0; blue,150}, text={rgb,255: red,15; green,0; blue,150}, shape=isosceles triangle, minimum width=0.8cm, shape border rotate=270, inner sep=1.4pt, minimum height=0.5cm, isosceles triangle apex angle=80]
\tikzstyle{red node}=[fill={rgb,255: red,150; green,0; blue,2}, draw={rgb,255: red,150; green,0; blue,2}, shape=circle, inner sep=1.5pt]
\tikzstyle{Purple node}=[fill={rgb,255: red,120; green,0; blue,120}, draw={rgb,255: red,120; green,0; blue,120}, text={rgb,255: red,120; green,0; blue,120}, shape=circle, inner sep=1.5pt]
\tikzstyle{red}=[text={rgb,255: red,150; green,0; blue,2}, inner sep=0mm, tikzit fill=white, tikzit draw={rgb,255: red,191; green,191; blue,191}]
\tikzstyle{purple}=[text={rgb,255: red,150; green,0; blue,150}, inner sep=0mm, tikzit fill=white, tikzit draw={rgb,255: red,191; green,191; blue,191}]
\tikzstyle{white morphism}=[fill=white, draw=white, shape=rectangle, tikzit draw={rgb,255: red,139; green,139; blue,139}]
\tikzstyle{leak morphism}=[fill=white, draw={rgb,255: red,120; green,0; blue,85}, shape=rectangle, text={rgb,255: red,120; green,0; blue,85}, tikzit category=leak]
\tikzstyle{leak}=[text={rgb,255: red,120; green,0; blue,85}, inner sep=0mm, tikzit fill=white, tikzit draw={rgb,255: red,191; green,191; blue,191}, tikzit category=leak]
\tikzstyle{leak node}=[fill={rgb,255: red,120; green,0; blue,85}, draw={rgb,255: red,120; green,0; blue,85}, shape=circle, inner sep=1.5pt, tikzit category=leak]
\tikzstyle{arrow}=[->]
\tikzstyle{dashed box}=[-, dashed]
\tikzstyle{blue line}=[-, draw={rgb,255: red,15; green,0; blue,150}, tikzit category=blue]
\tikzstyle{red arrow}=[-, draw={rgb,255: red,150; green,0; blue,2}, tikzit category=red]
\tikzstyle{purple line}=[draw={rgb,255: red,120; green,0; blue,120}, >=stealth, shorten <=2pt, shorten >=2pt, -]
\tikzstyle{protected purple line}=[draw={rgb,255: red,120; green,0; blue,120}, >=stealth, shorten <=2pt, shorten >=2pt, preaction={line width=1.8pt, white, draw}, -]
\tikzstyle{mapsto}=[{|->}]
\tikzstyle{double wire}=[-, double]
\tikzstyle{protected}=[-, preaction={line width=1.8pt,white,draw}]
\tikzstyle{leak arrow}=[-, line join=round, decorate, decoration={snake, segment length=4, amplitude=0.75, pre=curveto, post=curveto, pre length=1pt, post length=1pt}]
\tikzstyle{protected leak arrow}=[-, line join=round, decorate, decoration={snake, segment length=4, amplitude=0.75, pre=curveto, post=curveto, pre length=1pt, post length=1pt}, preaction={line width=1.8pt, white, draw}]
\tikzstyle{hollow arrow}=[-, very thin, white, preaction={line width=0.7pt,draw={rgb,255: red,120; green,0; blue,85}}, tikzit category=leak, tikzit draw={rgb,255: red,150; green,0; blue,120}]
\tikzstyle{protected hollow arrow}=[-, very thin, white, preaction={line width=0.7pt,draw={rgb,255: red,120; green,0; blue,85},preaction={line width=2.1pt,white,draw}}, tikzit category=leak, tikzit draw={rgb,255: red,150; green,0; blue,120}]
\tikzstyle{curly brace}=[-, decorate, decoration={brace,amplitude=5pt}]
	\setlist[enumerate]{label=(\roman*)}  
	\setlist[enumerate,2]{label=(\alph*)}  
\renewcommand{\todo}[2][]{\tikzexternaldisable\@todo[#1]{#2}\tikzexternalenable}
\definecolor{myurlcolor}{rgb}{0,0,0.3}
\definecolor{mycitecolor}{rgb}{0,0.3,0}
\definecolor{myrefcolor}{rgb}{0.3,0,0}
\newtheorem{theorem}{Theorem}[section]
\newtheorem{proposition}[theorem]{Proposition}
\newtheorem{lemma}[theorem]{Lemma}
\newtheorem{corollary}[theorem]{Corollary}
\newtheorem{definition}[theorem]{Definition}
\theoremstyle{definition}
\newtheorem{example}[theorem]{Example}
\newtheorem{remark}[theorem]{Remark}
\numberwithin{equation}{section}
\let\originalleft\left
\let\originalright\right
\renewcommand{\left}{\mathopen{}\mathclose\bgroup\originalleft}
\renewcommand{\right}{\aftergroup\egroup\originalright}
\newcommand{\N}{\mathbb{N}}
\newcommand{\R}{\mathbb{R}}
\newcommand{\E}{\mathbb{E}}	
\renewcommand{\P}{\mathbb{P}}	
\newcommand{\e}{\varepsilon}
\newcommand{\cat}[1]{{\mathrm{#1}}} 
\newcommand{\op}{\mathrm{op}}
\newcommand{\id}{\mathrm{id}} 		
\newcommand{\SI}{\cat{SI}}	
\newcommand{\DSI}{\cat{DSI}}	
\newcommand{\de}{\mathrm{de}}	
\newcommand{\dm}{\mathrm{dm}}	
\newcommand{\ess}{\mathrm{ess}} 
\newcommand{\newterm}[1]{\textbf{#1}}
\title{Convergence of martingales via enriched dagger categories}
\author{Paolo Perrone and Ruben Van Belle}
\affil{University of Oxford}
\date{}
\begin{document}

\maketitle

\begin{abstract}
We provide a categorical proof of convergence for martingales and backward martingales in mean, using enriched category theory. 
The enrichment we use is in topological spaces, with their canonical closed monoidal structure, which encodes a version of pointwise convergence.

We work in a topologically enriched dagger category of probability spaces and Markov kernels up to almost sure equality. In this category we can describe conditional expectations exactly as dagger-split idempotent morphisms, and filtrations can be encoded as directed nets of split idempotents, with their canonical partial order structure. As we show, every increasing (or decreasing) net of idempotents tends topologically to its supremum (or infimum).

Random variables on a probability space form contravariant functors into categories of Hilbert and Banach spaces, which we can enrich topologically  using the $L^p$ norms. Martingales and backward martingales can be defined in terms of these functors. Since enriched functors preserve convergence of nets, we obtain convergence in the $L^p$ norms. 
The convergence result for backward martingales indexed by an arbitrary net, in particular, seems to be new.

By changing the functor, one can describe more general notions of conditional expectations and martingales, and if the functor is enriched, one automatically obtains a convergence result.
For instance, one can recover the Bochner-based notion of vector-valued conditional expectation, and the convergence of martingales with values in an arbitrary Banach space.

This work seems to be the first application of topologically enriched categories to analysis and probability in the literature. We hope that this enrichment, so often overlooked in the past, will be used in the future to obtain further convergence results categorically.
\end{abstract}

\tableofcontents

\section{Introduction}

This work is part of an effort, started in the last few years, to express and to generalize the main results of probability theory using the language of category theory. 
Several results of probability theory and related fields have been recovered in this way, and sometimes extended. Among these, the Kolmogorov and Hewitt-Savage zero-one laws~\cite{fritzrischel2019zeroone,ensarguet2023ergodic}, de Finetti's theorem~\cite{fritz2021definetti,moss2022probability}, the Carathéodory extension theorem~\cite{vanbelle2022caratheodory}, the Radon-Nikodym theorem~\cite{vanbelle2023martingales}, and different versions of the ergodic decomposition theorem~\cite{moss2022ergodic,ensarguet2023ergodic}.
The main categorical structures used to achieve these purposes are \emph{probability monads}~\cite{giry}, \emph{Markov categories}~\cite{chojacobs2019strings,fritz2019synthetic}, and particular \emph{dagger categories}~\cite{wayofdagger}.

In this work, we turn to another cornerstone result of probability theory, the martingale convergence theorem.
It builds on previous work on martingales through the categorical lens of random variable functors \cite{adachi2018,dahlqvist2018borel,vanbelle2023martingales}, and on a categorical treatment of conditional expectations as particular idempotent morphisms \cite{fritz2023supports,ensarguet2023ergodic}.

The idea that martingale convergence is a limit or colimit-like property has already appeared in \cite{kozen2016martingales}, where it is argued that
it is like a universal property, but where \emph{certain key properties hold only up to a null set}. 
In this work we follow that intuition intuition using a category, first defined in \cite{dahlqvist2018borel}, where the morphisms are taken up to almost sure equality.

\paragraph{Probability spaces, kernels, and random variables.}
Among the most important structures of probability theory are probability spaces and random variables. In this work we give a categorical account of both: probability spaces as a category, and random variables on them as a functor.

In probability theory it is customary to fix a single probability space, the outcome space, and to work only on that space.
However, especially in the context of stochastic processes, one considers several sigma-algebras on that space at once, usually forming a filtration.
Technically, these sigma-algebras give rise to different, but related, probability spaces.
The categorical description that we use formalizes this intuition: we want probability spaces, with fixed sigma-algebras, to be the objects of a category, and refinements and coarse-grainings of these spaces to be the morphisms.
Concretely we work with a category whose objects are probability spaces, and whose morphisms are measure-preserving Markov kernels quotiented under a version of almost sure equality. This category, in the standard Borel case, was first defined in \cite{dahlqvist2018borel}.

Given a probability space $(X,\mathcal{A},p)$, one can consider the real-valued random variables on it which are measurable for $\mathcal{A}$ and integrable for $p$. 
Choosing a different probability space (for example, a different sigma-algebra), one obtains a different set of random variables. Moreover, refinements and coarse-grainings of the probability spaces have an effect on random variables, for example via conditional expectations.
This is encoded, category-theoretically, by modeling random variables as a \emph{functor} on the category of probability spaces. This idea was previously developed in \cite{vanbelle2023martingales,dahlqvist2018borel,adachi2018}, and here we give a further generalization of it. 
For an example of how this works, consider a probability space $(X,\mathcal{A},p)$ and a sub-sigma-algebra $\mathcal{B}\subseteq\mathcal{A}$. 
We have arrows between probability spaces as in the diagram below on the left,
\[
\begin{tikzcd}
  (X,\mathcal{A},p) \ar[shift right]{d}[swap]{\pi} \\
  (X,\mathcal{B},p) \ar[shift right]{u}[swap]{\pi^+}
\end{tikzcd}
\qquad\qquad
\begin{tikzcd}
  L^1(X,\mathcal{A},p) \ar[shift right,leftarrow]{d}[swap]{\pi^*} \\
  L^1(X,\mathcal{B},p) \ar[shift right,leftarrow]{u}[swap]{(\pi^+)^*}
\end{tikzcd}
\]
where we denote the restriction of $p$ to $\mathcal{B}$ again by $p$. 
The map $\pi$ is the (kernel induced by the) set-theoretical identity $X\to X$, which coarse-grains the sigma-algebra from $\mathcal{A}$ to $\mathcal{B}$. The kernel $\pi^+$ is the Bayesian inverse of $\pi$, which is guaranteed to exist, for example, if $(X,\mathcal{A})$ is a standard Borel space.
Random variables are now a contravariant functor. This means that we can assign to each probability space, for example $(X,\mathcal{A},p)$, the Banach space $L^1(X,\mathcal{A},p)$ of integrable random variables on it, again quotiented under almost sure equality. Moreover, we can assign to each morphism, i.e.~Markov kernel, a corresponding map between Banach spaces, in the opposite direction, as in the diagram above on the right. For example, 
\begin{itemize}
    \item The map $\pi^*:L^1(X,\mathcal{B},p)\to L^1(X,\mathcal{A},p)$, induced by the set-theoretical identity, is basically an inclusion map, saying that all $\mathcal{B}$-measurable random variables are $\mathcal{A}$-measurable as well;
    \item The map $(\pi^+)^*:L^1(X,\mathcal{A},p)\to L^1(X,\mathcal{B},p)$, induced by the Bayesian inverse, takes an $\mathcal{A}$-measurable random variable and gives its conditional expectation given $\mathcal{B}$.
\end{itemize}
In other words, the relations between the different probability spaces (for example, in a filtration) and their effects on random variables are captured categorically by means of a category and a functor.
For more details, see \Cref{sec_krnRV}.

This approach also lets us consider more general notions of random variables, for example vector-valued ones, integrable in the Bochner sense. 
These will simply form a different functor on our category. For more details, see \Cref{sec_V}.

\paragraph{Filtrations and martingales.}
A \emph{filtration} on a probability space $(X,\mathcal{A},p)$ is a sequence, or a net, of sub-sigma-algebras $\mathcal{B}_i\subseteq\mathcal{A}$ which are increasingly finer, that is, $\mathcal{B}_i\subseteq\mathcal{B}_j$ whenever $i\le j$.
Usually the filtration is indexed by a total order, which can be interpreted as time, so that as time progresses, our knowledge about the system progresses as well: we are able to make finer and finer distinctions, and to assess the probability of more and more possible events.
One can then form the \emph{join} sigma algebra $\mathcal{B}_\infty = \bigvee_i\mathcal{B}_i$, which can be interpreted as encoding all the knowledge that one can possibly learn from the process, if one had infinite time.
From the point of view of category theory, this sigma-algebra can be obtained as a particular directed limit, i.e. colimit of subobjects.

A \emph{martingale} on the filtration, following this interpretation, is a collection of random variables $f_i$ which, intuitively, follow the refinement of the filtration. 
More in detail, each $f_i$ needs to be measurable for the sigma-algebra $\mathcal{B}_i$, i.e.~it respects the state of knowledge that we have at ``time'' $i$. Moreover, for each $i\le j$, we have that $f_i$ is the conditional expectation of $f_j$. In other words, $f_i$ is a ``coarse graining'' of $f_j$ which ``forgets'' or ``averages over'' all the distinctions that we would be able to make at time $j$, but that we cannot yet make at time $i$. 
Dually, we can see $f_j$ as a ``refinement'' of $f_i$, which incorporates the additional ``knowledge'' that $\mathcal{B}_j$ has over $\mathcal{B}_i$. 

The \emph{martingale convergence theorem} says that, under some conditions,
\begin{itemize}
    \item The $f_i$ admits a common universal refinement $f$, measurable for the join sigma algebra $\mathcal{B}_\infty$, and moreover,
    \item The $f_i$ tend to $f$ topologically.
\end{itemize}
In other words, as our knowledge increases, our refinements become better and better ``approximations of the real thing''.
This ``approximation'' part is achieved, categorically, by means of a topological enrichment.

\paragraph{Topological enrichment.}
It is sometimes said that algebra concerns itself with equations and analysis concerns itself with inequalities.
While this is of course a simplification, it is true that in ordinary category theory it is not obvious how to express approximations and convergence, and the latter are extremely important in analysis and in probability theory.
Luckily, however, category theory has \emph{enriched} versions, enhancements of the notion of category where the sets of arrows are not just sets, but come equipped with additional structures, compatibly with composition. 
In particular, one can equip these sets with a metric or with a topology. This allows us to talk about convergence of measures and random variables categorically, and in a certain way, it adds approximations and inequalities to an otherwise purely algebraic, equational environment.

In \cite{vanbelle2023martingales}, a metric enrichment on a category of probability spaces and measure-preserving functions was used to study certain properties of martingales.
Also, in \cite{ours_entropy}, a metric enrichment on a category of kernels was used to recover classical concepts of information theory, such as entropy and data processing inequalities. 
In this work we bring the two ideas together, using enriched categories to study martingales, but using categories of kernels. Moreover, instead of metrics we endow our sets of arrows with more general topologies. The reason is that the notion of convergence between arrows that one obtains from the metric enrichment is a version of \emph{uniform} convergence, and this is too strong for our purposes. 
We are interested in a version of \emph{pointwise} convergence, and this is achieved by using a topological enrichment, where the sets of arrows have a topology which in general not metrizable.\footnote{The topologies we consider, however, can be induced by \emph{families} of metrics, and so we expect our categories to be enriched not only in topological spaces, but also in uniform spaces~\cite{uniform}. We will not pursue this idea in the present work.}
In the literature, this enrichment has not been used very often. In particular, it seems that it was never used to express, in terms of category theory, convergence results in probability or analysis. In that regard, this work might be the first one of its kind, where topological convergence and category theory are used together in this way.

Let's see how this works. First of all, one actually \emph{can} express certain ideas of approximation and convergence in ordinary categories: via particular limits and colimits (hence the name \emph{limit}). For example, the diagram in the category of sets formed by finite sets and inclusion maps
\[
\begin{tikzcd}
\{0\} \ar[hookrightarrow]{r}
 & \{0,1\} \ar[hookrightarrow]{r} 
 & \{0,1,2\} \ar[hookrightarrow]{r} 
 & \{0,1,2,3\} \ar[hookrightarrow]{r} 
 & \dots
\end{tikzcd}
\]
has as colimit the set of natural numbers $\mathbb{N}$. 
Similarly, and dually, one can express an infinite cartesian product as a limit of its finite factors. (This is particularly important in probability theory, as it is related to Kolmogorov's extension theorem, see \cite{fritzrischel2019zeroone}.)

The martingale convergence theorem, categorically, can be expressed as the fact that \emph{if we express an object as a categorical limit} (a probability space as a limit of increasingly finer sigma-algebras), \emph{we also have a topological convergence associated with it} (the conditional expectations of the random variables converge to a common refinement).

This correspondence between categorical limits and topological ones is a phenomenon that, in different ways, has been noticed before, for example in the category of Hilbert spaces \cite{dimeglio2024hilbert}, as well as for martingales \cite{kozen2016martingales}.
In this work, seemingly for the first time in the literature, we make this correspondence mathematically precise, by means of what we call the \emph{idempotent Levi property} of a topologically enriched category.
The details are explained in \Cref{sec_convergence}.

\paragraph{Our results.}
The main result of this paper is a version of the convergence theorem in mean for martingales and for backward martingales, which we prove for filtrations indexed by directed nets of arbitrary cardinality (\Cref{martin_up,martin_down}), and for random variables taking values in an arbitrary Banach space (\Cref{marti_bochner}).

Along the way, there are a number of additional results proven in this paper, which can be of independent interest.
First of all, we prove that random variables, including vector-valued ones, are functorial on probability spaces (\Cref{RV,RVV}).
We also prove that sub-sigma-algebras of a standard Borel space, up to null sets, are in bijection with almost surely idempotent kernels by means of taking conditional expectations and invariant sigma-algebras, and that the order of idempotents coincide with the (almost surely) inclusion order of sub-sigma-algebras (\Cref{krnsplit}).
We then give a characterization of martingales in terms of idempotents (\Cref{preserves_optima}), and in terms of limits and colimits of Banach and Hilbert spaces (\Cref{marti_ban,marti_hilb}).
The last step to prove martingale convergence is what we call the \emph{Levi property} of a topologically enriched category, which says that directed nets of idempotents converge topologically. We prove that such a property holds in the category of Hilbert spaces (\Cref{up_levi_hilb,down_levi_hilb}), and in our category of probability spaces (\Cref{levi_krn}).

As it is well known, there are results on convergence of martingales not only in mean, but also almost surely, at least for the case of sequential filtrations.
In this work we focus on convergence in mean, which implies convergence in probability, and leave the case of almost sure convergence to future work.
Since almost sure convergence does not arise from a topology, we expect that a categorical treatment of such a convergence will require much more refined enrichments.

\needspace{3\baselineskip}
\paragraph{Outline.}
\begin{itemize}
    \item In \Cref{sec_background} we give an overview of the main categorical concepts we need in this paper, dagger categories, and idempotents and their splittings. We show how, in categories of vector spaces, idempotents are projections onto subspaces. We conclude the section by looking at filtrations of subspaces and their limits.
    \item In \Cref{sec_krnRV} we turn to the main categories of interest for this work, of probability spaces and Markov kernels between them up to almost sure equality. We also define functors of random variables, and show how conditioning equips our categories with a dagger structure.
    \item In \Cref{sec_idempsigma} we join the structures introduced in the previous two sections, studying split idempotents in categories of kernels. We show that they are tightly related to sub-sigma-algebras and to conditional expectations of random variables.
    In particular they allow to give categorical descriptions of filtrations and of martingales.
    \item In \Cref{sec_enrichment} we provide our categories of kernels and of vector spaces with a topological enrichment, i.e.~a way to talk about convergence of morphisms in a ``pointwise-like'' sense. 
    We also show that the random variables functors are enriched, meaning that the functorial assignment on morphisms is continuous.
    \item In \Cref{sec_convergence} we finally show the main result of this work, namely convergence in mean martingales and backward martingales, for arbitrary nets, using categorical arguments. The main idea is what we call the \emph{Levi property}, which roughly says that convergence in the order implies topological convergence. The convergence of backward martingales indexed by arbitrary nets seems to be a new result.
    \item In \Cref{sec_V} we show that our convergence result can be extended to much more general notions of random variables, provided they form an enriched functor. We give the example of martingales with values in an arbitrary Banach space, using a Bochner-based approach to vector valued conditional expectations.
    \item Finally, in \Cref{sec_top} we give some background on topologically enriched categories, and on the closed monoidal structure of the enriching category $\cat{Top}$.
\end{itemize}

\paragraph{Acknowledgements.}
We want to thank Sam Staton and the members of his group for the enlightening discussions, and for providing a greatly supportive and inspiring research environment. 
We also want to thank Matthew Di Meglio for the helpful pointers on the category of Hilbert spaces.

Research for both authors is funded by Sam Staton's ERC grant ``BLaSt -- a Better Language for Statistics''.

\section{Categorical background}
\label{sec_background}

\subsection{Dagger categories}
\label{sec_dagger}

A dagger category can be interpreted as an undirected version of a category, similarly to directed and undirected graphs.
Just as for undirected graphs, in a dagger category if two objects are connected by a morphism $X\to Y$, then they are also connected by a morphism $Y\to X$, and we can think of these two arrows as a single entity which we can ``walk either way''. 
More specifically, to each morphism $f:X\to Y$ we assign a morphism $f^+:Y\to X$, not necessarily invertible, in a way which preserves identity and composition.
Here is the precise definition.

\begin{definition}
	A \newterm{dagger category} is a category $\cat{C}$ together with a functor $(-)^+:\cat{C}^\op\to\cat{C}$ which is
	\begin{itemize}
		\item The identity on objects, i.e.~$X^+=X$ for all $X$ of $\cat{C}$;
		\item Involutive, i.e.~for all morphisms, $(f^+)^+=f$.
	\end{itemize}
\end{definition}

Very often, dagger structures are used to encode ``Euclidean'' geometric structures, or notions of ``orthogonality'', as the following examples show.

\begin{example}
    The category $\cat{Euc}$ has as objects Euclidean spaces (finite-dimensional real vector spaces with an inner product), and as morphisms linear maps, or equivalently matrices.
    The transposition of matrices gives a dagger, as $(AB)^t=B^tA^t$. (Note that the inner product is necessary in order to have a coordinate-independent transposition.)
\end{example}

\begin{example}
    The category $\cat{Hilb}$ of real Hilbert spaces and bounded linear maps can be given a dagger structure via the \emph{adjoint}.
    Given Hilbert spaces $X$ and $Y$ and a bounded linear map $f:X\to Y$, its adjoint is the unique bounded linear map $f^+:Y\to X$ satisfying 
    \[
    \langle f(x), y \rangle  =  \langle x, f^+(y) \rangle 
    \]
    for all $x\in X$ and $y\in Y$. The brackets in the equation above are, in order, the inner product of $Y$, and the one of $X$.
\end{example}

\begin{definition}
	A morphism $f:X\to Y$ in a dagger category is called 
 	\begin{itemize}
 		\item a \newterm{dagger monomorphism} or \newterm{isometry} if $f^+\circ f = \id_X$;
 		\item a \newterm{dagger epimorphism} or \newterm{co-isometry} if $f\circ f^+=\id_Y$;
 		\item a \newterm{dagger isomorphism} or \newterm{unitary} if it is invertible, and $f^{-1}=f^+$;
        \item \newterm{self-dual} or \newterm{self-adjoint} if $X=Y$ and $f^+=f$.
 	\end{itemize}
\end{definition}

For brevity we will just say ``dagger mono'' for ``dagger monomorphism'', as well as ``dagger monic'', et cetera.
Note that a dagger mono is in particular a split mono, a dagger epi is in particular a split epi, and a dagger iso is in particular an isomorphism.
Equivalently, a dagger isomorphism is a morphism which is both dagger monic and dagger epic.

\begin{example}
    In $\cat{Euc}$,
    \begin{itemize}
        \item The dagger monomorphisms are exactly the linear isometries, or linear isometric embeddings, namely the matrices which preserve the Euclidean norms and inner products of vectors:
        \[
        \langle f(x) , f(x') \rangle = \langle x, f^+ f(x') \rangle = \langle x, x' \rangle .
        \]
        Dagger epimorphisms are their transposes;
        \item The dagger isomorphisms are exactly the orthogonal matrices;
        \item The self-dual morphisms are exactly the symmetric matrices.
    \end{itemize}
\end{example}

\begin{example}
    In $\cat{Hilb}$,
    \begin{itemize}
        \item The dagger monomorphisms are exactly the linear isometries (and their duals are sometimes called \emph{co-isometries} of Hilbert spaces);        
        \item The dagger isomorphisms are exactly the unitary maps;
        \item The self-dual morphisms are exactly the self-adjoint operators.
    \end{itemize}
\end{example}

Dagger monos and dagger epis are respectively closed under composition. In what follows we will write
\begin{itemize}
	\item $\cat{C}_\dm$ for the wide subcategory of dagger monos of $\cat{C}$;
	\item $\cat{C}_\de$ for the wide subcategory of dagger epis of $\cat{C}$. 
\end{itemize}

\subsection{Idempotents and their splittings}
\label{sec_idemp}

Let's now look at idempotents. We first start with idempotents in general, and then turn to the dagger setting.
An \newterm{idempotent} morphism in a category $\cat{C}$ is an endomorphism $e:X\to X$ such that $e\circ e=e$. 

\begin{definition}
	A \newterm{splitting} of an idempotent $e:X\to X$ consists of an object $A$ and morphisms $\iota:A\to X$ and $\pi:X\to A$ such that $\pi\circ\iota=\id_A$ and $\iota\circ\pi=e$. 
\end{definition}

Note that this implies that $\iota$ is split monic, and $\pi$ is split epic.

Very often, split idempotents encode the idea of ``projections onto a subspace'', as in the following examples.

\begin{example}\label{id_split_euc}
    In $\cat{Euc}$, all idempotents split. 
    Given an idempotent linear map $e$ on a Euclidean space $X$, denote by $A\subseteq X$ its image, and by $\iota:A\to X$ the inclusion. By definition of image, for all $x\in X$ we have that $e(x)\in A$, so $e$ specifies a linear map $\pi:X\to A$ by $x\mapsto e(x)$. 
    This way, $e(x)=\iota(\pi(x))$, and for all $y\in A$, for some $x$, $y=e(x)$, and so by idempotency, $\pi(\iota(y))=e(e(x))=e(x)=y$.
    Therefore $\pi\circ\iota=\id_A$ and $\iota\circ\pi=e$. 
    In other words, we can see the idempotent $e$ as a projection onto a the subspace $A$.
\end{example}
Let $\cat{Ban}$ be the category of Banach spaces and \emph{bounded} linear maps and let $\cat{Ban}_{\le 1}$ be the category of Banach spaces and \emph{1-Lipschitz} linear maps.
\begin{example}\label{id_split_ban}
    In $\cat{Ban}$ all idempotents split as well, and the intuition is similar.
    Let $e$ be an idempotent bounded linear map on a Banach space $X$. Let $A\subseteq X$ be its image, and denote by $\iota:A\to X$ be the inclusion map. 
    Note first of all that $A$ is a \emph{closed} subspace: suppose that for a sequence $(x_n)$ in $X$ we have that $e(x_n)\to y$. Then since $e$ is continuous and idempotent we have that
    \[
    e(y) = \lim_{n\to\infty} e(e(x_n)) = \lim_{n\to\infty} e(x_n) = y .
    \]
    So $y\in A$, which means that $A$ is closed, and so it is itself a Banach space.
    Just as in the example above, $e$ defines a map $\pi:X\to A$, and we have that $\pi\circ\iota=\id_A$ and $\iota\circ\pi=e$. 
    Once again, we can view the idempotent $e$ as a projection onto the closed subspace $A$.
    Note that not every closed subspace of a Banach space admits a (bounded) idempotent operator projecting onto it, and even when it exists, it may not be unique.

    In $\cat{Hilb}$, the situation is completely analogous, with the additional condition that \emph{every closed subspace admits an idempotent}. Uniqueness still fails, but there is a canonical choice, see \Cref{orthogonal_projection}. 
\end{example}

Equivalently, one can construct $A$ as either the set of fixed points of $e$, or as the quotient set of fibers of $e$.
Categorically, $A$ can be constructed as either the equalizer or the coequalizer of the parallel pair $e,\id:X\to X$, and it is preserved by every functor~\cite[Proposition~1]{cauchycompletion}.

Let's now turn to the dagger case.

\begin{definition}
	Let $\cat{C}$ be a dagger category.
	\begin{itemize}
	    \item A \newterm{dagger idempotent} or \newterm{projector} is a self-dual idempotent morphism. That is, a morphism $e:X\to X$ such that $e\circ e=e=e^+$;
        \item A \newterm{dagger splitting} of an idempotent $e:X\to X$ is a splitting $(A,\iota,\pi)$ such that $\iota=\pi^+$. 
	\end{itemize}
\end{definition}

Note that in the definition above, we necessarily have that $\iota$ is dagger monic and that $\pi$ is dagger epic. 
Note also that if an idempotent has a dagger splitting, then it is a dagger idempotent:
\[
e^+ \;=\; (\iota\circ\pi)^+ \;=\; \pi^+\circ\iota^+ \;=\; \iota\circ\pi \;=\; e .
\]

We have seen that split idempotents often encode ``projections onto subspaces'', and that dagger structures often encode notions of ``orthogonality''. Following this intuition, dagger idempotents often encode \emph{orthogonal projections} onto subspaces:

\begin{example}\label{orthogonal_projection}
    In $\cat{Euc}$ as well as in $\cat{Hilb}$, consider a dagger idempotent $e:X\to X$ and its splitting $(A,\iota,\pi)$ as in \Cref{id_split_euc,id_split_ban}.
    We have that for every $x\in X$ and $y\in A$,
    \begin{align*}
    \langle x - e(x), y \rangle &= \langle x , y \rangle - \langle e(x), y \rangle \\
    &= \langle x , y \rangle - \langle x, e(y) \rangle \\
    &= \langle x , y \rangle - \langle x, y \rangle \\
    &= 0 .
    \end{align*}
    Therefore $e$ is an orthogonal projector onto $A$. To see that our splitting is dagger, notice that for all $x\in X$ and $y\in A$,
    \[
    \langle \pi(x), y \rangle = \langle e(x), y \rangle = \langle x, e(y) \rangle = \langle x, y \rangle = \langle x, \iota(y) \rangle .
    \]
    Therefore $\iota=\pi^+$. 
    Conversely, by taking the adjoint of the inclusion map, every closed subspace admits a (unique) orthogonal projector.
\end{example}

\subsection{The order of split idempotents}
\label{sec_order}

Subspaces of a vector space form naturally a partial order under inclusion. The same, dually, can be said about quotients. More generally, subobjects and quotient objects form partial orders.
Retracts, or split idempotents, can be seen in this light, and inherit a partial order with interesting properties.

\begin{proposition}\label{eqcondorder}
Let $e_1$ and $e_2$ be idempotents $X\to X$ in a category, with splittings $(A_1,\pi_1,\iota_1)$ and $(A_2,\pi_2,\iota_2)$ respectively.
\[
\begin{tikzcd}
 & X \ar[shift right]{dl}[swap]{\pi_1} \ar[shift left]{dr}{\pi_2} \\
 A_1 \ar[shift right]{ur}[swap]{\iota_1} && A_2 \ar[shift left]{ul}{\iota_2}
\end{tikzcd}
\]
The following conditions are equivalent. 
\begin{enumerate}
 \item\label{eep} $e_1\circ e_2 = e_2\circ e_1 = e_1$;
 \item\label{pinv} $e_2\circ\iota_1=\iota_1$ and $\pi_1\circ e_2=\pi_1$;
 \item\label{f} There exists $f:A_1\to A_2$ and $g:A_2\to A_1$ such that the following diagrams commute.
\end{enumerate}
\[
\begin{tikzcd}
 & X \\
 A_1 \ar{ur}{\iota_1} \ar[dashed]{rr}[swap]{f} && A_2 \ar{ul}[swap]{\iota_2}
\end{tikzcd}
\qquad\qquad
\begin{tikzcd}
 & X \ar{dl}[swap]{\pi_1} \ar{dr}{\pi_2} \\
 A_1 && A_2 \ar[dashed]{ll}{g}
\end{tikzcd}
\]
Moreover, in the situation above, $f$ and $g$ are unique, and $g\circ f=\id_{A_1}$.

If in addition we are in a \emph{dagger} category, and $(\iota_1,\pi_1)$ and $(\iota_2,\pi_2)$ are \emph{dagger} splittings, then $f=g^+$.
\end{proposition}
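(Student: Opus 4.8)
The plan is to prove the equivalence of (i), (ii), (iii) by a single cyclic chain of implications (i)$\Rightarrow$(ii)$\Rightarrow$(iii)$\Rightarrow$(i), and then to read off uniqueness, the identity $g\circ f=\id_{A_1}$, and the dagger refinement directly from the explicit formulas for $f$ and $g$ that the argument produces. The only inputs are the four splitting relations $\pi_i\circ\iota_i=\id_{A_i}$ and $\iota_i\circ\pi_i=e_i$, together with their immediate corollaries $e_i\circ\iota_i=\iota_i$ and $\pi_i\circ e_i=\pi_i$, each obtained by inserting $\id_{A_i}=\pi_i\circ\iota_i$ into one factor.

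For (i)$\Rightarrow$(ii) I would compute $e_2\circ\iota_1=e_2\circ e_1\circ\iota_1=e_1\circ\iota_1=\iota_1$, folding $e_2\circ e_1$ into $e_1$ in the middle step; symmetrically $\pi_1\circ e_2=\pi_1\circ e_1\circ e_2=\pi_1\circ e_1=\pi_1$. For (ii)$\Rightarrow$(iii) the decisive move is to name the maps \emph{explicitly}: set $f=\pi_2\circ\iota_1$ and $g=\pi_1\circ\iota_2$. The two triangles then read $\iota_2\circ f=\iota_2\circ\pi_2\circ\iota_1=e_2\circ\iota_1=\iota_1$ and $g\circ\pi_2=\pi_1\circ\iota_2\circ\pi_2=\pi_1\circ e_2=\pi_1$, which are exactly (ii). For (iii)$\Rightarrow$(i) I would first cancel $\pi_2\circ\iota_2=\id_{A_2}$ in $\pi_2\circ\iota_1=\pi_2\circ\iota_2\circ f=f$, so that $e_2\circ\iota_1=\iota_2\circ\pi_2\circ\iota_1=\iota_2\circ f=\iota_1$ and hence $e_2\circ e_1=e_2\circ\iota_1\circ\pi_1=e_1$; dually $g\circ\pi_2=\pi_1$ forces $\pi_1\circ e_2=\pi_1$ and thus $e_1\circ e_2=\iota_1\circ\pi_1\circ e_2=e_1$.

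Uniqueness and the remaining identities are then immediate from these same formulas. Since $\pi_2\circ\iota_2=\id_{A_2}$, the map $\iota_2$ is split monic and $\pi_2$ is split epic, so any $f$ with $\iota_2\circ f=\iota_1$ satisfies $f=\pi_2\circ\iota_2\circ f=\pi_2\circ\iota_1$, and any $g$ with $g\circ\pi_2=\pi_1$ satisfies $g=g\circ\pi_2\circ\iota_2=\pi_1\circ\iota_2$; this pins down $f$ and $g$ and shows the formulas of the previous step are canonical. Composing them, $g\circ f=\pi_1\circ(\iota_2\circ\pi_2)\circ\iota_1=\pi_1\circ e_2\circ\iota_1=\pi_1\circ\iota_1=\id_{A_1}$, using (ii). Finally, in a dagger category with $\iota_i=\pi_i^+$, I would write $g=\pi_1\circ\pi_2^+$ and take its adjoint: contravariance and involutivity of the dagger give $g^+=(\pi_2^+)^+\circ\pi_1^+=\pi_2\circ\iota_1=f$.

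I do not expect a genuine obstacle: every step is elementary diagram chasing with the splitting relations. The only thing demanding care is the \emph{bookkeeping} — selecting the correct relation ($\pi_i\circ\iota_i=\id$ versus $\iota_i\circ\pi_i=e_i$) at each cancellation, and verifying that the cycle genuinely closes rather than merely establishing, say, (i)$\Leftrightarrow$(ii) together with one direction of (iii). Committing early to the explicit choices $f=\pi_2\circ\iota_1$ and $g=\pi_1\circ\iota_2$ is exactly what makes everything downstream mechanical.
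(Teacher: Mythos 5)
Your proposal is correct and follows essentially the same route as the paper: the same cyclic chain (i)$\Rightarrow$(ii)$\Rightarrow$(iii)$\Rightarrow$(i), the same explicit formulas $f=\pi_2\circ\iota_1$ and $g=\pi_1\circ\iota_2$ (with uniqueness obtained by the same cancellation against the splitting identities), the same computation of $g\circ f=\id_{A_1}$, and the same one-line dagger argument $g^+=\iota_2^+\circ\pi_1^+=\pi_2\circ\iota_1=f$. The only cosmetic difference is in (iii)$\Rightarrow$(i), where you re-derive the closed forms of $f$ and $g$ before concluding, whereas the paper substitutes $e_i=\iota_i\circ\pi_i$ directly into $e_1\circ e_2$ and $e_2\circ e_1$; both amount to the same cancellations.
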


\begin{proof}
 $\ref{eep}\Rightarrow\ref{pinv}$:
 \[
  e_2\circ\iota_1 \;=\; e_2\circ e_1\circ\iota_1 \;=\; e_1\circ\iota_1 \;=\; \iota_1 ;
 \]
 \[
  \pi_1\circ e_2 \;=\; \pi_1\circ e_1\circ e_2 \;=\; \pi_1\circ e_1 \;=\; \pi_1 .
 \]

 $\ref{pinv}\Rightarrow\ref{f}$:
 Notice that $\iota_2\circ f=\iota_1$ implies necessarily that $f=\pi_2\circ\iota_1$, so that $f$ is unique.
 This way,
 \[
  \iota_2\circ f \;=\; \iota_2\circ\pi_2\circ\iota_1 \;=\; e_2\circ\iota_1 \;=\; \iota_1 .
 \]
 Similarly, if we want that $g\circ\pi_2=\pi_1$, necessarily $g=\pi_1\circ\iota_2$.
 Now 
 \[
 g\circ\pi_2 \;=\; \pi_1\circ\iota_2\circ\pi_2 \;=\; \pi_1\circ e_2 \;=\; \pi_1 .
 \]
 To show that $g$ is a retraction,
 \[
 g\circ f \;=\; \pi_1\circ\iota_2\circ \pi_2\circ\iota_1 \;=\; \pi_1\circ e_2\circ\iota_1 \;=\; \pi_1\circ\iota_1 = \id_{A_1} . 
 \]

 $\ref{f}\Rightarrow\ref{eep}$:
 \[
  e_1\circ e_2 \;=\; \iota_1\circ\pi_1\circ \iota_2\circ \pi_2 \;=\; \iota_1\circ g\circ \pi_2\;=\; \iota_1\circ\pi_1 \;=\; e_1. 
 \]
 \[
  e_2\circ e_1 \;=\; \iota_2\circ\pi_2\circ \iota_1\circ \pi_1 \;=\; \iota_2\circ f\circ \pi_1\;=\; \iota_1\circ\pi_1 \;=\; e_1. 
 \]

 If we are in a dagger category and $\pi_1=\iota_1^+$ and $\pi_2=\iota_2^+$, then 
 \[
 g^+ \;=\; (\pi_1\circ\iota_2)^+ \;=\; \iota_2^+\circ\pi_1^+ \;=\; \pi_2\circ\iota_1 \;=\; f . \qedhere
 \]
\end{proof}

\begin{definition}
 In the hypotheses of the previous proposition, we say that $e_1\le e_2$ if any (hence all) the conditions are satisfied.
\end{definition}

\begin{remark}
In the dagger case it is even simpler to check that $e_1\le e_2$: by taking the dagger,
\begin{itemize}
    \item $e_1\circ e_2=e_1$ if and only if $e_2\circ e_1=e_1$;
    \item $e_2\circ\iota_1=\iota_1$ if and only if $\pi_1\circ e_2=\pi_1$;
    \item $f$ as in \Cref{eqcondorder} exists if and only if $g$ exists.
\end{itemize}
\end{remark}

\begin{example}
    In $\cat{Euc}$, $\cat{Ban}$ and $\cat{Hilb}$, in the hypotheses of the previous proposition, we have that $e_1\le e_2$ if and only if, as subspaces, $A_1\subseteq A_2$. The equivalent conditions read:
    \begin{enumerate}
        \item Projecting first onto $A_2$ and then onto $A_1$ is the same as just projecting onto $A_1$ directly;
        \item Projecting an element of $A_1$ onto $A_2$ leaves it unchanged;
        \item The projection $X\to A_1$ and the inclusion $A_1\to X$ both factor through $A_2$.
    \end{enumerate}
\end{example}

\begin{proposition}
 The relation $\le$ on the set of split idempotents on $X$ is a partial order.
\end{proposition}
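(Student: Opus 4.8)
The plan is to carry out all three verifications using characterization~\ref{eep} of \Cref{eqcondorder}, namely that $e_1\le e_2$ holds exactly when $e_1\circ e_2=e_2\circ e_1=e_1$. This formulation is purely in terms of the idempotents themselves, with no reference to any chosen splitting, so it is both the most convenient for computation and the cleanest way to see that $\le$ is a genuinely well-defined relation on idempotents. With this characterization in hand, the three partial-order axioms reduce to short manipulations of idempotency equations.

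Reflexivity is immediate: since $e\circ e=e$ by idempotency, the defining equation $e\circ e=e\circ e=e$ holds trivially, so $e\le e$. For antisymmetry, I would suppose $e_1\le e_2$ and $e_2\le e_1$. The first inequality gives $e_2\circ e_1=e_1$ and the second gives $e_2\circ e_1=e_2$; comparing the two expressions for $e_2\circ e_1$ immediately yields $e_1=e_2$. Here it matters that the relation is defined on idempotents rather than on their splittings, so that this equality of morphisms is exactly the conclusion we want.

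For transitivity, I would assume $e_1\le e_2$ and $e_2\le e_3$, so that $e_1\circ e_2=e_2\circ e_1=e_1$ and $e_2\circ e_3=e_3\circ e_2=e_2$. I would then compute $e_3\circ e_1=e_3\circ(e_2\circ e_1)=(e_3\circ e_2)\circ e_1=e_2\circ e_1=e_1$ and, symmetrically, $e_1\circ e_3=(e_1\circ e_2)\circ e_3=e_1\circ(e_2\circ e_3)=e_1\circ e_2=e_1$, which together establish $e_1\le e_3$. There is no serious obstacle in any of this: every step is a direct consequence of condition~\ref{eep}, and the only point that warrants a moment's care is the antisymmetry argument, where one must recall that $\le$ compares idempotents and not their auxiliary splitting data, so that $e_1=e_2$ is indeed the intended conclusion.
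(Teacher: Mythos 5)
Your proof is correct and follows essentially the same route as the paper: both work directly from characterization~(i) of the order ($e_1\le e_2$ iff $e_1\circ e_2=e_2\circ e_1=e_1$) and verify the three axioms by short composition computations. If anything, yours is slightly more thorough, since for transitivity you check both composites $e_1\circ e_3$ and $e_3\circ e_1$, while the paper's proof records only one of them.
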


\begin{proof}
 For reflexivity, notice that $e\circ e=e$ by idempotency.

 For transitivity, suppose that $e_1\circ e_2=e_1$ and $e_2\circ e_3=e_2$. Then
 \[
  e_1\circ e_3 \;=\; e_1\circ e_2\circ e_3 \;=\; e_1\circ e_2 \;=\; e_1 .
 \]

 For antisymmetry, notice that $e_1=e_1\circ e_2=e_2$ implies $e_1=e_2$.
\end{proof}

We can equivalently define the order of idempotents as the following category.
\begin{definition}
    Let $X$ be an object of a category $\cat{C}$.
The category $\SI(X)$ has:
\begin{itemize}
	\item As objects, objects $A$ of $\cat{C}$ together with a split idempotent $(\iota:A\to X,\pi:X\to A)$;
	\item As morphisms $(A_1,\iota_1,\pi_1)\to(A_2,\iota_2,\pi_2)$, pairs of morphisms $f_{12}:A_1\to A_2$ and $g_{21}:A_2\to A_1$ and making the top and bottom triangles in the following diagram commute.
	\[
\begin{tikzcd}[column sep=small]
    & X \ar{dl}[swap]{\pi_1} \ar{dr}[near end]{\pi_2} \\
     A_1 \ar{dr}[swap]{\iota_1} \ar[shift right]{rr}[swap]{f_{12}} 
     && A_2 \ar{dl}[near start]{\iota_2} \ar[shift right]{ll}[swap]{g_{21}} \\
	& X
\end{tikzcd}
\]
\end{itemize}
\end{definition}
This category is a preorder, equivalent to the poset of split idempotents on $X$.

In the dagger case, we can define things similarly. 
\begin{definition}
    Let $X$ be an object of a dagger category $\cat{C}$.

    The category $\DSI(X)$ has:
\begin{itemize}
	\item As objects, objects $A$ of $\cat{C}$ together with a dagger-split idempotent $(\iota:A\to X,\pi:X\to A)$;
	\item As morphisms $(A_1,\iota_1,\pi_1)\to(A_2,\iota_2,\pi_2)$, pairs of morphisms $f_{12}:A_1\to A_2$ and making the top and bottom triangles in the following diagram commute.
	\[
\begin{tikzcd}[column sep=small]
    & X \ar{dl}[swap]{\pi_1} \ar{dr}[near end]{\pi_2} \\
     A_1 \ar{dr}[swap]{\iota_1} \ar[shift right]{rr}[swap]{f_{12}} 
     && A_2 \ar{dl}[near start]{\iota_2} \ar[shift right]{ll}[swap]{f_{12}^+} \\
	& X
\end{tikzcd}
\]
\end{itemize}
\end{definition}

In the dagger case, thanks to the self-duality, can have two equivalent descriptions of the $\DSI(X)$.
The slice category $\cat{C}_\dm/X$ has:
\begin{itemize}
	\item As objects, objects $A$ of $\cat{C}$ together with a dagger monomorphism $\iota:A\to X$;
	\item As morphisms, dagger monomorphisms $f_{12}:A_1\to A_2$ making the following triangle commute.
	\[
	\begin{tikzcd}[row sep=small, column sep=tiny]
		A_1 \ar{dr}[swap]{\iota_1} \ar{rr}{f_{12}} && A_2 \ar{dl}{\iota_2} \\
		& X 
	\end{tikzcd}
	\]
\end{itemize}
Dually, the coslice category $X/\cat{C}_\de$ has:
 \begin{itemize}
  \item As objects, objects $Y$ of $\cat{C}$ together with a dagger epimorphism $\pi:X\to Y$;
  \item As morphisms, dagger epimorphisms $g_{21}:Y_2\to Y_1$ making the following triangle commute.
  \[
  \begin{tikzcd}[row sep=small, column sep=tiny]
   & X \ar{dl}[swap]{\pi_2} \ar{dr}{\pi_1} \\
   Y_2 \ar{rr}[swap]{g_{21}} && Y_1
  \end{tikzcd}
  \]
\end{itemize}
The category $\cat{C}_\dm/X$ is a preorder, equivalent to the poset of dagger-split idempotents on $X$, and the category $X/\cat{C}_\de$ is equivalent to the \emph{opposite} of the poset of dagger-split idempotents on $X$.

\subsection{Sequential and filtered suprema and infima}
\label{sec_supinf}

Consider an ascending chain $e_1\le e_2\le\dots$ of idempotents on $X$. Equivalently it is an inductive sequence in $\SI(X)$, as follows:
\[
\begin{tikzcd}[row sep=huge, column sep=small]
    &&&& X \ar{dllll}[swap]{\pi_1} \ar{dl}{\pi_2} \ar{dr}[near end]{\pi_3} \ar{drrrr} \\
    A_1 \ar{drrrr}[swap]{\iota_1} \ar[shift right]{rrr}[swap, near end]{f_{12}} 
     &&& A_2 \ar{dr}{\iota_2} \ar[shift right]{rr}[swap]{f_{23}} \ar[shift right]{lll}[swap, near start]{g_{21}}
     && A_3 \ar{dl}[near start]{\iota_3} \ar[shift right]{rrr} \ar[shift right]{ll}[swap]{g_{32}}
     &&& \cdots \ar{dllll} \ar[shift right]{lll} \\
	&&&& X
\end{tikzcd}
\]

The sequential colimit $A_\infty$ of the $A_i$ in $\SI(X)$, if it exists, is equivalently the supremum in the order of idempotents.
The same thing is true for increasing (directed) nets instead of sequences.

Similarly, consider a descending chain $e_1\ge e_2\ge\dots$ of idempotents. This is equivalently a coinductive sequence in $\SI(X)$, as follows:
\[
\begin{tikzcd}[row sep=huge, column sep=small]
    &&&& X \ar{dllll} \ar{dl}{\pi_3} \ar{dr}[near end]{\pi_2} \ar{drrrr}{\pi_1} \\
    \cdots \ar{drrrr} \ar[shift right]{rrr}
     &&& A_3 \ar{dr}{\iota_3} \ar[shift right]{rr}[swap]{f_{32}} \ar[shift right]{lll}
     && A_2 \ar{dl}[near start]{\iota_2} \ar[shift right]{rrr}[swap, near start]{f_{21}} \ar[shift right]{ll}[swap]{g_{23}}
     &&& A_1 \ar{dllll}{\iota_1} \ar[shift right]{lll}[swap, near end]{g_{12}} \\
	&&&& X
\end{tikzcd}
\]

The sequential limit $A_\infty$ of the $A_i$ in $\SI(X)$, if it exists, is equivalently the infimum in the order of idempotents.
The same is true for decreasing nets instead of sequences.

Similar remarks can be made for dagger-split idempotents. 

\begin{example}
    In $\cat{Euc}$ and $\cat{Hilb}$, increasing and decreasing sequences of dagger idempotents are increasing and decreasing sequences of (closed) subspaces.
    Their suprema are given by (the closure of) their union, and their infima are given by their intersection.
    The same is true for nets.
\end{example}

\begin{example}
 In $\cat{Ban}$, not every closed subspace gives rise to an idempotent, and when it does it may not be unique, so we need a little care.
 What we can say is that
    \begin{enumerate}
        \item Given an increasing net of idempotents $e_\lambda:X\to X$ projecting onto closed subspaces $A_\lambda\subseteq X$, denote by $A_\infty$ the closure of their union. Then if the supremum of the $e_\lambda$ in the idempotent order exists, it projects onto $A_\infty$.
        
        \item Given a decreasing net of idempotents $e_\lambda:X\to X$ projecting onto closed subspaces $A_\lambda\subseteq X$, denote by $A_\infty$ their intersection. Then if the infimum of the $e_\lambda$ in the idempotent order exists, it projects onto $A_\infty$.
    \end{enumerate}
\end{example}

Moreover, in the category $\cat{Ban}_{\le 1}$ of Banach spaces and \emph{1-Lipschitz} linear maps (not just bounded), 
\begin{itemize}
    \item nets of increasing subspaces have the closure of their union as colimit of the inclusion maps;
    \item nets of decreasing subspaces have their intersection as limit of the inclusion maps.
\end{itemize}
This is shown in the following propositions. 
The same statements will also hold in the subcategory $\cat{Hilb}_{\le 1}$ of Hilbert spaces and 1-Lipschitz linear maps, and we can interpret them in terms of the category $\DSI$. 

\begin{proposition}\label{banupi}
    Let $X$ be a Banach space. Let $(A_\lambda)_{\lambda\in\Lambda}$ be an increasing net of closed subspaces of $X$, i.e.~such that for all all $\lambda\le\mu$, $A_\lambda\subseteq A_{\mu}$. Let 
    \[
    A_\infty = \mathrm{cl} \left( \bigcup_\lambda A_\lambda \right) .
    \]
    For $\lambda\le\mu$, denote by $\iota_{\lambda,\mu}:A_\lambda\to A_\mu$ the 1-Lipschitz inclusion.
    Then $A_\infty$ is the (filtered) colimit in $\cat{Ban}_{\le 1}$ of the diagram formed by the $\iota_{\lambda,\mu}$.
\end{proposition}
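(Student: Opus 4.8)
The plan is to verify the universal property of the colimit directly. The colimit cocone consists of the 1-Lipschitz inclusions $\iota_\lambda \colon A_\lambda \to A_\infty$, which clearly satisfy $\iota_\mu \circ \iota_{\lambda,\mu} = \iota_\lambda$ for all $\lambda \le \mu$. So what remains is to show that for every Banach space $Y$ and every compatible cocone of 1-Lipschitz maps $\phi_\lambda \colon A_\lambda \to Y$ (meaning $\phi_\mu \circ \iota_{\lambda,\mu} = \phi_\lambda$ whenever $\lambda \le \mu$), there is a unique 1-Lipschitz linear map $\phi \colon A_\infty \to Y$ with $\phi \circ \iota_\lambda = \phi_\lambda$ for all $\lambda$.

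First I would define the mediating map on the dense subspace $\bigcup_\lambda A_\lambda$ by setting $\phi(x) = \phi_\lambda(x)$ whenever $x \in A_\lambda$. Well-definedness follows from directedness of $\Lambda$ together with cocone compatibility: if $x \in A_\lambda \cap A_\mu$, pick $\nu \ge \lambda, \mu$, so that $\phi_\lambda(x) = \phi_\nu(x) = \phi_\mu(x)$. The same directedness trick shows $\phi$ is linear on the union (given $x \in A_\lambda$ and $y \in A_\mu$, move both into a common $A_\nu$ and use linearity of $\phi_\nu$), and each $\phi_\lambda$ being 1-Lipschitz yields $\|\phi(x)\| \le \|x\|$ on the union.

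Next I would extend $\phi$ to all of $A_\infty = \mathrm{cl}\left( \bigcup_\lambda A_\lambda \right)$. Since $\bigcup_\lambda A_\lambda$ is dense in $A_\infty$, the map $\phi$ is uniformly continuous on it (being 1-Lipschitz), and $Y$ is complete, so $\phi$ extends uniquely to a continuous map $\bar\phi \colon A_\infty \to Y$. Continuity of addition, scalar multiplication, and the norm then guarantees that $\bar\phi$ is again linear and 1-Lipschitz, and by construction $\bar\phi \circ \iota_\lambda = \phi_\lambda$. Uniqueness is automatic: any 1-Lipschitz map agreeing with the $\phi_\lambda$ on the dense subset $\bigcup_\lambda A_\lambda$ must coincide with $\bar\phi$ everywhere by continuity.

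The step I expect to require the most care is the 1-Lipschitz bound surviving the passage to the closure, and, more conceptually, the fact that we are working in $\cat{Ban}_{\le 1}$ rather than $\cat{Ban}$: it is precisely the uniform bound of $1$ on all the $\phi_\lambda$ that makes the extension to $A_\infty$ bounded. In $\cat{Ban}$ with arbitrary bounded maps no such uniform control is available, and the closure of the union would generally fail to be the colimit — this is the reason the statement is phrased for 1-Lipschitz maps. Everything else reduces to the standard bounded-linear-extension argument applied to a directed union.
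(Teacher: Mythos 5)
Your proof is correct and follows essentially the same route as the paper's: define the mediating map on the dense union $\bigcup_\lambda A_\lambda$ using cocone compatibility, observe it is linear and 1-Lipschitz there, and extend uniquely to $A_\infty$ by density and completeness of the target. The only difference is that you spell out the directedness and uniqueness arguments (and the role of the uniform Lipschitz bound, which the paper addresses separately via a counterexample for $\cat{Ban}$) in more detail than the paper does.
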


\begin{proof}
    Let $B$ be a Banach space, and consider a cone of $1$-Lipschitz linear maps $c_\lambda:A_\lambda\to B$ making the following diagram commute for each $\mu\ge\lambda$.
    \begin{equation}\label{coneah}
    \begin{tikzcd}[row sep=small]
        A_\lambda \ar{dd}[swap]{\iota_{\lambda,\mu}} \ar{dr}{c_\lambda} \\ 
        & B \\
        A_\mu \ar{ur}[swap]{c_\mu}
    \end{tikzcd}
    \end{equation}
    We have to show that there is a unique map $c:A_\infty\to B$ making the following diagram commute.
    \[
    \begin{tikzcd}
        A_\lambda \ar{dr}[swap]{\iota_\lambda} \ar{dd}[swap]{\iota_{\lambda,\mu}} \ar{drr}{c_\lambda} \\ 
        & A_\infty \ar[dashed]{r} & B \\
        A_\mu \ar{ur}{\iota_\mu} \ar{urr}[swap]{c_\mu}
    \end{tikzcd}
    \]
    Now define first the map 
    \[
    c' : \bigcup_{\lambda\in\Lambda} A_\lambda \to B 
    \]
    by $c'(a_\lambda)=c_\lambda(a_\lambda)$ for all $a_\lambda\in A_\lambda$. Note that since \eqref{coneah} commutes, this map does not depend on the choice of $\lambda$. 
    Since all the $c_\lambda$ are 1-Lipschitz, so is $c'$, and so by density $c'$ admits a unique (linear) extension to $A_\infty$. 
\end{proof}

\begin{proposition}\label{bandowni}
    Let $X$ be a Banach space. Let $(A_\lambda)_{\lambda\in\Lambda}$ be a decreasing net of closed subspaces of $X$, i.e.~such that for all all $\lambda\le\mu$, $A_\lambda\supseteq A_{\mu}$. Let 
    \[
    A_\infty =  \bigcap_\lambda A_\lambda .
    \]
    For $\lambda\le\mu$, denote by $\iota_{\mu,\lambda}:A_\mu\to A_\lambda$ the inclusion.
    Then $A_\infty$ is the (cofiltered) limit in $\cat{Ban}$ and $\cat{Ban}_{\le 1}$ of the diagram formed by the $\iota_{\mu,\lambda}$.
\end{proposition}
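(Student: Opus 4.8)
The plan is to show directly that the intersection $A_\infty$, equipped with the inclusions $\iota_\lambda\colon A_\infty\to A_\lambda$, satisfies the universal property of the cofiltered limit, and that the same argument works verbatim in both $\cat{Ban}$ and $\cat{Ban}_{\le 1}$. First I would record that $A_\infty=\bigcap_\lambda A_\lambda$ is an intersection of closed subspaces, hence itself closed and therefore a Banach space, and that the inclusions $\iota_\lambda$ together with the $\iota_{\mu,\lambda}$ assemble into a cone, since $\iota_{\mu,\lambda}\circ\iota_\mu=\iota_\lambda$ as maps $A_\infty\to A_\lambda$ (all of them being literal inclusions inside $X$).

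The heart of the argument is the following observation. Let $B$ be a Banach space and let $(c_\lambda\colon B\to A_\lambda)_{\lambda\in\Lambda}$ be a cone, i.e.\ $\iota_{\mu,\lambda}\circ c_\mu=c_\lambda$ whenever $\lambda\le\mu$. Because every $\iota_{\mu,\lambda}$ is an inclusion, this compatibility says precisely that, viewed as elements of $X$, we have $c_\mu(b)=c_\lambda(b)$ for all $b\in B$ and all $\lambda\le\mu$. I would then use that $\Lambda$ is directed: for any two indices $\lambda,\lambda'$ pick an upper bound $\mu$, so that $c_\lambda(b)=c_\mu(b)=c_{\lambda'}(b)$ in $X$. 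Hence all the $c_\lambda(b)$ coincide, defining a single map $c\colon B\to X$ with $c(b)=c_\lambda(b)$ for every $\lambda$. Since $c(b)\in A_\lambda$ for every $\lambda$, in fact $c(b)\in\bigcap_\lambda A_\lambda=A_\infty$, so $c$ corestricts to a map $B\to A_\infty$.

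It remains to check the analytic and uniqueness conditions. Linearity of $c$ is inherited pointwise from any single $c_\lambda$. For the norm, the subspace norm on $A_\infty$ is the restriction of the norm of $X$, so $\|c(b)\|=\|c_\lambda(b)\|$ for any fixed $\lambda$; thus $\|c\|\le\|c_\lambda\|$, which gives boundedness in $\cat{Ban}$, and if each $c_\lambda$ is $1$-Lipschitz then $\|c(b)\|=\|c_\lambda(b)\|\le\|b\|$, giving $1$-Lipschitzness in $\cat{Ban}_{\le 1}$. Uniqueness is immediate: any $c'\colon B\to A_\infty$ with $\iota_\lambda\circ c'=c_\lambda$ must satisfy $c'(b)=c_\lambda(b)=c(b)$ in $X$, so $c'=c$.

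I expect no serious obstacle here; the only point worth flagging is why---unlike the colimit statement in \Cref{banupi}, which needed the $1$-Lipschitz restriction together with a density argument to extend maps to the closure---the limit statement holds in $\cat{Ban}$ as well. The reason is that no completion or extension is required: the cone maps already land in the $A_\lambda$, they automatically agree as maps into $X$, and the norm of the induced map into $A_\infty$ is controlled by that of any single factor $c_\lambda$, with no need for a uniform bound across $\lambda$.
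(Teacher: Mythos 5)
Your proof is correct and follows essentially the same route as the paper's: both verify the universal property directly by observing that the cone condition forces all $c_\lambda(b)$ to agree as elements of $X$, then use directedness of $\Lambda$ to conclude the common value lies in $\bigcap_\lambda A_\lambda$, giving the unique factorization. Your write-up is in fact slightly more complete, since you explicitly check linearity, the norm bounds (which is what makes the statement work in both $\cat{Ban}$ and $\cat{Ban}_{\le 1}$), and uniqueness, points the paper leaves implicit.
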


\begin{proof}
    Let $B$ be a Banach space, and consider a cone consisting of bounded linear maps $c_\lambda:B\to A_\lambda$ (for each $\lambda$) making the following diagram commute for each $\lambda\le\mu$.
    \[
    \begin{tikzcd}[row sep=small]
        & A_\mu \ar{dd}{\iota_{\mu,\lambda}} \\
        B \ar{ur}{c_\mu} \ar{dr}[swap]{c_\lambda} \\
        & A_\lambda
    \end{tikzcd}
    \]
    The commutativity of the diagram above says that for every $b\in B$, and for every $\mu\ge\lambda$, we have that $c_\lambda(b)\in A_\mu$.
    Since any $\lambda,\lambda'\in\Lambda$ admit an upper bound $\mu$, for every $\lambda,\lambda'\in\Lambda$, $c_\lambda(b)\in A_\mu\subseteq A_{\lambda'}$. Therefore, for every $\lambda$, $c_\lambda(b)\in\bigcap_{\lambda'} A_{\lambda'}=A_\infty$. 
    In other words, all the $c_\lambda$ factor uniquely through a map $c:B\to A_\infty$ as follows,
    \[
    \begin{tikzcd}
        && A_\mu \ar{dd}{\iota_{\mu,\lambda}} \\
        B \ar{urr}{c_\mu} \ar{drr}[swap]{c_\lambda} \ar[dashed]{r}[near end]{c} & A_\infty \ar{ur}[swap]{\iota_\mu} \ar{dr}{\iota_\lambda} \\
        && A_\lambda 
    \end{tikzcd}
    \]
    where $\iota_\lambda$ and $\iota_\mu$ are the inclusions.
    That is, $A_\infty$ is the limit of the cofiltered diagram.
\end{proof}

Since these statements also hold in $\cat{Hilb}_{\le 1}$, which is a dagger category, by dualizing we get the following statements, involving the projections instead of the inclusions:

\begin{corollary}\label{hilbupp}
    Let $X$ be a Hilbert space. Let $(A_\lambda)_{\lambda\in\Lambda}$ be an increasing net of closed subspaces of $X$, i.e.~such that for all all $\lambda\le\mu$, $A_\lambda\subseteq A_{\mu}$. Let 
    \[
    A_\infty = \mathrm{cl} \left( \bigcup_\lambda A_\lambda \right) .
    \]
    For $\lambda\le\mu$, denote by $\pi_{\mu,\lambda}:A_\mu\to A_\lambda$ the orthogonal projection.
    Then $A_\infty$ is the (cofiltered) limit in $\cat{Hilb}_{\le 1}$ of the diagram formed by the $\pi_{\mu,\lambda}$.
\end{corollary}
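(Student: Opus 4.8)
The plan is to deduce this statement from \Cref{banupi} by dualizing along the dagger, rather than redoing the universal-property argument from scratch. First I would observe that, since $X$, each $A_\lambda$, and $A_\infty$ are Hilbert spaces and the inclusions $\iota_{\lambda,\mu}$ are $1$-Lipschitz, the entire colimit diagram of \Cref{banupi} already lives in the full subcategory $\cat{Hilb}_{\le 1}\subseteq\cat{Ban}_{\le 1}$. Because $\cat{Hilb}_{\le 1}$ is full and the colimiting object $A_\infty$ belongs to it, the universal property established in \Cref{banupi} — which quantifies over all Banach spaces $B$, and hence in particular over all Hilbert spaces — shows that $A_\infty$, together with the cocone of inclusions $\iota_\lambda\colon A_\lambda\to A_\infty$, is also the filtered colimit of the $\iota_{\lambda,\mu}$ in $\cat{Hilb}_{\le 1}$.

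Next I would exploit that $\cat{Hilb}_{\le 1}$ is a dagger category: the adjoint of a $1$-Lipschitz map is again $1$-Lipschitz, so $(-)^+$ restricts to $\cat{Hilb}_{\le 1}$ and gives an isomorphism of categories $\cat{Hilb}_{\le 1}^{\op}\cong\cat{Hilb}_{\le 1}$ that is the identity on objects and involutive. Such an isomorphism preserves and reflects all limits and colimits, and a colimit in a category $\cat{C}$ is exactly a limit in $\cat{C}^{\op}$. Applying $(-)^+$ to the colimit cocone therefore turns it into a limit cone in $\cat{Hilb}_{\le 1}$, over the diagram obtained by replacing every structure map with its adjoint.

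It then remains to identify these adjoints. The inclusion $\iota_{\lambda,\mu}\colon A_\lambda\to A_\mu$ of a closed subspace is a dagger monomorphism whose adjoint is precisely the orthogonal projection $\pi_{\mu,\lambda}\colon A_\mu\to A_\lambda$, and likewise $\iota_\lambda^+=\pi_\lambda\colon A_\infty\to A_\lambda$ is the orthogonal projection onto $A_\lambda$ (see \Cref{orthogonal_projection}). Hence the dualized diagram is exactly the cofiltered diagram formed by the $\pi_{\mu,\lambda}$, with limit cone $(\pi_\lambda)$, and $A_\infty$ is its limit in $\cat{Hilb}_{\le 1}$, as claimed.

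I expect the only genuine subtlety to be the first step, namely confirming that a colimit computed in $\cat{Ban}_{\le 1}$ remains a colimit after restricting to the full subcategory $\cat{Hilb}_{\le 1}$; once this is settled, the remainder is formal dagger duality combined with the standard identification of the adjoint of a closed-subspace inclusion with the corresponding orthogonal projection.
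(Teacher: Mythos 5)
Your proposal is correct and follows essentially the same route as the paper, which derives this corollary in one line by noting that \Cref{banupi} also holds in $\cat{Hilb}_{\le 1}$ and then dualizing along the dagger, so that inclusions become orthogonal projections. You merely spell out the details the paper leaves implicit (fullness of $\cat{Hilb}_{\le 1}\subseteq\cat{Ban}_{\le 1}$ to transfer the colimit, the dagger restricting to $1$-Lipschitz maps, and the identification of adjoints of inclusions with projections via \Cref{orthogonal_projection}), all of which are accurate.
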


\begin{corollary}\label{hilbdownp}
    Let $X$ be a Hilbert space. Let $(A_\lambda)_{\lambda\in\Lambda}$ be a decreasing net of closed subspaces of $X$, i.e.~such that for all all $\lambda\le\mu$, $A_\lambda\supseteq A_{\mu}$. Let 
    \[
    A_\infty = \bigcap_\lambda A_\lambda .
    \]
    For $\lambda\le\mu$, denote by $\pi_{\lambda,\mu}:A_\lambda\to A_\mu$ the projection.
    Then $A_\infty$ is the (filtered) colimit in $\cat{Hilb}_{\le 1}$ of the diagram formed by the $\pi_{\lambda,\mu}$.
\end{corollary}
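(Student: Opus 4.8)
The plan is to obtain this corollary from \Cref{bandowni} purely formally, by applying the dagger functor of $\cat{Hilb}_{\le 1}$. The first thing I would check is that $\cat{Hilb}_{\le 1}$ is indeed a dagger category under the adjoint: since $\|f^+\|=\|f\|$ for a bounded operator between Hilbert spaces, the adjoint of a $1$-Lipschitz map is again $1$-Lipschitz, so $(-)^+$ restricts to an identity-on-objects involution $\cat{Hilb}_{\le 1}^{\op}\to\cat{Hilb}_{\le 1}$. Being an isomorphism of categories, this functor preserves all limits and colimits; since a limit in $\cat{C}$ is the same as a colimit in $\cat{C}^{\op}$, the net effect is that $(-)^+$ converts limits in $\cat{Hilb}_{\le 1}$ into colimits and vice versa, reversing every arrow.

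Next I would invoke \Cref{bandowni}, which applies verbatim in $\cat{Hilb}_{\le 1}$ because every Hilbert space is a Banach space and its proof uses only $1$-Lipschitz cones: the intersection $A_\infty=\bigcap_\lambda A_\lambda$ is the cofiltered limit of the diagram of inclusions $\iota_{\mu,\lambda}:A_\mu\to A_\lambda$ (for $\lambda\le\mu$), with limiting cone given by the inclusions $\iota_\lambda:A_\infty\to A_\lambda$.

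The key translation step is to identify the adjoints of these inclusions. By \Cref{orthogonal_projection}, the adjoint of the inclusion $\iota_{\mu,\lambda}:A_\mu\hookrightarrow A_\lambda$ of the closed subspace $A_\mu\subseteq A_\lambda$ is exactly the orthogonal projection $\pi_{\lambda,\mu}:A_\lambda\to A_\mu$, and likewise $\iota_\lambda^+=\pi_\lambda:A_\lambda\to A_\infty$. Applying $(-)^+$ to the limiting cone therefore turns the cofiltered diagram of inclusions into the filtered diagram of projections $\pi_{\lambda,\mu}$, and turns the limit cone into a colimit cocone $(\pi_\lambda)_\lambda$. By the first paragraph this exhibits $A_\infty$ as the filtered colimit of the $\pi_{\lambda,\mu}$ in $\cat{Hilb}_{\le 1}$, which is the claim. (This is the exact dual of how \Cref{hilbupp} is obtained from \Cref{banupi}.)

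The only real subtlety is the variance bookkeeping: one must check that under the contravariant isomorphism $(-)^+$ the decreasing net, read as a cofiltered diagram of inclusions, becomes a genuinely filtered diagram of projections whose arrows $\pi_{\lambda,\mu}:A_\lambda\to A_\mu$ point in the net direction $\lambda\le\mu$, and that the single invocation of $\|f^+\|=\|f\|$ is what keeps us inside $\cat{Hilb}_{\le 1}$ throughout. Given these checks, the argument is entirely formal.
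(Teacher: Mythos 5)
Your proof is correct and is essentially the paper's own argument: the paper obtains \Cref{hilbdownp} precisely by noting that \Cref{bandowni} also holds in $\cat{Hilb}_{\le 1}$, which is a dagger category, and dualizing so that inclusions become projections. Your additional checks (the adjoint of a $1$-Lipschitz map is $1$-Lipschitz, and the adjoint of an inclusion is the orthogonal projection as in \Cref{orthogonal_projection}) just make explicit the bookkeeping the paper leaves implicit.
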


Note that the same statements does not hold more in general for Ban.
A counterexample for the analogue of \Cref{hilbupp} in the Banach case will be given in \Cref{non_int_mart}.
A counterexample for the analogue for \Cref{hilbdownp} in the Banach case will be given in \Cref{non_lim_ban}.

Let's now interpret this statement, for $\cat{Hilb}$, in terms of the category $\DSI$. 
Note first of all that the dagger monic and the dagger epic morphisms of $\cat{Hilb}$ are 1-Lipschitz.
Now, it is well known that for every category $\cat{C}$ and object $X$, the forgetful functor 
\[
\begin{tikzcd}[row sep=0]
    X/\cat{C} \ar{r} & C \\
    (Y,X\xrightarrow{f} Y) \ar[mapsto]{r} & Y
\end{tikzcd}
\]
creates all limits and all connected colimits which exist in $\cat{C}$~\cite[Proposition~3.3.8]{riehl2016category}.
Similarly, the forgetful functor $\cat{C}/X\to\cat{C}$ creates all colimits and all connected limits which exist in $\cat{C}$.
Setting now $\cat{C}=\cat{Hilb}_\dm$, we have that the forgetful functor $\DSI(X)\cong \cat{Hilb}_\dm/X\to\cat{Hilb}_\dm$ creates all existing connected limits and colimits, in particular filtered colimits and cofiltered limits. 
This, together with the previous statements, gives an alternative proof of the fact that intersections and closures of unions are the directed infima and suprema in the order of dagger idempotents of $\cat{Hilb}$.

\section{Kernels and random variables, categorically}
\label{sec_krnRV}

We now turn our attention to probability theory. We define a (dagger) category of probability spaces, with Markov kernels as morphisms, and functors encoding random variables. 

\subsection{Categories of Markov kernels}
\label{sec_krn}

We now present categories, $\cat{Krn}$, $\cat{GKrn}$ and $\cat{EKrn}$, of probability spaces and Markov kernels up to almost sure equality. $\cat{Krn}$ was first defined in \cite{dahlqvist2018borel}.
These categories should not be confused with the category, often denoted by $\cat{Stoch}$, of \emph{measurable} spaces and Markov kernels, without quotienting under almost sure equality.
The two constructions are however related, as shown in the abstract setting of \cite[Section~13]{fritz2019synthetic}.
In \cite{ensarguet2023ergodic} (where $\cat{GKrn}$ is called $\cat{PS(Stoch)}$ and $\cat{GKrn}$ is called $\cat{PS(BorelStoch)}$) some of the properties of these categories were studied, which are quite relevant for the present work. 

\begin{definition}\label{defkern}
    Let $(X,\mathcal{A})$ and $(Y,\mathcal{B})$ be measurable spaces. A \newterm{Markov kernel} $k:X\to Y$ is an assignment 
    \[
    \begin{tikzcd}[row sep=0]
        X\times \mathcal{B} \ar{r} & {[0,1]} \\
        (x,B) \ar[mapsto]{r} & k(B|x)
    \end{tikzcd}
    \]
    which for every $B$ is measurable as a function of $x$, and for every $x$ is a probability measure in $B$.

    Given Markov kernels $k:X\to Y$ and $\ell:Y\to Z$ we define the Markov kernel $\ell\circ k:X\to Z$ as follows:
    \[
    (\ell\circ k) (C|x) \coloneqq \int_Y \ell(C|y)\,k(dy|x) =\int_Y \ell(C|\cdot)\text{d}k(\cdot |x) .
    \]
    Given probability spaces $(X,\mathcal{A},p)$ and $(Y,\mathcal{B},q)$, a Markov kernel $k:X\to Y$ is called \newterm{measure-preserving} if and only if for every $B\in\mathcal{B}$,
    \[
    \int_X k(B|x) \,p(dx) = q(B) .
    \]

    Given probability spaces $(X,\mathcal{A},p)$ and $(Y,\mathcal{B},q)$, two Markov kernels $k,h:X\to Y$ are \newterm{$p$-almost surely equal} if and only if for every measurable $B\in\mathcal{B}$, for $p$-almost all $x\in X$,
    \[
    k(B|x) = h(B|x) .
    \]
\end{definition}

\begin{definition}
    The category $\cat{GKrn}$ has 
    \begin{itemize}
        \item As objects, probability spaces;
        \item As morphisms, measure-preserving Markov kernels, quotiented under almost-sure equality.
    \end{itemize}
\end{definition}

The ``G'' in ``$\cat{GKrn}$'' stands for ``general'', as we also consider the following subcategories:

\begin{definition}
    The category $\cat{Krn}$ is the full subcategory of $\cat{GKrn}$ whose objects are probability spaces $(X,\mathcal{A},p)$, where the measurable space $(X,\mathcal{A})$ is standard Borel;
\end{definition}

\begin{definition}
    We call a probability space \newterm{essentially standard Borel} if and only if it is isomorphic in $\cat{GKrn}$ to a standard Borel space. The category $\cat{EKrn}$ is the full subcategory of $\cat{GKrn}$ whose objects are essentially standard Borel. 
\end{definition}

\begin{remark}
    The inclusion functor $\cat{Krn}\to\cat{EKrn}$ is an equivalence of categories (but not an isomorphism, since it's only \emph{essentially} surjective on objects).
    An example of a measure space which is in $\cat{EKrn}$ but not in $\cat{Krn}$ is is given by $\R$ with the codiscrete sigma-algebra $\{\R,\varnothing\}$, and equipped with the unique probability measure it admits:\footnote{We thank an anonymous reviewer for suggesting this example.}
    \begin{itemize}
        \item It is not an object of $\cat{Krn}$, as the measurable space is not standard Borel (the sigma-algebra does not even separate points);
        \item However, as an object of $\cat{GKrn}$, it is isomorphic to the one-point space (which is standard Borel), and hence it is an object of $\cat{EKrn}$.
    \end{itemize}
    Similar examples are given by non-countably generated sub-sigma-algebras of standard Borel spaces, which do not give rise to standard Borel spaces (since they are not countably generated), but are still in $\cat{EKrn}$ thanks to the results of \cite[Section~3]{ensarguet2023ergodic}. (See also \Cref{sec_condexp} of the current paper.)
\end{remark}

In $\cat{Krn}$ (and $\cat{EKrn}$, as we will see), regular conditionals always exist. Therefore, for those categories, we can equivalently take as morphisms $(X,\mathcal{A},p)\to (Y,\mathcal{B},q)$ probability measures on $X\times Y$ with marginals $p$ and $q$, i.e.~the so-called \emph{couplings} or \emph{transport plans}. 
Given a coupling $c$, we obtain a Markov kernel $X\to Y$ (unique up to $p$-almost-sure equality) by conditioning.
Conversely, given a Markov kernel $k:X\to Y$, we get a coupling by defining
\[
c(A\times B) = \int_A k(B|x) \; p(dx) .
\]

It was shown in \cite[Corollary~3.18]{ensarguet2023ergodic} that given a standard Borel probability space $(X,\mathcal{A},p)$ and any sub-sigma-algebra $\mathcal{B}\subseteq\mathcal{A}$, the probability space $(X,\mathcal{B},p)$ (where we denote the restriction of $p$ to $\mathcal{B}$ again by $p$) is essentially standard Borel.

Every measurable function defines a particular ``deterministic'' kernel as follows. (We will define determinism precisely in \Cref{defdet}.)

\begin{definition}
    Let $f:(X,\mathcal{A},p)\to (Y,\mathcal{B},p)$ be a measurable, measure-preserving function (i.e.\ for all $B\in\mathcal{B}$, $f^{-1}(B)\in\mathcal{A}$ and $p(f^{-1}(B))=q(B)$).
    The function $f$ defines a kernel $(X,\mathcal{A},p)\to (Y,\mathcal{B},q)$ as follows.
    \[
    \delta_f(B|x) = 1_B(x) = \begin{cases}
        1 & f(x)\in B ; \\
        0 & f(x)\notin B .
    \end{cases}    
    \]    
\end{definition}

If we denote by $\cat{Prob}$ the category of probability spaces and measure-preserving maps, we have an identity-on-objects functor $\delta:\cat{Prob}\to\cat{GKrn}$.

In particular, given a probability space $(X,\mathcal{A},p)$ and a sub-sigma-algebra $\mathcal{B}\subseteq\mathcal{A}$, the set-theoretic identity defines a kernel $\delta_\id:(X,\mathcal{A},p)\to(X,\mathcal{B},p)$ given by 
\begin{equation}\label{defdeltaf}
\delta_\id(B|x) = 1_B(x) 
\end{equation}
for all $B\in\mathcal{B}$ and $x\in X$. Note that since $\mathcal{B}\subseteq\mathcal{A}$, this assignment is $\mathcal{B}$-measurable.

\subsection{Functors of random variables}
\label{sec_RV}

Random variables, up to almost sure equality, are functorial on $\cat{EKrn}$.
Covariant functors of random variables were introduced in \cite{adachi2018} and independently \cite[Section~4]{dahlqvist2018borel} and \cite{vanbelle2023martingales}. 
Here we use a contravariant version, first defined in \cite[Section~4]{dahlqvist2018borel} in the Borel case, which can be turned covariant by means of the dagger structure of $\cat{EKrn}$.

As usual, a \newterm{random variable} (\newterm{RV}) on a probability space $(X,\mathcal{A},p)$ is an $\mathcal{A}$-measurable function $f:X\to\R$. 
We call $f$
\begin{itemize}
    \item \newterm{integrable} if and only if 
    \[
    \int_X |f(x)|\,p(dx) < \infty ;
    \]
    \item \newterm{$n$-integrable} if and only if 
    \[
    \int_X |f(x)|^n\,p(dx) < \infty ;
    \]
    \item \newterm{almost surely bounded} if and only if there exists $A\in\mathcal{A}$ of $p$-measure one such that $f$ restricted to $A$ is bounded.    
\end{itemize}

Unless otherwise stated, we will not consider non-integrable RVs in the present work. 

Again as usual, we say that two RVs $f$ and $g$ on $(X,\mathcal{A},p)$ are \newterm{almost surely equal} if there exists $A\in\mathcal{A}$ of $p$-measure one such that the restrictions of $f$ and $g$ to $A$ are equal. 
We denote by 
\begin{itemize}
    \item $L^1(X,\mathcal{A},p)$ the set of integrable RVs on $(X,\mathcal{A},p)$ quotiented under almost sure equality;
    \item $L^n(X,\mathcal{A},p)$ the set of $n$-integrable RVs on $(X,\mathcal{A},p)$ quotiented under almost sure equality;\footnote{Here we use $L^n$ rather than $L^p$ in order to avoid confusion with the probability measure $p$.}
    \item $L^\infty(X,\mathcal{A},p)$ the set of almost surely bounded RVs on $(X,\mathcal{A},p)$ quotiented under almost sure equality.
\end{itemize}
Since $p$ is a probability measure, every almost surely bounded RV on $(X,\mathcal{A},p)$ is $n$-integrable for every $n$. Also, for $m\ge n\ge 1$, every $m$-integrable function is $n$-integrable: for $x\ge 0$, the function $x\mapsto x^{m/n}$ is convex, and so by Jensen's inequality,
\[
\left( \int_X |f(x)|^n\,p(dx) \right)^{m/n} \;\le\;\int_X |f(x)|^m\,p(dx) ,
\]
so that 
\begin{equation}
    \| f \|_{L^n} \le \| f \|_{L^m} .
\end{equation}
In particular, if the right-hand side is finite, so is the left-hand side.
So, for each probability space, $L^\infty \subseteq\dots \subseteq L^n \subseteq L^{n-1}\subseteq\dots\subseteq L^1$.

The spaces $L^n$ of random variables form contravariant functors on $\cat{GKrn}$. 
Denote by $\cat{Ban}$ the category of Banach spaces and bounded linear maps. 
Equip $L^n(X,\mathcal{A},p)$ and $L^\infty(X,\mathcal{A},p)$ with their usual norms,
\[
\|f\|_{L^n} \coloneqq \left(\int_X |f(x)|^n\,p(dx)\right)^{1/n} \qquad\mbox{and}\qquad \|f\|_{L^\infty} \coloneqq \ess\sup |f| .
\]
We have functors $L^n:\cat{GKrn}^\op\to\cat{Ban}$, one for each $n$, which work as follows. 
\begin{itemize}
    \item On objects, we map a probability space $(X,\mathcal{A},p)$ to the Banach space $L^n(X,\mathcal{A},p)$;
    \item On morphisms, given a measure-preserving kernel $k:(X,\mathcal{A},p)\to(Y,\mathcal{B},q)$, we get a bounded linear map $k^*:L^n(Y,\mathcal{B},q)\to L^n(X,\mathcal{A},p)$ acting on random variables $g\in L^n (Y,\mathcal{B},q)$ by
    \begin{equation}\label{defkstar}
    k^*g (x) \coloneqq \int_Y g(y)\,k(dy|x) .
    \end{equation}
\end{itemize}

Here is the precise statement.

\begin{proposition}\label{RV}
    Let $k:(X,\mathcal{A},p)\to(Y,\mathcal{B},q)$ be a measure-preserving kernel, and let $g\in L^n(Y,\mathcal{B},q)$, for $n$ finite or infinite. Then the assignment 
    \[
    x\longmapsto k^*g (x) \coloneqq \int_Y g(y)\,k(dy|x) 
    \]
    is a well-defined element of $L^n(X,\mathcal{A},p)$. 

    Moreover, the assignment $k^*:L^n(Y,\mathcal{B},q)\to L^n(X,\mathcal{A},p)$ given by $g\mapsto k^*g$ is linear and 1-Lipschitz, so that $L^n$ is a functor $\cat{GKrn}^\op\to\cat{Ban}$.
\end{proposition}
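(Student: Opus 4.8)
The plan is to deduce everything from a single fundamental inequality, obtained by combining Jensen's inequality with the measure-preserving hypothesis, which yields simultaneously that $k^*g$ is $n$-integrable and that $\|k^*g\|_{L^n}\le\|g\|_{L^n}$. Once this is in place, measurability, independence of the chosen representatives, and functoriality are comparatively routine, each handled by the standard ``check on indicators, extend to simple functions, pass to the limit'' scheme, using the norm bound to control the limits. To begin, measurability of $x\mapsto k^*g(x)$ holds because for $g=1_B$ one has $k^*g(x)=k(B|x)$, which is $\mathcal{A}$-measurable by the definition of a Markov kernel; linearity extends this to simple $g$, monotone convergence to nonnegative measurable $g$, and the decomposition $g=g^+-g^-$ to general $g$.

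For the core estimate with $n$ finite, the function $t\mapsto|t|^n$ is convex, and since $k(\cdot|x)$ is a probability measure, Jensen's inequality gives the pointwise bound
\[
|k^*g(x)|^n=\left|\int_Y g(y)\,k(dy|x)\right|^n\le\int_Y|g(y)|^n\,k(dy|x).
\]
Integrating in $x$ against $p$ and using that $k$ is measure-preserving — which, starting from $\int_X k(B|x)\,p(dx)=q(B)$, extends to $\int_X\int_Y h(y)\,k(dy|x)\,p(dx)=\int_Y h\,dq$ for every nonnegative measurable $h$ by the same indicator/simple/monotone scheme — applied to $h=|g|^n$ yields
\[
\int_X|k^*g(x)|^n\,p(dx)\le\int_Y|g(y)|^n\,q(dy)=\|g\|_{L^n}^n<\infty.
\]
This shows at once that $k^*g\in L^n(X,\mathcal{A},p)$ and, on taking $n$-th roots, that $\|k^*g\|_{L^n}\le\|g\|_{L^n}$. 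The case $n=\infty$ is handled directly: $|g|\le\|g\|_{L^\infty}$ $q$-almost surely forces $|k^*g|\le\|g\|_{L^\infty}$ $p$-almost surely.

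Linearity of $k^*$ is immediate from linearity of the integral, and combined with the norm bound it also gives independence of the chosen representative of $g$: if $g=g'$ $q$-almost surely, then $\|k^*g-k^*g'\|_{L^n}=\|k^*(g-g')\|_{L^n}\le\|g-g'\|_{L^n}=0$. Independence of the chosen representative of the \emph{kernel} is the one point needing care, since almost-sure equality of kernels is stated separately for each $B$ and the exceptional null set may depend on $B$. For a simple $g$ only finitely many sets $B$ are involved, so a finite union of null sets suffices and $k^*g=k'^*g$ $p$-almost surely; for general $g$ I would approximate by simple functions $g_m\to g$ in $L^n$ and use the $1$-Lipschitz bound to pass to the limit, so that $k^*g_m=k'^*g_m$ almost surely for each $m$ forces $k^*g=k'^*g$ in $L^n$.

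Finally, for functoriality, the identity morphism is $\delta_{\id}$ with $\delta_{\id}(B|x)=1_B(x)$, giving $\delta_{\id}^*g=g$, hence $(\id)^*=\id$. For composition, given $k:X\to Y$, $\ell:Y\to Z$ and $h\in L^n(Z)$, the definition of $\ell\circ k$ together with the iterated-integral identity
\[
\int_Z h(z)\,(\ell\circ k)(dz|x)=\int_Y\left(\int_Z h(z)\,\ell(dz|y)\right)k(dy|x)
\]
gives $(\ell\circ k)^*h=k^*(\ell^*h)$, i.e.\ $(\ell\circ k)^*=k^*\circ\ell^*$, with the iterated identity verified first on indicators via the kernel composition formula and then extended by the simple-function/limit scheme. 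The main obstacle in the whole argument is precisely this reliance on the approximation scheme in the two places where null sets or iterated integrals appear — representative-independence in the kernel and the composition identity — since the spaces in $\cat{GKrn}$ need not be standard Borel, so one cannot appeal to a countable generating family and must instead lean on density of simple functions together with the continuity supplied by the $1$-Lipschitz estimate.
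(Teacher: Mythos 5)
Your proposal is correct and takes essentially the same route as the paper's proof: the core of both arguments is Jensen's inequality combined with the measure-preserving identity $\int_X\int_Y |g|^n\,k(dy|x)\,p(dx)=\int_Y|g|^n\,dq$ to get simultaneous well-definedness and the $1$-Lipschitz bound for finite $n$, while your observation for $n=\infty$ (that an almost-sure bound on $g$ transfers to $k^*g$) is precisely the content of the paper's \Cref{preserveone}, proved there by the same one-line integration of $k(\bar{B}|\cdot)$ against $p$. The extra details you supply --- independence of the representative of the kernel $k$ and the identity/composition laws for functoriality, handled by your indicator/simple/density scheme --- are correct and are points the paper leaves implicit.
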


\begin{proof}[Proof for finite $n$]
Since $k$ is measure-preserving, by approximating via simple functions,
\begin{align*}
    \int_X \int_Y |g(y)|^n \,k(dy|x)\,p(dx) \;=\; \int_Y |g(y)|^n\,q(dy) \;<\;\infty ,
\end{align*}
and so the integrals
\[
\int_Y |g(y)|^n \,k(dy|x) 
\]
are finite for $p$-almost all $x\in X$. Even if $n>1$, again by Jensen's inequality we have that 
\[
\left( \int_Y |g(y)| \,k(dy|x) \right)^n \;\le\; \int_Y |g(y)|^n \,k(dy|x) \;<\; \infty .
\]
Therefore the integral
\[
x\longmapsto \int_Y g(y) \,k(dy|x) ,
\]
is defined for $p$-almost all $x\in X$, and the assignment specifies a unique element of $L^n(X,\mathcal{A},p)$ (measurability can be shown by means of the usual approximation via simple functions). 
We denote this element by $k^*g$.
In other words, given $k:(X,\mathcal{A},p)\to(Y,\mathcal{B},q)$, we get a well-defined function $k^*:L^n(Y,\mathcal{B},q)\to L^n(X,\mathcal{A},p)$. 
given by $g\mapsto k^*g$.  This function is linear by linearity of integration.
To see that it is 1-Lipschitz, once again by Jensen,\begin{align*}
(\| k^*g \|_{L^n})^n \;&=\; \int_X \left| \int_Y g(y) \,k(dy|x) \right|^n \, p(dx) \\
&\le\; \int_X \left( \int_Y |g(y)| \,k(dy|x) \right)^n \, p(dx) \\
&\le\; \int_X \int_Y |g(y)|^n \,k(dy|x)\,p(dx) \\
&=\; \int_Y |g(y)|^n\,q(dy) \\
&=\; (\|g\|_{L^n})^n . \qedhere
\end{align*}
\end{proof}

Before looking at the case of $n=\infty$, let's prove the following auxiliary statement, which can be consider a sort of ``unitality'' property of the map $k^*$. 

\begin{lemma}\label{preserveone}
    Let $k:(X,\mathcal{A},p)\to(Y,\mathcal{B},q)$ be a measure-preserving kernel. 
    Then for every subset $B\in\mathcal{B}$ of $q$-measure one there exists a subset $A\in\mathcal{A}$ of $p$-measure one such that for all $x\in A$, $k(B|x)=1$.
\end{lemma}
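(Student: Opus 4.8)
The plan is to use the measure-preserving property of $k$ together with the fact that $B$ has full measure to show that the kernel must concentrate on $B$ for almost all $x$. The key observation is that measure-preservation gives us an integral identity, and integrating a function bounded by $1$ which integrates to $1$ forces that function to equal $1$ almost everywhere.

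First I would apply the measure-preserving condition directly to the set $B$. By \Cref{defkern}, since $k$ is measure-preserving and $q(B)=1$, we have
\[
\int_X k(B|x)\,p(dx) = q(B) = 1 .
\]
Now for every $x\in X$, since $k(\,\cdot\,|x)$ is a probability measure, we have $0\le k(B|x)\le 1$. I would then consider the complementary function $1-k(B|x)$, which is nonnegative and satisfies
\[
\int_X \bigl(1-k(B|x)\bigr)\,p(dx) = 1 - \int_X k(B|x)\,p(dx) = 1 - 1 = 0 .
\]

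A nonnegative measurable function whose integral over $X$ with respect to $p$ is zero must vanish $p$-almost everywhere. Hence $1-k(B|x)=0$, i.e.\ $k(B|x)=1$, for $p$-almost all $x\in X$. Concretely, I would set
\[
A \coloneqq \{\,x\in X : k(B|x)=1\,\} ,
\]
which is measurable because $x\mapsto k(B|x)$ is measurable (by the definition of a Markov kernel), and the argument above shows $p(A)=1$. By construction, $k(B|x)=1$ for all $x\in A$, which is exactly the claim.

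The steps here are entirely routine, so I do not anticipate a genuine obstacle; the only point requiring a little care is confirming that the set $A$ is measurable, which follows immediately since $x\mapsto k(B|x)$ is a measurable function and $A$ is the preimage of the single point $\{1\}$ (equivalently, the complement of $\{x : 1-k(B|x)>0\}$). The substantive content is simply the standard measure-theoretic fact that a nonnegative integrand with zero integral vanishes almost everywhere, applied to $1-k(B|\,\cdot\,)$.
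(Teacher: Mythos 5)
Your proof is correct and is essentially the paper's own argument: the paper applies measure-preservation to the complement $\bar{B}$ to get $\int_X k(\bar{B}|x)\,p(dx)=q(\bar{B})=0$ and concludes $k(\bar{B}|x)=0$ almost surely, which is the same "nonnegative function with zero integral vanishes a.e." step you apply to $1-k(B|\cdot)$. The two formulations coincide since $k(\bar{B}|x)=1-k(B|x)$, so there is nothing further to add.
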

\begin{proof}[Proof of \Cref{preserveone}]
    Consider $B\in\mathcal{B}$ of measure one, and denote its complement by $\bar{B}$. Then since $k$ is measure-preserving,
    \[
    0 \;=\; q(\bar{B}) \;=\; \int_Y k(\bar{B}|x)\,p(dx) ,
    \]
    which means that for $p$-almost all $x\in X$, $k(\bar{B}|x)=0$, i.e.~$k(B|x)=1$. 
\end{proof}

\begin{proof}[Proof of \Cref{RV} for infinite $n$]
Let $g\in L^\infty(Y,\mathcal{B},q)$. By definition there exists $B\in\mathcal{B}$ of $q$-measure one such that $g$ restricted to $B$ is bounded. Let $K$ be an upper bound.
Since $k$ is measure-preserving, just as for finite $n$,
\begin{align*}
    \int_X \int_Y |g(y)| \,k(dy|x)\,p(dx) \;=\; \int_Y |g(y)|\,q(dy) \;=\; \int_B |g(y)|\,q(dy) \;\le\; K .
\end{align*}
Therefore the integrals
\[
\int_Y |g(y)| \,k(dy|x) 
\]
are finite for $p$-almost all $x\in X$, and so the integral
\[
x\longmapsto \int_Y g(y) \,k(dy|x) ,
\]
is defined for $p$-almost all $x\in X$. Denote this value by $k^*g(x)$.

Moreover, by \Cref{preserveone}, there exists $A\in\mathcal{A}$ of $p$-measure one such that for all $x\in A$, $k(B|x)=1$.
Therefore, for all $x\in A$,
\[
|k^*g(x)| \;=\; \left| \int_Y g(y) \,k(dy|x) \right| \;\le\; \int_Y |g(y)| \,k(dy|x) \;=\;\int_B |g(y)| \,k(dy|x) \;\le\; K .
\]
In other words, the restriction of $k^*g$ to $A$ is also bounded by $K$. So we get a well-defined function $k^*:L^\infty(Y,\mathcal{B},q)\to L^\infty(X,\mathcal{A},p)$. Now since every essential upper bound for $|g|$ is an essential upper bound for $|k^*g|$, we have that 
\[
\|k^*g\|_{L^\infty} \;=\; \mathrm{ess}\sup |k^*g| \;\le\; \mathrm{ess}\sup |g| \;=\; \|g\|_{L^\infty} . \qedhere
\]
\end{proof}

Moreover, the inclusions $L^n(X,\mathcal{A},p)\subseteq L^m(X,\mathcal{A},p)$ for $m\le n$ form a natural transformation $L^n\Rightarrow L^m$.

Let's now see two particular cases of this functorial action.
\begin{example}
    A special case of a Markov kernel is a probability measure, which can be seen as a Markov kernel from the one-point space $1$. Denote by $(1,\mathcal{N},\delta_u)$ be the unique probability space on the one-point set $1$, where $u$ is the unique element of $1$. Given a probability space $(X,\mathcal{A},p)$, the only measure-preserving kernel $(1,\mathcal{N},\delta_u)\to (X,\mathcal{A},p)$ is given by $\tilde{p}$, defined in terms of $p$ as follows: for every $A\in\mathcal{A}$,
    \[
    \tilde{p}(A|u) = p(A) .
    \]
    Now given any $g\in L^1(X,\mathcal{A},p)$ (or $L^n$), we have that $L^1(1,\mathcal{N},\delta_u)\cong\R$, and
    \[
    \tilde{p}^*g (u) \;=\; \int_X g(x) \, \tilde{p}(dx|u) \;=\; \int_X g(x) \,p(dx) \;=\;\E[g] .
    \]
    In other words, for those kernels which are probability measures, the functorial action of $L^1$ (and $L^n$) is exactly giving the expectation values of random variables. 
\end{example}

\begin{example}
    Let's now consider the case of ``deterministic'' Markov kernels defined by functions. Let $f:(X,\mathcal{A},p)\to(Y,\mathcal{B},q)$ be a measure-preserving function, and consider the induced kernel $\delta_f$ as in \eqref{defdeltaf}. 
    Given a random variable $g\in L^1(Y,\mathcal{B},q)$ (or $L^n$), we have for every $x\in X$,
    \[
    (\delta_f)^*g(x) \;=\; \int_Y g(y) \,\delta_f(dy|x) \;=\; g(f(x)) .
    \]
    In other words, $(\delta_f)^*g=g\circ f$. That is, for the kernels in the form $\delta_f$ for some measurable function $f$, the functorial action of $L^1$ (and $L^n$) is exactly precomposition of functions.
    We can view $f:(X,\mathcal{A},p)\to(Y,\mathcal{B},q)$ as a ``reparametrization'' or ``refinement'' of the outcome space on which our RVs are defined: any RV on $Y$ defines a RV on $X$, but in general the sigma-algebra $\mathcal{A}$ may be finer than (the one induced by) $\mathcal{B}$.
\end{example}

In other words, the functorial action of $L^1$ (and $L^n$) on kernels is a common generalization of expectations and of refinements of the outcome space.

\begin{proposition}
    The functors $L^n:\cat{GKrn}^\op\to\cat{Ban}$ are faithful. 
    In other words, for measure-preserving kernels $k,h:(X,\mathcal{A},p)\to(Y,\mathcal{B},q)$ we have that $k=h$ $p$-almost surely if and only if for every random variable $g\in L^n(Y,\mathcal{B},q)$, $k^*g=h^*g$ $p$-almost surely.
\end{proposition}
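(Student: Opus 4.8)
The plan is to prove the two implications separately, the forward one being essentially automatic and the converse requiring one small idea.

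For the forward implication, suppose $k=h$ $p$-almost surely. Since $p$-almost-sure equivalence classes of kernels are precisely the morphisms of $\cat{GKrn}$, and $L^n$ is a functor on $\cat{GKrn}^\op$ by \Cref{RV}, the induced maps agree: $k^*=h^*$ as morphisms $L^n(Y,\mathcal{B},q)\to L^n(X,\mathcal{A},p)$. In particular $k^*g=h^*g$ in $L^n(X,\mathcal{A},p)$ for every $g$, which is exactly $p$-almost-sure equality of the random variables $k^*g$ and $h^*g$. So here I would simply invoke the well-definedness of $k^*$ on almost-sure classes, established in \Cref{RV}; there is nothing further to check.

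For the converse, the idea is to test the hypothesis only against indicator functions. First I would fix $B\in\mathcal{B}$ and note that, because $q$ is a probability measure, the indicator $1_B$ is almost surely bounded and hence lies in $L^\infty(Y,\mathcal{B},q)\subseteq L^n(Y,\mathcal{B},q)$ for every $n$, finite or infinite. Evaluating the functorial action through \eqref{defkstar} gives, for $p$-almost all $x$,
\[
k^*1_B(x)=\int_Y 1_B(y)\,k(dy|x)=k(B|x),
\]
and likewise $h^*1_B(x)=h(B|x)$. Applying the hypothesis to $g=1_B$ then yields $k(B|x)=h(B|x)$ for $p$-almost all $x$. As $B\in\mathcal{B}$ was arbitrary, this is precisely the defining condition of \Cref{defkern} for $k$ and $h$ to be $p$-almost surely equal, which closes the argument.

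The only subtlety, and the closest thing to an obstacle, is the order of quantifiers. The hypothesis supplies, for each test function $g$ separately, a $p$-null set off which $k^*g$ and $h^*g$ agree, and the definition of almost-sure equality of kernels likewise requires only, for each $B$ separately, a $p$-null exceptional set. These quantifier structures match perfectly, so no uniform-in-$B$ control of the exceptional sets is needed; this matters because different $B$ may produce different null sets, and there are in general uncountably many $B\in\mathcal{B}$. Recognizing that indicators already suffice as test functions is therefore the one step doing the real work, and it turns the statement into a routine verification.
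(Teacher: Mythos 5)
Your proof is correct and follows exactly the paper's own argument: the paper's one-line proof likewise tests the hypothesis on indicator functions $1_B\in L^\infty(Y,\mathcal{B},q)\subseteq L^n(Y,\mathcal{B},q)$, using that $k^*1_B(x)=k(B|x)$ and that the definition of almost-sure equality of kernels quantifies over $B$ before the null set. Your added remark on the quantifier order is a worthwhile clarification of why this reduction is legitimate, but it is the same proof.
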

\begin{proof}
    It suffices to test the a.s.\ equality of $k$ and $h$ on the indicator functions of sets of $\mathcal{B}$, which are in $L^n(Y,\mathcal{B},q)$ for all $n$ (including $n=\infty$).
\end{proof}

Before we leave this section, let's also look at how to talk about \emph{single} random variables categorically, as opposed to spaces of random variables. 
To this end, notice that a point of a Banach space $B$ is encoded exactly by a (bounded) linear map $f:\R\to B$ (by looking at where $f$ maps $1\in\R$).
Therefore an $L^n$ random variable on $(X,\mathcal{A},p)$ can be described categorically as a morphism $f:\R\to L^n(X,\mathcal{A},p)$ of $\cat{Ban}$.
Its expectation is the composition
\[
\begin{tikzcd}[row sep=0]
    \R \ar{r}{f} & L^n(X,\mathcal{A},p) \ar{r}{\tilde{p}^*} & L^n(1,\mathcal{N},\delta_u) \cong \R .
\end{tikzcd}
\]

\subsection{Bayesian inversions, conditioning, and dagger structures}
\label{sec_cond}

The categories $\cat{Krn}$ and $\cat{EKrn}$ come equipped with a dagger structure which is very relevant for probability, given by Bayesian inverses. Here is the definition.

\begin{definition}
    Let $k:(X,\mathcal{A},p)\to(Y,\mathcal{B},q)$ be a measure-preserving kernel. A \newterm{Bayesian inverse} of $k$ is a measure-preserving kernel $k^+:(Y,\mathcal{B},q)\to(X,\mathcal{B},p)$ such that for every $A\in\mathcal{A}$ and $B\in\mathcal{B}$,
    \begin{equation}\label{defbinv}
        \int_A k(B|x)\,p(dx) \;=\; \int_B k^+(A|y)\,q(dy) . 
    \end{equation}
\end{definition}

When a Bayesian inverse exists, it is unique almost surely. 
It is easy to see that the identity kernel is its own Bayesian inverse, and that Bayesian inverses are closed under composition. 
Moreover, every invertible kernel (invertible in $\cat{GKrn}$) has as inverse its Bayesian inverse. 

In what follows it is useful to set some notation for conditional expectations. 
Given a probability space $(X,\mathcal{A},p)$, a sub-sigma-algebra $\mathcal{B}\subseteq\mathcal{A}$ and an integrable RV $f\in L^1(X,\mathcal{A},p)$, recall that a \newterm{conditional expectation of $f$ given $\mathcal{B}$} is a RV $g\in L^1(X,\mathcal{B},p)$ such that for every $B\in\mathcal{B}$,
\[
\int_B g(x)\,p(dx) = \int_B f(x)\,p(dx) .
\]
Such a conditional expectation, if it exists, is unique almost surely. 
We denote it by
\[
x\longmapsto \E[f|\mathcal{B}](x) .
\]
We also write, for $A\in\mathcal{A}$, the shorthand
\[
\P[A|\mathcal{B}](x) \coloneqq \E[1_A|\mathcal{B}](x) ,
\]
where $1_A$ is the indicator function. 

We will use the following classic measure-theoretic result, which we restate in our notation:

\begin{theorem}[Rokhlin's disintegration theorem]\label{rokhlin}
    Let $(X,\mathcal{A},p)$ be a standard Borel probability space. 
    Consider a sub-sigma algebra $\mathcal{B}\subseteq\mathcal{A}$, and the kernel $\pi\coloneqq\delta_{\id}:(X,\mathcal{A},p)\to(X,\mathcal{B},p)$ induced by the set-theoretical identity of $X$ via \eqref{defdeltaf}.
    Then $\pi$ has a Bayesian inverse (or \newterm{disintegration}) $\pi^+:(X,\mathcal{B},p)\to(X,\mathcal{A},p)$ such that for all $A\in\mathcal{A}$, for $p$-almost all $x\in X$,
    \[
    \pi^+(A|x) = \P[A|\mathcal{B}](x) .
    \]
\end{theorem}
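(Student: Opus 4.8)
The plan is to split the statement into a purely formal part and a genuinely analytic part. The formal part is the verification that conditional probabilities, \emph{once assembled into a kernel}, satisfy the Bayesian inverse equation \eqref{defbinv}; the analytic part is that such an assembly into an honest Markov kernel exists, which is where the standard Borel hypothesis enters. Existence of each individual conditional expectation $\P[A|\mathcal{B}]$ is never in doubt (it is guaranteed by Radon--Nikodym on any probability space); the whole content is in fitting them together.

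For the formal part, recall that $\pi=\delta_{\id}$ acts by $\pi(B|x)=1_B(x)$ for $B\in\mathcal{B}$. Suppose we are handed a kernel $\pi^+:(X,\mathcal{B},p)\to(X,\mathcal{A},p)$ whose value on each fixed $A\in\mathcal{A}$ is a version of $\P[A|\mathcal{B}]=\E[1_A|\mathcal{B}]$. Then for $A\in\mathcal{A}$ and $B\in\mathcal{B}$ the left-hand side of \eqref{defbinv} is $\int_A 1_B(x)\,p(dx)=p(A\cap B)$, while the right-hand side is $\int_B \E[1_A|\mathcal{B}](y)\,p(dy)=\int_B 1_A(y)\,p(dy)=p(A\cap B)$, where the middle equality uses $B\in\mathcal{B}$ together with the defining property of conditional expectation. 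Taking $B=X$ gives $\int_X \pi^+(A|y)\,p(dy)=p(A)$, so $\pi^+$ is measure-preserving. Hence $\pi^+$ is a Bayesian inverse and realizes $\P[\,\cdot\,|\mathcal{B}]$ as claimed. Uniqueness up to almost sure equality is then automatic, since Bayesian inverses and conditional expectations are each unique almost surely.

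It remains to construct the kernel, and this is the crux. The difficulty is that each $\P[A|\mathcal{B}]$ is defined only up to a $p$-null set, and a priori there is no way to choose versions simultaneously over the uncountably many $A\in\mathcal{A}$ so that, for almost every fixed $y$, the set function $A\mapsto\pi^+(A|y)$ is a countably additive probability measure. This is exactly where standard Borel is used. The classical argument: transport everything along the standard Borel isomorphism so that we may assume $X$ is a Borel subset of $[0,1]$ (equivalently Polish). For each rational $q$ choose a version $F_q(y)\coloneqq\P[\{x\le q\}|\mathcal{B}](y)$. Monotonicity of conditional expectation gives $F_q\le F_{q'}$ almost surely for $q\le q'$, and right-continuity $F_q=\lim_{q'\downarrow q}F_{q'}$ holds almost surely by dominated convergence for conditional expectations; since the rationals are countable there is a single $p$-full set on which $q\mapsto F_q(y)$ is monotone, right-continuous, and has the correct limits at $\pm\infty$. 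On that full set this data is a genuine cumulative distribution function and hence determines a Borel probability measure $\mu_y$ on $X$, and off it we set $\mu_y$ to be any fixed probability measure. Defining $\pi^+(A|y)\coloneqq\mu_y(A)$ yields the required kernel. Finally one checks it computes conditional expectations on \emph{all} of $\mathcal{A}$: the collection of $A\in\mathcal{A}$ for which $y\mapsto\mu_y(A)$ is $\mathcal{B}$-measurable and equals $\P[A|\mathcal{B}]$ almost surely is a $\lambda$-system (closure under complements uses $\mu_y(A^c)=1-\mu_y(A)$, and closure under disjoint countable unions uses countable additivity of each $\mu_y$ together with monotone convergence for conditional expectations), and it contains the $\cap$-stable generating family $\{\{x\le q\}\}$ by construction; by Dynkin's lemma it is all Borel sets.

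The main obstacle is this last regularity/selection step: passing from a family of almost-surely-defined conditional expectations to a single function that is jointly good, measurable in $y$ for each $A$ and countably additive in $A$ for almost every $y$. Everything else -- the Bayesian inverse identity and the Dynkin extension -- is routine bookkeeping; it is precisely the countable-additivity-in-$A$-for-fixed-$y$ requirement that can fail for general measurable spaces and that forces the standard Borel hypothesis, constituting the real content of Rokhlin's theorem.
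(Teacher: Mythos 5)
Your proposal is correct in substance, but note that the paper does not actually prove this theorem: its entire ``proof'' is the citation ``This is Theorem 4 in \cite{Tao}'', so your write-up supplies the classical argument that the paper outsources to the literature. Your decomposition --- the routine verification that any kernel whose value at each fixed $A$ is a version of $\P[A|\mathcal{B}]$ satisfies the Bayesian-inverse identity \eqref{defbinv} and is measure-preserving, plus the genuinely analytic construction of such a kernel via conditional CDFs at rationals, a single full set of good $y$, Stieltjes extension, and a Dynkin-class argument --- is exactly the standard textbook proof of existence of regular conditional probabilities, and your diagnosis of where the standard Borel hypothesis enters (countable additivity in $A$ for almost every fixed $y$) is on target.

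One step needs tightening. After transporting along a Borel isomorphism so that $X$ is a Borel subset of $[0,1]$, the Stieltjes extension of the almost surely monotone, right-continuous family $(F_q)_{q\in\Q}$ produces, for each good $y$, a Borel probability measure $\mu_y$ on $[0,1]$ (or on $\R$), \emph{not} on $X$: nothing in the CDF construction prevents $\mu_y$ from charging $[0,1]\setminus X$ (think of $X$ equal to the irrationals of $[0,1]$). So the assertion that the CDF ``determines a Borel probability measure $\mu_y$ on $X$'' is unjustified as written, and your $\lambda$-system argument on $\mathcal{A}$ implicitly presupposes the missing fact, since a $\lambda$-system on $X$ must contain $X$ itself, i.e.\ one must already know $\mu_y(X)=1$. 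The repair is standard but has to be said, and the order of steps matters: run the Dynkin-class argument first in the ambient space $[0,1]$, with $\pi$-system $\{[0,q] : q\in\Q\}$, concluding that $\mu_y(A)=\E[1_{A\cap X}|\mathcal{B}](y)$ almost surely for every Borel $A\subseteq[0,1]$; since $X$ is Borel in $[0,1]$, taking $A=X$ gives $\mu_y(X)=\P[X|\mathcal{B}]=1$ for $p$-almost every $y$; redefine $\mu_y$ on the exceptional null set to be a point mass at some point of $X$, and only then restrict the measures to $X$ to obtain the kernel $\pi^+$. With that reordering your proof is complete.
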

This is Theorem 4 in \cite{Tao}. 

On random variables, the functorial action of kernel $\pi^+:(X,\mathcal{B},p)\to(X,\mathcal{A},p)$ takes an $\mathcal{A}$-measurable random variable $g$ and returns its conditional expectation with respect to $\mathcal{B}$:
\begin{equation}\label{actioncexp}
    (\pi^+)^*g \;=\; ((\delta_\id)^+)^*g \;=\; \E[g|\mathcal{B}] .
\end{equation}

Here is a well known, important consequence of the theorem:

\begin{proposition}\label{bayesexists}
    Every measure-preserving kernel between standard Borel spaces admits a Bayesian inverse.
\end{proposition}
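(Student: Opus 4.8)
The plan is to realize the Bayesian inverse of $k$ as a disintegration of the joint distribution, using Rokhlin's theorem (\Cref{rokhlin}) together with the fact that Bayesian inverses compose contravariantly. Let $k\colon(X,\mathcal{A},p)\to(Y,\mathcal{B},q)$ be a measure-preserving kernel between standard Borel spaces, and form the associated coupling $c$ on the product, with $c(A\times B)=\int_A k(B|x)\,p(dx)$ for $A\in\mathcal{A}$, $B\in\mathcal{B}$. The product measurable space $(X\times Y,\mathcal{A}\otimes\mathcal{B})$ is again standard Borel, and the two coordinate projections $p_X,p_Y$ are measure-preserving functions from $(X\times Y,\mathcal{A}\otimes\mathcal{B},c)$ onto $(X,\mathcal{A},p)$ and $(Y,\mathcal{B},q)$ respectively (their $c$-pushforwards are $p$ and $q$), inducing deterministic kernels $\delta_{p_X}$ and $\delta_{p_Y}$.

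First I would show that each projection kernel admits a Bayesian inverse. For $\delta_{p_X}$, factor it as the coarse-graining $\delta_{\id}\colon(X\times Y,\mathcal{A}\otimes\mathcal{B},c)\to(X\times Y,p_X^{-1}(\mathcal{A}),c)$ followed by the isomorphism $(X\times Y,p_X^{-1}(\mathcal{A}),c)\cong(X,\mathcal{A},p)$ induced by $p_X$, which lives in $\cat{EKrn}$ (hence in $\cat{GKrn}$) because $A\mapsto p_X^{-1}(A)$ is a measure-algebra isomorphism. Rokhlin's theorem gives a Bayesian inverse of the coarse-graining $\delta_{\id}$; an invertible kernel has its inverse as its Bayesian inverse; and Bayesian inverses are closed under composition. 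Hence $(\delta_{p_X})^+$ exists, and symmetrically $(\delta_{p_Y})^+$ exists.

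Next I would verify the key identity $k=\delta_{p_Y}\circ(\delta_{p_X})^+$. By the disintegration formula of \Cref{rokhlin}, $(\delta_{p_X})^+(\cdot\,|\,x)$ is the conditional distribution of $c$ given the first coordinate, a measure on $X\times Y$ whose $Y$-marginal is exactly $k(\cdot\,|\,x)$ since $c$ was built from $k$; composing with $\delta_{p_Y}$, which reads off this $Y$-marginal, returns $k$. Granting this, since Bayesian inversion reverses composition and $((\delta_{p_X})^+)^+=\delta_{p_X}$ by involutivity, the kernel
\[
k^+ \;\coloneqq\; \delta_{p_X}\circ(\delta_{p_Y})^+
\]
is a Bayesian inverse of $k$. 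Alternatively, and perhaps more transparently, one checks \eqref{defbinv} directly: disintegrating $c$ along $p_Y$ yields a kernel $k^+\colon(Y,\mathcal{B},q)\to(X,\mathcal{A},p)$ with $\int_B k^+(A|y)\,q(dy)=c(A\times B)=\int_A k(B|x)\,p(dx)$, by Fubini and the defining property of conditional distributions.

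The main obstacle is the second step: Rokhlin's theorem is stated only for the identity-induced coarse-graining of a sub-sigma-algebra, so the work lies in reducing the coordinate projections to that form via the $\cat{EKrn}$-isomorphism between a standard Borel space and its image under a measure-preserving map, and in confirming that this isomorphism and the resulting disintegration interact correctly with composition of Bayesian inverses. Everything else is bookkeeping with the defining equation \eqref{defbinv} and Fubini's theorem.
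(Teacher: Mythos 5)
Your proposal is correct and follows essentially the same route as the paper's proof: form the coupling $c$ on $(X\times Y,\mathcal{A}\otimes\mathcal{B})$, reduce the coordinate projection to an identity-induced coarse-graining via the $\cat{GKrn}$-isomorphism with the pullback sigma-algebra (the paper cites \cite[Proposition~2.7]{ensarguet2023ergodic} for exactly this step), apply Rokhlin's theorem (\Cref{rokhlin}), and compose the resulting disintegration with the other projection, checking \eqref{defbinv} directly. The only cosmetic difference is your optional detour through the identity $k=\delta_{p_Y}\circ(\delta_{p_X})^+$ and contravariance of Bayesian inversion; the paper verifies the defining equation directly, exactly as in your second variant.
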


\begin{proof}
    Let $(X,\mathcal{A})$ and $(Y,\mathcal{B})$ be standard Borel spaces, and let $k:(X,\mathcal{A},p)\to(Y,\mathcal{B},q)$ be a measure-preserving kernel. Form the joint probability measure $r$ on $(X\times Y,\mathcal{A}\otimes\mathcal{B})$ specified on rectangles by
    \[
    r(A\times B) \coloneqq \int_A k(B|x)\,p(dx) .
    \]
    Let's show that the projection to the second marginal $\pi_2:(X\times Y,\mathcal{A}\otimes\mathcal{B},r)\to (Y,\mathcal{B},q)$ has a Bayesian inverse.
    By \cite[Proposition~2.7]{ensarguet2023ergodic}, we can rewrite $(Y,\mathcal{B},q)$, up to isomorphism of $\cat{GKrn}$, as the set $X\times Y$ equipped with the sigma-algebra $\pi_2^{-1}(\mathcal{B})$ induced by the projection $\pi_2$, and the measure $r$ restricted to $\pi_2^{-1}(\mathcal{B})$. Therefore $\pi_2$ can be equivalently written as a deterministic kernel $(X\times Y, \mathcal{A}\otimes\mathcal{B},r)\to(X\times Y, \pi_2^{-1}(\mathcal{B}),r)\cong (Y,\mathcal{B},q)$.
    By \Cref{rokhlin}, we then have a Bayesian inverse $\pi_2^+:(Y,\mathcal{B},q)\cong(X\times Y, \pi_2^{-1}(\mathcal{B}),r)\to (X\times Y, \mathcal{A}\otimes\mathcal{B},r)$ as follows,
    \[
    \pi_2^+ (A\times B| y) = \E\big(1_A\cdot1_B | \pi_2^{-1}(\mathcal{B}) \big)(x,y) 
    \]  
    for $A\in\mathcal{A}$ and $B\in\mathcal{B}$, and for $x\in X$ and $y\in Y$ (the quantity, almost surely, does not depend on $x$). 
    Composing it with the projection $\pi_1:X\times Y\to X$ we obtain a kernel $h:Y\to X$ given by 
    \[
    h(A|y) = \E\big(1_A| \pi_2^{-1}(\mathcal{B}) \big)(x,y) ,
    \]
    which again, almost surely, does not depend on $x$. 
    So now for $A\in\mathcal{A}$ and $B\in\mathcal{B}$,
    \begin{align*}
    \int_B h(A|y)\,q(dy) &= \int_{\pi_2^{-1}(B)} \E\big(1_A| \pi_2^{-1}(\mathcal{B}) \big)(x,y) \, r(dx\,dy) \\
    &= \int_{\pi_2^{-1}(B)} 1_A(x) \, r(dx\,dy) \\
    &= r(A\times B) \\
    &= \int_A k(B|x)\,p(dx) ,
    \end{align*}
    so that $h$ is a Bayesian inverse of $k$.
\end{proof}

Bayesian inversion is therefore defined on all morphisms, and it makes $\cat{Krn}$ a dagger category (see also \cite[Theorem~2.10]{dahlqvist2018borel}). Since isomorphisms of $\cat{GKrn}$ admit Bayesian inverses as well, this also implies that every kernel between \emph{essentially} standard Borel spaces admits a Bayesian inverse as well, and so $\cat{EKrn}$ is a dagger category as well. 

\begin{remark}
Since $\cat{Krn}$ and $\cat{EKrn}$ are dagger categories, we can also define a \emph{covariant} version of the random variable functors, $\cat{Krn}\to\cat{Ban}$, by precomposing with the dagger:
\[
\begin{tikzcd}
    \cat{Krn} \ar{r}{(-)^+} & \cat{Krn}^\op \ar{r}{L^n} & \cat{Ban}
\end{tikzcd}
\]
We can restrict this functor to the deterministic kernels in the form $\delta_f$. If we denote by $\cat{BorelMeas}$ the category of standard Borel probability spaces and measure-preserving functions, the resulting functor $\cat{BorelMeas}\to\cat{Ban}$ acts as follows. 
First of all, given a standard Borel space $(X,\mathcal{A},p)$ and a sub-sigma-algebra $\mathcal{B}\subseteq\mathcal{A}$, the map $L^n(X,\mathcal{A},p)\to L^n(X,\mathcal{B},A)$ takes an $\mathcal{A}$-measurable RV and gives its conditional expectation, as in \eqref{actioncexp}.
More generally, given a measure-preserving function $f:(X,\mathcal{A},p)\to(Y,\mathcal{B},q)$, the map $L^n(X,\mathcal{A},p)\to L^n(Y,\mathcal{B},q)$ takes a RV $g$ on $X$ and gives the following RV on $Y$:
\[
y \longmapsto \E[g|f^{-1}(\mathcal{B})](x) \qquad\mbox{for } x\in f^{-1}(y) ,
\]
where $f^{-1}(\mathcal{B})\subseteq\mathcal{A}$ is the pullback sigma-algebra induced by $f$, and where the quantity above does not depend on the choice of $x\in f^{-1}(y)$.
This functor was defined in \cite{adachi2018} with the name $\mathcal{E}$ and in \cite{vanbelle2023martingales} with the name $RV$.
\end{remark}

Let's now focus on the functor $L^2$.
Given a probability space $(X,\mathcal{A},p)$, the space $L^2(X,p)$ is a Hilbert space, with inner product given by 
\[
\langle f, g \rangle \;=\; \int_X f(x)\,g(x)\,p(dx) .
\]

Therefore we can consider $L^2$ as a functor $\cat{GKrn}\to\cat{Hilb}$.

\begin{proposition}
    The functor $L^2:\cat{EKrn}\to\cat{Hilb}$ is a dagger functor, meaning that the following diagram commutes.
    \[
    \begin{tikzcd}
        \cat{EKrn}^\op \ar{d}{(-)^+} \ar{r}{L^2} & \cat{Hilb} \ar{d}{(-)^+} \\
        \cat{EKrn}^\op \ar{r}{L^2} & \cat{Hilb}
    \end{tikzcd}
    \]
\end{proposition}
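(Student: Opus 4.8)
The plan is to show that for every morphism $k:(X,\mathcal{A},p)\to(Y,\mathcal{B},q)$ of $\cat{EKrn}$, the $\cat{Hilb}$-adjoint of $L^2(k)=k^*$ coincides with $L^2(k^+)=(k^+)^*$. Since both daggers are the identity on objects, commutativity of the square reduces to this single identity on morphisms, $(k^*)^+ = (k^+)^*$, viewed as maps $L^2(X,\mathcal{A},p)\to L^2(Y,\mathcal{B},q)$. Existence of $k^+$ is not an issue, as the dagger structure on $\cat{EKrn}$ has already been established.

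First I would reduce the claim to the defining property of the Hilbert-space adjoint: it suffices to check that for all $f\in L^2(Y,\mathcal{B},q)$ and $g\in L^2(X,\mathcal{A},p)$,
\[
\langle k^*f,\, g\rangle_{L^2(X)} \;=\; \langle f,\, (k^+)^*g\rangle_{L^2(Y)} .
\]
Unwinding the definition of $k^*$ in \eqref{defkstar} and of the $L^2$ inner products, the left-hand side equals $\int_X\big(\int_Y f(y)\,k(dy|x)\big)\,g(x)\,p(dx)$ and the right-hand side equals $\int_Y f(y)\,\big(\int_X g(x)\,k^+(dx|y)\big)\,q(dy)$.

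The core observation is that both iterated integrals compute the integral of the single function $(x,y)\mapsto f(y)g(x)$ against the joint measure $r$ on $(X\times Y,\mathcal{A}\otimes\mathcal{B})$ determined on rectangles by $r(A\times B)=\int_A k(B|x)\,p(dx)$. Indeed, the left-hand iterated integral is precisely the disintegration of $r$ along $k$ and $p$, while the Bayesian inverse property \eqref{defbinv} states exactly that $r(A\times B)=\int_B k^+(A|y)\,q(dy)$, so the right-hand iterated integral is the disintegration of the \emph{same} $r$ along $k^+$ and $q$. Concretely, I would first establish equality of the two iterated integrals for indicator integrands $f=1_B$, $g=1_A$ (where it is literally \eqref{defbinv}), then extend to simple functions by bilinearity, and finally to arbitrary $f\in L^2(q)$, $g\in L^2(p)$ by density and dominated convergence.

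The step requiring the most care is the measure-theoretic bookkeeping: justifying Fubini-Tonelli and the passage from rectangles to general $L^2$ integrands. This is handled by Cauchy-Schwarz: since $r$ has marginals $p$ and $q$, one has $\int_{X\times Y}|f(y)\,g(x)|\,r(dx\,dy)\le \|f\|_{L^2(q)}\,\|g\|_{L^2(p)}<\infty$, so that $f(y)g(x)$ is $r$-integrable and both iterated integrals are well-defined and equal to $\int_{X\times Y}f(y)\,g(x)\,r(dx\,dy)$. Once this is in place the adjoint relation follows, giving $(k^*)^+=(k^+)^*$ and hence that $L^2$ is a dagger functor.
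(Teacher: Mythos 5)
Your proposal is correct and follows essentially the same route as the paper's proof: reduce the commutativity of the square to the adjoint identity $(k^*)^+=(k^+)^*$, verify it on indicator functions where it is exactly the defining property \eqref{defbinv} of the Bayesian inverse, and extend to general $L^2$ functions by simple-function approximation. Your explicit use of the coupling $r$ and Cauchy--Schwarz merely fills in the measure-theoretic details that the paper compresses into its final sentence.
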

\begin{proof}
    On objects, the assert is trivial. 
    On morphisms, consider a measure-preserving kernel $k:(X,\mathcal{A},p)\to (Y,\mathcal{B},q)$. 
    The commutativity of the diagram says that $(k^*)^+=(k^+)^*$
    In other words, given random variables $f\in L^2(X,\mathcal{A},p)$ and $g\in L^2(Y,\mathcal{B},q)$,
    \[
    \langle f, k^* g \rangle  \;=\;  \langle (k^+)^*f, g \rangle .
    \]
    In terms of integrals, equivalently,
    \[
    \int_X \int_Y f(x)\,g(y)\, k(dy|x)\,p(dx) \;=\; \int_X \int_Y f(x)\,g(y)\, k^+(dx|y)\,q(dy) . 
    \]
    This now follows from \eqref{defbinv} by approximating $f$ and $g$ via simple functions.
\end{proof}

One of the first probabilistic concepts that we can express in terms of the dagger structure is (almost sure) determinism.
Let's start with a definition, which follows \cite[Section~10 and Section~13]{fritz2019synthetic}.

\begin{definition}\label{defdet}
    A measure-preserving kernel $(X,\mathcal{A},p)\to (Y,\mathcal{B},q)$ is called \newterm{deterministic} if for every $x\in X$ and for every $B\in\mathcal{B}$,
    \[
    k(B|x) = 0 \quad\mbox{or}\quad k(B|x) = 1 .
    \]
    
    The kernel $k$ is called \newterm{$p$-almost surely deterministic} if for every $B\in\mathcal{B}$, the set of $x\in X$ where $k(B|x)\in\{0,1\}$ has $p$-measure one.\footnote{Note that this may be more general than requiring that there exists a measure-one subset independent of $B$ on which $k(B|x)\in\{0,1\}$. If $\mathcal{B}$ is countably generated, for example if $(Y,\mathcal{B})$ is standard Borel, the two notions coincide.}
\end{definition}

Every kernel in the form $\delta_f$ for a measurable function $f$ is deterministic. The converse in general is not true, since the sigma-algebra on the codomain may fail to separate points. However, every kernel between standard Borel spaces is deterministic if and only if it is in the form $\delta_f$~\cite[Example~10.5]{fritz2019synthetic}.

Almost sure determinism, in $\cat{Krn}$ and $\cat{EKrn}$, can be expressed in terms of the dagger structure as follows:

\begin{proposition}[{\cite[Proposition~2.5]{ensarguet2023ergodic}}]\label{asdet}
    A Markov kernel $k$ admitting a Bayesian inverse $k^+$ is almost surely deterministic if and only if $k\circ k^+=\id$.
    In particular, a morphism of $\cat{EKrn}$ is almost surely deterministic if and only if it is a dagger epi.
\end{proposition}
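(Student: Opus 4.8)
The plan is to reduce both directions of the main equivalence to a single identity relating the kernel $k\circ k^+:Y\to Y$ to the second moments of $k$. Write $(X,\mathcal{A},p)\xrightarrow{k}(Y,\mathcal{B},q)$. The defining equation \eqref{defbinv} of the Bayesian inverse says precisely that $k$ and $k^+$ induce the same joint measure $r$ on $X\times Y$; approximating by simple functions upgrades \eqref{defbinv} to the statement that, for bounded measurable $\phi$ on $X$ and $\psi$ on $Y$, the two iterated integrals of $\phi(x)\psi(y)$ against $k(dy|x)\,p(dx)$ and against $k^+(dx|y)\,q(dy)$ both equal $\int_{X\times Y}\phi\,\psi\,dr$. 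Applying this with $\phi(x)=k(B'|x)$ and $\psi(y)=1_B(y)$, and unfolding the composition $(k\circ k^+)(B'|y)=\int_X k(B'|x)\,k^+(dx|y)$, I obtain for all $B,B'\in\mathcal{B}$ the key identity
\[
\int_B (k\circ k^+)(B'|y)\,q(dy) \;=\; \int_X k(B'|x)\,k(B|x)\,p(dx).
\]

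For the forward direction, assume $k\circ k^+=\id$. Setting $B'=B$ makes the left-hand side equal $q(B)$, while measure-preservation of $k$ gives $\int_X k(B|x)\,p(dx)=q(B)$ as well. Subtracting yields $\int_X k(B|x)\bigl(1-k(B|x)\bigr)\,p(dx)=0$, and since $0\le k(B|x)\le 1$ the integrand is nonnegative, hence vanishes $p$-almost everywhere; thus $k(B|x)\in\{0,1\}$ for $p$-almost all $x$. As $B\in\mathcal{B}$ was arbitrary, $k$ is almost surely deterministic.

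For the converse, assume $k$ is almost surely deterministic. Fixing $B,B'\in\mathcal{B}$, on the $p$-full set where both $k(B|\cdot)$ and $k(B'|\cdot)$ take values in $\{0,1\}$, a short case check on the probability measure $k(\cdot|x)$ shows $k(B|x)\,k(B'|x)=k(B\cap B'|x)$. Integrating and using measure-preservation, the right-hand side of the key identity becomes $\int_X k(B\cap B'|x)\,p(dx)=q(B\cap B')=\int_B 1_{B'}(y)\,q(dy)$. Since this holds for every $B\in\mathcal{B}$, the functions $y\mapsto (k\circ k^+)(B'|y)$ and $y\mapsto 1_{B'}(y)$ agree $q$-almost everywhere for each $B'$, i.e.\ $k\circ k^+=\id$ as a morphism of $\cat{EKrn}$. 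The ``in particular'' then follows immediately: in $\cat{EKrn}$ every morphism admits a Bayesian inverse, which is exactly its dagger, so the condition $k\circ k^+=\id$ is by definition the statement that $k$ is a dagger epi.

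The only genuinely routine work is the simple-function approximation that establishes the key identity and the elementary case analysis on $k(\cdot|x)$. The point that needs the most care is the bookkeeping of almost-sure qualifiers: each identity is proved for fixed sets $B,B'$ on a $p$- (respectively $q$-) full set, and because only finitely many sets enter at a time, no uncountable union of null sets is required, so the whole argument stays within the almost-sure framework of $\cat{EKrn}$. (One could alternatively phrase determinism as multiplicativity of $k^*$ and exploit that $L^2$ is a dagger functor, but the direct computation above is self-contained and avoids passing through the functor.)
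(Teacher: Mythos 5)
Your proof is correct. Note that the paper itself gives no proof of this proposition at all: it is imported by citation from \cite[Proposition~2.5]{ensarguet2023ergodic}, so there is no in-paper argument to compare against, and your self-contained measure-theoretic derivation genuinely fills that gap. The two pillars of your argument both check out: the key identity
$\int_B (k\circ k^+)(B'|y)\,q(dy) = \int_X k(B'|x)\,k(B|x)\,p(dx)$
follows from \eqref{defbinv} by the simple-function approximation you describe (fix $\psi=1_B$ and extend in $\phi$ from indicators to bounded measurable functions by linearity and dominated convergence); the forward direction is then the second-moment trick $\int_X k(B|x)\bigl(1-k(B|x)\bigr)\,p(dx)=0$, and the converse reduces to the multiplicativity $k(B|x)\,k(B'|x)=k(B\cap B'|x)$ on the intersection of the two relevant full-measure sets, which your case check establishes. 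Your handling of the almost-sure bookkeeping is also consistent with the paper's conventions: both a.s.~equality of kernels and a.s.~determinism in this paper allow the null set to depend on the test set $B$, so working with finitely many sets at a time is exactly what is needed. Two small remarks: your multiplicativity step is precisely the content of the paper's \Cref{pos} (there stated for invariant sets, proved by the same case analysis), so your argument is structurally close to machinery the paper already has; and your proof only uses the existence of a Bayesian inverse, not standard Borelness, which matches the generality of the statement, with the ``in particular'' for $\cat{EKrn}$ following exactly as you say from the fact that the dagger of $\cat{EKrn}$ is Bayesian inversion.
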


Since every isomorphism of $\cat{EKrn}$ is almost surely deterministic, it follows that every isomorphism of $\cat{EKrn}$ is a dagger iso.

\section{Idempotent kernels and sub-sigma-algebras}
\label{sec_idempsigma}

We now study (dagger) idempotents in the category $\cat{EKrn}$. 
We will show that these correspond to sub-sigma-algebras up to almost sure equality, and that their functorial action on random variables will be given by conditional expectation.
Similarly, the functorial action of a filtration of sub-sigma-algebras will give martingales.

\subsection{Conditional expectation operators}
\label{sec_condexp}

Let's look at the idempotent morphisms of $\cat{Krn}$ and $\cat{EKrn}$. Since everything is defined up to almost sure equality, from the measure-theoretic point of view the kernels we are interested in are \emph{almost surely} idempotent, in the following sense.

\begin{definition}
A measure-preserving kernel $e:(X,\mathcal{A},p)\to(X,\mathcal{A},p)$ is \newterm{$p$-almost surely idempotent} if and only if for every $B\in\mathcal{A}$ and for $p$-almost all $x\in X$, we have
\[
\int_X e(B|x')\,e(dx'|x) = e(B|x) .
\]
\end{definition}
Equivalently, if for all $A,B\in\mathcal{A}$, 
\[
\int_A\int_X e(B|x')\,e(dx'|x)\,p(dx) = \int_A e(B|x)\,p(dx) .
\]
The almost sure equivalence classes of these kernels, between standard Borel probability spaces, are precisely the idempotent morphisms of $\cat{GKrn}$.
In terms of random variables, equivalently, a kernel $e$ is a.s.\ idempotent if and only if for every RV $f$, $e^*e^*f=e^*f$ almost surely. That is, if the corresponding operator $e^*$ on $L^n(X,\mathcal{A},p)$ is idempotent. 

Here is our main example of idempotent kernel. As we will show, all idempotent kernels can be written in this form.

\begin{definition}\label{subtokern}
    Let $(X,\mathcal{A},p)$ be a standard Borel probability space, and consider a sub-sigma-algebra $\mathcal{B}\subseteq\mathcal{A}$.
    The \newterm{conditional expectation operator} is the equivalence class of Markov kernels $e_{\mathcal{B}}:(X,\mathcal{A},p)\to(X,\mathcal{A},p)$ given by
    \[
    e_{\mathcal{B}}(A|x) \coloneqq \P[A|\mathcal{B}](x) = \E[1_A|\mathcal{B}](x) ,
    \]
    for each $A\in\mathcal{A}$ and for $p$-almost all $x\in X$.
\end{definition}

Note that by the disintegration theorem (\Cref{rokhlin}) we know that these conditional expectations can indeed be assembled to a kernel $(X,\mathcal{B},p)\to(X,\mathcal{A},p)$. Since $\mathcal{B}\subseteq\mathcal{A}$, this assignment is also measurable as a kernel $(X,\mathcal{A},p)\to(X,\mathcal{A},p)$.

The action of $e_\mathcal{B}$ on random variables, almost surely, is conditional expectation (hence the name):
\begin{align*}
(e_\mathcal{B})^*f (x) \;=\; \int_X f(x')\,e_\mathcal{B}(dx'|x) \;=\; \E[f|\mathcal{B}](x) .
\end{align*}
By idempotency of conditional expectations, we then have that $(e_\mathcal{B})^*:L^n(X,\mathcal{A},p)\to L^n(X,\mathcal{A},p)$ is idempotent, and hence so is $e_\mathcal{B}$. 

The idempotent $e_\mathcal{B}$ is split by $\mathcal{B}$:

\begin{lemma}\label{Bsplits}
    Let $(X,\mathcal{A},p)$ be a standard Borel probability space, and let $\mathcal{B}\subseteq\mathcal{A}$ be a sub-sigma-algebra.
    A splitting of the idempotent $e_\mathcal{B}:(X,\mathcal{A},p)\to(X,\mathcal{A},p)$ in $\cat{GKrn}$ is given by $(X,\mathcal{B},p)$, together with the map $\pi=\delta_\id:(X,\mathcal{A},p)\to(X,\mathcal{B},p)$ induced by the set-theoretic identity, and its Bayesian inverse $\pi^+=(\delta_\id)^+:(X,\mathcal{B},p)\to(X,\mathcal{A},p)$ (which exists by \Cref{rokhlin}).
\end{lemma}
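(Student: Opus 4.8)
The plan is to verify directly the two defining equations of a splitting, namely the retraction identity $\pi\circ\pi^+=\id_{(X,\mathcal{B},p)}$ and the idempotent identity $\pi^+\circ\pi=e_\mathcal{B}$. Throughout I write $\pi=\delta_\id$ and use that, by \Cref{rokhlin}, the Bayesian inverse satisfies $\pi^+(A|x)=\P[A|\mathcal{B}](x)$ for all $A\in\mathcal{A}$, and crucially that $x\mapsto\pi^+(A|x)$ is $\mathcal{B}$-measurable because $\pi^+$ has domain $(X,\mathcal{B},p)$.

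For the retraction identity, the quickest route is to invoke \Cref{asdet}: the kernel $\pi=\delta_\id$ is deterministic, hence a dagger epi in $\cat{EKrn}$, which is precisely the statement $\pi\circ\pi^+=\id$. One can also compute directly: for $B\in\mathcal{B}$, $(\pi\circ\pi^+)(B|y)=\int_X 1_B(x')\,\pi^+(dx'|y)=\pi^+(B|y)=\P[B|\mathcal{B}](y)$, which equals $1_B(y)$ almost surely since $1_B$ is already $\mathcal{B}$-measurable; this is the identity kernel on $(X,\mathcal{B},p)$.

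For the idempotent identity I would compute $(\pi^+\circ\pi)(A|x)=\int_X\pi^+(A|x')\,\pi(dx'|x)$ for $A\in\mathcal{A}$. The key observation is that $\pi(dx'|x)=\delta_\id(dx'|x)$ acts on $\mathcal{B}$-measurable integrands by evaluation at $x$; since $x'\mapsto\pi^+(A|x')$ is $\mathcal{B}$-measurable, the integral collapses to $\pi^+(A|x)=\P[A|\mathcal{B}](x)=e_\mathcal{B}(A|x)$, as required. A more structural alternative avoids pointwise integrals altogether: by faithfulness of the functors $L^n$ it suffices to check both identities after applying $L^n$ contravariantly, where $\pi^*$ is the inclusion $L^n(X,\mathcal{B},p)\hookrightarrow L^n(X,\mathcal{A},p)$ and $(\pi^+)^*=\E[-\,|\mathcal{B}]$ by \eqref{actioncexp}; the two identities then reduce to $\E[g|\mathcal{B}]=g$ for $\mathcal{B}$-measurable $g$ and $\E[f|\mathcal{B}]=(e_\mathcal{B})^*f$, both immediate.

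The only genuinely delicate point --- and the place where the standard Borel hypothesis really enters --- is the existence and measurability of $\pi^+$ as a kernel \emph{out of} $(X,\mathcal{B},p)$, so that $\pi^+(A|\cdot)$ is $\mathcal{B}$-measurable and the evaluation-at-$x$ step against $\delta_\id$ is legitimate; this is exactly what \Cref{rokhlin} supplies. Once it is in hand, every remaining step is routine manipulation of the Dirac-like kernel $\delta_\id$ against $\mathcal{B}$-measurable functions.
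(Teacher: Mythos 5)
Your proposal is correct and follows essentially the same route as the paper: the retraction identity $\pi\circ\pi^+=\id$ via \Cref{asdet} (determinism of $\delta_\id$), and the idempotent identity $\pi^+\circ\pi=e_\mathcal{B}$ by the direct computation in which the Dirac-like kernel $\pi=\delta_\id$ collapses the integral of the $\mathcal{B}$-measurable function $x'\mapsto\P[A|\mathcal{B}](x')$ to evaluation at $x$, exactly as in the paper's proof. Your additional remarks (the direct computation of the retraction identity and the alternative argument via faithfulness of $L^n$ together with \eqref{actioncexp}) are also sound, and your emphasis that the standard Borel hypothesis enters precisely through \Cref{rokhlin}, guaranteeing $\mathcal{B}$-measurability of $\pi^+(A|\cdot)$, correctly identifies the one nontrivial ingredient.
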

\begin{proof}
    First of all, we have that $\pi\circ\pi^+=\id_{(X,\mathcal{B},p)}$ almost surely, since $\pi$ is deterministic (using \Cref{asdet}).
    To show that $\pi^+\circ\pi=e_\mathcal{B}$ almost surely, notice that for every $A\in\mathcal{A}$ and for almost all $x\in X$, using \Cref{rokhlin},
    \begin{align*}
        (\pi^+\circ\pi)(A|x) \;=\; \int_X \pi^+(A|x')\,\pi(dx'|x) \;=\; \int_X \P[A|\mathcal{B}](x')\,\pi(dx'|x) \;=\; \P[A|\mathcal{B}](x) \;=\; e_\mathcal{B}(A|x) . 
    \end{align*} 
\end{proof}

In \cite[Theorem~3.14]{ensarguet2023ergodic} it was proven that all idempotents of $\cat{Krn}$ split, and since $\cat{Krn}$ is a full subcategory of $\cat{GKrn}$, the splitting in $\cat{GKrn}$ is going to be isomorphic to the one in $\cat{Krn}$, i.e., it is in $\cat{EKrn}$.
In this work we prefer to use sub-sigma-algebras, which are technically not standard Borel (only essentially so), but easier and more explicit to work with. This is why we are interested in the category $\cat{EKrn}$.

It turns out that all idempotent morphisms of $\cat{Krn}$ can be written in the form $e_\mathcal{B}$ for some sub-sigma-algebra $\mathcal{B}$, and that this gives a splitting of the idempotent (in $\cat{EKrn}$).
This sub-sigma-algebra is the \emph{invariant sigma-algebra}, defined here:

\begin{definition}\label{definv}
    Let $k:(X,\mathcal{A},p)\to(X,\mathcal{A},p)$ be a measure-preserving kernel.
    A measurable subset $B\in\mathcal{A}$ is called \newterm{$p$-almost surely invariant} under $k$ if and only if for $p$-almost all $x\in X$, we have 
    \[
    k(B|x) = 1_B(x) .
    \]
\end{definition}
Equivalently, if for all $A\in\mathcal{A}$,
\[
\int_A k(B|x) \,p(dx) = \int_A 1_B(x)\,p(dx) = p(A\cap B) .
\]
Almost surely invariant sets form a sub-sigma-algebra of $\mathcal{A}$, which we denote by $\mathcal{I}_k$, and call the \newterm{invariant sigma-algebra}. 

\begin{remark}\label{complete}
    For every measure-preserving kernel $k:(X,\mathcal{A},p)\to(X,\mathcal{A},p)$, the invariant sigma-algebra $\mathcal{I}_k$ contains all the null sets.
    Indeed, suppose that for $B\in\mathcal{A}$, $p(B)=0$. Then for all $A\in\mathcal{A}$,
    \[
    \int_A k(B|x) \,p(dx) \le \int_X k(B|x) \,p(dx) = p(B) = 0 = p(A\cap B) .
    \]
    In particular, if $(X,\mathcal{A},p)$ is a complete measure space, so is $(X,\mathcal{I}_k,p)$.
    Similarly, $\mathcal{I}_k$ also contains all the full-measure sets. 
\end{remark}

Invariant sigma-algebras split idempotents as follows.

\begin{theorem}\label{krnsplit}
    Let $e:(X,\mathcal{A},p)\to(X,\mathcal{A},p)$ be an idempotent morphism of $\cat{Krn}$. 
    Consider the invariant sigma-algebra $\mathcal{I}_e$. Form the kernel $\pi:(X,\mathcal{A},p)\to (X,\mathcal{I}_e,p)$ induced by the set-theoretic identity, as in \Cref{rokhlin}, and consider its Bayesian inverse $\pi^+:(X,\mathcal{I}_e,p)\to (X,\mathcal{A},p)$ (which exists by \Cref{rokhlin}).
    Then $\big((X,\mathcal{I}_e,p),\pi^+,\pi\big)$ is a splitting of $e$ in $\cat{GKrn}$. 
\end{theorem}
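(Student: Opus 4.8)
The plan is to verify the two defining equations of a splitting, namely $\pi\circ\pi^+=\id_{(X,\mathcal{I}_e,p)}$ and $\pi^+\circ\pi=e$, with essentially all of the work going into the second. The first equation is immediate and identical to the corresponding step in \Cref{Bsplits}: since $\pi=\delta_\id$ is deterministic and admits the Bayesian inverse $\pi^+$, \Cref{asdet} gives $\pi\circ\pi^+=\id$. For the second, \Cref{rokhlin} identifies $\pi^+(A|x)=\P[A|\mathcal{I}_e](x)$, and composing with $\pi=\delta_\id$ yields $(\pi^+\circ\pi)(A|x)=\P[A|\mathcal{I}_e](x)=e_{\mathcal{I}_e}(A|x)$. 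Hence everything reduces to the identity $e=e_{\mathcal{I}_e}$, i.e.\ to showing that for every $A\in\mathcal{A}$ the function $g\coloneqq e(A|\cdot)=e^*1_A$ is a version of the conditional expectation $\E[1_A|\mathcal{I}_e]$.

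I would check the two characterizing properties of $\E[1_A|\mathcal{I}_e]$ for $g$: that $g$ is $\mathcal{I}_e$-measurable, and that $\int_B g\,dp=p(A\cap B)$ for every $B\in\mathcal{I}_e$. The measurability is the crux, and I expect it to be the main obstacle. The key observation is that $g$ is a fixed point of $e^*$: by idempotency $e^*=e^*\circ e^*$, so $e^*g=g$. Applying the conditional Jensen inequality to the probability measures $e(\cdot|x)$ and the strictly convex function $t\mapsto t^2$ gives $e^*(g^2)\ge (e^*g)^2=g^2$ pointwise, while measure-preservation gives $\int_X e^*(g^2)\,dp=\int_X g^2\,dp$. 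A nonnegative function with vanishing integral is zero a.e., so $e^*(g^2)=g^2$ a.e., which forces equality in Jensen for $p$-almost every $x$. Strict convexity then means that, for a.e.\ $x$, the measure $e(\cdot|x)$ is concentrated on the level set $\{g=g(x)\}$. From this I would deduce that every set $\{g\le t\}$ is $e$-invariant, hence lies in $\mathcal{I}_e$ (which contains the null sets), so that $g$ is $\mathcal{I}_e$-measurable.

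For the averaging property I would use that invariance of $B\in\mathcal{I}_e$ (i.e.\ $e(B|x)=1_B(x)$ a.e.) makes $e(\cdot|x)$ concentrated on $B$ for a.e.\ $x\in B$ and on $B^c$ for a.e.\ $x\notin B$; a short case distinction then yields the module identity $e^*(1_B\,f)=1_B\,e^*f$ a.e.\ for any $f$. Combined with measure-preservation,
\[
\int_B e^*1_A\,dp=\int_X e^*(1_{A\cap B})\,dp=\int_X 1_{A\cap B}\,dp=p(A\cap B),
\]
which is exactly the averaging property. Together with measurability this gives $g=\E[1_A|\mathcal{I}_e]$, hence $e=e_{\mathcal{I}_e}=\pi^+\circ\pi$, completing the splitting.

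As an alternative route to the averaging property, and as a consistency check, I note that $e^*$ is an idempotent $1$-Lipschitz operator on the Hilbert space $L^2(X,\mathcal{A},p)$ by \Cref{RV}, and a contractive idempotent on a Hilbert space is automatically an orthogonal projection, hence self-adjoint; this shows $e$ is in fact a \emph{dagger} idempotent ($e^+=e$) and delivers the averaging property immediately via $\int_B e^*1_A\,dp=\langle e^*1_A,1_B\rangle=\langle 1_A,e^*1_B\rangle=\langle 1_A,1_B\rangle$, using $e^*1_B=1_B$. The level-set argument of the second paragraph is, however, the essential new ingredient, since it is what pins the range of $e^*$ down to the $\mathcal{I}_e$-measurable functions.
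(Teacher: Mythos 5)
Your proof is correct, and its skeleton is the same as the paper's: reduce via \Cref{Bsplits} to the identity $e=e_{\mathcal{I}_e}$, then verify that $e(A|\cdot)$ satisfies the two defining properties of $\E[1_A|\mathcal{I}_e]$ ($\mathcal{I}_e$-measurability and the averaging identity). The difference lies in how the two properties are established. For measurability, the paper simply invokes \Cref{harm} (cited from the literature): $e(A|\cdot)$ is harmonic by idempotency, hence $\mathcal{I}_e$-measurable. You instead re-prove this implication from scratch with the Jensen-equality argument: $e^*(g^2)\ge g^2$ pointwise, equal integrals by measure preservation, hence equality a.e., and strict convexity forces $e(\cdot|x)$ to concentrate on $\{g=g(x)\}$, making every sublevel set a.s.\ invariant. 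This is a valid, self-contained derivation of exactly the content of \Cref{harm}, so your proof buys independence from that citation at the cost of length. For the averaging identity, your module identity $e^*(1_B f)=1_B\,e^*f$ for $B\in\mathcal{I}_e$, proved by the same case distinction on where $e(\cdot|x)$ concentrates, is precisely the paper's \Cref{pos} specialized to indicators, and the final computation with measure preservation is identical. Your alternative Hilbert-space route is also sound and is a genuine shortcut the paper does not take: a contractive idempotent on $L^2$ is automatically an orthogonal projection, which yields the averaging identity at once and anticipates the paper's \Cref{selfadj}, where self-adjointness of idempotents is instead deduced \emph{after} the splitting theorem; note, though, that as you say this shortcut cannot replace the measurability argument, which remains the essential step in either treatment.
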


We will use the following auxiliary statements. 

\begin{lemma}[{\cite[Lemmas 3.7 and 3.9]{ensarguet2023ergodic}}]\label{harm}
    Let $k:(X,\mathcal{A},p)\to(X,\mathcal{A},p)$ be a measure-preserving kernel. 
    A bounded $\mathcal{A}$-measurable function $f:X\to\R$ is $\mathcal{I}_k$-measurable if and only if for $p$-almost all $x$,
    \[
    k^*f(x) = f(x) .
    \]
    (Such functions are sometimes called \emph{harmonic}.)
\end{lemma}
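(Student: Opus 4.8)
The plan is to prove the two implications separately, treating the forward direction (harmonic $\Rightarrow$ $\mathcal{I}_k$-measurable) as the substantive one. For the easy direction, that an $\mathcal{I}_k$-measurable bounded $f$ is harmonic, I would start from the definition of the invariant sigma-algebra: for $B\in\mathcal{I}_k$ we have $k^*1_B(x)=\int_X 1_B(x')\,k(dx'|x)=k(B|x)=1_B(x)$ for $p$-almost all $x$, so indicators of invariant sets are harmonic. By linearity of $k^*$ this extends to every simple $\mathcal{I}_k$-measurable function. For a general bounded $\mathcal{I}_k$-measurable $f$, I would take simple $\mathcal{I}_k$-measurable $f_n\to f$ uniformly; since $|k^*f_n(x)-k^*f(x)|\le\|f_n-f\|_{L^\infty}$, the convergence $k^*f_n\to k^*f$ is uniform, so on the complement of the countable union of the null sets where $k^*f_n\ne f_n$ we get $k^*f=f$ almost surely.

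For the converse, suppose $k^*f=f$ almost surely with $f$ bounded. The key device is to promote harmonicity of $f$ to harmonicity of $f^2$ using measure-preservation. By the conditional Jensen inequality applied to the convex map $t\mapsto t^2$ and the probability measure $k(\cdot|x)$,
\[
k^*(f^2)(x)=\int_X f(x')^2\,k(dx'|x)\;\ge\;\Big(\int_X f(x')\,k(dx'|x)\Big)^2=(k^*f(x))^2=f(x)^2
\]
for $p$-almost all $x$. On the other hand, measure-preservation of $k$ gives $\int_X k^*(f^2)\,p(dx)=\int_X f^2\,p(dx)$, so the nonnegative function $k^*(f^2)-f^2$ has zero integral and hence vanishes almost surely. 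Therefore Jensen holds with equality for $p$-almost every $x$.

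Equality in Jensen for $t\mapsto t^2$ means exactly that the variance of $f$ under the probability measure $k(\cdot|x)$ is zero, so for $p$-almost all $x$ the measure $k(\cdot|x)$ is concentrated on the level set $\{x': f(x')=f(x)\}$. Fixing the single null set $N$ outside which this concentration holds, I would then verify that each level set $B_c=\{f\le c\}$ is invariant: for $x\notin N$ with $f(x)\le c$ the measure $k(\cdot|x)$ lives inside $B_c$, so $k(B_c|x)=1=1_{B_c}(x)$, while for $x\notin N$ with $f(x)>c$ it lives in the complement, so $k(B_c|x)=0=1_{B_c}(x)$. Thus $B_c\in\mathcal{I}_k$ for every $c\in\R$; since the sets $(-\infty,c]$ generate $\mathcal{B}(\R)$ and $\mathcal{I}_k$ is a sigma-algebra, $f$ is $\mathcal{I}_k$-measurable.

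The main obstacle is the converse, and specifically the passage from ``$f$ is harmonic'' to ``$k(\cdot|x)$ is almost surely supported on a single level of $f$''. The Jensen-plus-measure-preservation argument is what makes this work for a general, possibly non-deterministic kernel $k$, where one cannot simply push forward level sets under a point map; boundedness of $f$ enters precisely to ensure $f^2\in L^1(X,\mathcal{A},p)$ so that the measure-preservation identity can be applied to $f^2$.
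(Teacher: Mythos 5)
Your proof is correct, but note that there is no in-paper argument to compare it against: the paper imports this lemma by citation from \cite[Lemmas~3.7 and~3.9]{ensarguet2023ergodic} and gives no proof of its own. Your argument is a sound, self-contained proof by the classical route. The easy direction (uniform approximation of a bounded $\mathcal{I}_k$-measurable $f$ by simple $\mathcal{I}_k$-measurable functions, whose indicators are harmonic directly from \Cref{definv}, plus the bound $|k^*f_n(x)-k^*f(x)|\le\sup|f_n-f|$) is fine. For the converse, the variance trick is exactly the right device for a genuinely stochastic kernel: pointwise Jensen gives $k^*(f^2)\ge(k^*f)^2=f^2$ almost everywhere, measure-preservation gives $\int_X k^*(f^2)\,dp=\int_X f^2\,dp$ (with boundedness of $f$ guaranteeing integrability of $f^2$, as you correctly flag), so the nonnegative a.e.\ defect vanishes a.e., the measure $k(\cdot|x)$ has zero variance on $f$, and hence concentrates on $\{f=f(x)\}$ for $p$-almost all $x$; the invariance of each level set $\{f\le c\}$ then follows as you state. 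Two small observations: the identity $\int_X k^*g\,dp=\int_X g\,dp$ deserves a one-line justification by simple-function approximation, exactly as in the proof of \Cref{RV}; and your conclusion is in fact slightly stronger than needed, since you exhibit a single null set $N$ serving all levels $c$ simultaneously (the definition of $\mathcal{I}_k$ would permit a $c$-dependent null set), which yields genuine $\mathcal{I}_k$-measurability of $f$ itself without any appeal to the completeness of $\mathcal{I}_k$ from \Cref{complete}.
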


\begin{lemma}\label{pos}
    Let $k:(X,\mathcal{A},p)\to(X,\mathcal{A},p)$ be a measure-preserving kernel, and let $A\in\mathcal{I}_k$.
    Then for every $B\in\mathcal{A}$, for $p$-almost all $x\in X$,
    \[
    k(A|x)\,k(B|x) = k(A\cap B|x) .
    \]
\end{lemma}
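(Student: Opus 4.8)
The plan is to use the defining property of the invariant sigma-algebra to reduce the identity to a pointwise statement about the individual probability measures $k(\cdot\mid x)$, exploiting the fact that invariance forces these measures to concentrate on $A$ or on its complement. Since $A\in\mathcal{I}_k$, \Cref{definv} supplies a single $p$-null set $N\in\mathcal{A}$ such that $k(A\mid x)=1_A(x)$ for every $x\notin N$. I will show that the claimed equality holds for each such $x$, which is enough.

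First I would treat the case $x\in A\setminus N$. Here $k(A\mid x)=1$, so the probability measure $k(\cdot\mid x)$ assigns zero mass to $A^c$; by monotonicity of this \emph{single} measure we get $k(B\cap A^c\mid x)=0$ for every $B\in\mathcal{A}$ at once, hence $k(A\cap B\mid x)=k(B\mid x)$. Therefore
\[
k(A\mid x)\,k(B\mid x) \;=\; 1\cdot k(B\mid x) \;=\; k(A\cap B\mid x).
\]
Next, for $x\in A^c\setminus N$ we have $k(A\mid x)=0$, so monotonicity gives $k(A\cap B\mid x)\le k(A\mid x)=0$, and thus
\[
k(A\mid x)\,k(B\mid x) \;=\; 0 \;=\; k(A\cap B\mid x).
\]
Combining the two cases, the identity holds for all $x\notin N$, that is, for $p$-almost all $x$.

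The point worth emphasizing---and the only place where any care is needed---is that the exceptional null set $N$ depends on $A$ but \emph{not} on $B$. This uniformity is exactly what invariance buys us: knowing $k(A\mid x)=1_A(x)$ pins down the whole measure $k(\cdot\mid x)$ (it concentrates on $A$ or on $A^c$), rather than merely controlling the scalar $k(B\mid x)$ for individual sets $B$. Once $x\notin N$ is fixed, the conclusion for all $B$ follows from monotonicity of that one measure, so there is no substantial obstacle beyond keeping this order of quantifiers straight---which is precisely why the statement is phrased as ``for every $B$, for $p$-almost all $x$''.
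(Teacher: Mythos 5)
Your proof is correct and follows essentially the same route as the paper's: both use invariance to split (up to a null set independent of $B$) into the cases $k(A|x)=0$ and $k(A|x)=1$, handle the first by monotonicity and the second by noting that $k(\cdot|x)$ concentrates on $A$ so that $k(A\cap B|x)=k(B|x)$. No gaps; if anything you prove the slightly stronger statement that one null set works uniformly in $B$, which the paper's argument also yields implicitly.
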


This lemma can be considered as an instance of \emph{relative positivity in a Markov category}, see \cite[Example~13.19]{fritz2019synthetic} and \cite[Section~2.5]{fritz2022dilations}.

\begin{proof}[Proof of \Cref{pos}]
    Define the sets
    \[
    X_0 \coloneqq \{x\in X : k(A|x)=0\} ,
    \qquad
    X_1 \coloneqq \{x\in X : k(A|x)=1\} .
    \]
    Since $A\in\mathcal{I}_k$, we have that $k(A|x)=1_A(x)$ $p$-almost surely, and so $p(X_0\cup X_1)=1$. 
    Now let $B\in\mathcal{A}$. For $x\in X_0$,
    \[
    k(A|x)\,k(B|x) = 0 = k(A\cap B|x) .
    \]
    Similarly, for $x\in X_1$,
    \[
    k(A|x)\,k(B|x) = k(B|x) = k(A\cap B|x) ,
    \]
    as the intersection of measure-one sets has measure one.
\end{proof}

We are now ready to prove the theorem.

\begin{proof}[Proof of \Cref{krnsplit}]
    By \Cref{Bsplits}, it suffices to show that $e=e_{\mathcal{I}_e}$ (almost surely).
    So let $A\in\mathcal{A}$. For $p$-almost all $x\in X$,
    \[
    e_{\mathcal{I}_e}(A|x) \;=\; \P[A|\mathcal{I}_e](x) \;=\; \E[1_A|\mathcal{I}_e](x) .
    \]
    To show that this conditional expectation is almost surely equal to $e(A|x)$, it suffices to show that $e(A|x)$ is a conditional expectation in the form above. This means that:
    \begin{enumerate}
        \item\label{mble} The function $x\mapsto e(A|x)$ is $\mathcal{I}_e$-measurable;
        \item\label{onset2} For every $B\in\mathcal{I}_e$, 
        \[
        \int_B e(A|x)\,p(dx) = \int_B 1_A(x)\,p(dx) = p(A\cap B) .
        \]
    \end{enumerate}
    To prove \ref{mble}, notice that since $e$ is idempotent, the function $x\mapsto e(A|x)$ satisfies
    \[
    \int_X e(A|x')\,e(dx'|x) =  e(A|x) \qquad\mbox{for }p\mbox{-almost all }x .
    \]
    Therefore it is harmonic in the sense of \Cref{harm}, and so it is $\mathcal{I}_e$-measurable.
    To prove \ref{onset2} we use, in order, the fact that $B\in\mathcal{I}_e$, \Cref{pos}, and the fact that $e$ is measure-preserving:
    \begin{align*}
        \int_B e(A|x)\,p(dx) &= \int_X e(A|x)\,e(B|x)\,p(dx) \\
        &= \int_X e(A\cap B|x)\,p(dx) \\
        &= p(A\cap B) .
    \end{align*}
    Therefore $e(A|x)=e_{\mathcal{I}_e}(A|x)$, and by \Cref{Bsplits}, $\mathcal{I}_e$ splits $e$. 
\end{proof}

As we remarked, $(X,\mathcal{I}_e,p)$ is in $\cat{EKrn}$, which is a dagger category, and so this splitting, given by $\pi$ and its Bayesian inverse, is a dagger splitting. 

\begin{corollary}[{\cite[end of Section 3]{ensarguet2023ergodic}}]\label{selfadj}
    Every idempotent morphism of $\cat{EKrn}$ is a dagger idempotent.
\end{corollary}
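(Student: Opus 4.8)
The plan is to deduce the statement almost immediately from \Cref{krnsplit}, together with the general observation recorded just after the definition of dagger splitting: any idempotent admitting a dagger splitting is automatically a dagger idempotent, via $e^+=(\iota\circ\pi)^+=\pi^+\circ\iota^+=\iota\circ\pi=e$. So the real content is only to check that the splittings produced by the invariant sigma-algebra are dagger splittings, and then to transport the conclusion from $\cat{Krn}$ to $\cat{EKrn}$.

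First I would handle the standard Borel case. Let $e:(X,\mathcal{A},p)\to(X,\mathcal{A},p)$ be an idempotent of $\cat{Krn}$. By \Cref{krnsplit} a splitting is given by $(X,\mathcal{I}_e,p)$ with projection $\pi=\delta_\id$ and inclusion $\iota=\pi^+$. Since the inclusion is by construction the Bayesian inverse — that is, the dagger — of the projection, the relation $\iota=\pi^+$ holds, so this is a dagger splitting in the sense of the definition. The general computation above then yields $e^+=e$, so $e$ is a dagger idempotent.

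Finally I would extend from $\cat{Krn}$ to $\cat{EKrn}$ using that the inclusion $\cat{Krn}\to\cat{EKrn}$ is an equivalence, so every object of $\cat{EKrn}$ is isomorphic to a standard Borel one. Given an idempotent $e:Y\to Y$ of $\cat{EKrn}$, choose an isomorphism $u:Y\to X$ with $X$ standard Borel; recall that every isomorphism of $\cat{EKrn}$ is a dagger iso, so $u^+=u^{-1}$ and likewise $(u^{-1})^+=u$. Then $e'\coloneqq u\circ e\circ u^{-1}$ is an idempotent of $\cat{Krn}$, hence a dagger idempotent by the previous paragraph, and conjugating back gives $e^+=(u^{-1}\circ e'\circ u)^+=u^+\circ(e')^+\circ(u^{-1})^+=u^{-1}\circ e'\circ u=e$.

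There is no genuinely hard step here; the statement is a corollary. The only point requiring care is the transport in the last paragraph, where one must use that the conjugating isomorphism is a dagger iso — valid because all isomorphisms of $\cat{EKrn}$ are almost surely deterministic — so that conjugation commutes with $(-)^+$ and the dagger-idempotency survives the passage to $\cat{EKrn}$.
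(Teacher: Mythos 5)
Your proof is correct and takes essentially the same route as the paper: \Cref{krnsplit} produces a splitting by $\pi$ and its Bayesian inverse $\iota=\pi^+$, which is a dagger splitting in the dagger category $\cat{EKrn}$, so the general identity $e^+=(\iota\circ\pi)^+=\pi^+\circ\iota^+=\iota\circ\pi=e$ noted after the definition of dagger splitting gives the conclusion. The only difference is that you spell out, via conjugation by a dagger isomorphism, the transfer from idempotents of $\cat{Krn}$ to idempotents of $\cat{EKrn}$, a step the paper leaves implicit.
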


This can be interpreted as a detailed balance condition: every measure-preserving, idempotent Markov kernel induces a reversible process.
Also, since we have seen that the functor $L^2:\cat{EKrn}^\op\to\cat{Hilb}$ preserves the dagger, every idempotent kernel induces a self-adjoint, idempotent operator $e^*:L^2(X,\mathcal{A},p)\to L^2(X,\mathcal{A},p)$, i.e.~an orthogonal projector.
(Conversely, a kernel induces an orthogonal projector if and only if it is idempotent.)

\begin{remark}
Since idempotent splittings are preserved by every functor, and so we also have that for every $n$, the idempotent $e^*:L^n(X,\mathcal{A},p)\to L^n(X,\mathcal{A},p)$ is split by $L^2(X,\mathcal{I}_e,p)$. Mind that the functors $L^n$ reverse the direction of the arrows, so that the idempotent operator $e^*$ factors as follows,
\[
\begin{tikzcd}[row sep=0]
    L^n(X,\mathcal{A},p) \ar{r}{(\pi^+)^*} & L^n(X,\mathcal{I}_e,p) \ar{r}{\pi^*} & L^n(X,\mathcal{A},p) \\
    f \ar[mapsto]{r} & \E[f|\mathcal{I}_e] \ar[mapsto]{r} & \E[f|\mathcal{I}_e] .
\end{tikzcd}
\]
The map $\pi^*$ is the inclusion of $\mathcal{I}_e$-measurable RVs into all $\mathcal{A}$-measurable ones, and the map $(\pi^+)^*$ is the projection of $\mathcal{A}$-measurable RVs onto $\mathcal{I}_e$-measurable ones given by taking conditional expectations.
As it is well known, for the $L^2$ case this projection is orthogonal. 
\end{remark}

\subsection{The preorder of sub-sigma-algebras relative to a measure}
\label{sec_subsigma}
The assignments $e\mapsto \mathcal{I}_e$ and $\mathcal{B}\mapsto e_\mathcal{B}$ between idempotents and sub-sigma-algebras are almost inverse to each other. We will make this now precise by showing that these form a Galois connection, which induces an order-preserving bijection between idempotents and $p$-complete sub-sigma-algebras.

\begin{proposition}
For idempotents $e\leq e'$, we have that $\mathcal{I}_e\subseteq \mathcal{I}_{e'}$, i.e. the assignment $e\mapsto \mathcal{I}_e$ is order-preserving.
\end{proposition}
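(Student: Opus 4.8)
The plan is to unfold both the hypothesis and the conclusion into statements about the conditional-expectation operators $e^*$ and $(e')^*$, and then read off the inclusion from a single composition identity. First I would recall from \Cref{eqcondorder} that, since these idempotents live in the dagger category $\cat{EKrn}$, the relation $e \le e'$ means precisely $e \circ e' = e' \circ e = e$. On the side of the conclusion, unwinding \Cref{definv}, a set $B \in \mathcal{A}$ lies in $\mathcal{I}_e$ exactly when $e(B|\cdot) = 1_B$ holds $p$-almost surely; since the functorial action on random variables satisfies $e^* 1_B(x) = \int_X 1_B(x')\, e(dx'|x) = e(B|x)$, this is the same as the harmonic condition $e^* 1_B = 1_B$ in $L^n$ (this is \Cref{harm} applied to the bounded function $1_B$).

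The core step is then immediate. Fix $B \in \mathcal{I}_e$, so that $e^* 1_B = 1_B$. Applying the contravariant functor $L^n$ to the identity $e \circ e' = e$ and using that it reverses composition gives $(e')^* \circ e^* = e^*$. Evaluating this operator identity at $1_B$ yields $(e')^* 1_B = (e')^*(e^* 1_B) = e^* 1_B = 1_B$, which says exactly that $e'(B|\cdot) = 1_B$ almost surely, i.e. $B \in \mathcal{I}_{e'}$. Hence $\mathcal{I}_e \subseteq \mathcal{I}_{e'}$, as claimed.

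The only point requiring measure-theoretic care — and the place where I expect the real work to sit if one argues directly with kernels rather than with operators — is the passage between the two descriptions, namely justifying that integrating the almost-sure identity $e(B|\cdot) = 1_B$ against the kernel $e'(\cdot|x)$ preserves the equality. Concretely, the null set on which $e(B|\cdot) = 1_B$ fails is annihilated by $e'(\cdot|x)$ for $p$-almost every $x$, because $e'$ is measure-preserving; this is exactly the content of \Cref{preserveone} applied to the complement of that null set. Working at the level of the operators $e^*$ and $(e')^*$ sidesteps this bookkeeping entirely, since the almost-sure identities are already built into the $L^n$ functor, so I would present the functorial argument as the main proof and relegate the kernel-level computation to at most a remark.
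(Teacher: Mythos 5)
Your proposal is correct, and at its core it is the same proof as the paper's: the paper fixes $B\in\mathcal{I}_e$ and computes, for $p$-almost all $x$,
\[
e'(B|x)\;=\;\int 1_B(y)\,e'(dy|x)\;=\;\int e(B|y)\,e'(dy|x)\;=\;(e\circ e')(B|x)\;=\;e(B|x)\;=\;1_B(x),
\]
which is exactly your operator chain $(e')^*1_B=(e')^*(e^*1_B)=e^*1_B=1_B$ written out as integrals. The only substantive difference is where the almost-sure bookkeeping sits. The paper substitutes $e(B|y)$ for $1_B(y)$ under the integral against $e'(dy|x)$ and justifies this only by noting that $e(B|y)=1_B(y)$ for $p$-almost every $y$; as you observe, this tacitly uses that $e'(\cdot|x)$ annihilates the exceptional null set for $p$-almost every $x$, which is \Cref{preserveone} applied to its complement. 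Your formulation absorbs that step into the well-definedness of the $L^n$ functor (\Cref{RV}) and the identity $(e')^*\circ e^*=e^*$ obtained by functoriality from $e\circ e'=e$, so nothing further needs to be checked; this is a genuine, if small, gain in hygiene over the paper's terser kernel-level computation. Two minor remarks: the appeal to \Cref{harm} is heavier than needed, since $B\in\mathcal{I}_e$ if and only if $e^*1_B=1_B$ almost surely is immediate from \Cref{definv} and the formula $e^*1_B=e(B|\cdot)$; and the equivalence of $e\le e'$ with $e\circ e'=e'\circ e=e$ is simply the paper's definition of the order via \Cref{eqcondorder}, with no need to invoke the dagger structure.
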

\begin{proof}
    For $B\in \mathcal{I}_e$, we find for $p$-almost every $x$ in $X$ that $$e'(B\mid x)=\int 1_B(y)e'(\text{d}y\mid x)=\int e(B\mid y)e'(\text{d}y\mid x) = e(B\mid x)=1_B(x).$$
Here we used that $e(B\mid y)=1_B(y)$ for $p$-almost every $y$ in $X$ and that $e\circ e'=e$. It follows now that $B\in \mathcal{I}_{e'}$.
\end{proof}

\begin{proposition}

For sub-sigma-algebras $\mathcal{B}\subseteq \mathcal{B}'$, we have that $e_\mathcal{B}\leq e_{\mathcal{B}'}$, i.e. the assignment $\mathcal{B}\mapsto e_\mathcal{B}$ is order-preserving.
\end{proposition}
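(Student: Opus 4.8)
The plan is to verify the defining inequality through characterization \ref{eep} of \Cref{eqcondorder}, namely to show that $e_\mathcal{B}\circ e_{\mathcal{B}'} = e_{\mathcal{B}'}\circ e_\mathcal{B} = e_\mathcal{B}$. Both $e_\mathcal{B}$ and $e_{\mathcal{B}'}$ are idempotents of $\cat{Krn}$ that split (by \Cref{Bsplits}), so the order $\le$ is defined on them, and it suffices to establish these two kernel identities up to almost sure equality.

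Rather than computing the kernel compositions directly, I would pass to random variables, using that the functors $L^n$ are faithful. Because $L^n$ is contravariant, the two equations to be checked become, as identities of operators on $L^n(X,\mathcal{A},p)$,
\[
(e_{\mathcal{B}'})^*\circ (e_\mathcal{B})^* \;=\; (e_\mathcal{B})^*\circ (e_{\mathcal{B}'})^* \;=\; (e_\mathcal{B})^*,
\]
and by faithfulness it is enough to verify these. Recalling that $(e_\mathcal{B})^* f = \E[f\mid\mathcal{B}]$ and $(e_{\mathcal{B}'})^* f = \E[f\mid\mathcal{B}']$, both identities are exactly instances of the tower property of conditional expectation: since $\mathcal{B}\subseteq\mathcal{B}'$ we have $\E[\E[f\mid\mathcal{B}']\mid\mathcal{B}] = \E[f\mid\mathcal{B}]$, while $\E[f\mid\mathcal{B}]$ is already $\mathcal{B}$-measurable, hence $\mathcal{B}'$-measurable, so $\E[\E[f\mid\mathcal{B}]\mid\mathcal{B}'] = \E[f\mid\mathcal{B}]$. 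This yields both operator equations at once.

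There is essentially no hard step here; the only point requiring care is bookkeeping the contravariance of $L^n$, so that the composite kernel $e_\mathcal{B}\circ e_{\mathcal{B}'}$ corresponds to $(e_{\mathcal{B}'})^*\circ(e_\mathcal{B})^*$ and not the reverse. One could equally run the argument directly at the level of kernels, checking $\int_X e_{\mathcal{B}'}(A\mid x')\,e_\mathcal{B}(dx'\mid x) = e_\mathcal{B}(A\mid x)$ and its companion for $p$-almost all $x$ and every $A\in\mathcal{A}$, but this reduces to the same tower-property computation written measure-theoretically, so I would prefer the operator formulation. Finally, by \Cref{eqcondorder} the two verified identities are equivalent to $e_\mathcal{B}\le e_{\mathcal{B}'}$, which completes the proof.
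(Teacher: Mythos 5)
Your proposal is correct and is essentially the paper's proof in functorial clothing: both arguments come down to the tower property of conditional expectation, and the paper's computation $e_\mathcal{B}\circ e_{\mathcal{B}'}(A\mid -) = \E[\E[1_A\mid\mathcal{B}]\mid\mathcal{B}'] = \E[1_A\mid\mathcal{B}]$ on indicator functions is exactly your operator identity tested on indicators, which is also all that the faithfulness of $L^n$ ultimately rests on. The only cosmetic difference is that you verify both composition identities explicitly, whereas the paper checks just $e_\mathcal{B}\circ e_{\mathcal{B}'}=e_\mathcal{B}$, the other following from the remark that in a dagger category (idempotents of $\cat{EKrn}$ being self-adjoint) one of the two equations implies the other.
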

\begin{proof}
We want to show that $e_\mathcal{B}\circ e_{\mathcal{B}'}=e_\mathcal{B}$. For $A\in \mathcal{A}$ we have $p$-almost surely that 
$$e_\mathcal{B}\circ e_{\mathcal{B}'}(A\mid -) = \mathbb{E}[\mathbb{E}[1_A\mid \mathcal{B}]\mid \mathcal{B}']=\mathbb{E}[1_A\mid \mathcal{B}]=e_\mathcal{B}(A\mid -).$$
Here we used the tower property of conditional expectation.
\end{proof}
\begin{theorem}
For an idempotent $e$ and a sub-sigma-algebra $B$, we have that $$\mathcal{B}\subseteq \mathcal{I}_e \quad \text{if and only if }\quad e_\mathcal{B}\leq e,$$ i.e. the order-preserving assignment $\mathcal{B}\mapsto e_\mathcal{B}$ is left adjoint to the order-preserving assignment $e\mapsto \mathcal{I}_e$.
\end{theorem}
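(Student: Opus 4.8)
The statement asserts that the two order-preserving maps $\mathcal{B}\mapsto e_\mathcal{B}$ and $e\mapsto\mathcal{I}_e$ form a Galois connection. The plan is to prove the two implications separately, leaning on the monotonicity already established in the two preceding propositions together with the splitting result \Cref{krnsplit}, whose real content for us is the identity $e = e_{\mathcal{I}_e}$ (almost surely) — the ``counit'' of the connection. Everything else will be formal.

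For the implication $\mathcal{B}\subseteq\mathcal{I}_e \Rightarrow e_\mathcal{B}\le e$, I would simply apply monotonicity of the assignment $\mathcal{B}\mapsto e_\mathcal{B}$ to the inclusion $\mathcal{B}\subseteq\mathcal{I}_e$, obtaining $e_\mathcal{B}\le e_{\mathcal{I}_e}$, and then rewrite $e_{\mathcal{I}_e}=e$ by \Cref{krnsplit}. This is a one-line argument once the counit identity is in hand.

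For the converse $e_\mathcal{B}\le e \Rightarrow \mathcal{B}\subseteq\mathcal{I}_e$, I would first record the ``unit'' inclusion $\mathcal{B}\subseteq\mathcal{I}_{e_\mathcal{B}}$. This is immediate: for $B\in\mathcal{B}$ the indicator $1_B$ is $\mathcal{B}$-measurable, so $e_\mathcal{B}(B\mid x)=\E[1_B\mid\mathcal{B}](x)=1_B(x)$ for $p$-almost all $x$, which is precisely the defining condition (\Cref{definv}) for $B$ to be invariant under $e_\mathcal{B}$. Then applying monotonicity of $e\mapsto\mathcal{I}_e$ to the hypothesis $e_\mathcal{B}\le e$ gives $\mathcal{I}_{e_\mathcal{B}}\subseteq\mathcal{I}_e$, and chaining yields $\mathcal{B}\subseteq\mathcal{I}_{e_\mathcal{B}}\subseteq\mathcal{I}_e$.

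I do not expect a genuinely hard step here: once \Cref{krnsplit} is available the whole argument is a formal consequence of monotonicity plus the counit identity $e_{\mathcal{I}_e}=e$ and the unit inclusion $\mathcal{B}\subseteq\mathcal{I}_{e_\mathcal{B}}$. The only point requiring a moment's care is that unit inclusion, but it reduces to the elementary fact that conditional expectation fixes functions measurable for the conditioning sigma-algebra. I would close by remarking that, since $\mathcal{I}_e$ is always $p$-complete (\Cref{complete}), this Galois connection restricts to an order isomorphism between idempotents and $p$-complete sub-sigma-algebras, with $\mathcal{I}_{e_\mathcal{B}}$ recovered as the $p$-completion of $\mathcal{B}$.
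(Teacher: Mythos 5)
Your proof is correct, and there is no circularity: the monotonicity propositions precede the theorem, \Cref{krnsplit} lives in the earlier section, and you prove the unit inclusion $\mathcal{B}\subseteq\mathcal{I}_{e_\mathcal{B}}$ inline. However, your route is genuinely different from the paper's. You give the purely formal Galois-connection argument: counit identity $e_{\mathcal{I}_e}=e$ (extracted from \Cref{krnsplit} together with \Cref{Bsplits}, since both exhibit $e$ and $e_{\mathcal{I}_e}$ as the same composite $\pi^+\circ\pi$), unit inclusion $\mathcal{B}\subseteq\mathcal{I}_{e_\mathcal{B}}$, and then monotonicity of both assignments does the rest. The paper instead proves both implications by direct kernel computations, never invoking \Cref{krnsplit}: for $\mathcal{B}\subseteq\mathcal{I}_e\Rightarrow e_\mathcal{B}\le e$ it computes $e_\mathcal{B}\circ e(A\mid -)=\int\E[1_A\mid\mathcal{B}]\,e(\mathrm{d}y\mid -)=\E[1_A\mid\mathcal{B}]$ using that $\E[1_A\mid\mathcal{B}]$ is $\mathcal{I}_e$-measurable, hence harmonic in the sense of \Cref{harm}; for the converse it substitutes $1_B=e_\mathcal{B}(B\mid -)$ a.s.\ inside $\int 1_B(y)\,e(\mathrm{d}y\mid x)$ and applies the hypothesis $e_\mathcal{B}\circ e=e_\mathcal{B}$ to conclude $e(B\mid x)=1_B(x)$ a.s. Your approach buys brevity and makes the adjunction structure transparent (unit, counit, monotonicity — nothing else); it also explains why the bijection onto $p$-complete sub-sigma-algebras falls out afterwards, exactly as you remark. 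The paper's approach buys self-containedness: it keeps the section's measure-theoretic content explicit and, notably, the identity $e_{\mathcal{I}_e}=e$ that you take as input is only stated and reproved by the paper \emph{after} this theorem, as a separate proposition feeding the bijection — so the paper's ordering deliberately avoids leaning on it here, whereas you must reach back to \Cref{krnsplit} to justify it.
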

\begin{proof}
Suppose that $\mathcal{B}\subseteq \mathcal{I}_e$. We want to show that $e_\mathcal{B}\circ e = e_\mathcal{B}$. For $A\in \mathcal{A}$ we have $p$-almost surely that $$e_\mathcal{B}\circ e(A\mid -) = \int \mathbb{E}[1_A\mid \mathcal{B}]e(\text{d}y\mid -)=\mathbb{E}[1_A\mid \mathcal{B}]=e_\mathcal{B}(A\mid -).$$
In the second equality we use the fact that $\mathbb{E}[1_A\mid \mathcal{B}]$ is $\mathcal{I}_e$-measurable. 

Suppose now that $e_\mathcal{B}\leq e$ (i.e. $e_\mathcal{B}\circ e = e_\mathcal{B}$) and that $B\in \mathcal{B}$. Then for $p$-almost every $x$ in $X$ it holds that $$e(B\mid x)=\int 1_B(y)e(\text{d}y\mid x) = \int e_\mathcal{B}(B\mid y)e(\text{d}y\mid x)=e_\mathcal{B}(1_B\mid x)=1_B(x).$$
We can conclude that $B\in \mathcal{I}_e$, and therefore $\mathcal{B}\subseteq \mathcal{I}_e$.
\end{proof}

Every Galois connection restricts to an order-preserving bijection. In our particular Galois connection, this means that the assignments $e\mapsto \mathcal{I}_e$ and $\mathcal{B}\mapsto e_\mathcal{B}$ restrict to an order-preserving bijection between: $$\left\{e\mid e_{\mathcal{I}_e}=e\right\} \quad \text{ and }\quad \left\{\mathcal{B}\mid \mathcal{I}_{e_\mathcal{B}}=\mathcal{B}\right\}.$$
\begin{proposition}
For every idempotent $e$ it holds that $e_{\mathcal{I}_e}=e$, i.e. the preorder $\left\{e\mid e_{\mathcal{I}_e}=e\right\}$ consists of all idempotents.
\end{proposition}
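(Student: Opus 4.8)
The plan is to recognize that the identity $e_{\mathcal{I}_e}=e$ is exactly the content that was established inside the proof of \Cref{krnsplit}, so I would either cite that proof directly or reproduce its core argument. Concretely, for each $A\in\mathcal{A}$ I want to show that the function $x\mapsto e(A|x)$ is a version of the conditional expectation $\E[1_A|\mathcal{I}_e]$, since by definition $e_{\mathcal{I}_e}(A|x)=\P[A|\mathcal{I}_e](x)=\E[1_A|\mathcal{I}_e](x)$. Identifying a function as $\E[1_A|\mathcal{I}_e]$ amounts to checking two things: that it is $\mathcal{I}_e$-measurable, and that it integrates correctly against every $\mathcal{I}_e$-measurable set.

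For $\mathcal{I}_e$-measurability I would use idempotency together with \Cref{harm}. Since $e\circ e=e$ almost surely, the bounded function $x\mapsto e(A|x)=e^*(1_A)(x)$ satisfies $e^*(e^*1_A)=e^*1_A$, i.e.\ it is harmonic in the sense of \Cref{harm}, and hence $\mathcal{I}_e$-measurable.

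For the averaging property, fix $B\in\mathcal{I}_e$. Using \Cref{pos} (relative positivity) we have $e(A|x)\,e(B|x)=e(A\cap B|x)$ for $p$-almost all $x$; integrating against $p$ and using that $e$ is measure-preserving gives $\int_X e(A|x)\,e(B|x)\,p(dx)=p(A\cap B)$. Since $B\in\mathcal{I}_e$ means $e(B|x)=1_B(x)$ almost surely, the left-hand side equals $\int_B e(A|x)\,p(dx)$, so $\int_B e(A|x)\,p(dx)=p(A\cap B)=\int_B 1_A\,dp$, which is precisely the defining averaging property of $\E[1_A|\mathcal{I}_e]$.

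Combining the two points, $x\mapsto e(A|x)$ is a version of $\E[1_A|\mathcal{I}_e]=e_{\mathcal{I}_e}(A|x)$ for every $A$, hence $e=e_{\mathcal{I}_e}$ almost surely. There is no genuine obstacle beyond the two auxiliary lemmas already available: the only mild subtlety is the use of \Cref{pos} to rewrite the product $e(A|x)\,e(B|x)$ as $e(A\cap B|x)$ pointwise, which is exactly what lets measure-preservation be applied cleanly. This makes the proposition essentially a restatement of what \Cref{krnsplit} already proved, now read off as the surjectivity of the assignment $\mathcal{B}\mapsto e_\mathcal{B}$ onto all idempotents.
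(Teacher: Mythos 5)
Your proof is correct and follows essentially the same route as the paper: the paper likewise verifies that $x\mapsto e(A|x)$ satisfies the defining averaging property of $\E[1_A|\mathcal{I}_e]$ using \Cref{pos} together with measure-preservation (and the measurability half via \Cref{harm} was already carried out in the proof of \Cref{krnsplit}). If anything, your write-up is slightly more self-contained, since the paper's proof of this proposition leaves the $\mathcal{I}_e$-measurability check implicit.
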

\begin{proof}
 We want to show that for every $A\in\mathcal{A}$, $e(A\mid -)=\mathbb{E}[1_A\mid \mathcal{I}_e]$ $p$-almost surely. We prove this by showing that the right hand side satisfies the defining property of conditional expectation. For $B\in\mathcal{I}_e$, using Lemma 4.8 we find that $$\int e(A\mid-)1_B\text{d}p = \int e(A\cap B\mid -)\text{d}p=p(A\cap B)=\int 1_A1_B\text{d}p.$$  In the second equality we also used the fact that $e$ is measure-preserving.
\end{proof}

\begin{proposition}
The set $\{\mathcal{B}\mid \mathcal{I}_{e_\mathcal{B}}=\mathcal{B}\}$ consists precisely of the $p$-complete sub-sigma-algebras of $\mathcal{A}$.
\end{proposition}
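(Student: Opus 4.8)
The plan is to prove the two inclusions $\mathcal{B}\subseteq\mathcal{I}_{e_\mathcal{B}}$ and $\mathcal{I}_{e_\mathcal{B}}\subseteq\mathcal{B}$ separately, observing that the first holds for every sub-sigma-algebra while the second is precisely what forces $\mathcal{B}$ to be $p$-complete. First I would record that $\mathcal{B}\subseteq\mathcal{I}_{e_\mathcal{B}}$ always: if $B\in\mathcal{B}$ then $1_B$ is already $\mathcal{B}$-measurable, so $e_\mathcal{B}(B\mid x)=\mathbb{E}[1_B\mid\mathcal{B}](x)=1_B(x)$ for $p$-almost all $x$, which is exactly the condition for $B$ to be invariant under $e_\mathcal{B}$. (This is also the unit of the Galois connection established above, applied to the idempotent $e=e_\mathcal{B}$.)

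For the forward direction, suppose $\mathcal{I}_{e_\mathcal{B}}=\mathcal{B}$. By \Cref{complete}, the invariant sigma-algebra $\mathcal{I}_{e_\mathcal{B}}$ contains every $p$-null set of $\mathcal{A}$; since it equals $\mathcal{B}$, the sigma-algebra $\mathcal{B}$ contains every $p$-null set, i.e.\ $\mathcal{B}$ is $p$-complete.

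The substance is in the reverse direction: assuming $\mathcal{B}$ is $p$-complete, I would show $\mathcal{I}_{e_\mathcal{B}}\subseteq\mathcal{B}$. Take $B\in\mathcal{I}_{e_\mathcal{B}}$, so that $g\coloneqq\mathbb{E}[1_B\mid\mathcal{B}]$ satisfies $g=1_B$ $p$-almost surely. The key observation is that $g$ is $\mathcal{B}$-measurable yet agrees almost everywhere with $1_B$, so $B$ differs from a genuinely $\mathcal{B}$-measurable set only by a null set. Concretely, set $B'\coloneqq\{x : g(x)=1\}\in\mathcal{B}$; since $g=1_B$ off a $p$-null set, the $\mathcal{A}$-measurable sets $B\setminus B'$ and $B'\setminus B$ are both $p$-null. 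Completeness of $\mathcal{B}$ then places these correction sets in $\mathcal{B}$, whence $B=\big(B'\setminus(B'\setminus B)\big)\cup(B\setminus B')\in\mathcal{B}$.

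I expect the main obstacle to be bookkeeping rather than conceptual: one must verify that the cleaned-up set $B'$ lies in $\mathcal{B}$ (using $\mathcal{B}$-measurability of the conditional expectation) and that the symmetric-difference correction terms are genuine $\mathcal{A}$-measurable null sets before invoking completeness. It is worth emphasizing that completeness is used only for this final inclusion $\mathcal{I}_{e_\mathcal{B}}\subseteq\mathcal{B}$; the opposite inclusion, and hence the fact that $\mathcal{I}_{e_\mathcal{B}}$ is always a $p$-complete extension of $\mathcal{B}$, holds for arbitrary $\mathcal{B}$.
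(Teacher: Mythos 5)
Your proof is correct and follows essentially the same route as the paper: the inclusion $\mathcal{B}\subseteq\mathcal{I}_{e_\mathcal{B}}$ via $\mathbb{E}[1_B\mid\mathcal{B}]=1_B$, completeness of invariant sigma-algebras for the forward direction, and for the converse the replacement of an invariant set by the $\mathcal{B}$-measurable set $\{g=1\}$ followed by a symmetric-difference correction using completeness. The only cosmetic differences are that you cite \Cref{complete} where the paper re-derives it inline, and your explicit definition $B'=\{g=1\}$ is in fact a cleaner rendering of the paper's somewhat loosely worded choice of representative.
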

\begin{proof}
We will first show that $\mathcal{I}_e$ is complete for any idempotent $e$. Let $A\in \mathcal{A}$ be a measurable subset such that $p(A)=0$. This means that $1_A=0$ $p$-almost surely. Since $e$ is measure-preserving, we also have that $$\int e(A\mid x)p(\text{d}x)=p(A)=0,$$ which implies that $e(A\mid -)=0$ $p$-almost surely. It follows now that $A\in \mathcal{I}_e$, and therefore $\mathcal{I}_e$ is $p$-complete. In particular, for every $\mathcal{B}$ such that $\mathcal{I}_{e_\mathcal{B}}=\mathcal{B}$, we can conclude that $\mathcal{B}$ is $p$-complete.

Let $\mathcal{B}$ be a $p$-complete sub-sigma-algebra. For $B\in \mathcal{B}$, $\mathbb{E}[1_B\mid \mathcal{B}]=1_B$ $p$-almost surely, hence $\mathcal{B}\subseteq \mathcal{I}_{e_\mathcal{B}}.$ For $A\in \mathcal{I}_{e_\mathcal{B}}$, we know that $\mathcal{E}[1_A\mid \mathcal{B}]=1_A$ $p$-almost surely. For a representative $g$ of $B:=\mathbb{E}[1_A\mid \mathcal{B}]$, the set $B$ is $\mathcal{B}$-measurable. We also have that $$p(A\Delta B)=0.$$ Because $\mathcal{B}$ is $p$-complete, this implies that $A\setminus B$ and $B\setminus A$ are also in $\mathcal{B}$. 
Because we can write $$A=(B\cup(A\setminus B))\setminus (B\setminus A),$$ we can conclude that $A\in \mathcal{B}$, which means that $\mathcal{I}_{e_\mathcal{B}}\subseteq \mathcal{B}$.
\end{proof}

Combining the above results leads to the following theorem, which states that idempotent measure-preserving kernels correspond to complete sub-sigma-algebras in an order-preserving way.

\begin{theorem}
The assignments $e\mapsto \mathcal{I}_e$ and $\mathcal{B}\mapsto e_\mathcal{B}$ define order-preserving inverse maps between the partial order of measure-preserving idempotent kernels and the partial order of complete sub-sigma-algebras.
\end{theorem}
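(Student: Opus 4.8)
The plan is to assemble the four preceding results into the standard fixed-point theorem for Galois connections, so that essentially no new computation is required. We have already established that $\mathcal{B}\mapsto e_\mathcal{B}$ and $e\mapsto\mathcal{I}_e$ are order-preserving, and that they form a Galois connection via the equivalence $\mathcal{B}\subseteq\mathcal{I}_e \iff e_\mathcal{B}\le e$. The general fact I would invoke is that any Galois connection $F\dashv G$ between posets restricts to a pair of mutually inverse, order-preserving bijections between the fixed points of the two round-trip composites $F\circ G$ and $G\circ F$. Applying this with $F=(\mathcal{B}\mapsto e_\mathcal{B})$ as left adjoint and $G=(e\mapsto\mathcal{I}_e)$ as right adjoint, it remains only to identify these two sets of fixed points.

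This is exactly what the last two propositions supply. On the idempotent side, the round-trip $e\mapsto e_{\mathcal{I}_e}$ is the identity on \emph{every} measure-preserving idempotent kernel, so the fixed-point set $\{e\mid e_{\mathcal{I}_e}=e\}$ is the whole poset of idempotents. On the sigma-algebra side, the round-trip $\mathcal{B}\mapsto\mathcal{I}_{e_\mathcal{B}}$ has fixed-point set $\{\mathcal{B}\mid\mathcal{I}_{e_\mathcal{B}}=\mathcal{B}\}$ equal precisely to the $p$-complete sub-sigma-algebras of $\mathcal{A}$. Substituting these two identifications into the abstract Galois bijection yields directly the claimed order-preserving bijection between idempotent measure-preserving kernels and complete sub-sigma-algebras.

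A small point of bookkeeping I would make explicit is that the two maps genuinely land in the stated posets: $\mathcal{I}_e$ is always $p$-complete (shown at the start of the fixed-point characterization), so $e\mapsto\mathcal{I}_e$ indeed takes values among complete sub-sigma-algebras, and $e_\mathcal{B}$ is always idempotent (established earlier when we introduced the conditional expectation operator), so $\mathcal{B}\mapsto e_\mathcal{B}$ takes values among idempotent kernels. With this observation in hand, one could equally bypass the abstract Galois machinery and argue directly: order-preservation gives monotonicity in both directions, the round-trip identities $e_{\mathcal{I}_e}=e$ and $\mathcal{I}_{e_\mathcal{B}}=\mathcal{B}$ (the latter valid exactly for complete $\mathcal{B}$) show the maps are mutually inverse, and mutually inverse monotone maps are automatically order isomorphisms.

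I do not expect any genuine obstacle here. All the analytic content has already been discharged in the earlier propositions (the tower property, the harmonicity criterion, and the relative positivity of \Cref{pos}), and the final statement is a purely order-theoretic repackaging of them. If anything, the only thing to watch carefully is the direction of the adjunction and the matching of each fixed-point set to the correct side of the bijection.
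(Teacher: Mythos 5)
Your proposal is correct and follows essentially the same route as the paper: the paper likewise observes that the Galois connection restricts to an order-preserving bijection between the fixed points of the two round-trip composites, and then invokes the two preceding propositions identifying those fixed-point sets as all idempotents and the $p$-complete sub-sigma-algebras, respectively. Your added bookkeeping remark (that $\mathcal{I}_e$ is always $p$-complete and $e_\mathcal{B}$ always idempotent, so the maps land in the stated posets) is also implicitly covered by the paper's fixed-point propositions.
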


\subsection{Filtrations and martingales}
\label{sec_martingales}

Let's now turn to martingales. We first need to introduce filtrations, which in our categorical setting correspond to filtered diagrams of dagger-split idempotents. 

\begin{definition}
    Let $(X,\mathcal{A},p)$ be a probability space, and let $(\mathcal{B}_\lambda)_{\lambda\in\Lambda}$ be a net (or sequence) of sub-sigma-algebras of $\mathcal{A}$. 
    We say that $(\mathcal{B}_\lambda)$ is 
    \begin{itemize}
        \item An \newterm{increasing filtration} if for all $\lambda\le\mu$, $\mathcal{B}_\lambda\subseteq\mathcal{B}_{\mu}$;
        \item A \newterm{decreasing filtration} if for all $\lambda\le\mu$, $\mathcal{B}_\lambda\supseteq\mathcal{B}_{\mu}$;
        \item An \newterm{almost surely increasing filtration} if for all $\lambda\le\mu$, $\mathcal{B}_\lambda\lesssim\mathcal{B}_{\mu}$;
        \item An \newterm{almost surely decreasing filtration} if for all $\lambda\le\mu$, $\mathcal{B}_\lambda\gtrsim\mathcal{B}_{\mu}$.
    \end{itemize}
\end{definition}

For sub-sigma-algebras of a standard Borel space, whenever $\mathcal{B}\subseteq\mathcal{C}$, we have a canonical almost surely deterministic morphism $(X,\mathcal{C},p)\to(X,\mathcal{B},p)$ (note the direction of the arrow).
Therefore every almost surely increasing filtration gives rise to a cofiltered (projective) net of measure spaces and almost surely deterministic kernels, which in the sequential case looks as follows.
\begin{equation}\label{backward}
\begin{tikzcd}
   (X,\mathcal{B}_0,p)
    & (X,\mathcal{B}_1,p) \ar{l}
    & (X,\mathcal{B}_2,p) \ar{l} 
    & \dots \ar{l} 
    & (X,\mathcal{B}_n,p) \ar{l} 
    & \dots \ar{l} 
\end{tikzcd}
\end{equation}
(We draw the arrow in this way to keep the filtration in the ``forward'' direction.)
Similarly, an almost surely decreasing filtration gives rise to a filtered (inductive) net, which in the sequential case looks as follows.
\begin{equation}\label{forward}
\begin{tikzcd}
   \dots
    & (X,\mathcal{B}_n,p) \ar{l} 
    & \dots \ar{l} 
    & (X,\mathcal{B}_2,p) \ar{l} 
    & (X,\mathcal{B}_1,p) \ar{l}
    & (X,\mathcal{B}_0,p) \ar{l}     
\end{tikzcd}
\end{equation}
If the filtrations are strictly increasing or decreasing, meaning not just almost surely, the morphisms can be taken to be deterministic, not just almost surely.
On a standard Borel space there is essentially no difference between ``increasing'' and ``almost surely increasing''. On one hand, every increasing filtration is almost surely increasing. A weak converse is given by the following proposition.

\begin{proposition}\label{asincreasing}
    Let $(X,\mathcal{A},p)$ be standard Borel, and let $(\mathcal{B}_\lambda)_{\lambda\in\Lambda}$ be an almost surely increasing filtration. 
    Then there exists an increasing (not just almost surely) filtration $(\mathcal{B}'_\lambda)_{\lambda\in\Lambda}$ such that for all $\lambda$, $\mathcal{B}'_\lambda\simeq\mathcal{B}_\lambda$, and moreover for each $\lambda\le\mu$ each of the following squares commute,
    \[
    \begin{tikzcd}
        (X,\mathcal{B}_\mu,p) \ar{d}{\cong} \ar{r} & (X,\mathcal{B}_{\lambda},p) \ar{d}{\cong} \\
        (X,\mathcal{B}'_\mu,p) \ar{r} & (X,\mathcal{B}'_{\lambda},p)
    \end{tikzcd}
    \]
    where the maps are the ones canonically induced by coarse-graining.

    The same can be said for the decreasing case. 
\end{proposition}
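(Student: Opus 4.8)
The plan is to obtain $\mathcal{B}'_\lambda$ by $p$-completing each $\mathcal{B}_\lambda$ inside $\mathcal{A}$. Concretely, I would set
\[
\mathcal{B}'_\lambda \;\coloneqq\; \{\, B \in \mathcal{A} : \text{there is } B_0 \in \mathcal{B}_\lambda \text{ with } p(B \triangle B_0) = 0 \,\},
\]
the sigma-algebra generated by $\mathcal{B}_\lambda$ together with all the $p$-null sets of $\mathcal{A}$. A routine check (closure under complements and countable unions, using that a countable union of null sets is null) shows each $\mathcal{B}'_\lambda$ is a genuine sub-sigma-algebra of $\mathcal{A}$, and by construction $\mathcal{B}'_\lambda \simeq \mathcal{B}_\lambda$: every set of $\mathcal{B}'_\lambda$ is almost surely a set of $\mathcal{B}_\lambda$, while conversely $\mathcal{B}_\lambda \subseteq \mathcal{B}'_\lambda$. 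Since $\mathcal{A}$ is standard Borel, each $(X,\mathcal{B}'_\lambda,p)$ is essentially standard Borel, so these are legitimate objects of $\cat{EKrn}$.

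The crux is that completion upgrades almost sure inclusion to genuine inclusion. If $\lambda \le \mu$ and $B \in \mathcal{B}'_\lambda$, pick $B_0 \in \mathcal{B}_\lambda$ with $p(B \triangle B_0) = 0$; since the filtration is almost surely increasing, $\mathcal{B}_\lambda \lesssim \mathcal{B}_\mu$ yields $B_1 \in \mathcal{B}_\mu$ with $p(B_0 \triangle B_1) = 0$, whence $p(B \triangle B_1) = 0$ and $B \in \mathcal{B}'_\mu$. Thus $\mathcal{B}'_\lambda \subseteq \mathcal{B}'_\mu$ on the nose, so $(\mathcal{B}'_\lambda)$ is a genuine increasing filtration.

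It remains to produce the commuting squares. I would take every arrow in sight to be the almost surely deterministic kernel induced by the set-theoretic identity $\id_X$ via \eqref{defdeltaf}: the horizontal coarse-grainings $(X,\mathcal{B}_\mu,p)\to(X,\mathcal{B}_\lambda,p)$ and $(X,\mathcal{B}'_\mu,p)\to(X,\mathcal{B}'_\lambda,p)$, and the vertical maps $(X,\mathcal{B}_\lambda,p)\to(X,\mathcal{B}'_\lambda,p)$. Each is a well-defined morphism of $\cat{EKrn}$ because, on indicators, $1_B$ for $B$ in the target is almost surely measurable for the source sigma-algebra (this is exactly $\lesssim$, respectively $\simeq$). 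The vertical maps are isomorphisms since $\mathcal{B}_\lambda \simeq \mathcal{B}'_\lambda$: the identity-induced kernel in the opposite direction is an inverse up to almost sure equality. Finally, because each such kernel is Dirac at $x$, computing both composites of the square on a set $B \in \mathcal{B}'_\lambda$ gives $1_B(x)$ in either order, so the square commutes.

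For the decreasing case the same completion works verbatim: if $\mathcal{B}_\lambda \gtrsim \mathcal{B}_\mu$ the identical argument gives $\mathcal{B}'_\lambda \supseteq \mathcal{B}'_\mu$, and the squares commute for the same reason. I do not expect a genuine obstacle; the only points requiring care are keeping the completion inside $\mathcal{A}$ (so that $\mathcal{B}'_\lambda$ remains a sub-sigma-algebra of $\mathcal{A}$) and checking that all four arrows, being identity-induced and hence almost surely deterministic, compose correctly — both of which become routine once the setup is fixed.
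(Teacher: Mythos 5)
Your construction coincides with the paper's: the paper's entire proof is ``take $\mathcal{B}'_\lambda=\mathcal{I}_{e_{\mathcal{B}_\lambda}}$'', and by the results of \Cref{sec_subsigma} that invariant sigma-algebra is exactly the $p$-completion you define, so your second paragraph simply makes explicit the monotonicity that the paper delegates to its Galois-connection machinery. The one place where your write-up is not literally correct is the definition of the arrows: for the top horizontal map (the filtration is only \emph{almost surely} increasing, so $\mathcal{B}_\lambda\not\subseteq\mathcal{B}_\mu$ in general) and for both vertical maps (the completion $\mathcal{B}'_\mu$ is strictly finer than $\mathcal{B}_\mu$), the assignment $\delta_\id(B|x)=1_B(x)$ of \eqref{defdeltaf} does not define a Markov kernel, because $1_B$ is only almost surely, not genuinely, measurable with respect to the source sigma-algebra; only the bottom map is a literal $\delta_\id$ kernel. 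Those three arrows must instead be taken as the almost surely deterministic kernels supplied by disintegration (Bayesian inverses, via \Cref{rokhlin}, of the genuine $\delta_\id$ kernels going in the opposite direction, which are isomorphisms in $\cat{EKrn}$ precisely because of the almost sure inclusions); with that correction your commutativity computation survives, with ``both composites give $1_B(x)$'' weakened to ``both composites give $1_B(x)$ for $p$-almost all $x$'', which is exactly equality of morphisms in $\cat{EKrn}$.
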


\begin{proof}
    Take $\mathcal{B}'_\lambda=\mathcal{I}_{e_{\mathcal{B}_\lambda}}$. 
\end{proof}

We know by the results of \Cref{sec_supinf} that the limit of an increasing filtration (in the subcategory of a.s.\ deterministic kernels, i.e.~the dagger epis) is given by the supremum in the almost-surely-coarse preorder ($\lesssim$), and the colimit of a decreasing filtration is given by the infimum.
However, relating the suprema and infima for the almost-surely-coarse preorder ($\lesssim$) and for the inclusion order ($\subseteq$) between sigma-algebras requires a little care. 

First of all, call a sub-sigma-algebra \newterm{null-set-complete} if it contains all the null sets, or equivalently, if it is in the form $\mathcal{I}_k$ for some kernel $k$. It is easy to see that the joins (for the inclusion order) and the meets (i.e.~intersection) of null-set-complete sub-sigma-algebras are again null-set complete. 

Somewhat conversely, on a standard Borel space we can turn any sub-sigma-algebra into an almost surely equivalent, null-set-complete one via the assignment $\mathcal{B}\mapsto\mathcal{I}_{e_\mathcal{B}}$.
This assignment preserves joins (for the inclusion order). To see this, denote by $\mathcal{N}$ the sub-sigma-algebra generated by all null sets, i.e.~the one of sets $A\in\mathcal{A}$ for which either $p(A)=0$ or $p(A)=1$. Now we have that $\mathcal{I}_{e_\mathcal{B}}$ is the join of $\mathcal{B}$ and $\mathcal{N}$, and so the assignment $\mathcal{B}\mapsto\mathcal{B}\vee\mathcal{N}$ preserves all joins.

However, this assignment in general does not preserve meets (intersections). For an example, consider the set $\{x,y,z\}$ with the discrete sigma-algebra $\mathcal{A}$ and the measure $p(x)=p(z)=1/2$, $p(y)=0$. Consider now the sub-sigma-algebras $\mathcal{B}$ and $\mathcal{C}$ generated by the partitions $\{\{x,y\},\{z\}\}$ and $\{\{x\},\{y,z\}\}$.
We have that $\mathcal{B}\cap\mathcal{C}$ is the trivial sigma-algebra, and so $\mathcal{I}_{e_{\mathcal{B}\cap\mathcal{C}}}=\mathcal{N}$.
On the other hand $\mathcal{B}\simeq\mathcal{C}$, so that $\mathcal{I}_{e_\mathcal{B}}=\mathcal{I}_{e_\mathcal{C}}=\mathcal{I}_{e_\mathcal{B}}\cap\mathcal{I}_{e_\mathcal{C}}=\mathcal{A}$.

\begin{remark}
    For the readers familiar with order theory, we can model the situation as follows.
    Denote by $\Sigma X$ the set of sub-sigma algebras of $\mathcal{A}$, denote by $NC\Sigma X$ the set of null-set-complete sub-sigma-algebras, and denote by $I$ the set of idempotents on $(X,\mathcal{A},p)$ with the idempotent order.
    We have respectively a Galois connection, an isomorphism, and an equivalence of preorders as in the following diagram:
    \[
    \begin{tikzcd}[row sep=tiny]
        (\Sigma X, \subseteq) \ar[bend right]{r}[inner sep=1.8mm]{\top} & (NC\Sigma X, \subseteq) \ar[hook']{l} \ar[leftrightarrow]{r}{\cong} & (I,\le) \ar[leftrightarrow]{r}{\simeq} & (\Sigma X, \lesssim) \\
        \mathcal{B} \ar[mapsto]{r} & \mathcal{I}_{e_\mathcal{B}}
    \end{tikzcd}
    \]
    Now the assignment $\mathcal{B}\mapsto\mathcal{I}_{e_\mathcal{B}}$, being a lower (or left) adjoint, preserves all joins (but not all meets). 
\end{remark}

For sequential filtrations, in any case, the necessary meets are preserved, as the following proposition shows.

\begin{proposition}\label{sequentialinf}
    Let $(X,\mathcal{A},p)$ be standard Borel.
    The assignment $\mathcal{B}\mapsto\mathcal{I}_{e_\mathcal{B}}$ preserves intersections of decreasing filtrations, i.e.\ sequential infima for the inclusion relation ($\subseteq$).
\end{proposition}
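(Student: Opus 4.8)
The plan is to turn the statement into a concrete equality of sigma-algebras and then prove it by an explicit set-theoretic construction. Write $\mathcal{B}_\infty = \bigcap_n \mathcal{B}_n$ for the infimum of the decreasing filtration. By the identification obtained above, $\mathcal{I}_{e_\mathcal{B}} = \mathcal{B}\vee\mathcal{N}$ is exactly the null-set completion of $\mathcal{B}$; concretely, $A\in\mathcal{I}_{e_\mathcal{B}}$ if and only if there is some $B\in\mathcal{B}$ with $p(A\Delta B)=0$. The goal is therefore to establish
\[
\mathcal{I}_{e_{\mathcal{B}_\infty}} \;=\; \bigcap_n \mathcal{I}_{e_{\mathcal{B}_n}} .
\]
The inclusion $\subseteq$ is immediate from monotonicity: since $\mathcal{B}_\infty\subseteq\mathcal{B}_n$ for every $n$, and the composite assignment $\mathcal{B}\mapsto\mathcal{I}_{e_\mathcal{B}}$ is order-preserving (shown above), we get $\mathcal{I}_{e_{\mathcal{B}_\infty}}\subseteq\mathcal{I}_{e_{\mathcal{B}_n}}$ for all $n$, hence the inclusion into the intersection.

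For the reverse inclusion I would take $A\in\bigcap_n\mathcal{I}_{e_{\mathcal{B}_n}}$, so that for each $n$ there exists $B_n\in\mathcal{B}_n$ with $p(A\Delta B_n)=0$. The task is to manufacture from these a single representative lying in the intersection $\mathcal{B}_\infty$. The candidate is
\[
B_\infty \coloneqq \liminf_k B_k \;=\; \bigcup_m\bigcap_{k\ge m} B_k .
\]
One checks that $B_\infty\in\mathcal{B}_\infty$: for each inner intersection with $m\ge n$, every $B_k$ with $k\ge m$ satisfies $B_k\in\mathcal{B}_k\subseteq\mathcal{B}_m$ (the filtration is decreasing), so $\bigcap_{k\ge m}B_k\in\mathcal{B}_m\subseteq\mathcal{B}_n$; since the sets $\bigcap_{k\ge m}B_k$ increase in $m$, the union over all $m$ equals the union over $m\ge n$, which lies in $\mathcal{B}_n$. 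As this holds for every $n$, we get $B_\infty\in\mathcal{B}_\infty$. Finally $p(A\Delta B_\infty)=0$: the $B_k$ are all a.s. equal to $A$, hence pairwise a.s. equal, so on the full-measure set where $1_{B_k}=1_A$ for every $k$ one has $1_{B_\infty}=\liminf_k 1_{B_k}=1_A$. This yields $A\in\mathcal{B}_\infty\vee\mathcal{N}=\mathcal{I}_{e_{\mathcal{B}_\infty}}$, completing the proof.

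The delicate point, and the reason the statement is confined to the sequential case, is precisely the construction of $B_\infty$: it lands in $\bigcap_n\mathcal{B}_n$ only because the $\liminf$ uses nothing beyond countable unions and intersections, which remain inside each $\mathcal{B}_n$. For an uncountable net this argument collapses, in line with the counterexample given just before this proposition for meets of merely almost-surely-equivalent families. I expect the verification that $B_\infty\in\mathcal{B}_\infty$ — carefully tracking which $\mathcal{B}_n$ each layer of the $\liminf$ belongs to — to be the only step requiring genuine care; the almost-sure comparison is routine. I would deliberately avoid routing the reverse inclusion through the backward martingale convergence theorem (which would make the limit $\mathbb{E}[1_A\mid\mathcal{B}_\infty]=1_A$ transparent), since that theorem is proved only later and depends on material of this kind, so the direct measure-theoretic construction is the appropriate tool here.
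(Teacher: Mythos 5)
Your proof is correct and follows essentially the same route as the paper's: the easy inclusion via monotonicity, then, for $A$ almost surely in every $\mathcal{B}_n$, an explicit representative in $\bigcap_n\mathcal{B}_n$ built from the witnesses $B_n$ using only countable operations that stay inside each $\mathcal{B}_n$. The only (immaterial) difference is that you take $\liminf_k B_k=\bigcup_m\bigcap_{k\ge m}B_k$ where the paper takes $\limsup_k B_k=\bigcap_n\bigcup_{k\ge n}B_k$; both land in the intersection sigma-algebra and are almost surely equal to $A$ by the same null-set argument.
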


\begin{proof}
    Let $(\mathcal{B}_n)_{n=0}^\infty$ be a decreasing filtration. We have to prove that 
    \[
    \bigcap_n \left( \mathcal{I}_{e_{\mathcal{B}_n}}\right) \;=\; \mathcal{I}_{e_{\left( \bigcap_n\mathcal{B}_n \right)}} .
    \]
    Since the assignment $\mathcal{B}\mapsto\mathcal{I}_{e_\mathcal{B}}$ is monotone, we already have that $\mathcal{I}_{e_{\left( \bigcap_n\mathcal{B}_n \right)}}\subseteq \bigcap_n \left( \mathcal{I}_{e_{\mathcal{B}_n}}\right)$. We have to prove the other direction.
    
    So let $A\in \bigcap_n \left( \mathcal{I}_{e_{\mathcal{B}_n}}\right)$. 
    This means that for every $n$, $A\in \mathcal{I}_{e_{\mathcal{B}_n}}$, which means that $A$ is $p$-almost surely in each $\mathcal{B}_n$. In other words, for each $n$ there exists $B_n\in\mathcal{B}_n$ such that $p(A\,\triangle\,B_n)=0$. 
    Now since for each $n$ we have that $\mathcal{B}_{n+1}\subseteq\mathcal{B}_n$, we have that for every $n$ and for every $k\ge n$, $B_k\in\mathcal{B}_n$ as well.
    Define now for each $n$ the set
    \[
    B'_n \coloneqq \bigcup_{k\ge n} B_k .
    \]
    For every $n$ we have that 
    \begin{itemize}
        \item $B'_n\in\mathcal{B}_n$;
        \item $B'_n\supseteq B'_{n+1}$;
        \item $p(B'_n\,\triangle\,A)=0$.
    \end{itemize}
    The first item implies, using again that $\mathcal{B}_{n+1}\subseteq\mathcal{B}_n$, that for every $n$ and for every $k\ge n$, $B'_k\in\mathcal{B}_n$ as well.
    The second item says exactly that the sequence $(B'_n)$ is nonincreasing. With these two remarks in mind, define now the set
    \[
    B \coloneqq \bigcap_{k=0}^\infty B'_k .
    \]
    Note that since the sequence $(B'_n)$ is nonincreasing, we could define $B$ equivalently as 
    \[
    B \coloneqq \bigcap_{k\ge n} B'_k 
    \]
    for any $n$. Therefore $B\in\mathcal{B}_n$ for every $n$, and hence $B\in\bigcap_n\mathcal{B}_n$. 
    Moreover, once again by construction, $p(A\,\triangle\,B)=0$. Therefore $A$ is almost surely in $\bigcap_n\mathcal{B}_n$, which means that $A\in\mathcal{I}_{e_{\left( \bigcap_n\mathcal{B}_n \right)}}$. 
\end{proof}

For non-sequential filtrations (nets), instead, one has to be careful, since the infimum in the idempotent order is given by the intersection of the \emph{null-set-completion} $\mathcal{I}_{e_{\left( \bigcap_n\mathcal{B}_\lambda \right)}}$ of the $\mathcal{B}_\lambda$.
This difficulty is avoided if one starts with a decreasing net of null-set-complete sub-sigma-algebras.
In what follows, to avoid ambiguity, we will denote by $\bigcap_\lambda\mathcal{B}_\lambda$ the ordinary intersection of sigma-algebras, and by $\bigwedge_\lambda\mathcal{B}_\lambda$ the intersection of the null-set completions (giving the meet in the idempotent order).

Let's now use the ideas of \Cref{sec_supinf}. Given a standard Borel space $(X,\mathcal{A},p)$, the category $\DSI(X,\mathcal{A},p)$ has (equivalently) as objects, sub-sigma-algebras $\mathcal{B}\subseteq\mathcal{A}$ up to almost sure equality, together with the maps 
\[
\begin{tikzcd}
    (X,\mathcal{A},p) \ar[shift left]{r}{\pi} & (X,\mathcal{B},p) \ar[shift left]{l}{\iota},
\end{tikzcd}
\]
where $\pi=\delta_\id$ and $\iota=\pi^+$ is the disintegration. As morphisms, corresponding to (almost sure) inclusions $\mathcal{B}_1\lesssim\mathcal{B}_2$, it has almost surely deterministic kernels $\pi_{2,1}:(X,\mathcal{B}_2,p)\to(X,\mathcal{B}_1,p)$ making the upper and lower triangles in the following diagram commute.
\[
\begin{tikzcd}[column sep=tiny]
    & (X,\mathcal{A},p) \ar{dl}[swap]{\pi_1} \ar{dr}{\pi_2} \\
    (X,\mathcal{B}_1,p) \ar{dr}[swap]{\iota_1} \ar[shift right]{rr}[swap]{\pi_{2,1}^+}
     && (X,\mathcal{B}_2,p) \ar{dl}{\iota_2} \ar[shift right]{ll}[swap]{\pi_{2,1}} \\
    & (X,\mathcal{A},p) 
\end{tikzcd}
\]
Equivalently, it can be seen as the slice category $\cat{Krn}_\dm/(X,\mathcal{A},p)$, or the opposite of $(X,\mathcal{A},p)/\cat{Krn}_\de$.

\begin{corollary}
    Let $(X,\mathcal{A},p)$ be standard Borel. In the category $\DSI(X,\mathcal{A},p)$ (equivalently, $\cat{Krn}_\dm/(X,\mathcal{A},p)$ and $(X,(\mathcal{A},p)/\cat{Krn}_\de)^\op$):
    \begin{itemize}
        \item The limit of an increasing filtration $(\mathcal{B}_\lambda)$ on $(X,\mathcal{A},p)$ exists, and it is given by the join sigma-algebra $\bigvee_\lambda\mathcal{B}_\lambda$, i.e.~the smallest (coarsest) sigma-algebra containing all the $\mathcal{B}_\lambda$;
        \item The colimit of a decreasing filtration $(\mathcal{B}_\lambda)$ on $(X,\mathcal{A},p)$ exists, and is given by the meet (intersection of the null-set completions) sigma-algebra $\bigwedge_\lambda\mathcal{B}_\lambda$.
    \end{itemize}
\end{corollary}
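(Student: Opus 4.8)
The plan is to deduce the statement from the order-theoretic correspondence of \Cref{sec_subsigma} together with the creation-of-(co)limits principle recalled at the end of \Cref{sec_supinf}, in exact parallel with the Hilbert-space statements \Cref{hilbupp,hilbdownp}. First I would reduce everything to the order of idempotents: by \Cref{krnsplit} and the order-preserving bijection at the end of \Cref{sec_subsigma}, the assignment $\mathcal{B}\mapsto e_\mathcal{B}$ identifies the poset of $p$-complete sub-sigma-algebras of $\mathcal{A}$, ordered by inclusion, with the poset $\DSI(X,\mathcal{A},p)$ of dagger-split idempotents, ordered by $\le$. Under this identification an increasing filtration becomes an increasing net of idempotents and a decreasing filtration a decreasing net. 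Since the relevant slice and coslice forgetful functors create connected limits and colimits (as recalled at the end of \Cref{sec_supinf}), the universal object attached to an increasing filtration — the cofiltered limit of its coarse-graining projections, in the sense of \Cref{hilbupp} — coincides with the supremum of the associated idempotents, while the universal object attached to a decreasing filtration — the filtered colimit of those projections, in the sense of \Cref{hilbdownp} — coincides with their infimum. These suprema and infima exist because the $p$-complete sub-sigma-algebras of $\mathcal{A}$ form a complete lattice, so it remains only to identify them concretely as sub-sigma-algebras.

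For the increasing case, the join $\bigvee_\lambda\mathcal{B}_\lambda$ is a well-defined sub-sigma-algebra, and by the join-preservation of the assignment $\mathcal{B}\mapsto\mathcal{I}_{e_\mathcal{B}}=\mathcal{B}\vee\mathcal{N}$ established in \Cref{sec_martingales}, the supremum of the idempotents $e_{\mathcal{B}_\lambda}$ is exactly $e_{\bigvee_\lambda\mathcal{B}_\lambda}$. Hence the limit exists and is the object $(X,\bigvee_\lambda\mathcal{B}_\lambda,p)$; I would note here that this object lives in $\cat{EKrn}$ rather than $\cat{Krn}$, since for an uncountable net the join need not be countably generated.

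The decreasing case is where the only genuine subtlety lies, and it is the step I expect to need the most care. Here the infimum in the idempotent order corresponds, via the bijection of \Cref{sec_subsigma}, to the meet of the $p$-complete sub-sigma-algebras $\mathcal{I}_{e_{\mathcal{B}_\lambda}}$; since an intersection of $p$-complete sub-sigma-algebras is again $p$-complete and is evidently the largest such sub-sigma-algebra contained in all of them, this meet is simply the intersection $\bigcap_\lambda\mathcal{I}_{e_{\mathcal{B}_\lambda}}=\bigwedge_\lambda\mathcal{B}_\lambda$. The point to be careful about — and the reason the statement is phrased with $\bigwedge$ rather than the naive intersection $\bigcap$ — is that $\mathcal{B}\mapsto\mathcal{I}_{e_\mathcal{B}}$ does \emph{not} preserve meets in general, so one must intersect the null-set completions and not the $\mathcal{B}_\lambda$ themselves; for sequential filtrations \Cref{sequentialinf} shows the two coincide, but for general nets they genuinely differ. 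Modulo this bookkeeping the colimit is the object $(X,\bigwedge_\lambda\mathcal{B}_\lambda,p)$, again an object of $\cat{EKrn}$.
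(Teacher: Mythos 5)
Your proposal is correct and follows essentially the same route as the paper, whose own justification for this corollary is exactly the preceding discussion you reconstruct: identify $\DSI(X,\mathcal{A},p)$ with the poset of $p$-complete sub-sigma-algebras via the Galois connection of \Cref{sec_subsigma}, note that (co)limits of filtrations in this preorder are suprema/infima as in \Cref{sec_supinf}, and then compute the supremum as the join (since $\mathcal{B}\mapsto\mathcal{B}\vee\mathcal{N}$ is a left adjoint, hence join-preserving) and the infimum as the intersection of the null-set completions. Your added remarks — the complete-lattice existence argument, the observation that the splitting objects live in $\cat{EKrn}$ rather than $\cat{Krn}$, and the caution that meets are not preserved by null-set completion for general nets — are all consistent with the paper; the appeal to the creation-of-(co)limits principle is harmless but not actually needed, since the identification of (co)limits with suprema/infima is immediate from $\DSI$ being a preorder.
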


Let's now apply the random variable functors to filtrations. We focus on increasing filtrations, decreasing filtrations are analogous.
If we apply the $L^1$ (or $L^n$) functor to an increasing filtration, which in the sequential case looks as follows,
\[
\begin{tikzcd}[column sep=large]
   (X,\mathcal{B}_0,p)
    & (X,\mathcal{B}_1,p) \ar{l}[swap]{\pi_{1,0}}
    & (X,\mathcal{B}_2,p) \ar{l}[swap]{\pi_{2,1}} 
    & \dots \ar{l}[swap]{\pi_{3,2}} 
\end{tikzcd}
\]
where we denoted by $\pi_{n,n-1}$ the canonical map $(X,\mathcal{B}_n,p)\to (X,\mathcal{B}_{n-1},p)$,
we get a net with the arrows reversed:
\[
\begin{tikzcd}[column sep=large]
   L^1(X,\mathcal{B}_0,p) \ar{r}{(\pi_{1,0})^*}
    & L^1(X,\mathcal{B}_1,p) \ar{r}{(\pi_{2,1})^*} 
    & L^1(X,\mathcal{B}_2,p) \ar{r}{(\pi_{3,2})^*} 
    & \dots 
\end{tikzcd}
\]
For example, the map $(\pi_{2,1})^*:L^1(X,\mathcal{B}_1,p)\to L^1(X,\mathcal{B}_2,p)$ includes the $\mathcal{B}_1$-measurable random variables into the space of $\mathcal{B}_2$-measurable ones, which is possible since $\mathcal{B}_2$ is finer or equal to $\mathcal{B}_1$.
(Note that, technically, $L^1(X,\mathcal{B}_1,p)$ is not exactly a subset of $L^1(X,\mathcal{B}_2,p)$, since the latter may have more measure zero sets, but it defines a subset canonically.)

We can also look at the disintegrations $\pi_{n,n-1}^+:(X,\mathcal{B}_{n-1},p)\to(X,\mathcal{B}_n,p)$, fitting into the original diagram as follows.
\[
\begin{tikzcd}[column sep=large]
   (X,\mathcal{B}_0,p) \ar[shift left]{r}{\pi_{1,0}^+}
    & (X,\mathcal{B}_1,p) \ar[shift left]{l}{\pi_{1,0}} \ar[shift left]{r}{\pi_{1,2}^+}
    & (X,\mathcal{B}_2,p) \ar[shift left]{l}{\pi_{2,1}} \ar[shift left]{r}{\pi_{3,2}^+}
    & \dots \ar[shift left]{l}{\pi_{3,2}} 
\end{tikzcd}
\]
Applying the $L^1$ (or $L^n$) functor we get the following maps.
\[
\begin{tikzcd}[column sep=large]
   L^1(X,\mathcal{B}_0,p) \ar[leftarrow,shift left]{r}{(\pi_{1,0}^+)^*}
    & L^1(X,\mathcal{B}_1,p) \ar[leftarrow,shift left]{l}{(\pi_{1,0})^*} \ar[leftarrow,shift left]{r}{(\pi_{1,2}^+)^*}
    & L^1(X,\mathcal{B}_2,p) \ar[leftarrow,shift left]{l}{(\pi_{2,1})^*} \ar[leftarrow,shift left]{r}{(\pi_{3,2}^+)^*}
    & \dots \ar[leftarrow,shift left]{l}{(\pi_{3,2})^*} 
\end{tikzcd}
\]
The new maps encode conditional expectations: for example, the map $(\pi_{2,1}^+)^*:L^1(X,\mathcal{B}_2,p)\to L^1(X,\mathcal{B}_1,p)$ takes the conditional expectation of $\mathcal{B}_2$-measurable random variables given $\mathcal{B}_1$ (which is possible since $\mathcal{B}_2$ is finer or equal to $\mathcal{B}_1$).
Focusing on these maps, 
\[
\begin{tikzcd}[column sep=large]
   L^1(X,\mathcal{B}_0,p) \ar[leftarrow]{r}{(\pi_{1,0}^+)^*}
    & L^1(X,\mathcal{B}_1,p) \ar[leftarrow]{r}{(\pi_{1,2}^+)^*}
    & L^1(X,\mathcal{B}_2,p) \ar[leftarrow]{r}{(\pi_{3,2}^+)^*}
    & \dots 
\end{tikzcd}
\]
we can encode a \emph{martingale} as a collection of random variables, one for every space, compatible with the arrows:

\begin{definition}
    Let $(X,\mathcal{A},p)$ be a standard Borel probability space.
    
    A \newterm{(forward) martingale} on $(X,\mathcal{A},p)$ consists of 
    \begin{itemize}
        \item An increasing filtration $(\mathcal{B}_\lambda)_{\lambda\in\Lambda}$ on $(X,\mathcal{A},p)$;
        \item For each $\lambda\in\Lambda$, a random variable $f_\lambda\in L^1(X,\mathcal{B}_\lambda,p)$,
    \end{itemize}
    such that for all $\lambda\le\mu$, $f_\lambda$ is a conditional expectation of $f_{\mu}$ given $\mathcal{B}_\lambda$.
    
    A \newterm{backward martingale} on $(X,\mathcal{A},p)$ consists of 
    \begin{itemize}
        \item A decreasing filtration $(\mathcal{B}_\lambda)_{\lambda\in\Lambda}$ on $(X,\mathcal{A},p)$;
        \item For each $\lambda\in\Lambda$, a random variable $f_\lambda\in L^1(X,\mathcal{B}_\lambda,p)$,
    \end{itemize}
    such that for all $\lambda\le\mu$, $f_{\mu}$ is a conditional expectation of $f_{\lambda}$ given $\mathcal{B}_{\mu}$.
\end{definition}

Recall that we can encode a random variable as a bounded linear map $\R\to (X,\mathcal{A},p)$. Similarly, we can encode a martingale as a commutative diagram, which in the sequential case looks as follows,
\[
\begin{tikzcd}[column sep=small]
    &&& \R \ar{dlll}[swap]{f_0} \ar{dl}{f_1} \ar{dr}[swap]{f_2} \ar{drrr} \\
    L^1(X,\mathcal{B}_0,p)
     && L^1(X,\mathcal{B}_1,p) \ar{ll}{(\pi_{1,0}^+)^*}
     && L^1(X,\mathcal{B}_2,p) \ar{ll}{(\pi_{2,1}^+)^*} 
     && \cdots \ar{ll}{(\pi_{3,2}^+)^*} 
\end{tikzcd}
\]
and a backward martingale is described analogously.
Note that the $f_n$ commute with the maps $(\pi_{n,n-1}^+)^*$ (since they are conditional expectations of each other), but \emph{not} with the maps $(\pi_{n,n-1})^*$ in general: that would mean that all the $f_n$ are almost surely equal, and almost surely $\mathcal{B}_i$-measurable for all $i$.  

Note also that, by the results above, we can equivalently work with \emph{almost surely} increasing (resp.~decreasing) filtrations (but in the decreasing, non-sequential case, taking the infimum will require some care).

Let's now consider the elements $\mathcal{B}_\lambda$ of the filtration as sub-sigma-algebras of $\mathcal{A}$. We have canonical maps as follows.
\[
\begin{tikzcd}
    (X,\mathcal{A},p) \ar[shift left]{r}{\pi_\lambda} & (X,\mathcal{B}_\lambda,p) \ar[shift left]{l}{\pi_\lambda^+}
\end{tikzcd}
\]
where $\pi_\lambda:(X,\mathcal{A},p)\to(X,\mathcal{B}_\lambda,p)$ is the kernel induced by the set-theoretic identity, and $\pi^+_\lambda$ is the corresponding disintegration. 
The pair $(\pi_\lambda,\pi^+_\lambda)$ forms a (dagger-)split idempotent, and those are preserved by all functors.
Therefore we have again a split idempotent 
\[
\begin{tikzcd}
    L^1(X,\mathcal{A},p) \ar[leftarrow,shift left]{r}{(\pi_\lambda)^*} & L^1(X,\mathcal{B}_\lambda,p) \ar[leftarrow,shift left]{l}{(\pi_\lambda^+)^*} ,
\end{tikzcd}
\]
where the map $(\pi_\lambda)^*:L^1(X,\mathcal{B}_\lambda,p)\to L^1(X,\mathcal{A},p)$ is (almost) an inclusion, and the map $(\pi_\lambda^+)^*:L^1(X,\mathcal{A},p)\to L^1(X,\mathcal{B}_\lambda,p)$ forms conditional expectations. 

If we consider the functor $L^2:\cat{EKrn}\to\cat{Hilb}$, which preserves the dagger structure, we have that the maps $(\pi_\lambda)^*:L^2(X,\mathcal{B}_\lambda,p)\to L^2(X,\mathcal{A},p)$ and $(\pi_\lambda^+)^*:L^2(X,\mathcal{A},p)\to L^2(X,\mathcal{B}_\lambda,p)$ form moreover a \emph{dagger}-split idempotent, just like $\pi_\lambda$ and $\pi_\lambda^+$. 

We have seen above that, in the order of split idempotents of $(X,\mathcal{A},p)$, the join sigma-algebra is the supremum of an increasing filtration, and the meet (intersection of the null-set completions) sigma-algebra is the infimum of a decreasing filtration. 
It is natural to ask if the $L^n$ functors preserve these suprema and infima, and the answer is affirmative.

\begin{proposition}\label{preserves_optima}
    Let $(X,\mathcal{A},p)$ be a standard Borel probability space, and let $n\in\N$.

    \begin{enumerate}
        \item Given an increasing filtration $(\mathcal{B}_\lambda)_{\lambda\in\Lambda}$ with supremum $\mathcal{B}_\infty=\bigvee_{\lambda\in\Lambda}\mathcal{B}_\lambda$, the space $L^n(X,\mathcal{B}_\infty,p)$ is the supremum of the $L^n(X,\mathcal{B}_\lambda,p)$ in the order of split idempotents on $L^n(X,\mathcal{A},p)$ (equivalently, the colimit in $\SI(L^n(X,\mathcal{A},p))$).

        \item Given a decreasing filtration $(\mathcal{C}_\lambda)_{\lambda\in\Lambda}$ with infimum $\mathcal{C}_\infty=\bigwedge_{\lambda\in\Lambda}\mathcal{C}_\lambda$, the space $L^n(X,\mathcal{C}_\infty,p)$ is the infimum of the $L^n(X,\mathcal{C}_\lambda,p)$ in the order of split idempotents on $L^n(X,\mathcal{A},p)$ (equivalently, the limit in $\SI(L^n(X,\mathcal{A},p))$).
    \end{enumerate}
\end{proposition}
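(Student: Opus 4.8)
The plan is to exhibit the conditional-expectation idempotent $(e_{\mathcal{B}_\infty})^*$ (respectively $(e_{\mathcal{C}_\infty})^*$) as the required supremum (respectively infimum) and to verify its extremality directly against an \emph{arbitrary} split-idempotent bound. First I would record the setup: since $L^n$ is a functor and split idempotents are preserved by every functor, each $e_{\mathcal{B}_\lambda}$ is sent to a split idempotent $P_\lambda := (e_{\mathcal{B}_\lambda})^*$ on $L^n(X,\mathcal{A},p)$, acting by $f\mapsto\E[f\mid\mathcal{B}_\lambda]$, whose splitting object is the closed subspace $V_\lambda := L^n(X,\mathcal{B}_\lambda,p)\subseteq L^n(X,\mathcal{A},p)$. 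Because the idempotent order is defined by the equations $e_1\circ e_2=e_2\circ e_1=e_1$, which a functor preserves, the relations $e_{\mathcal{B}_\lambda}\le e_{\mathcal{B}_\mu}$ (from $\mathcal{B}_\lambda\subseteq\mathcal{B}_\mu$ and the tower property) pass to $P_\lambda\le P_\mu$; in particular the candidate $(e_{\mathcal{B}_\infty})^*$ is already a bound. By the examples in $\cat{Ban}$ and $\cat{Hilb}$ above, what remains is to identify the splitting subspace of the candidate with the appropriate closure of a union, or intersection, and then to check extremality.

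For the increasing case I would first prove the density fact $V_\infty=\overline{\bigcup_\lambda V_\lambda}$, where $V_\infty=L^n(X,\mathcal{B}_\infty,p)$: since the net is directed, $\bigcup_\lambda\mathcal{B}_\lambda$ is an algebra of sets generating $\mathcal{B}_\infty$, so indicators of $\mathcal{B}_\infty$-sets are approximated in $L^n$ by indicators from this algebra, and density of simple functions finishes the identification (finiteness of $n$ is essential here). For extremality, let $q$ be any idempotent with $P_\lambda\le q$ for all $\lambda$. From $qP_\lambda=P_\lambda$ one sees that $q$ fixes each $V_\lambda$, hence by continuity fixes $V_\infty$, giving $qP_\infty=P_\infty$; from $P_\lambda q=P_\lambda$ one gets $\int_B(qf-f)\,dp=0$ for every $B\in\bigcup_\lambda\mathcal{B}_\lambda$, whence, as a finite signed measure vanishing on a generating algebra vanishes on the whole $\sigma$-algebra, $\E[qf\mid\mathcal{B}_\infty]=\E[f\mid\mathcal{B}_\infty]$, i.e.\ $P_\infty q=P_\infty$. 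Thus $P_\infty\le q$, so $P_\infty$ is the least upper bound.

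For the decreasing case I would dualize. The identification is now the intersection $W_\infty=\bigcap_\lambda W_\lambda$ with $W_\lambda=L^n(X,\mathcal{C}_\lambda,p)$, which is immediate because $\mathcal{C}_\infty=\bigwedge_\lambda\mathcal{C}_\lambda=\bigcap_\lambda\mathcal{I}_{e_{\mathcal{C}_\lambda}}$ and a real function is measurable for an intersection of $\sigma$-algebras exactly when it is measurable for each. Given a lower bound $q\le P_\lambda$, the relation $q=P_\lambda q$ forces $\mathrm{im}(q)\subseteq W_\lambda$ for all $\lambda$, hence $\mathrm{im}(q)\subseteq W_\infty$ and $P_\infty q=q$. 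The remaining equation $qP_\infty=q$ is the genuine difficulty: it amounts to $\ker P_\infty\subseteq\ker q$, and since $q$ only annihilates $\overline{\bigcup_\lambda\ker P_\lambda}$, it requires the density $\ker P_\infty=\overline{\bigcup_\lambda\ker P_\lambda}$.

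The main obstacle is proving this last density \emph{without} circularly invoking backward martingale convergence, which is only established later. I would instead argue by annihilator duality in the pairing of $L^n$ with $L^{n'}$, where $1/n+1/n'=1$: using that conditional expectation is self-dual, $(\ker P_\lambda)^\perp=L^{n'}(X,\mathcal{C}_\lambda,p)$, so $\bigcap_\lambda(\ker P_\lambda)^\perp=L^{n'}(X,\mathcal{C}_\infty,p)$ by the same intersection-of-$\sigma$-algebras fact applied in $L^{n'}$. Taking pre-annihilators via the bipolar relation $\overline{\sum_\lambda S_\lambda}={}^\perp(\bigcap_\lambda S_\lambda^\perp)$, valid in any normed space by Hahn--Banach and with $\sum=\bigcup$ since the $\ker P_\lambda$ are nested, then gives $\overline{\bigcup_\lambda\ker P_\lambda}={}^\perp L^{n'}(X,\mathcal{C}_\infty,p)=\ker P_\infty$. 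This yields $qP_\infty=q$, hence $q\le P_\infty$, so $P_\infty$ is the greatest lower bound. I would close by remarking that for $n=2$ one is in $\cat{Hilb}$ and both identifications can instead be read through orthogonal complements, but the duality argument covers all finite $n$, including the non-reflexive case $n=1$.
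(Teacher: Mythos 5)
Your proof is correct, and while it starts from the same two subspace identifications as the paper --- $L^n(X,\mathcal{B}_\infty,p)=\mathrm{cl}\big(\bigcup_\lambda L^n(X,\mathcal{B}_\lambda,p)\big)$ in the increasing case (you use the generating-algebra/Dynkin approximation where the paper cites \cite[Lemma~3.14]{vanbelle2023martingales}) and $L^n(X,\mathcal{C}_\infty,p)=\bigcap_\lambda L^n(X,\mathcal{C}_\lambda,p)$ in the decreasing case --- it completes the argument by a genuinely different route. The paper then asserts that, for split idempotents of Banach spaces, these identifications together with the existence of a retract suffice; taken literally this reduction is too weak, since in $\cat{Ban}$ an idempotent is not determined by its image (distinct idempotents can project onto the same closed subspace with different kernels), so each relation $e_1\le e_2$ carries a kernel condition as well as an image condition. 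Your proof verifies the universal property directly against an arbitrary idempotent bound $q$ and supplies exactly the two kernel arguments this requires: in the increasing case the signed-measure/Dynkin argument giving $P_\infty q=P_\infty$ from $P_\lambda q=P_\lambda$, and in the decreasing case the annihilator-duality proof that $\ker P_\infty=\mathrm{cl}\big(\bigcup_\lambda\ker P_\lambda\big)$, via $(\ker P_\lambda)^\perp=L^{n'}(X,\mathcal{C}_\lambda,p)$ and the bipolar relation. The latter is the most substantial departure: it obtains the kernel density from Hahn--Banach alone, works for all finite $n$ including the non-reflexive case $n=1$, and, as you note, deliberately avoids backward martingale convergence, which in the paper's development is downstream of this proposition (it feeds into \Cref{levi_krn} and \Cref{martin_down}) and would be circular here. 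What the paper's route buys is brevity and staying within the split-idempotent calculus of \Cref{sec_order}; what yours buys is a self-contained verification of extremality that in fact fills in the step the paper leaves implicit.
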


\begin{proof}
    First of all, without loss of generality, by possibly replacing $\mathcal{B}_\lambda$ and $\mathcal{B}_\infty$ with their null-set completions, we can assume that $\mathcal{B}_\lambda$- and $\mathcal{B}_\infty$-measurability and almost sure measurability coincide. (Same for $\mathcal{C}_\lambda$ and $\mathcal{C}_\infty$.) Moreover, this way the infimum will simply be given by the intersection of sigma-algebras.
    
    \begin{enumerate}
        \item Notice that for split idempotents of Banach spaces, in order for $L^n(X,\mathcal{B}_\infty,p)$ to be a supremum it suffices to check that it is the closure of the union of the $L^n(X,\mathcal{B}_\lambda,p)$, and that it admits a retract. As we know, the retract is given by the conditional expectation map $(\pi^+)^*:L^n(X,\mathcal{A},p)\to L^n(X,\mathcal{B}_\infty,p)$, where $\pi:(X,\mathcal{A},p)\to(X,\mathcal{B}_\infty,p)$ is the kernel induced by the set-theoretic identity.
        
        To prove that $L^n(X,\mathcal{B}_\infty,p)$ is the closure of the union, let first of all $B\in\mathcal{B}_\infty$. By \cite[Lemma~3.14]{vanbelle2023martingales}, there exists a sequence $(B_i)_{i=0}^\infty$ of sets from the union $\bigcup_{\lambda\in\Lambda}\mathcal{B}_\lambda$ such that $p(B\,\triangle\,B_i)\to 0$. Therefore $1_B$ is in the closure of the union of the $L^n(X,\mathcal{B}_\lambda,p)$. The same claim can now be extended to all $\mathcal{B}_\infty$-measurable function by approximating them via simple functions.
        Conversely, suppose $f$ is in the closure of the union of the $L^n(X,\mathcal{B}_\lambda,p)$. Then there exists a sequence $(f_i)_{i=0}^\infty$ in $\bigcup_{\lambda\in\Lambda}L^n(X,\mathcal{B}_\lambda,p)$ tending to $f$, and all the functions $f_i$ are also in $L^n(X,\mathcal{B}_\infty,p)$. But now since $L^n(X,\mathcal{B}_\infty,p)$ is complete, $f\in L^n(X,\mathcal{B}_\infty,p)$ as well. So $f\in L^n(X,\mathcal{B}_\infty,p)$ if and only if it is in the closure of the union of the $L^n(X,\mathcal{B}_\lambda,p)$.
        
        \item As above, notice that for split idempotents of Banach spaces, in order for $L^n(X,\mathcal{C}_\infty,p)$ to be an infimum it suffices to check that it is the intersection of the $L^n(X,\mathcal{C}_\lambda,p)$, and that it admits a retract. Again as above, the retract is given by the conditional expectation map $(\pi^+)^*:L^n(X,\mathcal{A},p)\to L^n(X,\mathcal{C}_\infty,p)$, where $\pi:(X,\mathcal{A},p)\to(X,\mathcal{C}_\infty,p)$ is the kernel induced by the set-theoretic identity.

        To prove that $L^n(X,\mathcal{C}_\infty,p)$ is the intersection, suppose that $f\in L^n(X,\mathcal{A},p)$ is $\mathcal{C}_\lambda$-measurable for all $\lambda$. Then for every Borel set $B\subseteq\R$, $f^{-1}(B)\in\mathcal{C}_\lambda$ for all $\lambda$, so that $f^{-1}(C)\in\mathcal{C}_\infty$, and hence $f$ is $\mathcal{C}_\infty$-measurable. 
        Conversely, if $f$ is $\mathcal{C}_\infty$-measurable, then it is $\mathcal{C}_\lambda$-measurable for all $\lambda$. So $f\in L^n(X,\mathcal{C},p)$ if and only if for all $\lambda$, $f\in L^n(X,\mathcal{C}_\lambda,p)$.  \qedhere
    \end{enumerate}
\end{proof}

\begin{corollary}\label{marti_ban}
    By \Cref{banupi} and \Cref{bandowni}, we have that
    \begin{itemize}
        \item In the increasing case, $L^n(X,\mathcal{B}_\infty,p)$ is the colimit in $\cat{Ban}_{\le 1}$ of the $L^n(X,\mathcal{B}_i,p)$ and the inclusion maps $(\pi_{i+1,i})^*:L^n(X,\mathcal{B}_{i},p)\to L^n(X,\mathcal{B}_{i+1},p)$.
        \item In the decreasing case, $L^n(X,\mathcal{B}_\infty,p)$ is the limit in $\cat{Ban}_{\le 1}$ of the $L^n(X,\mathcal{B}_i,p)$ and the inclusion maps $(\pi_{i,i+1})^*:L^n(X,\mathcal{B}_{i+1},p)\to L^n(X,\mathcal{B}_{i},p)$.
    \end{itemize}
\end{corollary}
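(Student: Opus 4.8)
The plan is to transport the concrete identifications already obtained in \Cref{preserves_optima} into the language of limits and colimits in $\cat{Ban}_{\le 1}$, by recognizing that the diagrams in question are exactly the ones covered by \Cref{banupi} and \Cref{bandowni}. Since essentially all the analytic content is supplied by these earlier results, the corollary is a matter of bookkeeping: matching objects, maps, and arrow directions.

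First I would fix the ambient Banach space to be $L^n(X,\mathcal{A},p)$ and realize each $L^n(X,\mathcal{B}_\lambda,p)$ as a closed subspace of it via the isometric embedding $(\pi_\lambda)^*$, where $\pi_\lambda:(X,\mathcal{A},p)\to(X,\mathcal{B}_\lambda,p)$ is the coarse-graining kernel. That these embeddings have closed image follows as in \Cref{id_split_ban}, since $L^n(X,\mathcal{B}_\lambda,p)$ is the image of the idempotent operator $(e_{\mathcal{B}_\lambda})^*$ on $L^n(X,\mathcal{A},p)$; alternatively, it is complete and isometrically embedded, hence closed. By \Cref{RV} all of these maps are $1$-Lipschitz, so the entire diagram lives in $\cat{Ban}_{\le 1}$.

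Next I would verify that, under this realization, the transition maps $(\pi_{i+1,i})^*$ coincide with the subspace inclusions appearing in \Cref{banupi} and \Cref{bandowni}. In the increasing case this is the factorization of coarse-grainings $\pi_i=\pi_{i+1,i}\circ\pi_{i+1}$, which upon applying the contravariant functor $L^n$ gives $(\pi_i)^*=(\pi_{i+1})^*\circ(\pi_{i+1,i})^*$; this says exactly that the inclusion $L^n(X,\mathcal{B}_i,p)\hookrightarrow L^n(X,\mathcal{A},p)$ factors through $L^n(X,\mathcal{B}_{i+1},p)$ via $(\pi_{i+1,i})^*$, so the latter is the subspace inclusion. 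The decreasing case is symmetric, with the roles of the indices exchanged.

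Finally I would invoke \Cref{preserves_optima} to identify $L^n(X,\mathcal{B}_\infty,p)$ with the closure of the union $\mathrm{cl}(\bigcup_\lambda L^n(X,\mathcal{B}_\lambda,p))$ in the increasing case, and with the intersection $\bigcap_\lambda L^n(X,\mathcal{B}_\lambda,p)$ in the decreasing case. Then \Cref{banupi} immediately yields that the former is the filtered colimit in $\cat{Ban}_{\le 1}$, and \Cref{bandowni} that the latter is the cofiltered limit in $\cat{Ban}$ and $\cat{Ban}_{\le 1}$. I expect the main (and only mild) obstacle to be making the identification of $(\pi_{i+1,i})^*$ with the subspace inclusions fully precise, in particular keeping track of the variance of $L^n$ so that the directions of the arrows match those required by \Cref{banupi} and \Cref{bandowni}.
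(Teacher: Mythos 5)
Your proposal is correct and is exactly the argument the paper intends: the corollary is stated without proof as a consequence of \Cref{preserves_optima} (which identifies $L^n(X,\mathcal{B}_\infty,p)$, embedded via $(\pi_\infty)^*$, with the closure of the union, resp.\ the intersection, of the subspaces $L^n(X,\mathcal{B}_\lambda,p)$) together with \Cref{banupi} and \Cref{bandowni}. Your bookkeeping — realizing each $L^n(X,\mathcal{B}_\lambda,p)$ as a closed subspace via the isometric embedding $(\pi_\lambda)^*$ and using the factorization $(\pi_i)^*=(\pi_{i+1})^*\circ(\pi_{i+1,i})^*$ to match the transition maps with the subspace inclusions — is precisely the implicit content of the paper's citation of those two propositions.
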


Moreover, in the case of $L^2$, which is a Hilbert space, the dagger structure gives us the following dual statements:
\begin{corollary}\label{marti_hilb}
    By \Cref{hilbupp} and \Cref{hilbdownp}, we have that 
    \begin{itemize}
        \item In the increasing case, $L^2(X,\mathcal{B}_\infty,p)$ is the limit in $\cat{Hilb}_{\le 1}$ of the $L^2(X,\mathcal{B}_i,p)$ and the projection maps $(\pi_{i+1,i}^+)^*:L^2(X,\mathcal{B}_{i+1},p)\to L^2(X,\mathcal{B}_{i},p)$.
        \item In the decreasing case, $L^2(X,\mathcal{B}_\infty,p)$ is the colimit in $\cat{Hilb}_{\le 1}$ of the $L^2(X,\mathcal{B}_i,p)$ and the inclusion maps $(\pi_{i+1,i}^+)^*:L^2(X,\mathcal{B}_{i},p)\to L^2(X,\mathcal{B}_{i+1},p)$.
    \end{itemize}
\end{corollary}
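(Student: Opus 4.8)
The plan is to apply \Cref{hilbupp,hilbdownp} to the concrete Hilbert space $X=L^2(X,\mathcal{A},p)$, taking as the net of closed subspaces the images $A_\lambda=L^2(X,\mathcal{B}_\lambda,p)$ under the isometric inclusions $(\pi_\lambda)^*$. First I would record that each $A_\lambda$ is genuinely a closed subspace: the inclusion $(\pi_\lambda)^*:L^2(X,\mathcal{B}_\lambda,p)\to L^2(X,\mathcal{A},p)$ is a dagger mono, hence an isometry, and its image is complete in the $L^2$-norm and therefore closed. Since $L^2$ is order-preserving on idempotents, an increasing filtration $\mathcal{B}_\lambda\subseteq\mathcal{B}_\mu$ produces an increasing net of subspaces $A_\lambda\subseteq A_\mu$, while a decreasing filtration produces a decreasing net.

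The second step is to identify the transition maps. Because $L^2:\cat{EKrn}^\op\to\cat{Hilb}$ is a dagger functor, each dagger-split idempotent $(\pi_\lambda,\pi_\lambda^+)$ in $\cat{EKrn}$ is sent to a dagger-split idempotent in $\cat{Hilb}$, i.e.\ to an orthogonal projection. Concretely, the conditional expectation operators $(\pi^+)^*$ appearing in the statement are exactly the orthogonal projections between the subspaces $A_\lambda$, matching the standard fact that $\mathbb{E}[\,\cdot\mid\mathcal{B}]$ is orthogonal projection onto $L^2(X,\mathcal{B},p)$. These are precisely the projection maps used in \Cref{hilbupp,hilbdownp}.

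The third step is to identify the apex $A_\infty$. By \Cref{preserves_optima}, in the increasing case $L^2(X,\mathcal{B}_\infty,p)$ is the supremum of the $A_\lambda$ in the order of split idempotents on $L^2(X,\mathcal{A},p)$, which for closed subspaces of a Hilbert space is exactly $\mathrm{cl}\!\left(\bigcup_\lambda A_\lambda\right)$; in the decreasing case $L^2(X,\mathcal{B}_\infty,p)$ is the infimum, which is exactly $\bigcap_\lambda A_\lambda$. With the subspaces, their transition projections, and their apex all matched to the hypotheses, \Cref{hilbupp} gives that $L^2(X,\mathcal{B}_\infty,p)$ is the cofiltered limit in $\cat{Hilb}_{\le 1}$ in the increasing case, and \Cref{hilbdownp} gives that it is the filtered colimit in the decreasing case, which is the claim.

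Since the whole argument is an application of results already proved, there is no serious obstacle; the only point demanding care is the bookkeeping of arrow directions. The functors reverse arrows and the dagger of $\cat{Hilb}$ further swaps limits with colimits, so that the \emph{inclusion}-based colimit/limit description of \Cref{marti_ban} in $\cat{Ban}_{\le 1}$ turns into the \emph{projection}-based limit/colimit description here, with increasing filtrations now giving limits and decreasing filtrations giving colimits, the opposite pattern to the Banach case.
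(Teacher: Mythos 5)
Your proposal is correct and follows essentially the same route as the paper's (implicit) proof: identify each $L^2(X,\mathcal{B}_\lambda,p)$ with a closed subspace of $L^2(X,\mathcal{A},p)$ via the isometries $(\pi_\lambda)^*$, use \Cref{preserves_optima} to recognize $L^2(X,\mathcal{B}_\infty,p)$ as the closure of the union (resp.\ the intersection) of these subspaces, and then apply \Cref{hilbupp} and \Cref{hilbdownp} with the conditional-expectation operators as the orthogonal projections. Your closing observation is also accurate: both transition systems in this corollary are projection maps, so the word ``inclusion'' in the decreasing-case statement is best read as a slip for ``projection'', and the increasing/decreasing pattern of limits versus colimits is indeed opposite to that of \Cref{marti_ban}.
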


The latter property says in particular that given an $L^2$-bounded martingale (for example, sequential) $(f_n)$ on a filtration $(\mathcal{B}_n)$, forming a cone as below,
\[
\begin{tikzcd}[column sep=small]
    &&& \R \ar{dlll}[swap]{f_0} \ar{dl}{f_1} \ar[dashed]{drrr}{f} \\
    L^2(X,\mathcal{B}_0,p) 
     && L^2(X,\mathcal{B}_1,p) \ar{ll}{(\pi_{1,0}^+)^*} 
     && \cdots \ar{ll}{(\pi_{2,1}^+)^*} 
     && L^2(X,\mathcal{B}_\infty,p) \ar{ll}
\end{tikzcd}
\]
there exists a unique random variable $f$ as above making the diagram commute, i.e.~a $\mathcal{B}_\infty$-measurable random variable $f$ such that such that for all $n$, $f_n=\E[f|\mathcal{B}_n]$. This is in general not the case for $L^1$, in particular if the $f_i$ are not uniformly integrable.

\begin{example}\label{non_int_mart}
    Let $X=[0,1]$ with its Borel sigma-algebra. For $n\in\N$, let $\mathcal{B}_n$ be the sigma-algebra generated by the inverse powers of 2, i.e. by the intervals $[0,1/2^n],(1/n,2/2^n],\dots,((2^n-1)/2^n, 1]$. 
    The functions $f_i$ given by
    \[
    f_n(x) \coloneqq \begin{cases}
        2^n & x \le 1/2^n \\
        0 & x > 1/2^n
    \end{cases}
    \]
    form a martingale on $(\mathcal{B}_n)_{n\in\N}$, and we have that for all $n$,
    \[
    \|f_n\|_{L^1} \;=\; \int_{[0,1]} |f_n(x)|\,dx \;=\; \int_{[0,1/2^n]} 2^n \,dx \;=\; 1 .
    \]
    However there does not exist an $f$ such that for all $n$, $\E[f|\mathcal{B}_n]=f_n$. 
\end{example}

What we \emph{always} get, by having a colimit in $\SI(L^1(X,\mathcal{A},p))$, is that if \emph{we have} a random variable $f\in L^1(X,\mathcal{A},p)$, and its conditional expectations $f_n$ with respects to a filtration $(\mathcal{B}_n)_{n=0}^\infty$, then the conditional expectation of $f$ given $\mathcal{B}_\infty=\bigvee_n\mathcal{B}_n$ is a $\mathcal{B}$-measurable random variable $f_\infty$ such that for all $n$, $f_n=\E[f_\infty|\mathcal{B}_n]$. 
In diagrams, given $f:\R\to L^1(X,\mathcal{A},p)$,
\[
\begin{tikzcd}[row sep=huge, column sep=tiny]
    &&&&& \R \ar{d}{f} \\
    &&&&& L^1(X,\mathcal{A},p) \ar{dlllll}[swap]{(\pi^+_1)^*} \ar{dl}{(\pi^+_2)^*} \ar{dr} \ar{drrrrr}{(\pi^+_\infty)^*} \\
    L^1(X,\mathcal{B}_1,p) \ar{drrrrr}[swap]{(\pi_1)^*} \ar[shift right]{rrrr}[swap, near end]{(\pi_{21})^*} 
     &&&& L^1(X,\mathcal{B}_2,p) \ar{dr}{(\pi_1)^*} \ar[shift right]{rr}[swap]{(\pi_{32})^*} \ar[shift right]{llll}[swap, near start]{(\pi_{21})^*}
     && \cdots \ar{dl} \ar[shift right]{rrrr} \ar[shift right]{ll}[swap]{(\pi^+_{32})^*}
     &&&& L^1(X,\mathcal{B}_\infty,p) \ar{dlllll}{(\pi_\infty)^*} \ar[shift right]{llll} \\
	&&&&& L^1(X,\mathcal{A},p)
\end{tikzcd}
\]
the conditional expectations $f_n$ can be expressed as $(\pi^+_n)^*f\in L^1(X,\mathcal{B}_n,p)$, and the map $f_\infty$ can be written as $(\pi^+_\infty)^*f\in L^1(X,\mathcal{B}_\infty,p)$. By further postcomposing with the maps $(\pi_n)^*$ and $(\pi_\infty)^*$, one can of course consider all these maps again as random variables on $(X,\mathcal{A},p)$. 
Analogous remarks can be made for decreasing filtrations.

The \emph{martingale convergence theorem} says that moreover, this map $f_\infty$ is the limit (in the topological sense) of the $f_n$. We can state and prove this categorically as well, but in order to talk about topological convergence we need an additional ingredient: a topological enrichment.

\section{Topological enrichment}
\label{sec_enrichment}

We now equip both our categories of kernels and our categories of vector spaces with a topological enrichment.
This amounts to equipping each hom-set $\cat{C}(X,Y)$ with a topology which makes the composition of morphisms continuous, and it allows us to talk about convergence of morphisms. When morphisms are functions, in our examples this convergence is the pointwise one.
More details on this enrichment are given in \Cref{sec_top}.

\subsection{The enrichments of Ban and Hilb}
\label{sec_banenrichment}

Let $X$ and $Y$ be Banach spaces. We equip the set of bounded linear maps $\cat{Ban}(X,Y)$ with the topology of pointwise convergence. That is, we say that a net $(f_\lambda:X\to Y)_{\lambda\in\Lambda}$ tends to $f:X\to Y$ if and only if for all $x\in X$, $f_\lambda(x)\to f(x)$ in $Y$, or equivalently,
\[
\| f(x) - f_\lambda(x) \|_Y \to 0 .
\]
Note that we are not requiring that the convergence be uniform in $x$. 

A subbasis of this topology is given by the sets
\[
\{f : \|f(x) - y\|_Y < r \}
\]
for $x\in X$, $y\in Y$, and $r>0$.

In the case of $X=Y$, this topology is sometimes called the \emph{strong operator topology}.

\begin{proposition}
    This choice of topology for each pair of Banach spaces $X$ and $Y$ makes $\cat{Ban}$ enriched in $\cat{Top}$, in the sense of \Cref{sec_top}.
\end{proposition}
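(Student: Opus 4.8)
The plan is to verify directly the axioms of a $\cat{Top}$-enriched category as set out in \Cref{sec_top}. The associativity and unit coherence conditions are equalities of maps that already hold at the level of underlying sets (composition of bounded linear maps is associative and unital), so they hold automatically once the relevant structure maps are known to be continuous. The unit map, which selects $\id_X \in \cat{Ban}(X,X)$ out of the monoidal unit, is a map out of the one-point space and is therefore trivially continuous. Hence the whole content of the proposition is the continuity of the composition map
\[
c \colon \cat{Ban}(Y,Z) \otimes \cat{Ban}(X,Y) \longrightarrow \cat{Ban}(X,Z), \qquad (g,f) \longmapsto g\circ f,
\]
where $\otimes$ is the tensor product of the closed monoidal structure on $\cat{Top}$ recalled in \Cref{sec_top}.

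For continuity I would argue pointwise, since the target carries the topology of pointwise convergence. For a fixed $x \in X$ and a net $(g_\lambda, f_\lambda) \to (g,f)$, the triangle inequality gives
\[
\| (g_\lambda\circ f_\lambda)(x) - (g\circ f)(x) \|_Z \;\le\; \|g_\lambda\|_{\mathrm{op}}\,\|f_\lambda(x) - f(x)\|_Y \;+\; \|g_\lambda(f(x)) - g(f(x))\|_Z .
\]
The second summand tends to $0$ because $g_\lambda \to g$ pointwise, evaluated at the single point $f(x) \in Y$. The first summand tends to $0$ as soon as $\|f_\lambda(x) - f(x)\|_Y \to 0$, which holds by hypothesis, \emph{provided} the operator norms $\|g_\lambda\|_{\mathrm{op}}$ stay bounded along the net.

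This uniform bound on $\|g_\lambda\|_{\mathrm{op}}$ is the one delicate point, and it is where the choice of monoidal structure matters. For \emph{sequences} the bound is free: a pointwise-convergent sequence of operators is pointwise bounded, so Banach--Steinhaus forces $\sup_n \|g_n\|_{\mathrm{op}} < \infty$, and the estimate finishes the job. For general \emph{nets} this can fail, which is precisely why the naive cartesian product topology would not suffice and why the closed monoidal structure of \Cref{sec_top} is used instead: by the description of the tensor product there, a map out of $\cat{Ban}(Y,Z)\otimes\cat{Ban}(X,Y)$ is continuous once its restriction to every product $K \times L$ of compact subsets is continuous. A subset $K \subseteq \cat{Ban}(Y,Z)$ that is compact in the pointwise topology is bounded in the topological-vector-space sense, which for this topology means it is pointwise bounded; Banach--Steinhaus then yields a uniform bound $\sup_{g\in K}\|g\|_{\mathrm{op}} < \infty$. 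On such $K \times L$ the estimate above gives continuity of $c|_{K\times L}$, hence continuity of $c$, which is exactly the remaining axiom. The main obstacle, then, is not the elementary estimate but recognizing that it only works on norm-bounded families and that the closed structure reduces continuity to exactly those compact, hence norm-bounded, pieces.
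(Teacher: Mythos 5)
Your argument rests on a mischaracterization of the monoidal structure on $\cat{Top}$ that the paper uses. The tensor product of \Cref{sec_top} is the topology of \emph{separately open} sets, and the paper shows that a map out of $X \otimes Y$ is continuous if and only if it is \emph{separately} continuous; the enrichment axiom in \Cref{sec_topcat} is even stated directly as separate continuity of composition. Nowhere is there a reduction to products $K \times L$ of compact subsets: that compact-generation style criterion belongs to a different monoidal structure (on compactly generated spaces, say) and is false for the separate-continuity tensor product. Consequently the second half of your proof --- compactness in the pointwise topology, pointwise boundedness, Banach--Steinhaus --- establishes a statement that is neither what the definition asks for nor known to imply it, so as written the proof does not go through.

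What is actually required is only separate continuity of $(f,g)\mapsto g\circ f$, and this is immediate, with no uniform bound needed. With $g$ fixed and $f_\lambda \to f$ pointwise, one has
\[
\| g(f_\lambda(x)) - g(f(x)) \|_Z \;\le\; \|g\|_{\mathrm{op}}\,\| f_\lambda(x) - f(x) \|_Y \;\to\; 0 ,
\]
where only the single fixed operator norm $\|g\|_{\mathrm{op}}$ enters; with $f$ fixed and $g_\lambda \to g$ pointwise, evaluating at the point $y = f(x)$ gives $g_\lambda(f(x)) \to g(f(x))$. This two-line argument is exactly the paper's proof. Note that your own triangle-inequality estimate, specialized to one variable held constant, already contains both observations: your difficulty with uniform boundedness of $\|g_\lambda\|_{\mathrm{op}}$ arises only because you set out to prove joint continuity along nets, which the enrichment does not demand --- indeed, the failure of joint continuity of composition in the strong operator topology is precisely the reason the paper enriches over $(\cat{Top},\otimes)$ rather than over $\cat{Top}$ with the cartesian product.
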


The statement follows immediately since we can consider $\cat{Ban}$ a subcategory of $\cat{Top}$ (with the corresponding enrichment). We nevertheless provide a direct proof for clarity.

\begin{proof}
    We have to prove that for all Banach spaces $X$, $Y$ and $Z$, the composition map 
    \[
    \begin{tikzcd}[row sep=0]
        \cat{Ban}(X,Y) \times \cat{Ban}(Y,Z) \ar{r} & \cat{Ban}(X,Z) \\
        (f,g) \ar[mapsto]{r} & g\circ f
    \end{tikzcd}
    \]
    is separately continuous in both variables. 
    \begin{enumerate}
        \item To prove continuity in $f$ (of the postcomposition with $g$), suppose that $f_\lambda\to f:X\to Y$. 
        This means that for all $x\in X$, $f_\lambda(x)\to f(x)$ in $Y$. 
        Since $g$ is continuous, then $g(f_\lambda(x))\to g(f(x))$. 
        This holds for all $x$, which means exactly that $g\circ f_\lambda \to g\circ f$.

        \item To prove continuity in $g$ (of the precomposition with $f$), suppose that $g_\lambda\to g:Y\to Z$. 
        This means that for all $y\in Y$, $g_\lambda(y)\to g(y)$ in $Z$. 
        Setting $y=f(x)$, we see that for all $x\in X$, $g_\lambda(f(x))\to g(f(x))$, i.e.~$g_\lambda\circ f\to g\circ f$. \qedhere
    \end{enumerate}
\end{proof}

Let's now turn to Hilbert spaces. One can consider $\cat{Hilb}$ as a subcategory of $\cat{Ban}$, and inherit the enrichment accordingly. However, this enrichment is not compatible with the dagger structure, i.e.~it does not respect the symmetry: if $f_\lambda\to f:X\to Y$, it is not necessarily true that $f^+_\lambda\to f^+:Y\to X$. 
Therefore we define the following topology: we say that a net $(f_\lambda:X\to Y)_{\lambda=0}^\infty$ tends to $f:X\to Y$ if and only if 
\begin{enumerate}
    \item for all $x\in X$, $f_\lambda(x)\to f(x)$ in $Y$; and
    \item for all $y\in Y$, $f_\lambda^+(y)\to f^+(y)$ in $X$.
\end{enumerate}
Once again, we do not require that the convergence be uniform in $x$. 
A subbasis of this topology is given by the sets
\[
\{f : \|f(x) - y\|_Y < r, \| f^+(y) - x \|_X < r \} 
\]
for $x\in X$, $y\in Y$, and $r>0$.

In the case of $X=Y$, this topology is sometimes called the \emph{strong-* operator topology}.
Note that for self-adjoint operators, this topology coincide with the strong operator topology.

\begin{proposition}
    This choice of topology for each pair of Banach spaces $X$ and $Y$ makes $\cat{Hilb}$ enriched in $\cat{Top}$, in the sense of \Cref{sec_top}.
\end{proposition}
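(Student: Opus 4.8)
The plan is to mirror the proof just given for $\cat{Ban}$, checking that the composition map is separately continuous in each variable, but now with respect to the strong-* topology. The crucial difference is that at every step I must verify convergence of \emph{both} the operators and their adjoints, rather than of the operators alone. The single algebraic fact that makes everything go through is that the dagger is a contravariant involution, so that $(g\circ f)^+ = f^+\circ g^+$; this is exactly what allows the second (adjoint) convergence condition to propagate through composition.

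First I would prove continuity of postcomposition by a fixed $g:Y\to Z$. Assume $f_\lambda\to f:X\to Y$ in the strong-* sense, so that $f_\lambda(x)\to f(x)$ for every $x\in X$ and $f_\lambda^+(y)\to f^+(y)$ for every $y\in Y$. For the first defining condition of $g\circ f_\lambda\to g\circ f$, continuity (boundedness) of $g$ gives $g(f_\lambda(x))\to g(f(x))$ for all $x$. For the second, I use $(g\circ f_\lambda)^+ = f_\lambda^+\circ g^+$: setting $y=g^+(z)$ for an arbitrary $z\in Z$, the hypothesis gives $f_\lambda^+(g^+(z))\to f^+(g^+(z))$, which is precisely the statement $(g\circ f_\lambda)^+(z)\to (g\circ f)^+(z)$ in $X$.

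Dually, I would prove continuity of precomposition by a fixed $f:X\to Y$. Assume $g_\lambda\to g:Y\to Z$, so that $g_\lambda(y)\to g(y)$ for all $y\in Y$ and $g_\lambda^+(z)\to g^+(z)$ for all $z\in Z$. The first condition follows by setting $y=f(x)$, giving $g_\lambda(f(x))\to g(f(x))$. For the second, I use $(g_\lambda\circ f)^+ = f^+\circ g_\lambda^+$ together with the boundedness, hence continuity, of $f^+$: the convergence $g_\lambda^+(z)\to g^+(z)$ in $Y$ yields $f^+(g_\lambda^+(z))\to f^+(g^+(z))$ in $X$, which is what is required.

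I do not expect a genuine obstacle here, since both verifications reduce to the continuity of individual bounded operators combined with the identity $(g\circ f)^+=f^+\circ g^+$. The only point that demands care—and indeed the reason the $\cat{Ban}$ enrichment fails to respect the dagger while this one succeeds—is that one must carry the adjoint conditions along at every step, and it is precisely the contravariance of the dagger that pairs them up correctly with the forward conditions.
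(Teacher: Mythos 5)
Your proof is correct and follows essentially the same route as the paper: verify separate continuity of composition in the strong-* topology, using continuity of the fixed operator for one of the two convergence conditions and a substitution (evaluating the hypothesis at $g^+(z)$, respectively $f(x)$) together with the identity $(g\circ f)^+ = f^+\circ g^+$ for the other. The only difference is that the paper writes out just the postcomposition case and dismisses precomposition as ``completely analogous, and dual,'' whereas you spell out both halves explicitly.
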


\begin{proof}
    We have to prove that for all Banach spaces $X$, $Y$ and $Z$, the composition map 
    \[
    \begin{tikzcd}[row sep=0]
        \cat{Hilb}(X,Y) \times \cat{Hilb}(Y,Z) \ar{r} & \cat{Hilb}(X,Z) \\
        (f,g) \ar[mapsto]{r} & g\circ f
    \end{tikzcd}
    \]
    is separately continuous in both variables. 
    
    To prove continuity in $f$ (of the postcomposition with $g$), suppose that $f_\lambda\to f:X\to Y$. 
    This means that for all $x\in X$, $f_\lambda(x)\to f(x)$ in $Y$, and for all $y\in Y$, $f_\lambda^+(y)\to f^+(y)$ in $X$. 
    Now since $g$ is continuous, then $g(f_\lambda(x))\to g(f(x))$ for all $x$.
    Moreover, setting $y=g^+(z)$, we have that for all $z\in Z$, $f_\lambda^+(g^+(z))\to f^+(g^+(z))$, i.e.~$(g\circ f_\lambda)^+(z) = (g\circ f)^+(z)$.
    Therefore $g\circ f_\lambda \to g\circ f$.

    Continuity in $g$ is completely analogous, and dual.
\end{proof}

This choice of topology makes the functor $(-)^+:\cat{Hilb}\to\cat{Hilb}$ an equivalence of $\cat{Top}$-enriched categories.
Therefore we have a \emph{topologically enriched dagger category}.

\subsection{The enrichment of Krn and EKrn}
\label{sec_krnenrichment}

A Markov kernel $(X,\mathcal{A},p)\to(Y,\mathcal{B},q)$, intuitively, is at the same time a measurable function on $X$, and a probability measure on $Y$. Because of this, we define a notion of convergence which is like the $L^1$ convergence of measures on $X$, and like the setwise convergence of probability measure on $Y$.
More in detail, consider probability spaces $(X,\mathcal{A},p)$ and $(Y,\mathcal{B},q)$, and let $(k_\lambda:(X,\mathcal{A},p)\to(Y,\mathcal{B},q))_{\lambda\in\Lambda}$ be a sequence of measure-preserving kernels (up to a.s.~equality).
\begin{enumerate}
    \item We say that $k_\lambda\to k$ in the \newterm{one-sided topology} if and only if for every every measurable set $B\in\mathcal{B}$, 
        \[
        \int_X \big| k(B|x) - k_\lambda(B|x) \big| \, p(dx) \;\to\; 0 ;
        \]
    \item If $(X,\mathcal{A},p)$ and $(Y,\mathcal{B},q)$ are essentially standard Borel, we say that $(k_\lambda)$ tends to $k$ in the \newterm{two-sided topology} if and only if both $k_\lambda\to k$ and $k_\lambda^+\to k^+$ in the one-sided topology.
\end{enumerate}
Just as for $\cat{Hilb}$, we will have that the two-sided topology makes $\cat{EKrn}$ an enriched \emph{dagger} category. 
On those kernels $e:X\to X$ which are their own Bayesian inverse (for example, idempotent kernels), the one-sided topology and the two-sided topology coincide.

\begin{theorem}\label{homeomorph}
    Let $(X,\mathcal{A},p)$ and $(Y,\mathcal{B},q)$ be probability spaces, and let $(k_\lambda:(X,\mathcal{A},p)\to(Y,\mathcal{B},q))_{\lambda\in\Lambda}$ be a sequence of measure-preserving kernels (up to a.s.~equality).
    Then the $k_\lambda$ tend to a kernel $k$ in the one-sided topology if and only if for any (hence all) $n$ (including $n=\infty$), the maps $(k_\lambda)^*:L^n(Y,\mathcal{B},q)\to L^n(Y,\mathcal{B},q)$ tend to $k^*$. 
\end{theorem}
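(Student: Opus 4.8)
The plan is to prove both implications by testing on indicator functions and then bootstrapping to general random variables by density. The bridge between the two topologies is the observation that for an indicator $1_B$ with $B\in\mathcal{B}$ one has $(k_\lambda)^*1_B(x)=\int_Y 1_B(y)\,k_\lambda(dy|x)=k_\lambda(B|x)$, and likewise $k^*1_B(x)=k(B|x)$, so that
\[
\big\| (k_\lambda)^*1_B - k^*1_B \big\|_{L^n}^n \;=\; \int_X \big| k_\lambda(B|x)-k(B|x) \big|^n \, p(dx),
\]
which ties operator convergence evaluated at $1_B$ directly to the integral defining the one-sided topology. Here $k^*\colon L^n(Y,\mathcal{B},q)\to L^n(X,\mathcal{A},p)$, and indicators lie in every $L^n$, including $L^\infty$.

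For the direction ``operator convergence $\Rightarrow$ one-sided'', fix a finite $n$ and assume $(k_\lambda)^*\to k^*$ in the pointwise topology. The displayed quantity then tends to $0$, and the norm ordering $\|\cdot\|_{L^1}\le\|\cdot\|_{L^n}$ established above gives $\int_X|k_\lambda(B|x)-k(B|x)|\,p(dx)\to 0$ for every $B$, which is exactly convergence in the one-sided topology. For $n=\infty$ this is even more immediate, since the essential supremum dominates the $L^1$ norm.

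For the converse, ``one-sided $\Rightarrow$ operator convergence'', I would argue in three steps for finite $n$. First, for an indicator I use $|k_\lambda(B|x)-k(B|x)|\le 1$, so $|k_\lambda(B|x)-k(B|x)|^n\le|k_\lambda(B|x)-k(B|x)|$ and hence $\|(k_\lambda)^*1_B-k^*1_B\|_{L^n}^n\le\int_X|k_\lambda(B|x)-k(B|x)|\,p(dx)\to 0$. Second, by linearity of $g\mapsto k^*g$ the same holds for every simple function $s=\sum_i c_i 1_{B_i}$. Third, for general $g\in L^n(Y,\mathcal{B},q)$ I approximate $g$ in $L^n$ by a simple $s$ with $\|g-s\|_{L^n}<\varepsilon$ and split
\[
\|(k_\lambda)^*g-k^*g\|_{L^n} \le \|(k_\lambda)^*(g-s)\|_{L^n} + \|(k_\lambda)^*s-k^*s\|_{L^n} + \|k^*(s-g)\|_{L^n}.
\]
The outer two terms are each at most $\|g-s\|_{L^n}<\varepsilon$ because every $(k_\lambda)^*$ and $k^*$ is $1$-Lipschitz (\Cref{RV}), and the middle term tends to $0$ by the simple-function case; letting $\varepsilon\to 0$ gives the conclusion. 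The ``any (hence all) $n$'' clause then follows since, by the two implications just proved, each finite $n$ is equivalent to the single one-sided condition, which itself does not mention $n$.

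The main obstacle is the passage from simple functions to general $g$: a priori the net limit in $\lambda$ and the $L^n$-approximation of $g$ need not commute, and it is the \emph{uniform} $1$-Lipschitz bound on the whole family $\{(k_\lambda)^*\}$, i.e.\ equicontinuity, that legitimizes the $3\varepsilon$ estimate above. The case $n=\infty$ is where I expect the real difficulty and deserves separate treatment: operator convergence on $L^\infty$ trivially forces the one-sided topology, but the reverse implication concerns convergence in essential supremum, that is, uniform convergence, which is strictly stronger than the $L^1$-type control supplied by the one-sided topology; the indicator estimate used for finite $n$ does not transfer, so one must either exploit extra structure of the kernels or interpret this endpoint with care.
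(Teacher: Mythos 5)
For finite $n$, your proof is correct and is essentially the paper's own argument: the paper likewise reduces operator convergence to convergence on indicators (passing through simple functions by linearity, and from simple functions to general $g\in L^n$ by density plus the uniform $1$-Lipschitz bound of \Cref{RV} --- its \Cref{densitylemma} is exactly your equicontinuity/$3\varepsilon$ step packaged as a lemma), and your pointwise bound $|k_\lambda(B|x)-k(B|x)|^n\le|k_\lambda(B|x)-k(B|x)|$ is just a cleaner form of the interpolation $\|f\|_{L^n}^n\le\|f\|_{L^1}\,\|f\|_{L^\infty}^{n-1}$ that the paper uses for the same purpose, with Jensen giving the reverse implication in both write-ups.

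Your caution about the $n=\infty$ endpoint is not a defect of your proposal; it is exactly right, and this is where the paper itself has a genuine gap. The paper dismisses that case with ``follows by letting $n\to\infty$ in the case above,'' but the finite-$n$ estimate only yields $\|f\|_{L^n}\le\|f\|_{L^1}^{1/n}$ when $\|f\|_{L^\infty}\le 1$, and the right-hand side tends to $1$, not $0$, as $n\to\infty$, so nothing survives the limit. In fact the implication ``one-sided $\Rightarrow$ $L^\infty$-operator convergence'' is false. Concretely, take $X=Y=[0,1]$ with Lebesgue measure, $k=\delta_{\mathrm{id}}$, and $k_m=\delta_{T_m}$, where $T_m$ is the measure-preserving involution exchanging $[0,1/m)$ and $[1/2,1/2+1/m)$ by translation and fixing all other points. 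For every Borel set $B$,
\[
\int_0^1 \big|k_m(B|x)-k(B|x)\big|\,dx \;\le\; \big|\{x : T_m(x)\neq x\}\big| \;=\; \tfrac{2}{m} \;\longrightarrow\; 0,
\]
so $k_m\to k$ in the one-sided topology, and hence (by the finite-$n$ part of the theorem, which you proved) $(k_m)^*\to k^*$ on every $L^n$ with $n$ finite. But for $g=1_{[1/2,1]}$ the function $(k_m)^*g-k^*g=g\circ T_m-g$ has absolute value $1$ on a set of measure $2/m>0$, so $\|(k_m)^*g-k^*g\|_{L^\infty}=1$ for every $m$. Thus one-sided convergence does not imply operator convergence on $L^\infty$, and the ``hence all'' clause also breaks once $n=\infty$ is included; at that endpoint only the implication you called trivial ($L^\infty$-operator convergence implies one-sided convergence) survives. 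The statement and proof should be restricted to finite $n$ --- which is all that is used downstream (e.g.\ \Cref{hilb_homeo} needs $n=2$) --- and the same caveat applies wherever $n=\infty$ is swept into ``for all $n$'' claims such as \Cref{Ln_enriched}.
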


In other words, the map 
\[
\begin{tikzcd}[row sep=0]
    \cat{EKrn}\big( (X,\mathcal{A},p), (Y,\mathcal{B},q) \big) \ar{r} & \cat{Ban}\big( L^n(Y,\mathcal{B},q), L^n(X,\mathcal{A},p) \big) \\
    k \ar[mapsto]{r} & k^*
\end{tikzcd}
\]
induces a homeomorphism onto its image.

Let's prove the theorem using the following auxiliary statement.

\begin{lemma}\label{densitylemma}
    Let $X$ and $Y$ be metric spaces, and let $D\subseteq X$ be a dense subset. Let $f:X\to Y$ be 1-Lipschitz, and $(f_\lambda:X\to Y)$ be a net of $1$-Lipschitz functions.
    Then $f_\lambda\to f$ pointwise if and only if for all $d\in D$, $f_\lambda(d)\to f(d)$. 
\end{lemma}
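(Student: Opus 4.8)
The forward implication is immediate: pointwise convergence on all of $X$ restricts in particular to convergence at every point of $D$. The entire content is in the converse, and the plan is to run the standard three-$\e$ density argument, with the two $1$-Lipschitz hypotheses doing the work of controlling the ``horizontal'' errors uniformly in $\lambda$. Throughout I would write $d_X$ and $d_Y$ for the two metrics.

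So suppose $f_\lambda(d)\to f(d)$ for every $d\in D$, fix an arbitrary $x\in X$, and let $\e>0$. First I would invoke density of $D$ to choose a point $d\in D$ with $d_X(x,d)<\e/3$; crucially this choice is made once and does not depend on $\lambda$. I would then split the distance $d_Y(f_\lambda(x),f(x))$ via the triangle inequality through the two points $f_\lambda(d)$ and $f(d)$:
\[
d_Y\big(f_\lambda(x),f(x)\big) \;\le\; d_Y\big(f_\lambda(x),f_\lambda(d)\big) + d_Y\big(f_\lambda(d),f(d)\big) + d_Y\big(f(d),f(x)\big).
\]
The first term is bounded by $d_X(x,d)<\e/3$ because each $f_\lambda$ is $1$-Lipschitz, and the third term is bounded by $d_X(d,x)<\e/3$ because $f$ is $1$-Lipschitz; both bounds hold with the same constant for every $\lambda$. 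For the middle term, the pointwise hypothesis at $d$ furnishes an index $\lambda_0$ such that $d_Y(f_\lambda(d),f(d))<\e/3$ for all $\lambda\ge\lambda_0$. Combining the three estimates gives $d_Y(f_\lambda(x),f(x))<\e$ for all $\lambda\ge\lambda_0$, so $f_\lambda(x)\to f(x)$.

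Since $x$ was arbitrary, this establishes pointwise convergence on all of $X$. There is no real obstacle here; the only points deserving attention are the order of the quantifiers---the choice of $d$, and hence of $\lambda_0$, is allowed to depend on both $x$ and $\e$, which is exactly what pointwise (rather than uniform) convergence permits---and the observation that the Lipschitz bounds on the two outer terms are independent of $\lambda$, so that the tail estimate on the middle term alone suffices to conclude.
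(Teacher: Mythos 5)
Your proof is correct and follows essentially the same argument as the paper: a triangle-inequality split through a nearby point of $D$, with the two $1$-Lipschitz hypotheses bounding the outer terms uniformly in $\lambda$ and the pointwise hypothesis at the dense point handling the middle term. The only cosmetic difference is that the paper approximates $x$ by a sequence in $D$ and settles for a $3\e$ bound, whereas you choose a single point within $\e/3$; the substance is identical.
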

\begin{proof}[Proof of \Cref{densitylemma}]
    Clearly, if the $f_\lambda$ tend to $f$ pointwise, so do their restrictions to $D$.

    Conversely, suppose that for all $d\in D$, $f_\lambda(d)\to f(d)$. 
    Now let $x\in X$. By density we can find a sequence $(x_i)$ in $D$ tending to $x$. 
    For every $\e>0$ there exists $I_\e\in\N$ such that for all $i\ge I_\e$, $d(x,x_i)<\e$. 
    Moreover, for all $i\in\N$, since $x_i\in D$ we have that $f_\lambda(x_i)\to f(x_i)$. So for all $\e>0$ and for all $i\in\N$ we can find $\lambda_{\e,i}$ such that for all $\mu\ge \lambda_{\e,i}$, $d\big(f_\mu(x_i),f(x_i)\big)<\e$. 
    Given $\e>0$, take now $i\ge I_\e$ and $\mu\ge \lambda_{\e,i}$. Since $f$ and $f_\mu$ are $1$-Lipschitz,
    \begin{align*}
    d\big(f_\mu(x),f(x)\big) \;&\le\; d\big(f_\mu(x),f_\mu(x_i)\big) + d\big(f_\mu(x_i),f(x_i)\big) + d\big(f(x_i),f(x)\big) \\
    &\le\; d(x,x_i) + d\big(f_\mu(x_i),f(x_i)\big) + d(x_i,x) \\
    &< 3\e .
    \end{align*}
    Therefore $f_\lambda(x)\to f(x)$. 
\end{proof}

\begin{proof}[Proof of \Cref{homeomorph}]
    We can restate the theorem as the fact that the following conditions are equivalent for all $n$:
    \begin{enumerate}
        \item\label{onset} For all $B\in\mathcal{B}$, 
        \[
        \int_X \big| k(B|x) - k_\lambda(B|x) \big| \, p(dx) \to 0 ;
        \]
        \item\label{onRV} For all $g\in L^n(Y,\mathcal{B},q)$,
        \[
        \left\| k^*g - k_\lambda^*g \right\|_{L^n} \to 0 .
        \]
    \end{enumerate}
    Since the maps $k^*$ and $k_\lambda^*$ are 1-Lipschitz (\Cref{RV}), using \Cref{densitylemma} we can equivalently test condition \ref{onRV} on simple functions, i.e.~those in the form 
    \[
    g(x) = \sum_{i=1}^k \alpha_i\, 1_{B_i}(x) ,
    \]
    where $\alpha_i\in\R$ and $B_i\in\mathcal{B}$, which are dense in $L^n$. 
    We can then equivalently rewrite the integral in \ref{onRV} as
        \[
        \left\| \sum_{i=1}^k \alpha_i \, \big( k(B_i|x) - k_\lambda(B_i|x) \big) \right\|_{L^n} \;\le\quad \sum_{i=1}^k |\alpha_i| \,  \left\| \big( k(B_i|x) - k_\lambda(B_i|x) \big) \right\|_{L^n} ,
        \]
    and so we can rewrite condition \ref{onRV} equivalently as
    \begin{enumerate}\setcounter{enumi}{2}
        \item\label{onsetn} For all $B\in\mathcal{B}$, 
        \[
        \left\| \big( k(B|x) - k_\lambda(B|x) \big) \right\|_{L^n} \to 0 .
        \]
    \end{enumerate}

    With this in mind, let's distinguish the few cases.
    \begin{itemize}
        \item For $n=1$, conditions \ref{onset} and \ref{onsetn} are exactly the same.
        
        \item For $0<n<\infty$, notice that for all $f\in L^n(X,\mathcal{A},p)$,
        \begin{align*}
            (\|f\|_{L^n})^n \;&=\; \int_X |f|^n\,dp \\
            &=\; \int_X |f| \,|f|^{n-1}\,dp \\
            &\le\; \int_X |f| \,\ess\sup |f|^{n-1}\,dp \\
            &= \|f\|_{L^1} \, \|f\|_{L^\infty}^{n-1} .
        \end{align*}
        Setting now $f=k(B|x) - k_\lambda(B|x)$, so that $\|f\|_{L^\infty}\le 1$, we have that $(\|f\|_{L^n})^n\le (\|f\|_{L^1})$, and so $\ref{onset}\Rightarrow\ref{onsetn}$
        The converse statement $\ref{onsetn}\Rightarrow\ref{onset}$ follow by Jensen's inequality. 
        
        \item The $n=\infty$ case follows by letting $n\to\infty$ in the case above.  \qedhere
    \end{itemize}
\end{proof}

\begin{corollary}\label{hilb_homeo}
    Let $(X,\mathcal{A},p)$ and $(Y,\mathcal{B},q)$ be probability spaces, and consider a net of measure-preserving kernels $(k_\lambda:(X,\mathcal{A},p)\to(Y,\mathcal{B},q))_{\lambda\in\Lambda}$ (up to a.s.~equality).
    Then the $k_\lambda$ tend to a kernel $k$ in the two-sided topology if and only if the maps $(k_\lambda)^*:L^2(Y,\mathcal{B},q)\to L^2(X,\mathcal{A},p)$ tend to $k^*$ in the topology of $\cat{Hilb}$. 
\end{corollary}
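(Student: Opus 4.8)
The plan is to derive this directly from \Cref{homeomorph} applied with $n=2$, together with the fact (established above) that $L^2$ is a dagger functor, i.e.\ $(k^*)^+ = (k^+)^*$ for every kernel $k$. Both the two-sided topology on kernels and the topology of $\cat{Hilb}$ on operators are defined as the conjunction of a ``forward'' and a ``backward'' (adjoint) convergence, so the strategy is simply to match these paired conditions against each other.

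First I would unfold the definition of the two-sided topology: by definition $k_\lambda\to k$ in the two-sided topology means exactly that $k_\lambda\to k$ \emph{and} $k_\lambda^+\to k^+$, both in the one-sided topology. Applying \Cref{homeomorph} with $n=2$ to each of these two one-sided convergences separately, the first is equivalent to $(k_\lambda)^*\to k^*$ pointwise in $L^2$, and the second is equivalent to $(k_\lambda^+)^*\to (k^+)^*$ pointwise in $L^2$. Note this is legitimate because \Cref{homeomorph} is being applied to two nets of measure-preserving kernels, namely $(k_\lambda)$ and the net of Bayesian inverses $(k_\lambda^+)$.

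Next I would rewrite the second condition using dagger functoriality of $L^2$: since $(k_\lambda^+)^* = \big((k_\lambda)^*\big)^+$ and $(k^+)^* = (k^*)^+$, the convergence $(k_\lambda^+)^*\to (k^+)^*$ pointwise becomes $\big((k_\lambda)^*\big)^+\to (k^*)^+$ pointwise. Combining the two equivalences, the two-sided convergence $k_\lambda\to k$ is equivalent to the conjunction that $(k_\lambda)^* g\to k^* g$ for all $g\in L^2(Y,\mathcal{B},q)$ and $\big((k_\lambda)^*\big)^+ f\to (k^*)^+ f$ for all $f\in L^2(X,\mathcal{A},p)$. This is precisely the definition of $(k_\lambda)^*\to k^*$ in the topology of $\cat{Hilb}$, which completes the argument.

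There is essentially no hard step here: the content is carried entirely by \Cref{homeomorph} and the dagger compatibility of $L^2$, so the only real care needed is in pairing the forward convergence with the forward condition and the adjoint convergence with the adjoint condition. The sole point worth stating explicitly is that \Cref{homeomorph} must be invoked twice, once for each member of the pair defining the two-sided topology.
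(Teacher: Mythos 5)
Your proof is correct and follows essentially the same route as the paper: unfold the two-sided topology into its two one-sided conditions, apply \Cref{homeomorph} with $n=2$ to each, and identify the resulting pair of pointwise convergences with convergence in the topology of $\cat{Hilb}$. The only difference is cosmetic: you explicitly invoke the dagger functoriality $(k^+)^* = (k^*)^+$ to make the identification, a step the paper's proof leaves implicit.
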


\begin{proof}
    By \Cref{homeomorph}, we have that $k_\lambda\to k$ in the one-sided topology if and only if $(k_\lambda)^*:L^2(Y,\mathcal{B},q)\to L^2(X,\mathcal{A},p)$ tends to $k^*$ pointwise.
    Similarly, $k_\lambda^+\to k^+$ in the one-sided topology if and only if $(k_\lambda^+)^*:L^2(X,\mathcal{A},p)\to L^2(Y,\mathcal{B},q)$ tends to $(k^+)^*$ pointwise.
    Now $k_\lambda\to k$ in the two-sided topology if and only if $k_\lambda\to k$ and $k_\lambda^+\to k^+$ in the one-sided topology, which happens if and only if $(k_\lambda)^*\to k^*$ and $(k_\lambda^+)^*\to(k^+)^*$ pointwise. The latter condition is precisely convergence in the topology of $\cat{Hilb}$. 
\end{proof}

\begin{proposition}
    The one-sided and two-sided topologies enrich $\cat{GKrn}$ in $\cat{Top}$, in the sense of \Cref{sec_top}.
\end{proposition}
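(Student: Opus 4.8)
The plan is to reduce the statement to the enrichments of $\cat{Ban}$ and $\cat{Hilb}$ already established above, transferring them along the homeomorphisms of \Cref{homeomorph} and \Cref{hilb_homeo}. Following the convention for $\cat{Top}$-enrichment of \Cref{sec_top} (the same one verified above for $\cat{Ban}$ and $\cat{Hilb}$), it suffices to show that for all objects $X$, $Y$, $Z$ the composition map
\[
\cat{GKrn}(X,Y)\times\cat{GKrn}(Y,Z)\to\cat{GKrn}(X,Z),\qquad (k,\ell)\mapsto \ell\circ k
\]
is separately continuous in each variable, where the hom-sets carry the one-sided topology (and the analogous statement on $\cat{EKrn}$ for the two-sided topology, which is only defined there).

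For the one-sided case I would use the contravariant functoriality of $L^n$, namely $(\ell\circ k)^* = k^*\circ\ell^*$. By \Cref{homeomorph} the assignment $k\mapsto k^*$ is a homeomorphism of $\cat{GKrn}(X,Y)$ (one-sided topology) onto its image in $\cat{Ban}\big(L^n(Y),L^n(X)\big)$ (pointwise topology), and likewise for the other two hom-sets; injectivity here is the faithfulness of $L^n$ proven earlier. Fixing $\ell$, the map $k\mapsto \ell\circ k$ is carried by these homeomorphisms to $k^*\mapsto k^*\circ\ell^*$, i.e.\ precomposition with the fixed operator $\ell^*$; fixing $k$, the map $\ell\mapsto\ell\circ k$ is carried to $\ell^*\mapsto k^*\circ\ell^*$, i.e.\ postcomposition with $k^*$. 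Both are continuous because composition in $\cat{Ban}$ is separately continuous (as shown above), and since $(-)^*$ is a homeomorphism onto its image, the single-variable maps $c_A(-,\ell)$ and $c_A(k,-)$ are continuous as well, being conjugates $h^{-1}\circ(\text{Ban operation})\circ h$ of continuous maps. Hence composition in $\cat{GKrn}$ is separately continuous for the one-sided topology.

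The two-sided case on $\cat{EKrn}$ is identical, with $\cat{Hilb}$ in place of $\cat{Ban}$: by \Cref{hilb_homeo} the map $k\mapsto k^*$ is a homeomorphism of $\cat{EKrn}(X,Y)$ (two-sided topology) onto its image in $\cat{Hilb}\big(L^2(Y),L^2(X)\big)$ (strong-$*$ topology), and the same conjugation argument, again using $(\ell\circ k)^*=k^*\circ\ell^*$ and the separate continuity of composition in $\cat{Hilb}$, gives separate continuity of composition in $\cat{EKrn}$.

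I expect the delicate points to be bookkeeping rather than substance: one must track the contravariance carefully (it swaps the order of composition, so ``fixing $\ell$'' becomes \emph{pre}composition and ``fixing $k$'' becomes \emph{post}composition in the target), and one must note explicitly that the two-sided topology is defined only for essentially standard Borel spaces, so that part of the claim really concerns $\cat{EKrn}$. As a self-contained alternative avoiding the transfer, one can verify separate continuity directly from the definition: postcomposition continuity follows since $(\ell\circ k)(C\mid\cdot)=k^*\big(\ell(C\mid\cdot)\big)$ with $\ell(C\mid\cdot)\in L^\infty(Y)$, so convergence reduces to \Cref{homeomorph}; while precomposition continuity follows from the triangle inequality together with the measure-preservation of $k$, which turns $\int_X\int_Y\big|\ell(C\mid y)-\ell_\lambda(C\mid y)\big|\,k(dy\mid x)\,p(dx)$ into $\int_Y\big|\ell(C\mid y)-\ell_\lambda(C\mid y)\big|\,q(dy)\to 0$.
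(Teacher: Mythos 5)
Your main argument is correct, and it takes a genuinely different route from the paper's. The paper verifies separate continuity directly at the level of kernels: for continuity in $k$ it rewrites $\int_X \big|(h\circ k)(C|x)-(h\circ k_\lambda)(C|x)\big|\,p(dx)$ as $\|k^*h_C-k_\lambda^*h_C\|_{L^1}$ with $h_C=h(C|\cdot)$ and applies \Cref{homeomorph} in one direction only; for continuity in $h$ it performs exactly the Jensen/measure-preservation estimate you sketch as your self-contained alternative; and it then deduces the two-sided case from the one-sided one by pure dagger algebra, via $(h\circ k)^+=k^+\circ h^+$, never invoking $\cat{Hilb}$. Your route instead packages everything as a transfer of enrichment along $(-)^*$: faithfulness of $L^n$ together with \Cref{homeomorph} (resp.\ \Cref{hilb_homeo}) exhibits each hom-set of $\cat{GKrn}$ with the one-sided topology (resp.\ of $\cat{EKrn}$ with the two-sided topology) as homeomorphic to a subspace of a hom-set of $\cat{Ban}$ (resp.\ $\cat{Hilb}$); contravariance $(\ell\circ k)^*=k^*\circ\ell^*$ shows these subspaces are stable under pre- and postcomposition with fixed operators in the image; and the already-proved enrichments of $\cat{Ban}$ and $\cat{Hilb}$ give separate continuity by conjugation. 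What you gain is structure and economy: your proof is an instance of the general fact that a $\cat{Top}$-enrichment pulls back along a functor that is injective and a homeomorphism onto its image on hom-sets, and it treats the two-sided case uniformly; note, though, that it needs \emph{both} directions of \Cref{homeomorph}, not just the forward implication. What the paper gains is independence from the $\cat{Hilb}$ apparatus (neither \Cref{hilb_homeo} nor the strong-$*$ enrichment is needed) and an elementary kernel-level estimate that shows explicitly where measure preservation of $k$ enters---content that in your route is hidden inside \Cref{RV} and \Cref{homeomorph}. Your two bookkeeping caveats (contravariance swapping pre- and postcomposition, and the two-sided claim really living on $\cat{EKrn}$, where Bayesian inverses exist) are both correct; the paper's own two-sided argument indeed silently assumes the daggers exist.
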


\begin{proof}
    Let's first prove the assert for the one-sided topology.
    We have to prove that for all probability spaces $(X,\mathcal{A},p)$, $(Y,\mathcal{B},q)$ and $(Z,\mathcal{C},r)$, the composition map 
    \[
    \begin{tikzcd}[row sep=0]
        \cat{GKrn}\big((X,\mathcal{A},p),(Y,\mathcal{B},q)\big) \times \cat{GKrn}\big((Y,\mathcal{B},q),(Z,\mathcal{C},r)\big) \ar{r} 
         & \cat{GKrn}\big((X,\mathcal{A},p),(Z,\mathcal{C},r)\big) \\
        (k,h) \ar[mapsto]{r} & h\circ k
    \end{tikzcd}
    \]
    is separately continuous in both variables. 
    
    \begin{enumerate}
        \item\label{ki} To prove continuity in $k$ (of the postcomposition with $h$), suppose that $k_\lambda\to k$.
        Then for all $C\in\mathcal{C}$,
        \begin{align*}
        \int_X \big| (h\circ k)(C|x) - (h\circ k_\lambda)(C|x) \big| \, p(dx) 
         \;&=\; \int_X \left| \int_Y h(C|y)\,k(dy|x) - \int_Y h(C|y)\,k_\lambda(dy|x) \right| \, p(dx) \\
         \;&=\; \| k^*h_C - k_\lambda^*h_C \|_{L^1}
        \end{align*}
        where $h_C(y)=h(C|y)$. By \Cref{homeomorph}, we know that $\| k^*h_C - k_\lambda^*h_C \|_{L^1} \to 0$.
        This holds for all $C\in\mathcal{C}$, and so $h\circ k_\lambda\to h\circ k$. 
        
        \item\label{hi} To prove continuity in $h$ (of the precomposition with $k$), suppose that $h_\lambda\to h$, meaning that for all $C\in\mathcal{C}$, 
        \[
        \int_Y \big| h(C|y) - h_\lambda(C|y) \big| \, q(dy) \;\to\; 0 .
        \]
        Now for every $C\in\mathcal{C}$, by Jensen's inequality and the fact that $k$ is measure-preserving,
        \begin{align*}
        \int_X \big| (h\circ k)(C|x) - (h_\lambda\circ k)(C|x) \big| \, p(dx) 
         \;&=\; \int_X \left| \int_Y h(C|y)\,k(dy|x) - \int_Y h_\lambda(C|y)\,k(dy|x) \right| \, p(dx) \\ 
         \;&\le\; \int_X \int_Y  \big| h(C|y) - h_\lambda(C|y) \big| \,k(dy|x) \, p(dx) \\ 
         \;&=\; \int_Y \big| h(C|y) - h_\lambda(C|y) \big| \, q(dy) \;\to\; 0 .
        \end{align*}
        Therefore $h_\lambda\circ k\to h\circ k$.  
    \end{enumerate}

    Let's now turn to the two-sided topology.
    Expressing the two-sided topology in terms of the one-sided one, we equivalently have to prove that
    \begin{enumerate}
    \setcounter{enumi}{2}
        \item\label{kii} If $k_\lambda\to k$ and $k_\lambda^+\to k^+$, then $h\circ k_\lambda\to h\circ k$ and $k_\lambda^+\circ h^+\to k^+\circ h^+$;
        \item\label{hii} If $h_\lambda\to h$ and $h_\lambda^+\to h^+$, then $h_\lambda\circ k\to h\circ k$ and $k^+\circ h_\lambda^+\to k^+\circ h^+$.
    \end{enumerate}
    (All the convergences above are meant in the one-sided topology).
    To prove \ref{kii}, note that if $k_\lambda\to k$, then $h\circ k_\lambda\to h\circ k$ by \ref{ki}. Similarly, if $k_\lambda^+\to k^+$, then $k_\lambda^+\circ h^+\to k^+\circ h^+$ by \ref{hi} (after renaming). 
    Similarly, to prove \ref{hii}, note that if $h_\lambda\to h$, then $h_\lambda\circ k\to h\circ k$ by \ref{hi}, and if $h_\lambda^+\to h^+$, then $k^+\circ h_\lambda^+\to k^+\circ h^+$ by \ref{ki}.
\end{proof}

In particular, this way $\cat{EKrn}$ is an enriched dagger category, like $\cat{Hilb}$. 

\begin{corollary}\label{Ln_enriched}
    The following functors are enriched:
    \begin{itemize}
        \item $L^n:\cat{Krn}^\op\to\cat{Ban}$, where $\cat{Krn}$ has either the one-sided or two-sided topology;
        \item $L^2:\cat{Krn}^\op\to\cat{Hilb}$, where $\cat{Krn}$ has the two-sided topology;
        \item $L^n\circ(-)^+:\cat{Krn}\to\cat{Ban}$ and $L^2\circ(-)^+:\cat{Krn}\to\cat{Hilb}$, where $\cat{Krn}$ has the two-sided topology. These are the (covariant) functors analogous to the RV functors of \cite{vanbelle2023martingales}.
    \end{itemize} 
\end{corollary}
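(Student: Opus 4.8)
The plan is to observe that every clause is an immediate consequence of the two homeomorphism results already established, namely \Cref{homeomorph} and \Cref{hilb_homeo}, together with two elementary topological facts: that the two-sided topology refines the one-sided one, and that the dagger $(-)^+$ is continuous for the two-sided topology. Recall that a $\cat{Top}$-enriched functor is one whose action on each hom-set is continuous; since all four functors in the statement send a kernel $k$ to $k^*$, possibly after precomposing with the dagger, verifying enrichment reduces to checking continuity of these hom-set maps.

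First I would dispatch the first bullet. The action of $L^n:\cat{Krn}^\op\to\cat{Ban}$ on the hom-set from $(X,\mathcal{A},p)$ to $(Y,\mathcal{B},q)$ is precisely $k\mapsto k^*$, and \Cref{homeomorph} states that $k_\lambda\to k$ in the one-sided topology if and only if $k_\lambda^*\to k^*$ in the pointwise topology of $\cat{Ban}$. In particular this map is continuous, so $L^n$ is enriched when $\cat{Krn}$ carries the one-sided topology. To obtain the same for the two-sided topology, I would note that convergence there requires both $k_\lambda\to k$ and $k_\lambda^+\to k^+$ in the one-sided topology, so the two-sided topology is finer. A map continuous out of the coarser topology remains continuous out of the finer one, giving enrichment of $L^n$ for the two-sided topology as well. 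For $L^2:\cat{Krn}^\op\to\cat{Hilb}$ I would instead appeal directly to \Cref{hilb_homeo}, which says that $k_\lambda\to k$ in the two-sided topology if and only if $k_\lambda^*\to k^*$ in the strong-$*$ topology of $\cat{Hilb}$; this is exactly the continuity of $k\mapsto k^*$ into $\cat{Hilb}$.

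Finally, for the covariant functors $L^n\circ(-)^+$ and $L^2\circ(-)^+$, I would use that $(-)^+:\cat{Krn}\to\cat{Krn}^\op$ is continuous on hom-sets for the two-sided topology. Since that topology is defined symmetrically, so that $k_\lambda\to k$ in it if and only if $k_\lambda^+\to k^+$ in it, the dagger is in fact a homeomorphism on each hom-set, as already recorded in the statement that it yields an equivalence of $\cat{Top}$-enriched categories. A composite of enriched functors is enriched, so precomposing the continuous maps $k\mapsto k^*$ with the continuous dagger produces the continuity of $k\mapsto (k^+)^*$ into $\cat{Ban}$ and into $\cat{Hilb}$, completing all four claims.

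There is essentially no hard step here: the substantive analytic work was already carried out in the proof of \Cref{homeomorph}, which reduced pointwise convergence of the operators $k^*$ to setwise $L^1$-convergence of the kernels through a density argument on simple functions via \Cref{densitylemma}. The only points requiring care are the direction of the refinement between the one- and two-sided topologies, and the symmetry of the two-sided topology under the dagger; once these are recorded, continuity propagates purely formally, and I do not anticipate any genuine obstacle.
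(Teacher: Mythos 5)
Your proposal is correct and follows exactly the route the paper intends: the corollary is stated without an explicit proof precisely because it is the combination of \Cref{homeomorph}, \Cref{hilb_homeo}, and the continuity of the dagger for the two-sided topology that you spell out. The two details you flag — that the two-sided topology refines the one-sided one, and that the dagger is a hom-set homeomorphism for the two-sided topology — are exactly the implicit steps, and your handling of them is sound.
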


\begin{proposition}
    The composition of $1$-Lipschitz maps between metric spaces is jointly continuous for the pointwise order.
\end{proposition}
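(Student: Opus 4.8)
The plan is to upgrade the separate continuity of composition, established for $\cat{Ban}$ earlier, to \emph{joint} continuity by exploiting the $1$-Lipschitz hypothesis. Concretely, let $X$, $Y$, $Z$ be metric spaces, let $(f_\lambda:X\to Y)$ be a net of $1$-Lipschitz maps converging pointwise to a $1$-Lipschitz map $f$, and let $(g_\lambda:Y\to Z)$ be a net of $1$-Lipschitz maps converging pointwise to a $1$-Lipschitz map $g$. I want to show $g_\lambda\circ f_\lambda\to g\circ f$ pointwise. So I fix a point $x\in X$ and estimate the distance $d\big(g_\lambda(f_\lambda(x)),\,g(f(x))\big)$.

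The key step is to insert the intermediate term $g_\lambda(f(x))$ and apply the triangle inequality:
\[
d\big(g_\lambda(f_\lambda(x)),\,g(f(x))\big) \;\le\; d\big(g_\lambda(f_\lambda(x)),\,g_\lambda(f(x))\big) + d\big(g_\lambda(f(x)),\,g(f(x))\big).
\]
The second summand tends to $0$ because $g_\lambda\to g$ pointwise, evaluated at the fixed point $f(x)\in Y$. For the first summand I would use that each $g_\lambda$ is $1$-Lipschitz, so that it is bounded above by $d\big(f_\lambda(x),f(x)\big)$, which in turn tends to $0$ because $f_\lambda\to f$ pointwise at $x$. Hence both summands vanish in the limit, giving $d\big(g_\lambda(f_\lambda(x)),\,g(f(x))\big)\to 0$. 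Since $x$ was arbitrary, this is exactly pointwise convergence of the composites, so composition is jointly continuous.

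The point to get right — rather than a genuine obstacle — is that the Lipschitz bound must be applied to the \emph{varying} outer maps $g_\lambda$ and not to the limit $g$: it is precisely the uniform ($1$-Lipschitz) control on the $g_\lambda$ that lets the perturbation from $f(x)$ to $f_\lambda(x)$ be absorbed, and this is what separates genuine joint continuity here from the merely separate continuity available without a Lipschitz hypothesis. I would also remark that the statement is about nets, since the pointwise topology is in general non-metrizable; but the argument is phrased directly in terms of the two given convergences of nets, so it goes through verbatim without any metrizability assumption.
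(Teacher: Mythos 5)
Your proof is correct and is essentially the same argument as the paper's: both insert the intermediate point $g_\lambda(f(x))$, use the triangle inequality, control one term by the pointwise convergence $g_\lambda(f(x))\to g(f(x))$ at the fixed point $f(x)$, and control the other by applying the $1$-Lipschitz bound to the \emph{varying} maps $g_\lambda$ so that it is dominated by $d(f_\lambda(x),f(x))\to 0$. Your closing remark correctly identifies the crux (uniform Lipschitz control on the outer maps), which is exactly the leverage the paper's proof uses as well.
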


\begin{proof}
    Let $X$, $Y$ and $Z$ be metric spaces. Suppose that $(f_\lambda)$ is a net of $1$-Lipschitz functions $X\to Y$ tending pointwise to $f$, and that $(g_\lambda)$ is a net of $1$-Lipschitz functions $Y\to Z$ tending pointwise to $g$.
    Then for every $x\in X$, for every $\e>0$ we can find $\lambda\in\Lambda$ such that for all $\mu\ge\lambda$, both $d\big( f(x), f_\mu(x) \big)$ and $d\big( g(f(x)), g_\mu(f(x))$ are less than $\e$. Therefore
    \begin{align*}
        d\big( g(f(x)), g_\mu(f_\mu(x)) \big) \;&\le\; d\big( g(f(x)), g_\mu(f(x)) + d\big( g_\mu(f(x)), g_\mu(f_\mu(x)) \\
        &\le\; d\big( g(f(x)), g_\mu(f(x)) + d\big( f(x), f_\mu(x) \big) \\
        &<\; 2\,\e .
    \end{align*}
    Hence $g_\lambda\circ f_\lambda\to g\circ f$.
\end{proof}

\begin{corollary}\label{jointly}
    The composition in $\cat{GKrn}$ is jointly continuous, not just separately continuous. 
\end{corollary}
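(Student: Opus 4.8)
The plan is to deduce joint continuity of composition in $\cat{GKrn}$ from the preceding proposition on composition of $1$-Lipschitz maps, transporting the statement into $\cat{Ban}$ by means of \Cref{homeomorph}. The guiding idea is that, topologically, composition of kernels is faithfully represented by composition of the associated operators $k^*$ on any one of the $L^n$ spaces.

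Concretely, fix probability spaces $(X,\mathcal{A},p)$, $(Y,\mathcal{B},q)$ and $(Z,\mathcal{C},r)$, and suppose we are given nets $k_\lambda\to k$ in $\cat{GKrn}\big((X,\mathcal{A},p),(Y,\mathcal{B},q)\big)$ and $h_\lambda\to h$ in $\cat{GKrn}\big((Y,\mathcal{B},q),(Z,\mathcal{C},r)\big)$, both in the one-sided topology. First I would invoke \Cref{homeomorph} (say with $n=1$) to rewrite these as pointwise convergences $k_\lambda^*\to k^*$ and $h_\lambda^*\to h^*$ of the corresponding operators between the relevant $L^1$ spaces, which are $1$-Lipschitz by \Cref{RV}. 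Since $L^1$ is contravariant, composition is reversed, so that $(h_\lambda\circ k_\lambda)^* = k_\lambda^*\circ h_\lambda^*$ and $(h\circ k)^* = k^*\circ h^*$. Applying the preceding proposition to the two convergent nets of $1$-Lipschitz maps then gives $k_\lambda^*\circ h_\lambda^*\to k^*\circ h^*$ pointwise, i.e.\ $(h_\lambda\circ k_\lambda)^*\to (h\circ k)^*$ pointwise. Using the ``if'' direction of \Cref{homeomorph} once more, this pulls back to $h_\lambda\circ k_\lambda\to h\circ k$ in the one-sided topology, which is exactly joint continuity.

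The only thing that needs care is the bookkeeping forced by contravariance: passing to operators reverses the order of composition, so the two nets swap roles when feeding them into the preceding proposition, and one must check that \Cref{homeomorph} is used in both directions (to push the hypotheses into $\cat{Ban}$ and to pull the conclusion back). Beyond this there is no genuine obstacle, since all the analytic content is already packaged in \Cref{homeomorph} and in the joint continuity of composition of $1$-Lipschitz maps. For the two-sided topology on $\cat{EKrn}$ the same argument applies verbatim: two-sided convergence of $k_\lambda$ unpacks to one-sided convergence of both $k_\lambda$ and $k_\lambda^+$, composition commutes with the dagger, and so joint continuity on each ``side'' assembles into joint continuity in the two-sided topology.
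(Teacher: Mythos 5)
Your proof is correct and is essentially the paper's intended argument: the corollary is stated without an explicit proof precisely because it follows from the preceding proposition on $1$-Lipschitz maps by transporting along the homeomorphism of \Cref{homeomorph}, exactly as you do, with the contravariant swap $(h\circ k)^* = k^*\circ h^*$ handled correctly. Your extension to the two-sided topology on $\cat{EKrn}$ via $(h\circ k)^+ = k^+\circ h^+$ is also sound and matches how the paper treats the two-sided case elsewhere.
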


\subsection{The subspace of idempotent kernels}
\label{sec_idenrichment}

Let's now restrict to the case of almost surely idempotent kernels. 

\begin{proposition}
    Let $(X,\mathcal{A},p)$ be a probability space, and let $(e_\lambda)$ be a net of a.s.\ idempotent kernels on $(X,\mathcal{A},p)$ tending to a kernel $e$. Then $e$ is a.s.\ idempotent as well. 

    In other words, almost surely idempotent kernels form a closed subset of $\cat{GKrn}\big((X,\mathcal{A},p),(X,\mathcal{A},p)\big)$.
\end{proposition}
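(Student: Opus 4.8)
The plan is to reduce the statement to the joint continuity of composition proved in \Cref{jointly}, combined with the observation that the hom-sets of $\cat{GKrn}$ consist of kernels taken \emph{up to almost sure equality}, which makes the one-sided topology Hausdorff. Concretely, I want to produce two expressions for the limit of the same net and then invoke uniqueness of limits.

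First I would note that, since each $e_\lambda$ is almost surely idempotent, $e_\lambda\circ e_\lambda = e_\lambda$ as a morphism of $\cat{GKrn}$. Hence the net $(e_\lambda\circ e_\lambda)_{\lambda}$ is literally the net $(e_\lambda)_\lambda$, and so it converges to $e$ by hypothesis. On the other hand, applying \Cref{jointly} to the pair of nets $(e_\lambda)$ and $(e_\lambda)$, both of which tend to $e$, gives that $e_\lambda\circ e_\lambda \to e\circ e$ in the one-sided topology. Thus one and the same net converges both to $e$ and to $e\circ e$.

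It remains to argue uniqueness of limits, i.e.\ that the one-sided topology is Hausdorff on the quotient. I would check this directly: if $k_\lambda\to k$ and $k_\lambda\to k'$, then for every $B\in\mathcal{A}$ the triangle inequality gives $\int_X \lvert k(B\mid x)-k'(B\mid x)\rvert\,p(dx) \le \int_X \lvert k(B\mid x)-k_\lambda(B\mid x)\rvert\,p(dx) + \int_X \lvert k_\lambda(B\mid x)-k'(B\mid x)\rvert\,p(dx) \to 0$, and since the left-hand side is independent of $\lambda$ it must vanish; as this holds for all $B$, we get $k=k'$ almost surely, hence as morphisms. Applying this to $e$ and $e\circ e$ yields $e\circ e = e$, so $e$ is almost surely idempotent, and the subset is closed. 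The only point requiring care is that one genuinely needs \emph{joint} continuity here, not merely the separate continuity of the enrichment, because both factors of the composite vary along the net; this is precisely why \Cref{jointly} is the right tool to invoke.
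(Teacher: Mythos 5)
Your proof is correct and takes essentially the same route as the paper: joint continuity of composition (\Cref{jointly}) gives $e_\lambda\circ e_\lambda\to e\circ e$, while $e_\lambda\circ e_\lambda=e_\lambda\to e$, whence $e\circ e=e$. The only difference is that you explicitly verify uniqueness of limits in the one-sided topology, which the paper's one-line computation leaves implicit; this is a worthwhile bit of care, since the hom-sets are Hausdorff precisely because kernels are taken up to almost sure equality.
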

\begin{proof}
    Since composition in $\cat{GKrn}$ is jointly continuous (\Cref{jointly}), 
    \[
    e\circ e \;=\; \lim_\lambda (e_\lambda\circ e_\lambda) \;=\; \lim_\lambda e_\lambda \;=\; e.
    \]
    Therefore $e$ is idempotent.
\end{proof}

Now let's restrict to the almost surely idempotents on an essentially standard Borel space $(X,\mathcal{A},p)$.
Notice that since for all a.s.~idempotent kernels $e$ we have $e^+=e$ (almost surely), the one-sided and two-sided topologies coincide.

\begin{proposition}\label{closedorder}
    Let $(X,\mathcal{A},p)$ be a standard Borel probability space, and let $(e_\lambda)$ and $(f_\lambda)$ be nets of a.s.\ idempotent kernels on $(X,\mathcal{A},p)$, indexed by the same directed set, and tending respectively to kernels $e$ and $f$. 
    Suppose moreover that in the order of (split) idempotents, $e_\lambda\le f_\lambda$ for all $i$. Then $e\le f$.

    In other words, the order relation of idempotents on $(X,\mathcal{A},p)$ is closed.
\end{proposition}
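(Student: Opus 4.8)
The plan is to reduce the order relation $\le$ to a single algebraic identity among kernels and then to pass to the limit using joint continuity of composition. First I would observe that since we are on a standard Borel space, every a.s.\ idempotent kernel splits by \Cref{krnsplit}, and by the proposition immediately preceding this one (that a.s.\ idempotent kernels form a closed subset) the limits $e$ and $f$ are themselves a.s.\ idempotent; hence both are genuine split idempotents and the relation $e\le f$ is well defined. By condition~\ref{eep} of \Cref{eqcondorder}, the hypothesis $e_\lambda\le f_\lambda$ is equivalent to the equations
\[
e_\lambda\circ f_\lambda \;=\; f_\lambda\circ e_\lambda \;=\; e_\lambda
\]
holding for every $\lambda$, and likewise $e\le f$ is equivalent to $e\circ f = f\circ e = e$. (Since these are dagger idempotents by \Cref{selfadj}, the two equations are in fact interderivable via the dagger remark following \Cref{eqcondorder}, so checking either one would suffice.)

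Next I would invoke \Cref{jointly}, which states that composition in $\cat{GKrn}$ is jointly continuous. As $e_\lambda\to e$ and $f_\lambda\to f$ with both nets indexed by the same directed set, joint continuity yields $e_\lambda\circ f_\lambda\to e\circ f$ and $f_\lambda\circ e_\lambda\to f\circ e$. On the other hand, the nets $e_\lambda\circ f_\lambda$ and $f_\lambda\circ e_\lambda$ are both literally the net $e_\lambda$, which converges to $e$. Thus each of these nets has two limits, and by uniqueness of limits we conclude $e\circ f = e$ and $f\circ e = e$, that is, $e\le f$.

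The only point requiring genuine verification, and the step I expect to be the main (if modest) obstacle, is the uniqueness of limits used above, i.e.\ that the one-sided topology is Hausdorff. This follows from the triangle inequality for the $L^1$-type quantities defining convergence: if a net $(k_\lambda)$ tended to both $k$ and $k'$, then for every $B\in\mathcal{A}$,
\[
\int_X \big| k(B|x)-k'(B|x)\big|\,p(dx)
\;\le\;
\int_X \big| k(B|x)-k_\lambda(B|x)\big|\,p(dx)
+
\int_X \big| k_\lambda(B|x)-k'(B|x)\big|\,p(dx)
\;\longrightarrow\; 0,
\]
so that $k(B|\cdot)=k'(B|\cdot)$ for $p$-almost all $x$ and every $B$; since morphisms of $\cat{GKrn}$ are already taken up to almost sure equality, this forces $k=k'$. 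With Hausdorffness in hand the argument above is complete, and the order relation on idempotents is closed.
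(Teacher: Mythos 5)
Your proof is correct and follows essentially the same route as the paper's: both pass to the limit in the identity $e_\lambda\circ f_\lambda = e_\lambda$ (and $f_\lambda\circ e_\lambda=e_\lambda$) using the joint continuity of composition from \Cref{jointly}. The additional points you verify --- that the limits $e$ and $f$ are themselves a.s.\ idempotent, and that the one-sided topology is Hausdorff so limits are unique --- are precisely the details the paper leaves implicit in writing $e\circ f = \lim_\lambda(e_\lambda\circ f_\lambda) = \lim_\lambda e_\lambda = e$, and your verifications of them are sound.
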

\begin{proof}
    Since composition in $\cat{GKrn}$ is jointly continuous (\Cref{jointly}), 
    \[
    e\circ f \;=\; \lim_\lambda (e_\lambda\circ f_\lambda) \;=\; \lim_\lambda e_\lambda \;=\; e .
    \]
    The condition $f\circ e = e$ is proven similarly.
\end{proof}

We have seen in \Cref{sec_idempsigma} that idempotent kernels on a standard Borel probability space correspond to sub-sigma-algebras (up to almost sure equality). 
So equivalently, the topology we have on idempotent kernels can be seen as a topology on the set of (equivalence classes of) sub-sigma-algebras.
Concretely, we can say that a net $(\mathcal{B}_\lambda)$ of sub-sigma algebras of $(X,\mathcal{A},p)$ tends to a sub-sigma-algebra $\mathcal{B}\subseteq\mathcal{A}$ if and only if for the corresponding idempotents we have $e_\lambda\to e$, i.e.~for all measurable sets $A\in\mathcal{A}$,
\[
\int_X \big| \P[A|\mathcal{B}](x) - \P[A|\mathcal{B}_\lambda](x) \big|\,p(dx) \to 0 .
\]
Equivalently, if for all integrable random variables $f$,
\[
\int_X \big| \E[f|\mathcal{B}](x) - \E[f|\mathcal{B}_\lambda](x) \big|\,p(dx) \to 0 ,
\]
i.e.\ the conditional expectations $\E[f|\mathcal{B}_\lambda]$ tend to the conditional expectation $\E[f|\mathcal{B}]$ in $L^1$ (and also in $L^n$, when the resulting quantities are finite).

The martingale convergence theorems, which we will prove in the next section, will tell us that
\begin{itemize}
    \item For an increasing filtration $(\mathcal{B}_\lambda)$, the $\mathcal{B}_\lambda$ tend topologically to their supremum $\bigvee_\lambda \mathcal{B}_\lambda$;
    \item For a decreasing filtration $(\mathcal{C}_\lambda)$, the $\mathcal{C}_\lambda$ tend topologically to their infimum $\bigwedge_\lambda \mathcal{C}_\lambda$.
\end{itemize}

\section{Martingale convergence}
\label{sec_convergence}

We now have all the ingredients in place to talk about martingale convergence categorically:
\begin{itemize}
    \item A way to talk about conditional expectations (\Cref{sec_condexp});
    \item A way to talk about filtrations and martingales (\Cref{sec_martingales});
    \item A notion of convergence (the topological enrichment from \Cref{sec_enrichment}).
\end{itemize}
We will show that the topological convergence of martingales can be interpreted as the fact that \emph{convergence in the order implies convergence in the topology}, a phenomenon we call \emph{Levi property}. 

\subsection{Levi properties}
\label{sec_levi}

Recall Levi's theorem, which says that every bounded monotone real sequence converges topologically to its supremum. 
We can define the \emph{Levi property} of a poset analogously:

\begin{definition}
    Let $X$ be a topological space equipped with a partial order. We say that 
    \begin{itemize}
        \item $X$ has the \newterm{upward Levi property} if an upper-bounded upward monotone net (i.e.\ a sequence $(x_\lambda)$ such that for all $\lambda\le\mu$, $x_\lambda\le x_{\mu}$, and such that there exists $y$ such that for all $\lambda$, $x_\lambda\le y$) has a supremum, and tends to it topologically.
        \item $X$ has the \newterm{downward Levi property} if a lower-bounded downward monotone net has an infimum, and tends to it topologically.
    \end{itemize}
\end{definition}

\begin{proposition}\label{limitclosedorder}
    Let $X$ be a topological space equipped with a partial order, and suppose that the order relation is a separately closed (hence jointly closed) subset of $X\times X$. 
    Then if an upward monotone net converges, it converges to it supremum.
    
    Similarly, if a downward monotone net converges, it converges to its infimum.
\end{proposition}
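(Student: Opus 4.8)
The plan is to prove the upward statement, the downward one being formally dual, by establishing two facts about the limit $x$ of a convergent upward monotone net $(x_\lambda)_{\lambda\in\Lambda}$: that $x$ is an upper bound for $\{x_\lambda\}$, and that it is the least one. Both will follow directly from the hypothesis that the order relation $R=\{(a,b):a\le b\}$ has closed sections, since the only inputs I need are that, for each fixed $a$, the up-set $\{z:a\le z\}$ is closed, and that, for each fixed $y$, the down-set $\{z:z\le y\}$ is closed. These are exactly the ``separately closed'' sections; joint closedness would of course also suffice, being stronger. Observe that proving $x$ is the supremum simultaneously shows the supremum exists, so no separate existence argument is needed.

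For the upper-bound part, I would fix an index $\lambda_0$ and pass to the tail of the net indexed by $\mu\ge\lambda_0$. Since $\{\mu:\mu\ge\lambda_0\}$ is cofinal in the directed set $\Lambda$, this tail is a subnet and hence still converges to $x$. By monotonicity, every term $x_\mu$ of the tail satisfies $x_{\lambda_0}\le x_\mu$, i.e.\ lies in the up-set $U_{\lambda_0}\coloneqq\{z:x_{\lambda_0}\le z\}$, which is closed by hypothesis. A closed set contains the limit of any of its convergent nets, so $x\in U_{\lambda_0}$, that is $x_{\lambda_0}\le x$. As $\lambda_0$ was arbitrary, $x$ is an upper bound.

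For minimality, let $y$ be any upper bound, so that $x_\lambda\le y$ for every $\lambda$; equivalently, each $x_\lambda$ lies in the down-set $D_y\coloneqq\{z:z\le y\}$, which is again closed by hypothesis. Since the whole net $(x_\lambda)$ converges to $x$ while staying inside the closed set $D_y$, we conclude $x\in D_y$, i.e.\ $x\le y$. Thus $x$ lies below every upper bound, so it is the least upper bound, i.e.\ the supremum. The downward case is obtained verbatim by reversing the order and replacing up-sets by down-sets and suprema by infima.

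The argument is essentially routine; the only point requiring a little care is the reduction of the upper-bound step to a tail, where one must note that restricting a convergent net to a cofinal up-set of indices preserves the limit (equivalently, one can argue directly with neighbourhoods of $x$ and directedness, choosing indices above both $\lambda_0$ and a witness for membership in a given neighbourhood). Everything else is a direct application of the defining property of closed sets, namely their stability under limits of nets.
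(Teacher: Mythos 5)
Your proof is correct and follows essentially the same route as the paper's: show the limit $x$ is an upper bound by taking limits in the tail (using closedness of up-sets), then show it is least by taking the limit of the whole net inside the closed down-set of any upper bound $y$. The paper's version is terser (it simply says ``taking the limit in $\mu$'' and ``taking the limit in $\lambda$''), while you make explicit the cofinality/subnet justification for the tail step, which is a useful clarification but not a different argument.
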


\begin{proof}
    We will prove the upward case, the downward case is analogous. 
    
    Let $(x_\lambda)$ be an upward monotone net, and let $x$ be its limit. Since the net is increasing, for $\lambda\le\mu$ we have $x_\lambda\le x_\mu$. Since the order is closed, taking the limit in $\mu$ we get that $x_\lambda\le x$ for all $\lambda$. Therefore $x$ is an upper bound. 
    Now suppose that for some $y$, $x_\lambda\le y$ for all $\lambda$. Then, again since the order is closed, taking the limit in $\lambda$ gives $x\le y$.
    Therefore $x$ is the least upper bound.
\end{proof}

\begin{definition}
    Let $\cat{C}$ be a dagger category with a topological enrichment.
    We say that $\cat{C}$ has the \newterm{idempotent Levi property} if for every object $X$, the poset of dagger idempotents on $X$ has the upward and downward Levi properties. 
\end{definition}

Let's spell this out in detail. Let $(e_\lambda:X\to X)$ be a net of dagger idempotents on $X$, with dagger splittings $(A_\lambda,\iota_\lambda,\pi_\lambda)$.
Suppose first that the net is increasing: for all $\lambda\le\mu$, $e_\lambda\le e_{\mu}$, i.e.\ $e_\lambda\circ e_\mu = e_\mu\circ e_\lambda = e_\lambda$. It is automatically bounded above. The upward Levi property says then that the net $e_\lambda$ has a supremum $e$, and that $e_\lambda\to e$ topologically. 
Now suppose instead that the net is decreasing: for all $\lambda\le\mu$, $e_\lambda\ge e_{\mu}$ (that is, $e_\lambda\circ e_\mu = e_\mu\circ e_\lambda = e_\mu$). Suppose moreover it is bounded below. The downward Levi property says then that the net $e_\lambda$ has an infimum $e$, and that $e_\lambda\to e$ topologically. 
In $\cat{EKrn}$, we always have a lower bound, the constant kernel. In $\cat{Hilb}$, we can take the zero subspace.

In both $\cat{EKrn}$ and $\cat{Hilb}$, we know that the suprema and infima exist. We will now show topological convergence.

\subsection{The idempotent Levi property of Hilb}
\label{sec_levi_hilb}

In the category of Hilbert spaces, categorical limits often give rise to topological limits of norms (see \cite{dimeglio2024hilbert} for more on this). We can formalize this idea by saying that, in $\cat{Hilb}$, the idempotent Levi property holds.

\begin{proposition}[upward Levi property for $\cat{Hilb}$]\label{up_levi_hilb}
    Let $X$ be a Hilbert space. Let $(A_\lambda)_{\lambda\in\Lambda}$ and $A$ be closed subspaces of $X$, with corresponding orthogonal projectors $(e_\lambda)_{\lambda\in\Lambda}$ and $e$. Suppose moreover that for all $\lambda\le\mu$, $A_\lambda\subseteq A_{\mu}$, and that 
    \[
    A = \mathrm{cl} \left( \bigcup_\lambda A_\lambda \right) .
    \]
    Then $e_\lambda\to e$ pointwise.
\end{proposition}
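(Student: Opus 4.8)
The plan is to reduce the statement to a single nearest-point estimate for orthogonal projections. First I would observe that since each $e_\lambda$ and $e$ is an orthogonal projector, it is self-adjoint, so $e_\lambda^+ = e_\lambda$ and $e^+ = e$; consequently the two-sided topology of $\cat{Hilb}$ collapses to the strong operator topology, as noted in \Cref{sec_banenrichment}. Thus it suffices to prove that $e_\lambda(x) \to e(x)$ in norm for every fixed $x \in X$, with no separate control of the adjoints required.

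The key algebraic input is that $A_\lambda \subseteq A$ forces $e_\lambda \le e$ in the idempotent order, which by \Cref{eqcondorder} means $e_\lambda \circ e = e \circ e_\lambda = e_\lambda$. Hence for every $x$ we may rewrite $e_\lambda(x) = e_\lambda(e(x))$, so that $e_\lambda(x)$ is precisely the orthogonal projection of the vector $e(x)$ onto the closed subspace $A_\lambda$. The argument then rests on the elementary fact that the orthogonal projection of a vector $w$ onto a closed subspace $A_\lambda$ is its nearest point in $A_\lambda$, that is, $\|w - e_\lambda(w)\| = \inf_{z \in A_\lambda} \|w - z\|$.

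With $w = e(x) \in A$, I would use the hypothesis $A = \mathrm{cl}(\bigcup_\lambda A_\lambda)$: given $\varepsilon > 0$, density produces some index $\lambda_0$ and some $z \in A_{\lambda_0}$ with $\|e(x) - z\| < \varepsilon$. Because the net of subspaces is increasing and $\Lambda$ is directed, $z \in A_\lambda$ for every $\lambda \ge \lambda_0$, and then the nearest-point property yields $\|e(x) - e_\lambda(x)\| = \|e(x) - e_\lambda(e(x))\| \le \|e(x) - z\| < \varepsilon$ for all $\lambda \ge \lambda_0$. This is exactly the statement that $e_\lambda(x) \to e(x)$, which completes the proof.

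I expect the only genuinely delicate points to be bookkeeping rather than substance. The first is the passage from the strong-$*$ topology to plain pointwise convergence, which must be justified by self-adjointness of the projectors so that the adjoint half of the two-sided convergence is automatic. The second is invoking monotonicity of the net together with directedness of $\Lambda$ to guarantee that a single approximant $z$ lies in $A_\lambda$ for all sufficiently large $\lambda$. The identity $e_\lambda = e_\lambda \circ e$ coming from the order relation is what lets the nearest-point estimate apply directly to $e(x)$ rather than to $x$; once that is in place the convergence is immediate, and no Cauchy-net or completeness argument is needed.
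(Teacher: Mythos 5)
Your proof is correct and follows essentially the same route as the paper's: the paper also reduces to the nearest-point property of orthogonal projections (its \Cref{orthogonality}), uses the identity $e_\lambda(x) = e_\lambda(e(x))$ coming from $A_\lambda \subseteq A$, and concludes from density of $\bigcup_\lambda A_\lambda$ in $A$. If anything, you make explicit two points the paper leaves implicit — that self-adjointness collapses the strong-$*$ topology to pointwise convergence, and that directedness of $\Lambda$ plus monotonicity of the net turns a single approximant into an eventual bound — so the proposal stands as a slightly more detailed version of the same argument.
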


\begin{lemma}\label{orthogonality}
    Let $X$ be Hilbert space, let $B\subseteq X$ be a closed subset, and let $e_B:X\to X$ be the orthogonal projection onto $B$. 
    Then for every $x\in X$, the projection $e_B(x)$ is the closest point on $B$ to $x$, i.e.
    \[
    d\big( x, e_B(x) \big) \;=\; \inf_{b\in B} d(x, b) .
    \]
\end{lemma}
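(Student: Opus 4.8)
The plan is to reduce the statement to the defining orthogonality property of the orthogonal projection, established in \Cref{orthogonal_projection}, combined with the Pythagorean identity. Here $B$ is the closed subspace onto which the orthogonal projector $e_B$ projects (it is the image of the dagger idempotent, hence a subspace, not merely an arbitrary closed set). Recall from \Cref{orthogonal_projection} that for every $x\in X$ and every $b\in B$ one has $\langle x - e_B(x), b\rangle = 0$; that is, the residual vector $x - e_B(x)$ is orthogonal to all of $B$.

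First I would fix $x\in X$ and an arbitrary $b\in B$, and split the difference $x-b$ through the projection:
\[
x - b \;=\; \big(x - e_B(x)\big) + \big(e_B(x) - b\big).
\]
Since $B$ is a subspace and both $e_B(x)$ and $b$ lie in $B$, the second summand $e_B(x)-b$ again belongs to $B$. Hence the two summands are orthogonal by the property recalled above, and the Pythagorean theorem yields
\[
\|x - b\|^2 \;=\; \|x - e_B(x)\|^2 + \|e_B(x) - b\|^2 \;\ge\; \|x - e_B(x)\|^2.
\]

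This inequality shows that $d\big(x, e_B(x)\big) = \|x - e_B(x)\|$ is a lower bound for $d(x,b) = \|x-b\|$ as $b$ ranges over $B$, and choosing $b = e_B(x)\in B$ shows the bound is attained, giving $\inf_{b\in B} d(x,b) = d\big(x, e_B(x)\big)$ as claimed. There is no genuine obstacle in this argument: essentially all the content is the orthogonality of the residual, which is exactly \Cref{orthogonal_projection}, and the remainder is the elementary Pythagorean computation. The only point deserving a word of care is that equality in the displayed inequality forces $\|e_B(x)-b\|=0$, so in fact the minimizer is unique; this is not needed for the stated infimum but is worth recording for later use.
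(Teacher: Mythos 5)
Your proof is correct and is essentially identical to the paper's: both decompose $x-b$ into the residual $x-e_B(x)$ plus a vector of $B$, invoke the orthogonality established in \Cref{orthogonal_projection}, and conclude via the Pythagorean identity, with the infimum attained at $b=e_B(x)$. Your added remarks (that $B$ must be read as a closed subspace, and that the minimizer is unique) are sound but not needed for the statement.
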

\begin{proof}[Proof of \Cref{orthogonality}]
    Since $e_B$ is an orthogonal projection (see \Cref{orthogonal_projection}), we have that for all $b\in B$, the vectors $x-e_B(x)$ and $b-e_B(x)$ are orthogonal. 
    Therefore, for all $b\in B$,
    \begin{align*}
    \| x - b \|^2 \;&=\; \left\| \big(x-e_B(x)\big) - \big(b-e_B(x)\big) \right\|^2 \\
    &=\; \left\| x-e_B(x) \right\|^2 + \left\| b-e_B(x) \right\|^2 \\
    &\ge\; \left\| x-e_B(x) \right\|^2 .
    \end{align*}
    So, for all $b\in B$, $d\big( x, e_B(x) \big) \;=\; d(x, b)$.
\end{proof}

\begin{proof}[Proof of \Cref{{up_levi_hilb}}]
    Given $x\in A$, note that $e(x)\in A$. 
    Now for all $\lambda$, by \Cref{orthogonality} (setting $B=A_\lambda$ and replacing $x$ by $e(x)$),
\begin{align*}
    d\big( e(x), e_\lambda(x) \big) &= d\big( e(x), e_\lambda(e(x)) \big) \\
    &= \inf_{a_\lambda\in A_\lambda} d\big(e(x), a_\lambda\big) ,
\end{align*}
which tends to zero since $e(x)$ is in the closure of the $A_\lambda$. 
\end{proof}

\begin{proposition}[downward Levi property for $\cat{Hilb}$]\label{down_levi_hilb}
    Let $X$ be a Hilbert space. Let $(A_\lambda)_{\lambda\in\Lambda}$ and $A$ be closed subspaces of $X$, with corresponding orthogonal projectors $(e_\lambda)_{\lambda\in\Lambda}$ and $e$. Suppose moreover that for all $\lambda\le\mu$, $A_\lambda\supseteq A_{\mu}$, and that 
    \[
    A = \bigcap_\lambda A_\lambda .
    \]
    Then $e_\lambda\to e$ pointwise.
\end{proposition}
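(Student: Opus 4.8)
The plan is to deduce this statement from the upward Levi property (\Cref{up_levi_hilb}) by passing to orthogonal complements, which converts a decreasing net with intersection into an increasing net with closure-of-union. First I would set $A_\lambda' \coloneqq A_\lambda^\perp$ for each $\lambda$, and $A' \coloneqq A^\perp$. Since the original net is decreasing, i.e.\ $A_\lambda \supseteq A_\mu$ for $\lambda \le \mu$, the complements satisfy $A_\lambda' \subseteq A_\mu'$, so $(A_\lambda')_{\lambda\in\Lambda}$ is an increasing net of closed subspaces, exactly the setting of \Cref{up_levi_hilb}.

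The key geometric input is the identity
\[
\mathrm{cl}\!\left( \bigcup_\lambda A_\lambda^\perp \right) \;=\; \left( \bigcap_\lambda A_\lambda \right)^\perp \;=\; A^\perp .
\]
This follows from the standard facts that $(M^\perp)^\perp = M$ for a closed subspace $M$, and that orthogonal complementation exchanges intersections with closed spans of unions: applying $(-)^\perp$ to the left-hand side gives $\bigcap_\lambda (A_\lambda^\perp)^\perp = \bigcap_\lambda A_\lambda = A$, and since $\mathrm{cl}(\bigcup_\lambda A_\lambda^\perp)$ is itself a closed subspace, applying $(-)^\perp$ once more recovers the displayed equality. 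Thus $A'$ is indeed the closure of the union of the $A_\lambda'$.

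Next I would identify the projectors: using the orthogonal decomposition $X = A_\lambda \oplus A_\lambda^\perp$ (cf.\ \Cref{orthogonal_projection}), the orthogonal projector onto $A_\lambda^\perp$ is $\id_X - e_\lambda$, and the one onto $A^\perp$ is $\id_X - e$. Applying \Cref{up_levi_hilb} to the increasing net $(A_\lambda')$ with supremum $A'$ yields $(\id_X - e_\lambda) \to (\id_X - e)$ pointwise. Since the assignment $g \mapsto \id_X - g$ is continuous for the pointwise topology (if $(\id_X - e_\lambda)(x) \to (\id_X - e)(x)$ then $e_\lambda(x) = x - (\id_X - e_\lambda)(x) \to x - (\id_X - e)(x) = e(x)$), we conclude $e_\lambda \to e$ pointwise, as desired.

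The main obstacle is purely the verification of the orthogonal-complement identity above; everything else is formal. If one preferred a self-contained argument paralleling the upward case, one could instead argue directly: for $\lambda \le \mu$ one has $e_\mu = e_\mu \circ e_\lambda$, so by the Pythagorean theorem $\| e_\lambda x - e_\mu x \|^2 = \| e_\lambda x \|^2 - \| e_\mu x \|^2$; since $(\| e_\lambda x \|^2)$ is a non-increasing net of reals bounded below (by $\| e(x) \|^2$, as $e(x) \in A \subseteq A_\lambda$), it converges, forcing $(e_\lambda x)$ to be Cauchy and hence convergent to some $y \in X$ by completeness. One then checks that $y \in A$ (each tail of the net lies in the closed subspace $A_\mu$, hence so does the limit) and that $x - y \perp A$ (passing to the limit in the relation $\langle x - e_\lambda x, a \rangle = 0$ valid for $a \in A \subseteq A_\lambda$), which together identify $y = e(x)$.
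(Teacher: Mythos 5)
Your proof is correct, but it takes a genuinely different route from the paper. You reduce the downward case to the upward one (\Cref{up_levi_hilb}) via orthogonal complementation, using $\mathrm{cl}\left(\bigcup_\lambda A_\lambda^\perp\right) = \left(\bigcap_\lambda A_\lambda\right)^\perp$ and the fact that the projector onto $M^\perp$ is $\id_X - e_M$; your fallback argument (monotonicity of $\|e_\lambda x\|^2$ via Pythagoras, Cauchyness of the net, and identification of the limit by the characterization of orthogonal projection) is also sound. The paper instead deduces the result from its categorical machinery: \Cref{hilbdownp} identifies $A$ as the \emph{colimit} in $\cat{Hilb}_{\le 1}$ of the projections $A_\lambda \to A_\mu$, and \Cref{colim_matt} (describing filtered colimits of surjective $1$-Lipschitz maps as quotients under the limiting seminorm) yields $\lim_\mu \|\pi_\mu(x - e(x))\| = 0$, from which $\|e_\mu(x) - e(x)\| \to 0$ follows because the inclusions $A_\mu \to X$ are isometries. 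Your approach is more elementary and makes the up/down duality in $\cat{Hilb}$ transparent, at the cost of being specific to inner-product geometry (orthogonal complements); the paper's proof buys a conceptual payoff consistent with its theme — topological convergence extracted directly from a colimit — and the same lemma \Cref{colim_matt} is then reused to exhibit the failure of the statement in $\cat{Ban}$ (\Cref{non_lim_ban}). One small point worth making explicit in your complementation argument: the union $\bigcup_\lambda A_\lambda^\perp$ is a subspace only because the net is directed (increasing), which is what lets you apply $(M^\perp)^\perp = M$ to its closure rather than to its closed span.
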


We prove the statement by means of the following lemma, which we learned from Matthew Di Meglio \cite{dimeglio2024hilbert}.

\begin{lemma}\label{colim_matt}
    Let $(A_\lambda)_{\lambda\in\Lambda}$ be a filtered diagram of Banach spaces and surjective, linear 1-Lipschitz maps $f_{\lambda,\mu}:A_\lambda\to A_\mu$.
    Then the colimit in $\cat{Ban}_{\le 1}$ is the space $A$ given by forming the quotient of any of the $A_\lambda$, under the seminorm
    \[
    a\longmapsto \lim_{\mu\ge\lambda} \| \pi_{\lambda,\mu}(a) \|_{A_\mu} ,
    \]
    with colimiting cocone formed by the quotient maps $q_\lambda:A_\lambda\to A$.
\end{lemma}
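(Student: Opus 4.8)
The plan is to compute the filtered colimit in $\cat{Ban}_{\le 1}$ explicitly and match it with the proposed description, then verify the universal property directly. Recall that a filtered colimit in $\cat{Ban}_{\le 1}$ is obtained by taking the algebraic (vector-space) filtered colimit $V=\colim_\lambda A_\lambda$, equipping it with the colimit seminorm
\[
p(\xi) \;=\; \inf\{\,\|a\|_{A_\mu} : \mu\in\Lambda,\ a\in A_\mu,\ \eta_\mu(a)=\xi\,\},
\]
where $\eta_\mu:A_\mu\to V$ are the canonical maps, and then separating and completing. The first task is to show that, under surjectivity, this recipe reduces to the quotient-by-seminorm description in the statement.

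First I would check that the seminorm in the statement is well defined. For fixed $\lambda$ and $a\in A_\lambda$, functoriality and the $1$-Lipschitz hypothesis give, for $\nu\ge\mu\ge\lambda$,
\[
\|f_{\lambda,\nu}(a)\|_{A_\nu} \;=\; \big\|f_{\mu,\nu}\big(f_{\lambda,\mu}(a)\big)\big\|_{A_\nu} \;\le\; \|f_{\lambda,\mu}(a)\|_{A_\mu},
\]
so the net $\mu\mapsto\|f_{\lambda,\mu}(a)\|_{A_\mu}$ is monotone non-increasing and bounded below by $0$; hence it converges to its infimum, and $p_\lambda(a)\coloneqq\lim_{\mu\ge\lambda}\|f_{\lambda,\mu}(a)\|_{A_\mu}$ is a seminorm on $A_\lambda$ dominated by $\|\cdot\|_{A_\lambda}$. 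A short cofinality argument then shows these seminorms are compatible, $p_\mu\big(f_{\lambda,\mu}(a)\big)=p_\lambda(a)$, so they assemble into a single seminorm $p$ on $V$ with $p\circ\eta_\lambda=p_\lambda$. This compatibility is exactly what makes the construction independent of the chosen base index, justifying the phrase ``any of the $A_\lambda$''.

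Next I would use surjectivity twice. Because the diagram is filtered and every $f_{\lambda,\mu}$ is onto, each canonical map $\eta_\lambda:A_\lambda\to V$ is surjective: given a representative $a_\nu\in A_\nu$ of $\xi\in V$, pick $\rho\ge\lambda,\nu$ and lift $f_{\nu,\rho}(a_\nu)$ along the surjection $f_{\lambda,\rho}$ to some $a_\lambda\in A_\lambda$ with $\eta_\lambda(a_\lambda)=\xi$. Consequently $V\cong A_{\lambda_0}/\ker\eta_{\lambda_0}$ for any fixed $\lambda_0$, and the separated quotient of $(V,p)$ coincides with $A_{\lambda_0}/N$ where $N=\{p_{\lambda_0}=0\}$; so $A$ is the completion of $(A_{\lambda_0}/N,\,p_{\lambda_0})$, as claimed. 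I would then identify the colimit seminorm $p$ with $p_\lambda$: since the later representatives of $\xi=\eta_\lambda(a)$ are precisely the $f_{\lambda,\mu}(a)$, monotonicity gives $p(\xi)\le p_\lambda(a)$; conversely, for any representative $a_\nu\in A_\nu$ one finds $\rho\ge\lambda,\nu$ with $f_{\lambda,\rho}(a)=f_{\nu,\rho}(a_\nu)$, whence $p_\lambda(a)=\inf_\mu\|f_{\lambda,\mu}(a)\|\le\|f_{\nu,\rho}(a_\nu)\|\le\|a_\nu\|$, and taking the infimum over representatives yields $p_\lambda(a)\le p(\xi)$.

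Finally I would confirm the universal property directly, which also makes the role of completion transparent. Given a $1$-Lipschitz cocone $c_\lambda:A_\lambda\to B$ into a Banach space $B$ (so $c_\mu\circ f_{\lambda,\mu}=c_\lambda$), the estimate $\|c_\lambda(a)\|_B=\|c_\mu(f_{\lambda,\mu}(a))\|_B\le\|f_{\lambda,\mu}(a)\|_{A_\mu}$ for all $\mu\ge\lambda$ gives $\|c_\lambda(a)\|_B\le p_\lambda(a)$; hence each $c_\lambda$ kills $N$ and is $1$-Lipschitz for $p_\lambda$, so it factors through $A_\lambda/N$ and extends uniquely to the completion $A$. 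Uniqueness of the induced map holds because $q_{\lambda_0}$ has dense image, and its compatibility with the whole cocone is checked by chasing a common refinement $\rho\ge\lambda,\lambda_0$ exactly as above. I expect the main obstacle to be the identification of the colimit seminorm with the decreasing limit $\lim_{\mu\ge\lambda}\|f_{\lambda,\mu}(a)\|$: this is where surjectivity is indispensable (it guarantees every class is represented already at a single stage), and where one must be careful that the relevant norm is the seminorm-induced one, not the quotient of the ambient norm of $A_\lambda$, so that a completion step is in general required to land in $\cat{Ban}_{\le 1}$.
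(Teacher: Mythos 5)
Your proof is correct, and it takes a genuinely different (and more careful) route than the paper's. The paper verifies the universal property directly on the quotient: it defines the mediating map by $c([a_\lambda])=c_\lambda(a_\lambda)$, checks well-definedness via the estimate $\|c_\lambda(a_\lambda-b_\lambda)\|_B\le\lim_\mu\|\pi_{\lambda,\mu}(a_\lambda-b_\lambda)\|_{A_\mu}$ (the same key inequality you use), obtains uniqueness from surjectivity of the quotient map, and handles independence of the base index by directedness. You instead pass through the general construction of filtered colimits in $\cat{Ban}_{\le 1}$ (algebraic colimit, colimit seminorm, separation, completion), use surjectivity to show every class is represented at a single stage, and identify the colimit seminorm with the limit seminorm $p_\lambda$; your explicit compatibility $p_\mu\circ f_{\lambda,\mu}=p_\lambda$ also yields the cocone property of the maps $q_\lambda$, a point the paper leaves implicit.

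More importantly, your closing caution about completion is not mere prudence: the completion step is genuinely needed, and the paper's statement and proof gloss over it (the paper's ``quotient under a seminorm'' is only the vector-space quotient by null vectors, and the proof never checks that $A$ is complete, i.e.\ an object of $\cat{Ban}_{\le 1}$). Indeed, take $\Lambda=\N$, let $A_n$ be $\ell^1$ equipped with the equivalent norm $\|a\|_{(n)}=\sum_k \max(2^{-n},2^{-k})\,|a_k|$, and let all connecting maps be the identity (surjective, linear, $1$-Lipschitz). The limit seminorm on $A_0$ is $p_0(a)=\sum_k 2^{-k}|a_k|$, which is a genuine norm, but $(\ell^1,p_0)$ is not complete: the truncations of $(1,1,1,\dots)$ are $p_0$-Cauchy with no limit in $\ell^1$. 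The colimit is the completion, namely the weighted space $\{a:\sum_k 2^{-k}|a_k|<\infty\}$, so the lemma as literally stated fails while your version holds. The discrepancy is harmless for the paper's downstream uses (\Cref{down_levi_hilb} and \Cref{non_lim_ban} only use the identification of the null space of the seminorm, which completion does not alter), but your formulation and proof are the ones that establish a true statement.
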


By ``quotienting under a seminorm'', here, we mean forming the universal quotient vector space on which the seminorm induces a norm (similarly to how we obtain $L^p$ spaces from $\mathcal{L}^p$ spaces).

\begin{proof}[Proof of \Cref{colim_matt}]
    First of all, the limit in the definition of the norm converges, since it is a nonincreasing sequence (the $\pi_{\lambda,\mu}$ are 1-Lipschitz) bounded below by zero. Being a limit of seminorms, it is itself a seminorm.
    Moreover, by filteredness, this seminorm does not depend on the choice of $\lambda$ we start with.

    Consider now a Banach space $B$ and 1-Lipschitz linear maps $c_\lambda:A_\lambda\to B$ for all $\lambda$ forming a cocone, i.e.~such that for all $\mu\ge\lambda$, the outer triangle in the diagram below commutes.
    \[
    \begin{tikzcd}
        A_\lambda \ar{dr}[swap]{q_\lambda} \ar{drr}{c_\lambda} \ar{dd}[swap]{\pi_{\lambda,\mu}} \\
        & A \ar[dashed]{r} & B \\
        A_\mu \ar{ur}{q_\mu} \ar{urr}[swap]{c_\mu}
    \end{tikzcd}
    \]
    Define now a map $c:A\to B$ as follows. If we construct $A$ as a quotient of $A_\lambda$, its elements are equivalence classes $[a_\lambda]$, with $a_\lambda\in A_\lambda$, quotiented by the seminorm. 
    Define now $c([a_\lambda])=c_\lambda(a_\lambda)$. 

    To show that this map is well defined on equivalence classes, suppose that $a_\lambda$ and $b_\lambda$ are in the same equivalence class, which means that 
    \[
    \lim_{\mu\ge\lambda} \| \pi_{\lambda,\mu}(a_\lambda-b_\lambda) \|_{A_\mu} \;=\; 0 .
    \]
    Then for every $\mu\ge\lambda$,
    \begin{align*}
        \|c_\lambda(a_\lambda) - c_\lambda(b_\lambda) \|_{B} \;&=\; \|c_\lambda(a_\lambda - b_\lambda) \|_{B} \\
        &=\; \|c_\mu(\pi_{\lambda,\mu}(a_\lambda - b_\lambda)) \|_{B} \\
        &\le\; \|\pi_{\lambda,\mu}(a_\lambda - b_\lambda) \|_{A_\mu} ,
    \end{align*}
    and so 
    \[
    \|c_\lambda(a_\lambda) - c_\lambda(b_\lambda) \|_{B} \;\le\; \lim_{\mu\ge\lambda} \| \pi_{\lambda,\mu}(a_\lambda-b_\lambda) \|_{A_\mu} \;=\; 0 ,
    \]
    which means that $c_\lambda(a_\lambda) = c_\lambda(b_\lambda)$, i.e.~the map $c$ is well defined on equivalence classes.
    A similar calculation shows that it is 1-Lipschitz, and linearity is easily checked.

    By construction of the map $c$, we have $c\circ q_\lambda=c_\lambda$.
    Uniqueness of the map, and hence the universal property, follows from the fact that the quotient map $q_\lambda$ is epimorphic.

    To show that this quotient construction does not depend on the choice of $\lambda$, notice first of all that for all $\mu\ge\lambda$, defining $c$ in terms of $\mu$ instead of $\lambda$ gives the same quotient (since the $\mu'\ge\mu$ form a subnet which converges to the same limit).
    Now by directedness, given $\lambda,\lambda'\in\Lambda$ we can find $\mu\in\Lambda$ such that $\mu\ge \lambda,\lambda'$. 
\end{proof}

\begin{proof}[Proof of \Cref{down_levi_hilb}]
    First of all, by \Cref{hilbdownp} we know that $A$ is the colimit in $\cat{Hilb}_{\le 1}$ of the spaces $A_\lambda$ and their projections.
    Let $x\in X$. Notice that if $\pi:X\to A$ is the projection (this time considered with full image, unlike $e:X\to X$), we have that $\pi(e(x))=\pi(x)$, and so, by \Cref{colim_matt} (taking $A_\lambda=X$), we have that $x$ and $e(x)$ are in the same equivalence class induced by the limiting norm:
    \[
    \lim_{\mu} \| \pi_{\mu} (x - e(x) ) \| \;=\; 0 .
    \]
    Now since $A\subseteq A_\mu$, $e_\mu\circ e = e$, and using that the inclusions $\iota_\mu:A_\mu\to X$ are isometries,
    \begin{align*}
        \lim_\mu \| e_\mu(x) - e(x) \| \;&=\; \lim_\mu \| e_\mu(x) - e_\mu(e(x)) \| \\
        &=\; \lim_\mu \| e_\mu( x - e(x) ) \| \\
        &=\; \lim_\mu \| \iota_\mu(\pi_\mu( x - e(x) ) ) \| \\
        &=\; \lim_\mu \| \pi_\mu( x - e(x) ) \| \\
        &=\; 0 . \qedhere
    \end{align*}
\end{proof}

Note that the proof uses \Cref{hilbdownp}, which holds for Hilbert spaces, but not for general Banach spaces. Indeed, for general Banach spaces the statement fails, as the following example shows.

\begin{example}\label{non_lim_ban}
    Let $X=\ell^\infty(\N)$.
    Consider the decreasing sequence of closed subsets,
    \[
    A_i \;=\; \{f\in\ell^\infty(\N) : \forall x\le i, f(x)=0 \} .
    \]
    These subspaces admit projectors, given by
    \[
    e_i(f)(x) \;=\; \begin{cases}
        0 & x \le i ; \\
        f(x) & x > i .
    \end{cases}
    \]
    We have that $\bigcap_i A_i=\{0\}$, However, $e_i(f)$ does not tend to zero in general. Indeed, for $f=1$ identically,
    $$
    \| e_i(f) \| \;=\; \sup_x |e_i(f)(x)| \;=\; 1 
    $$
    for all $i\in\N$.

    Indeed, while $\bigcap_i A_i=\{0\}$, it is not true that $\{0\}$ is the colimit of the projections: 
    by \Cref{colim_matt}, the colimit would be given by the quotient under the seminorm $f\longmapsto \lim_{i} \| e_i(f) \|$.
    By the calculation above, however, for $f=1$ the resulting seminorm gives $1$, not zero.
\end{example}

\subsection{The idempotent Levi property of EKrn and Levy's theorems}
\label{sec_levi_krn}

The idempotent Levi property holds in $\cat{EKrn}$ with our topological enrichment, and this can be considered a categorical formalization of Levy's (not Levi's) upward and downward theorems in mean. 

\begin{theorem}\label{levi_krn}
    The category $\cat{EKrn}$ has the idempotent Levi property.
\end{theorem}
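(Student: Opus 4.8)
The plan is to transport the idempotent Levi property from $\cat{Hilb}$, already established in \Cref{up_levi_hilb,down_levi_hilb}, to $\cat{EKrn}$ along the enriched dagger functor $L^2$. First I would reduce to the standard Borel case: since the inclusion $\cat{Krn}\to\cat{EKrn}$ is an equivalence, and isomorphisms of $\cat{EKrn}$ act as homeomorphisms on hom-sets (composition being continuous), the poset of dagger idempotents on any object, together with its order and its subspace topology, is carried isomorphically onto that of an isomorphic standard Borel space. Recall also that in $\cat{EKrn}$ every idempotent is automatically a dagger idempotent, so the poset of dagger idempotents is the whole poset of idempotents. Hence it suffices to fix a standard Borel $(X,\mathcal{A},p)$ and verify both Levi properties there.

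Next, for an upward monotone net $(e_\lambda)$ of idempotent kernels, I would use \Cref{krnsplit} and the Galois-connection results of \Cref{sec_subsigma} to identify each $e_\lambda$ with a complete sub-sigma-algebra $\mathcal{B}_\lambda$, the order $e_\lambda\le e_\mu$ with the inclusion $\mathcal{B}_\lambda\subseteq\mathcal{B}_\mu$, and hence the net with an increasing filtration. The supremum in the idempotent order exists and equals $e_{\mathcal{B}_\infty}$ for $\mathcal{B}_\infty=\bigvee_\lambda\mathcal{B}_\lambda$. Applying $L^2$ turns each $(e_\lambda)^*$ into the orthogonal projection onto the closed subspace $L^2(X,\mathcal{B}_\lambda,p)$, and --- crucially --- \Cref{preserves_optima} identifies $L^2(X,\mathcal{B}_\infty,p)$ with the closure of the union of these subspaces. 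The Hilbert-space upward Levi property (\Cref{up_levi_hilb}) then gives $(e_\lambda)^*\to (e_{\mathcal{B}_\infty})^*$ pointwise; since these operators are self-adjoint projectors, pointwise convergence coincides with convergence in the topology of $\cat{Hilb}$, and by \Cref{hilb_homeo} this is equivalent to $e_\lambda\to e_{\mathcal{B}_\infty}$ in $\cat{EKrn}$.

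The downward case is entirely parallel, using \Cref{down_levi_hilb} together with the second part of \Cref{preserves_optima}, where $L^2(X,\mathcal{C}_\infty,p)$ is realized as the intersection of the $L^2(X,\mathcal{C}_\lambda,p)$. Here I would first replace each $\mathcal{C}_\lambda$ by its null-set completion $\mathcal{I}_{e_{\mathcal{C}_\lambda}}$ --- which leaves the idempotents $e_{\mathcal{C}_\lambda}$ unchanged --- so that the infimum is computed as the honest intersection, matching the hypothesis of \Cref{down_levi_hilb}. A lower bound always exists (the trivial sigma-algebra, i.e.\ the constant kernel), so the boundedness requirement is automatic.

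The main obstacle is bookkeeping rather than a genuinely new argument: I must check that (i) the order, the suprema/infima, and the topology on idempotent kernels correspond exactly to their counterparts for orthogonal projections under $L^2$, and (ii) the identification of the limiting idempotent with the join or meet sigma-algebra is legitimate. Both are supplied by the preceding results --- the kernel/sub-sigma-algebra dictionary of \Cref{krnsplit}, the homeomorphism \Cref{hilb_homeo}, and the colimit/limit identification \Cref{preserves_optima} --- so once these pieces are assembled, the conclusion follows directly from the two Hilbert-space Levi propositions.
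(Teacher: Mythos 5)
Your proposal is correct and follows essentially the same route as the paper's own proof: transport the problem along the enriched dagger functor $L^2$, identify the supremum/infimum of the idempotents with the join/meet sigma-algebra via \Cref{krnsplit} and \Cref{preserves_optima}, invoke the Hilbert-space Levi properties (\Cref{up_levi_hilb,down_levi_hilb}), and pull the convergence back through the homeomorphism of \Cref{hilb_homeo}. The extra bookkeeping you flag (reduction to the standard Borel case, idempotents being automatically dagger idempotents, null-set completions in the downward case, coincidence of the one- and two-sided topologies on self-adjoint projectors) is exactly what the paper's proof relies on implicitly, so your write-up is, if anything, slightly more explicit.
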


\begin{proof}
    Let $(X,\mathcal{A},p)$ be essentially standard Borel, and let $(e_\lambda)_{\lambda\in\Lambda}$ be a net of almost surely idempotent kernels.
    We know that this net is bounded above and below.
    Also, by \Cref{hilb_homeo}, we have that $e_\lambda$ tends (either in the one-sided or two-sided topology) to an idempotent $e$ if and only if the operators $e_\lambda^*:L^2(X,\mathcal{A},p)\to L^2(X,\mathcal{A},p)$ tend to $e^*$ in $\cat{Hilb}$.
    \begin{itemize}
        \item If the net is increasing, we know it has a supremum in $\cat{EKrn}$, given by the join sigma-algebra. Call this supremum $e_\infty$. 
        By \Cref{preserves_optima}(i), $e_\infty^*:L^2(X,\mathcal{A},p)\to L^2(X,\mathcal{A},p)$ is the supremum of the $e_\lambda^*$ in $\cat{Hilb}$ as well.
        Now by the upward Levi property for $\cat{Hilb}$ (\Cref{up_levi_hilb}), $e_\lambda^*$ tends to $e_\infty^*$.
        Therefore $e_\lambda\to e_\infty$.

        \item Similarly, if the net is decreasing, we know it has an infimum in $\cat{EKrn}$, given by the meet sigma-algebra (intersection of the null-set completions). Call this infimum $e_\infty$. 
        By \Cref{preserves_optima}(ii), $e_\infty^*:L^2(X,\mathcal{A},p)\to L^2(X,\mathcal{A},p)$ is the infimum of the $e_\lambda^*$ in $\cat{Hilb}$ as well.
        Now by the downward Levi property for $\cat{Hilb}$ (\Cref{down_levi_hilb}), $e_\lambda^*$ tends to $e_\infty^*$.
        Therefore $e_\lambda\to e_\infty$. \qedhere
    \end{itemize}
\end{proof}

As corollaries, by looking at the actions of idempotents on random variables, we get Levy's upward and downward theorems, for convergence in mean:

\begin{corollary}[Levy's upward theorem]\label{Levy_upward}
    Let $(X,\mathcal{A},p)$ be essentially standard Borel, let $(\mathcal{B}_\lambda)_{\lambda\in\Lambda}$ be an increasing filtration, and let $f\in L^n(X,\mathcal{A},p)$.
    Denote the join $\bigvee_{\lambda\in\Lambda} \mathcal{B}_\lambda$ by $\mathcal{B}_\infty$. Then $\E[f|\mathcal{B}_\lambda]$ converges to $\E[f|\mathcal{B}_\infty]$ in $L^n$.
\end{corollary}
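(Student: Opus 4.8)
The plan is to realize the corollary as a direct application of the idempotent Levi property of $\cat{EKrn}$ (\Cref{levi_krn}) together with the homeomorphism of \Cref{homeomorph}, so that essentially no new analysis is needed. First I would pass from the filtration $(\mathcal{B}_\lambda)$ to the associated net of conditional expectation operators $e_{\mathcal{B}_\lambda}:(X,\mathcal{A},p)\to(X,\mathcal{A},p)$. Since $\mathcal{B}_\lambda\subseteq\mathcal{B}_\mu$ for $\lambda\le\mu$, the order-preservation of the assignment $\mathcal{B}\mapsto e_\mathcal{B}$ shows that $(e_{\mathcal{B}_\lambda})$ is an increasing net of dagger idempotents on $(X,\mathcal{A},p)$, which is automatically bounded above.

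Next I would invoke \Cref{levi_krn}: the increasing net $(e_{\mathcal{B}_\lambda})$ has a supremum and converges to it topologically. The supremum is precisely $e_{\mathcal{B}_\infty}$, where $\mathcal{B}_\infty=\bigvee_\lambda\mathcal{B}_\lambda$, by the identification of the supremum in the idempotent order with the join sigma-algebra (\Cref{preserves_optima}). Thus $e_{\mathcal{B}_\lambda}\to e_{\mathcal{B}_\infty}$ in $\cat{EKrn}$; since all of these kernels are self-adjoint, the one-sided and two-sided topologies coincide here, so there is no ambiguity about which topology is meant.

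The final step is to translate this convergence of kernels into convergence of conditional expectations. By \Cref{homeomorph}, $e_{\mathcal{B}_\lambda}\to e_{\mathcal{B}_\infty}$ in the one-sided topology if and only if the operators $(e_{\mathcal{B}_\lambda})^*\to (e_{\mathcal{B}_\infty})^*$ pointwise on $L^n(X,\mathcal{A},p)$, for any (hence every) finite or infinite $n$. Evaluating these operators at the fixed random variable $f$, and recalling that $(e_\mathcal{B})^*f=\E[f|\mathcal{B}]$, pointwise convergence yields exactly $\E[f|\mathcal{B}_\lambda]\to\E[f|\mathcal{B}_\infty]$ in the $L^n$ norm, which is the claim.

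I do not expect a genuine obstacle: the entire analytic content is already packaged in \Cref{levi_krn} and \Cref{homeomorph}, so the corollary is a matter of assembling existing pieces. The only points requiring care are bookkeeping rather than difficulty, namely confirming that the supremum produced abstractly by the Levi property is indeed $e_{\mathcal{B}_\infty}$ (so that the limit is the expected conditional expectation operator and not merely some idempotent), and noting that for essentially standard Borel $(X,\mathcal{A},p)$ the operators $e_{\mathcal{B}_\lambda}$ genuinely live in $\cat{EKrn}$, so that \Cref{levi_krn} is applicable.
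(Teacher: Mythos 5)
Your proof is correct and is essentially the paper's own argument: form the increasing net of dagger idempotents $e_{\mathcal{B}_\lambda}$, identify its supremum as $e_{\mathcal{B}_\infty}$, apply the idempotent Levi property of $\cat{EKrn}$ (\Cref{levi_krn}), and convert the topological convergence of kernels into $L^n$-convergence of conditional expectations by evaluating at $f$. The only differences are cosmetic: you invoke \Cref{homeomorph} directly where the paper cites the enriched-functor statement \Cref{Ln_enriched} (itself an immediate consequence of \Cref{homeomorph}), and the identification of the supremum with the join sigma-algebra is more accurately attributed to the corollary on limits in $\DSI(X,\mathcal{A},p)$ from \Cref{sec_martingales} (restated inside the proof of \Cref{levi_krn}) than to \Cref{preserves_optima}, which concerns split idempotents on $L^n(X,\mathcal{A},p)$ rather than on $(X,\mathcal{A},p)$ itself.
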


\begin{proof}
First of all, the increasing filtration $(\mathcal{B}_\lambda)_{\lambda\in \Lambda}$ induces an increasing net of dagger idempotents $(e_\lambda:X\to X)_{\lambda\in \Lambda}$. Since the join of the filtration is given by $\mathcal{B}_\infty$, the join of the dagger idempotents is given by the dagger idempotent $e_\infty:X\to X$ induced by $\mathcal{B}_\infty$.

Because $\cat{EKrn}$ satisfies the idempotent Levi property (\Cref{levi_krn}), it follows that the net $(e_\lambda)_{\lambda\in \Lambda}$ converges topologically to $e_\infty$ in $\cat{EKrn}((X,\mathcal{A},p),(X,\mathcal{A},p))$, using either the one-sided or two-sided topology (they agree on idempotents). Since $L^n$ is an enriched functor (\Cref{Ln_enriched}), we see that $(e^*_\lambda)_{\lambda\in \Lambda}$ converges to $e^*_\infty$ in $\cat{Ban}(L^n(X,\mathcal{A},p),(X,\mathcal{A},p))$. This exactly means that \[ \mathbb{E}[f\mid \mathcal{B}_\lambda]=e^*_\lambda f\to e^*_\infty f=\mathbb{E}[f\mid B_\infty]\] in $L^n$ for every $f\in L^n(X,\mathcal{A},p)$.
\end{proof}

\begin{corollary}[Levy's downward theorem]
    Let $(X,\mathcal{A},p)$ be essentially standard Borel, let $(\mathcal{B}_\lambda)_{\lambda\in\Lambda}$ be a decreasing filtration, and let $f\in L^n(X,\mathcal{A},p)$.
    Denote the meet $\bigwedge_{\lambda\in\Lambda} \mathcal{B}_\lambda$ (intersection of the null-set completions) by $\mathcal{B}_\infty$. Then $\E[f|\mathcal{B}_\lambda]$ converges to $\E[f|\mathcal{B}_\infty]$ in $L^n$.
\end{corollary}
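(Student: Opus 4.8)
The plan is to mirror the proof of \Cref{Levy_upward} exactly, replacing the upward Levi property with its downward counterpart. First I would observe that a decreasing filtration $(\mathcal{B}_\lambda)_{\lambda\in\Lambda}$ induces a decreasing net of dagger idempotents $(e_\lambda:X\to X)_{\lambda\in\Lambda}$, where $e_\lambda=e_{\mathcal{B}_\lambda}$ is the conditional expectation operator of \Cref{subtokern}; the net is decreasing precisely because the assignment $\mathcal{B}\mapsto e_\mathcal{B}$ is order-preserving. The key identification I would then invoke is that the meet $\mathcal{B}_\infty=\bigwedge_\lambda\mathcal{B}_\lambda$ — defined as the intersection of the null-set completions — is by construction the infimum of the $\mathcal{B}_\lambda$ in the idempotent order, so that the idempotent $e_\infty=e_{\mathcal{B}_\infty}$ is the infimum of the net $(e_\lambda)$. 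This is exactly the bookkeeping set up in \Cref{sec_martingales}, and it is the reason $\bigwedge$ rather than the naive intersection appears in the statement.

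With the infimum in hand, I would apply the idempotent Levi property of $\cat{EKrn}$ (\Cref{levi_krn}), whose downward half guarantees that the decreasing net $(e_\lambda)$ converges topologically to its infimum $e_\infty$ in the hom-space $\cat{EKrn}\big((X,\mathcal{A},p),(X,\mathcal{A},p)\big)$. Since all the kernels involved are idempotent, their Bayesian inverses agree with themselves and the one-sided and two-sided topologies coincide, so there is no ambiguity about which convergence is meant.

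Finally, I would transport this convergence along the random variable functor. Because $L^n$ is enriched (\Cref{Ln_enriched}), it sends the convergent net $e_\lambda\to e_\infty$ to a convergent net $e_\lambda^*\to e_\infty^*$ in $\cat{Ban}\big(L^n(X,\mathcal{A},p),L^n(X,\mathcal{A},p)\big)$, i.e.\ in the pointwise (strong operator) topology. Evaluating at a fixed $f\in L^n(X,\mathcal{A},p)$ and recalling that $e_\lambda^*f=\E[f\mid\mathcal{B}_\lambda]$ and $e_\infty^*f=\E[f\mid\mathcal{B}_\infty]$ yields the claimed $L^n$ convergence.

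The main obstacle — indeed essentially the only nontrivial point, since the categorical machinery does all the analytic work — is the infimum identification in the first paragraph: one must use the meet of the null-set completions rather than the raw intersection of sigma-algebras as the categorical infimum for general (non-sequential) nets. The example on $\{x,y,z\}$ in \Cref{sec_martingales} shows these two notions can genuinely differ, so the statement's use of $\bigwedge$ is essential; for sequential filtrations \Cref{sequentialinf} reconciles the two, but in the net case one must take the completions from the outset.
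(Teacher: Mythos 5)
Your proposal is correct and follows exactly the route the paper intends: the paper proves the downward theorem by declaring it "analogous to the previous corollary" (Levy's upward theorem), and your argument is precisely that analogue — decreasing filtration gives a decreasing net of idempotents whose infimum is the idempotent of the meet $\bigwedge_\lambda\mathcal{B}_\lambda$, the downward half of \Cref{levi_krn} gives topological convergence $e_\lambda\to e_\infty$, and the enrichment of $L^n$ (\Cref{Ln_enriched}) transports this to $e_\lambda^*f\to e_\infty^*f$ in $L^n$. Your closing remark on why the meet of null-set completions (rather than the raw intersection) is the correct categorical infimum for non-sequential nets matches the paper's own discussion in \Cref{sec_martingales} and correctly identifies the one subtle point.
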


The proof is analogous to the one of the previous corollary.

We can now combine these theorems with the results at the end of \Cref{sec_martingales}, to obtain the following \emph{martingale convergence theorems} in mean.

\begin{corollary}[Martingale convergence theorem in mean]\label{martin_up}
    Let $(X,\mathcal{A},p)$ be essentially standard Borel, let $(\mathcal{B}_\lambda)_{\lambda\in\Lambda}$ be an increasing filtration, and let $(f_\lambda)_{\lambda\in\Lambda}$ be a martingale where either
    \begin{itemize}
        \item the $f_\lambda$ admit a uniform $L^n$ bound for $n\ge 2$; or
        \item there exists $f\in L^1(X,\mathcal{A},p)$ such that for eventually all $\lambda$, $f_\lambda=\E[f|\mathcal{B}_\lambda]$.
    \end{itemize}
    Denote the join $\bigvee_{\lambda\in\Lambda} \mathcal{B}_\lambda$ by $\mathcal{B}_\infty$. Then there exists $f_\infty\in L^n(X,\mathcal{B}_\infty,p)$ such that
    \begin{itemize}
        \item $f_\lambda=\E[f_\infty|\mathcal{B}_\lambda]$ for all $\lambda$;
        \item $f_\lambda$ converges to $f_\infty$ in $L^n$.
    \end{itemize}
\end{corollary}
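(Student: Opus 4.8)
The plan is to reduce both hypotheses to Levy's upward theorem (\Cref{Levy_upward}). In each case the strategy is identical: produce the candidate limit $f_\infty$ as a $\mathcal{B}_\infty$-measurable random variable whose conditional expectations recover the martingale, and then invoke \Cref{Levy_upward} with $f=f_\infty$, using that $\E[f_\infty\mid\mathcal{B}_\lambda]=f_\lambda$ while $\E[f_\infty\mid\mathcal{B}_\infty]=f_\infty$ because $f_\infty$ is already $\mathcal{B}_\infty$-measurable. The two cases differ only in how $f_\infty$ is obtained: in the second it is handed to us, in the first it must be built.

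Suppose first that there is $f\in L^1(X,\mathcal{A},p)$ with $f_\lambda=\E[f\mid\mathcal{B}_\lambda]$ for eventually all $\lambda$. I would set $f_\infty\coloneqq\E[f\mid\mathcal{B}_\infty]\in L^1(X,\mathcal{B}_\infty,p)$. First I would upgrade ``eventually'' to ``for all $\lambda$'': given any $\lambda$, directedness of $\Lambda$ lets me pick $\mu$ above both $\lambda$ and the threshold, and then the martingale property and the tower property give $f_\lambda=\E[f_\mu\mid\mathcal{B}_\lambda]=\E[\E[f\mid\mathcal{B}_\mu]\mid\mathcal{B}_\lambda]=\E[f\mid\mathcal{B}_\lambda]$. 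The tower property then yields $\E[f_\infty\mid\mathcal{B}_\lambda]=\E[f\mid\mathcal{B}_\lambda]=f_\lambda$ for all $\lambda$, and \Cref{Levy_upward} with $n=1$ gives $f_\lambda=\E[f\mid\mathcal{B}_\lambda]\to\E[f\mid\mathcal{B}_\infty]=f_\infty$ in $L^1$.

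For the second hypothesis, a uniform bound $C\coloneqq\sup_\lambda\|f_\lambda\|_{L^n}<\infty$ with $n\ge 2$, the random variable $f_\infty$ is not given and must be constructed. Since $n\ge 2$ we have $\|f_\lambda\|_{L^2}\le\|f_\lambda\|_{L^n}\le C$, so $(f_\lambda)$ is a uniformly $L^2$-bounded martingale. By \Cref{marti_hilb} (in its net form, which follows from \Cref{hilbupp} together with \Cref{preserves_optima}), $L^2(X,\mathcal{B}_\infty,p)$ is the limit in $\cat{Hilb}_{\le 1}$ of the spaces $L^2(X,\mathcal{B}_\lambda,p)$ along the conditional-expectation maps. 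The rescaled martingale $(C^{-1}f_\lambda)$, viewed as linear maps $\R\to L^2(X,\mathcal{B}_\lambda,p)$ sending $1$ to an element of norm at most one, forms a cone over this diagram, compatibility being exactly the martingale identity $\E[f_\mu\mid\mathcal{B}_\lambda]=f_\lambda$. The universal property then produces a unique $g_\infty\in L^2(X,\mathcal{B}_\infty,p)$ with $\E[g_\infty\mid\mathcal{B}_\lambda]=C^{-1}f_\lambda$; setting $f_\infty\coloneqq C\,g_\infty$ gives $\E[f_\infty\mid\mathcal{B}_\lambda]=f_\lambda$ for all $\lambda$, which is the first conclusion.

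It remains to promote $f_\infty$ from $L^2$ to $L^n$ and then obtain $L^n$ convergence, and this is the main obstacle, since the categorical limit only sees the Hilbert ($L^2$) structure. Applying \Cref{Levy_upward} with $n=2$ first gives $f_\lambda=\E[f_\infty\mid\mathcal{B}_\lambda]\to f_\infty$ in $L^2$. To see $f_\infty\in L^n$ I would argue by duality: for every bounded simple function $g$ with $\|g\|_{L^{n'}}\le 1$ (where $n'$ is the conjugate exponent, finite since $n>1$), Hölder gives $\left|\int_X f_\lambda\,g\,dp\right|\le\|f_\lambda\|_{L^n}\|g\|_{L^{n'}}\le C$, while $L^2$ convergence and $g\in L^2$ give $\int_X f_\lambda\,g\,dp\to\int_X f_\infty\,g\,dp$; hence $\left|\int_X f_\infty\,g\,dp\right|\le C$ for all such $g$. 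A truncation argument, testing $f_\infty$ against simple approximations of $|f_\infty|^{n-1}\operatorname{sgn} f_\infty$ and letting the truncation level grow via monotone convergence, then yields $f_\infty\in L^n(X,\mathcal{B}_\infty,p)$ with $\|f_\infty\|_{L^n}\le C$. With $f_\infty\in L^n$ in hand, a final application of \Cref{Levy_upward}, now with the given $n$, gives $f_\lambda=\E[f_\infty\mid\mathcal{B}_\lambda]\to\E[f_\infty\mid\mathcal{B}_\infty]=f_\infty$ in $L^n$, completing the proof. The delicate point throughout is that $\Lambda$ is a net rather than a sequence, so I would deliberately avoid any ``pass to an almost-everywhere convergent subsequence'' step and rely instead on the duality and monotone-convergence argument, which is insensitive to the cardinality of the index set.
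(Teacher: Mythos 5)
Your proof is correct, and its skeleton is the same as the paper's: the second hypothesis is handled by setting $f_\infty=\E[f|\mathcal{B}_\infty]$ and combining the tower property with \Cref{Levy_upward}, while the first is handled by regarding the (normalized) martingale as a cone with tip $\R$ over the filtered diagram of the spaces $L^2(X,\mathcal{B}_\lambda,p)$ and conditional-expectation maps, whose limit in $\cat{Hilb}_{\le 1}$ is $L^2(X,\mathcal{B}_\infty,p)$, and extracting $f_\infty$ from the universal property. The genuine difference is at the end of the first case, and it is in your favor: the paper constructs $f_\infty$ only as an element of $L^2(X,\mathcal{B}_\infty,p)$ and then cites \Cref{Levy_upward} at exponent $n$ to conclude $L^n$ convergence, but that corollary requires $f_\infty\in L^n$, which for $n>2$ does not follow from $f_\infty\in L^2$ and is never verified there. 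Your two-stage argument --- \Cref{Levy_upward} at exponent $2$ first, then transferring the uniform bound $\sup_\lambda\|f_\lambda\|_{L^n}\le C$ to $\|f_\infty\|_{L^n}\le C$ by pairing against bounded simple functions of unit $L^{n'}$ norm and a truncation/monotone-convergence step, then \Cref{Levy_upward} again at exponent $n$ --- supplies exactly this missing step, and it does so without extracting any almost-everywhere convergent subsequence, so it remains valid for filtrations indexed by arbitrary nets. Two smaller refinements are also worth noting: your normalization $1\mapsto C^{-1}f_\lambda$ makes the cone legs honestly $1$-Lipschitz (the paper's legs $r\mapsto f_\lambda$ with $r=\sup_\lambda\E[f_\lambda^2]$ are $1$-Lipschitz only when $r\ge 1$), and you spell out the directedness argument upgrading ``eventually all $\lambda$'' to ``all $\lambda$'' in the second case, which the paper leaves implicit.
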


\begin{proof}
Let us first consider the first case. The increasing filtration induces a cofiltered diagram $\mathcal{B}:\Lambda\to \cat{EKrn}$. By applying the dagger functor and then the $L^2$ functor we obtain a filtered diagram $D$: \[\Lambda\xrightarrow{\mathcal{B}}\cat{EKrn}\xrightarrow{\dagger} \cat{EKrn}^\text{op}\xrightarrow{L^2}\cat{Hilb}_{\leq 1}.\] This means that $D(\lambda)=L^2(X,\mathcal{B}_\lambda,p)$ and $D(\lambda<\mu)=(\pi_{\lambda,\mu}^\dagger)^*$, where $\pi_{\lambda,\mu}: (X,\mathcal{B}_\mu,p)\to (X,\mathcal{B}_\lambda,p)$ is the identity on sets. By \Cref{Levy_upward} the limit of $D$ is given by $L^2(X,\mathcal{B}_\infty,p)$ with projection maps $(\pi_\lambda^\dagger)^*$ for every $\lambda\in \Lambda$.

Now for every $\lambda\in\Lambda$ and for $n\ge 2$, $\|f_\lambda\|_{L^2}\le \|f_\lambda\|_{L^n}$, and so the uniform $L^n$ bound (that we have by hypothesis) also gives us a uniform $L^2$ bound. In other words, we can define a real number $r:=\sup_\lambda\mathbb{E}[f_\lambda^2]$. For $\lambda\in \Lambda$, the assignment $r\mapsto f_\lambda$ defines a map $\varphi_\lambda:\mathbb{R}\to L^2(X,\mathcal{B}_\lambda,p)$ in $\cat{Hilb}_{\leq 1}$. Because $(f_\lambda)_{\lambda\in\Lambda}$ is a martingale we have that $\mathbb{R}$ together with the maps $(\varphi_\lambda)_{\lambda\in \Lambda}$ form a cone over the diagram $L^2$. Therefore there exists a unique map $\varphi_\infty:\mathbb{R}\to L^2(X,\mathcal{B}_\infty,p)$ in $\cat{Hilb}_{\leq 1}$ such that $(\pi_\lambda^\dagger)^*\varphi_\infty=\varphi_\lambda$ for every $\lambda\in \Lambda$. If we define $f_\infty:=\varphi_\infty(r)$, then \[\mathbb{E}[f_\infty\mid \mathcal{B}_\lambda]=(\pi_\lambda^\dagger)^*\varphi_\infty(r)=\varphi_\lambda(r)=f_\lambda.\]
By \Cref{Levy_upward}, it now follows that $f_\lambda\to f_\infty$ in $L^n$. 

The second case follows from Corollary \ref{marti_hilb}, from the tower property of conditional expectation (taking $f_\infty=\mathbb{E}[f\mid \mathcal{B}_\infty]$), and again by \Cref{Levy_upward}.
\end{proof}

\begin{corollary}[Backward martingale convergence theorem in mean]\label{martin_down}
    Let $(X,\mathcal{A},p)$ be essentially standard Borel, let $(\mathcal{B}_\lambda)_{\lambda\in\Lambda}$ be a decreasing filtration, and let $(f_\lambda)_{\lambda\in\Lambda}$ be a martingale with $f_\lambda\in L^n(X,\mathcal{B}_\lambda,p)$ for $n\ge 1$.
    Denote the meet $\bigwedge_{\lambda\in\Lambda} \mathcal{B}_\lambda$ (intersection of the null-set completions) by $\mathcal{B}_\infty$. Then there exists $f_\infty\in L^n(X,\mathcal{B}_\infty,p)$ such that
    \begin{itemize}
        \item $f_\infty=\E[f_\lambda|\mathcal{B}_\infty]$ for all $\lambda$;
        \item $f_\lambda$ converges to $f_\infty$ in $L^n$.
    \end{itemize}
\end{corollary}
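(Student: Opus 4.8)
The plan is to reduce the statement to the downward form of Levy's theorem stated just above, exploiting the structural feature that makes backward martingales strictly easier than forward ones: any single member of the net already generates the entire tail by conditioning, so no integrability hypothesis beyond $f_\lambda\in L^n$ is required (contrast the forward case, which needed either a uniform $L^n$ bound or a dominating $f$, cf.\ \Cref{non_int_mart}). All the genuine analytic content will be carried by the idempotent Levi property of $\cat{EKrn}$ (\Cref{levi_krn}) and its consequence, Levy's downward theorem; what remains is bookkeeping with the directed index set.

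First I would fix an index $\lambda_0\in\Lambda$ (which exists since $\Lambda$ is directed, hence nonempty) and set $f_\infty := \E[f_{\lambda_0}|\mathcal{B}_\infty]$. Since conditional expectation is the $1$-Lipschitz operator $(\pi^+)^*$ of \Cref{RV}, this $f_\infty$ is a well-defined element of $L^n(X,\mathcal{B}_\infty,p)$. To check that it does not depend on the choice of $\lambda_0$ and that it satisfies the first bullet for every $\lambda$, I would argue as follows: given any $\lambda$, choose by directedness $\mu\ge\lambda,\lambda_0$; the backward martingale condition gives $f_\mu=\E[f_\lambda|\mathcal{B}_\mu]$ and $f_\mu=\E[f_{\lambda_0}|\mathcal{B}_\mu]$, and since $\mathcal{B}_\infty\subseteq\mathcal{B}_\mu$ up to null sets, the tower property yields
\[
\E[f_\lambda|\mathcal{B}_\infty]=\E\big[\E[f_\lambda|\mathcal{B}_\mu]\,\big|\,\mathcal{B}_\infty\big]=\E[f_\mu|\mathcal{B}_\infty]=\E[f_{\lambda_0}|\mathcal{B}_\infty]=f_\infty .
\]
This establishes $f_\infty=\E[f_\lambda|\mathcal{B}_\infty]$ for all $\lambda$, which is the first claimed property.

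For the convergence, the key observation is that for every $\lambda\ge\lambda_0$ the martingale property gives $f_\lambda=\E[f_{\lambda_0}|\mathcal{B}_\lambda]$: on the up-set of $\lambda_0$ the net $(f_\lambda)$ \emph{is} the net of conditional expectations of the single fixed random variable $f_{\lambda_0}\in L^n(X,\mathcal{A},p)$. Levy's downward theorem then gives that the net $\big(\E[f_{\lambda_0}|\mathcal{B}_\lambda]\big)_{\lambda\in\Lambda}$ converges to $\E[f_{\lambda_0}|\mathcal{B}_\infty]=f_\infty$ in $L^n$. To transfer this to $(f_\lambda)$ itself, I would combine the two eventual conditions using directedness: given any $L^n$-neighborhood $V$ of $f_\infty$, Levy's theorem supplies $\lambda_V$ with $\E[f_{\lambda_0}|\mathcal{B}_\lambda]\in V$ for all $\lambda\ge\lambda_V$; picking $\mu_V\ge\lambda_V,\lambda_0$, every $\lambda\ge\mu_V$ satisfies both $\lambda\ge\lambda_0$ (so $f_\lambda=\E[f_{\lambda_0}|\mathcal{B}_\lambda]$) and $\lambda\ge\lambda_V$ (so this lies in $V$), whence $f_\lambda\in V$. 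This is exactly the defining ``eventually'' condition for $f_\lambda\to f_\infty$ in $L^n$.

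I expect the only genuinely delicate point to be this last passage from the behaviour on the tail $\{\lambda\ge\lambda_0\}$ to convergence of the whole net, which rests on the cofinality of up-sets in a directed set. This is precisely where the ``arbitrary net'' generality (rather than sequences) must be handled with care, and it is what distinguishes the argument from the classical sequential one; everything else follows formally from the enriched-functor and Levi-property machinery already in place.
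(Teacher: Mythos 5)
Your proof is correct and takes essentially the same route as the paper, which disposes of this corollary by declaring it ``analogous to the second case of \Cref{martin_up}'': fix a reference index, observe that the tail of the backward martingale is the net of conditional expectations of the single random variable $f_{\lambda_0}\in L^n$, and conclude by the tower property together with Levy's downward theorem. Your write-up merely supplies the details the paper leaves implicit (the choice of $\lambda_0$, its irrelevance via directedness, and the passage from convergence on the up-set of $\lambda_0$ to convergence of the whole net), all of which are handled correctly.
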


The proof is analogous to the one of the second case of \Cref{martin_up}. 

This version of the backward martingale convergence theorem, for decreasing filtrations indexed by arbitrary nets, seems to be new.

\section{Vector-valued martingales}
\label{sec_V}

Our abstract formalism allows to talk about martingales in a very general way, including vector-values ones, and gives us a convergence result for all such classes of martingales. Vector-valued martingales and their convergence can be used to describe approximations of stochastic processes such as Brownian motion. 

Here is the definition in general.

\begin{definition}
    Let $F:\cat{EKrn}^\op\to\cat{Ban}$ be any functor. 
    \begin{itemize}
        \item An \newterm{$F$-valued random variable} on $(X,\mathcal{A},p)$ is an element of the Banach space $F(X,\mathcal{A},p)$. (Equivalently, a linear map $\R\to F(X,\mathcal{A},p)$).
        \item An \newterm{$F$-valued martingale} on $(X,\mathcal{A},p)$ consists of an increasing filtration $(\mathcal{B}_\lambda)_{\lambda\in\Lambda}$ on $(X,\mathcal{A},p)$, together with a family of $F$-valued random variables $(f_\lambda)_{\lambda\in\Lambda}$ such that for all $\lambda\le\mu$, $f_\lambda= F(\pi^+_{\mu,\lambda})(f_\mu)$.
        (Equivalently, with a cone in $\cat{Ban}$ on the $F(X,\mathcal{B}_\lambda,p)$ with tip $\R$.)
        \item An \newterm{$F$-valued backward martingale} is defined analogously, but with a decreasing filtration.
    \end{itemize}
\end{definition}

Beside the ordinary RV functors of \Cref{RV}, we will also give the example of vector-valued random variables in the Bochner (strong) sense.
One could generalize this even further, replacing the category $\cat{Ban}$ with any other category, but we will not do that here.

Thanks to how we have set up the formalism, whenever the functor $F$ is enriched we automatically have a version of the Levy theorems:

\begin{theorem}\label{general_levy}
    Let $F:\cat{EKrn}^\op\to\cat{Ban}$ be an enriched functor, let $(\mathcal{B}_\lambda)_{\lambda\in\Lambda}$ be an increasing filtration on a standard Borel space $(X,\mathcal{A},p)$, and let $f\in F(X,\mathcal{A},p)$.
    We have that 
    \begin{enumerate}
        \item The $F$-images of the retracts 
        \[
        \begin{tikzcd}
            (X,\mathcal{A},p) \ar[shift left]{r}{\pi_\lambda} & (X,\mathcal{B}_\lambda,p) \ar[shift left]{l}{\pi^+_\lambda}
        \end{tikzcd}
        \]
        are retracts 
        \[
        \begin{tikzcd}
            F(X,\mathcal{A},p) \ar[leftarrow, shift left]{r}{F(\pi_\lambda)} & F(X,\mathcal{B}_\lambda,p) \ar[leftarrow, shift left]{l}{F(\pi^+_\lambda)}
        \end{tikzcd}
        \]
        in $\cat{Ban}$, where $F(\pi_\lambda)$ is a projection and $F(\pi^+_\lambda)$ an inclusion, and the order of split idempotents is preserved;
        \item If we denote by $f_\lambda$ the element $F(\pi^+_\lambda)(f)$, we have that $(f_\lambda)_{\lambda\in\Lambda}$ is an $F$-valued martingale;
        \item The $F$-image of the supremum 
        \[
        \begin{tikzcd}
            (X,\mathcal{A},p) \ar[shift left]{r}{\pi_\infty} & (X,\mathcal{B}_\infty,p) \ar[shift left]{l}{\pi^+_\infty}
        \end{tikzcd}
        \]
        is the supremum in the split idempotent order on $F(X,\mathcal{A},p)$ in $\cat{Ban}$;
        \item If we denote by $f_\infty$ the element $F(\pi^+_\infty)(f)$, we have that $f_\lambda$ tends to $f_\infty$ topologically (in the norm of $F(X,\mathcal{A},p)$).
    \end{enumerate}

    The analogous statement is true for decreasing filtrations and backward martingales.
\end{theorem}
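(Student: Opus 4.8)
The plan is to separate the four claims into the purely functorial statements (i)--(ii) and the convergence statements (iii)--(iv), the latter being where the enrichment actually does the work. For (i) I would observe that $(\pi_\lambda,\pi^+_\lambda)$ is a splitting of the idempotent $e_\lambda=\pi^+_\lambda\circ\pi_\lambda$ in $\cat{EKrn}$, and invoke the fact that split idempotents are preserved by every functor \cite[Proposition~1]{cauchycompletion}. Since $F$ is contravariant it reverses the two defining composites, so $F(\pi^+_\lambda)\circ F(\pi_\lambda)=F(\id)=\id$ while $F(\pi_\lambda)\circ F(\pi^+_\lambda)=F(e_\lambda)$; thus $\big(F(X,\mathcal{B}_\lambda,p),F(\pi_\lambda),F(\pi^+_\lambda)\big)$ splits $F(e_\lambda)$ in $\cat{Ban}$ and is the asserted retract. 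For the order I would use characterization~\ref{eep} of \Cref{eqcondorder}: from $e_\lambda\circ e_\mu=e_\mu\circ e_\lambda=e_\lambda$, contravariance gives $F(e_\mu)\circ F(e_\lambda)=F(e_\lambda)\circ F(e_\mu)=F(e_\lambda)$, i.e.\ $F(e_\lambda)\le F(e_\mu)$, so $F$ is order-preserving on idempotents.

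For (ii) I would exploit the factorization $\pi_\lambda=\pi_{\mu,\lambda}\circ\pi_\mu$ of identity-on-sets coarse-grainings, whose dagger is $\pi^+_\lambda=\pi^+_\mu\circ\pi^+_{\mu,\lambda}$. Applying the contravariant $F$ then yields $f_\lambda=F(\pi^+_\lambda)(f)=F(\pi^+_{\mu,\lambda})\big(F(\pi^+_\mu)(f)\big)=F(\pi^+_{\mu,\lambda})(f_\mu)$, which is exactly the defining relation of an $F$-valued martingale.

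The substance is in (iii) and (iv). First I would invoke the idempotent Levi property of $\cat{EKrn}$ (\Cref{levi_krn}): the increasing net $(e_\lambda)$ has supremum $e_\infty$, the dagger idempotent of $\mathcal{B}_\infty$, and converges to it in the topology on the endomorphism hom-space (where one-sided and two-sided topologies agree, as idempotents are their own Bayesian inverse). Because $F$ is enriched, its action on hom-spaces is continuous, so $F(e_\lambda)\to F(e_\infty)$ in $\cat{Ban}\big(F(X,\mathcal{A},p),F(X,\mathcal{A},p)\big)$, and by (i) the net $(F(e_\lambda))$ is monotone increasing. To identify the limit as the supremum I would apply \Cref{limitclosedorder}: the idempotent order on $F(X,\mathcal{A},p)$ is separately closed because composition in $\cat{Ban}$ is separately continuous---for $e_\lambda\le f$ with $e_\lambda\to e$ one gets $e\circ f=\lim(e_\lambda\circ f)=e$ and $f\circ e=\lim(f\circ e_\lambda)=e$---so the convergent monotone net tends to its supremum, which by uniqueness of limits in the Hausdorff pointwise topology is $F(e_\infty)$, proving (iii). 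For (iv), comparing $f_\lambda$ and $f_\infty$ inside $F(X,\mathcal{A},p)$ amounts to comparing their inclusions $F(\pi_\lambda)(f_\lambda)=F(e_\lambda)(f)$ and $F(\pi_\infty)(f_\infty)=F(e_\infty)(f)$; evaluating the convergence $F(e_\lambda)\to F(e_\infty)$ at the single vector $f$ gives $F(e_\lambda)(f)\to F(e_\infty)(f)$ in norm, i.e.\ $f_\lambda\to f_\infty$. The decreasing/backward case is dual: one uses the downward half of the Levi property and of \Cref{limitclosedorder}, together with the fact that the order-preserving $F$ sends a decreasing net of idempotents to a decreasing net.

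The one non-formal step, and the point I expect to be the main obstacle, is the passage in (iii) from topological convergence to the order-theoretic identification of the limit as the supremum; everything else is bookkeeping with functoriality and the dagger. The observation that resolves it is that \Cref{limitclosedorder} requires only \emph{separate} closedness of the idempotent order, hence only separate continuity of composition, which is precisely what the $\cat{Top}$-enrichment of $\cat{Ban}$ supplies; no joint continuity is needed, which matters since for non-uniformly-bounded families of projections in a general Banach space joint continuity can fail.
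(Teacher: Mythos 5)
Your proposal is correct and follows essentially the same route as the paper: parts (i)--(ii) by contravariant functoriality exactly as in the paper's proof, part (iii) by combining the idempotent Levi property of $\cat{EKrn}$ (\Cref{levi_krn}) with the enrichment of $F$ and then identifying the limit as the supremum via \Cref{limitclosedorder}, and part (iv) by evaluating the pointwise convergence $F(e_\lambda)\to F(e_\infty)$ at $f$. Your explicit verification that the idempotent order on $F(X,\mathcal{A},p)$ is \emph{separately} closed (so that only the separate continuity of composition supplied by the $\cat{Top}$-enrichment of $\cat{Ban}$ is needed) is in fact a more careful rendering of the step the paper handles by citing \Cref{closedorder}, whose stated proof concerns $\cat{GKrn}$ and joint continuity; this is a clarification of, not a departure from, the paper's argument.
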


\begin{proof}
    We will prove the statement for increasing filtrations and forward martingales, the backward case is analogous.
    \begin{enumerate}
        \item By (contravariant) functoriality, 
        \[
        F(\pi^+_\lambda)\circ F(\pi_\lambda) \;=\; F(\pi_\lambda\circ \pi^+_\lambda) \;=\; F(\id_{(X,\mathcal{B}_\lambda,p)}) \;=\; \id_{F(X,\mathcal{B}_\lambda,p)} ,
        \]
        so that we have a retract.
        To see that the order is preserved, denote by $e_\lambda$ the idempotent induced by $\mathcal{B}_\lambda$. If $e_\lambda\le e_\mu$, i.e.~$e_\lambda\circ e_\mu=e_\mu\circ e_\lambda=e_\lambda$, again by functoriality we have that
        \[
        F(e_\lambda)\circ F(e_\mu) \;=\; F(e_\mu\circ e_\lambda) \;=\; F(e_\lambda) ,
        \]
        and analogously $F(e_\mu)\circ F(e_\lambda)=F(e_\lambda)$, so that $F(e_\lambda)\le F(e_\mu)$. 
        \item Once again by functoriality, for every $\lambda\le\mu$, 
        \[
        F(\pi^+_{\mu,\lambda})(f_\mu) \;=\; F(\pi^+_{\mu,\lambda})\circ F(\pi^+_{\mu}) (f) \;=\; F(\pi^+_{\mu}\circ \pi^+_{\mu,\lambda}) \;=\; F(\pi^+_\lambda)(f) \;=\; f_\lambda .
        \]
        \item Denote by $e_\infty$ the idempotent induced by $\mathcal{B}_\infty$. 
        By \Cref{levi_krn}, $e_\lambda\to e_\infty$ topologically in $\cat{EKrn}\big( (X,\mathcal{A},p), (X,\mathcal{A},p) \big)$.
        Since the functor $F$ is enriched, then $F(e_\lambda)\to F(e_\infty)$ topologically in $\cat{Ban}\big( F(X,\mathcal{A},p), F(X,\mathcal{A},p) \big)$. 
        Now by \Cref{closedorder} and \Cref{limitclosedorder} it follows that $F(e_\infty)$ is a supremum of the $F(e_\lambda)$.
        \item From the point above, we have that $f_\lambda = F(e_\lambda) (f) \to F(e_\infty) (f) = f_\infty$ topologically. \qedhere
    \end{enumerate}
\end{proof}

Let's now apply this to vector-valued martingales. 

\subsection{Bochner integrals and conditional expectations}
\label{sec_bochner}

In what follows we will consider random variables with values in a Banach space $V$.
The case where $V$ is separable is particularly simple, but we will not assume it here. 
(Note that a separable Banach space is in particular a complete separable metric space, so that with its Borel sigma-algebra it is a standard Borel space.) 

Where possible, we will follow the terminology and conventions of \cite[Appendix~E]{cohn}.

\begin{definition}
    Let $(X,\mathcal{A})$ be a measurable space, and let $V$ be a Banach space.
    \begin{itemize}
        \item A \newterm{$V$-valued random variable} is an $\mathcal{A}$-measurable function $f:X\to V$, where $V$ is taken with the Borel sigma-algebra.
        \item A $V$-valued RV $f$ on $X$ is called \newterm{strongly measurable} if it is measurable and its image $f(X)\subseteq V$ is separable, and \emph{almost surely strongly measurable} if it is almost surely equal to a strongly measurable RV.
        \item A $V$-valued RV $f$ on $X$ is called \newterm{simple} if it is in the form
    \[
    f(x) \;=\; \sum_{i=1}^n 1_{A_i}(x)\cdot v_i ,
    \]
    where $v_i\in V$ and $A_i\in\mathcal{A}$ for all $i=1,\dots,n$.
    \end{itemize}
\end{definition}
Every simple RV is strongly measurable. Also, if $V$ is separable, measurability and strong measurability coincide.
Just as ordinarily measurable RVs, strongly measurable RVs are closed under linear combinations \cite[Proposition~E.3]{cohn} and under pointwise limits \cite[Proposition~E.1]{cohn}.

\begin{definition}
    In analogy with the real case, we call an a.s.~strongly measurable $V$-valued random variable $f$ \newterm{Bochner-integrable} if and only if 
    \[
    \int_X \|f(x)\|_V \,p(dx) \;<\; \infty ,
    \]
    and \newterm{Bochner-$n$-integrable}, for $n\ge 1$, if and only if 
    \[
    \int_X (\|f(x)\|_V)^n \,p(dx) \;<\; \infty .
    \]
\end{definition}

We denote the set of a.s.~equivalence classes of (a.s.~strongly measurable) Bochner-$n$-integrable random variables on $(X,\mathcal{A},p)$ by $L^n(X,\mathcal{A},p;V)$. 
These spaces are known in the literature under the name of \emph{Bochner spaces}. 
They are Banach spaces with norm
\[
\|f\|_{L^n(X,\mathcal{A},p;V)} \;=\; \left( \int_X (\|f(x)\|_V)^n \,p(dx) \right)^{1/n} ,
\]
analogously to usual $L^n$ spaces.
Similarly, we denote by $L^\infty(X,\mathcal{A},p;V)$ the set of equivalence classes of almost surely bounded $V$-valued random variables, with the essential supremum norm. Just as in the real-valued case, we have that for $m\ge n$, $L^m(X,\mathcal{A},p;V)\subseteq L^n(X,\mathcal{A},p;V)$.

It can be shown that if a RV $f$ is Bochner-integrable, then there exists a sequence $(f_n)$ of simple $V$-valued RVs 
\[
f_n(x) \coloneqq \sum_{i=1}^{k_n} 1_{A_{n,i}}(x)\cdot v_{n,i}
\]
tending pointwise to $f$ and such that for all $x$ and $n$, $\|f_n(x)\|_V\le\|f(x)\|_V$ \cite[Proposition~E.2]{cohn}.
In that case, the \newterm{Bochner integral} of $f$ is defined by 
\[
\int_X f(x)\, p(dx) \;\coloneqq\; \lim_{n\to\infty} \int_X f_n(x)\,p(dx) \;\coloneqq\; \lim_{n\to\infty} \sum_{i=1}^{k_n} p(A_{n,i}) \cdot v_{n,i} ,
\]
is finite, and is independent of the choice of $(f_n)$  \cite[p.~399]{cohn}.
One can show that Bochner integration is linear in $f$, and a version of the dominated convergence theorem holds.

We now want to construct functors $L^n(-;V):\cat{GKrn}^\op\to\cat{Ban}$ analogous to the $L^n$ functors defined in \Cref{sec_RV}:
\begin{itemize}
    \item On objects, we map a probability space $(X,\mathcal{A},p)$ to the Banach space $L^n(X,\mathcal{A},p;V)$;
    \item On morphisms, given a measure-preserving kernel $k:(X,\mathcal{A},p)\to(Y,\mathcal{B},q)$, we get a bounded linear map $k^*:L^n(Y,\mathcal{B},q;V)\to L^n(X,\mathcal{A},p;V)$ acting on random variables $g\in L^n (Y,\mathcal{B},q;V)$ by
    \begin{equation}\label{defkstar}
    k^*g (x) \coloneqq \int_Y g(y)\,k(dy|x) .
    \end{equation}
\end{itemize}

Here is the precise statement, the analogous of \Cref{RV}.

\begin{proposition}\label{RVV}
    Let $k:(X,\mathcal{A},p)\to(Y,\mathcal{B},q)$ be a measure-preserving kernel, and let $g\in L^n(Y,\mathcal{B},q;V)$, for $n$ finite or infinite. Then the assignment 
    \[
    x\longmapsto k^*g (x) \coloneqq \int_Y g(y)\,k(dy|x) 
    \]
    is a well-defined element of $L^n(X,\mathcal{A},p;V)$. 

    Moreover, the assignment $k^*:L^n(Y,\mathcal{B},q;V)\to L^n(X,\mathcal{A},p;V)$ given by $g\mapsto k^*g$ is linear and 1-Lipschitz, so that $L^n$ is a functor $\cat{GKrn}^\op\to\cat{Ban}$.
\end{proposition}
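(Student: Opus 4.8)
The plan is to reduce the entire statement to the scalar case \Cref{RV} applied to the real-valued random variable $y\mapsto\|g(y)\|_V$, together with standard properties of the Bochner integral. First I would note that since $g\in L^n(Y,\mathcal{B},q;V)$, the function $y\mapsto\|g(y)\|_V$ is measurable (as $g$ is a.s.\ strongly measurable) and lies in the ordinary real space $L^n(Y,\mathcal{B},q)$. Applying \Cref{RV} to this scalar function, the integrals $\int_Y\|g(y)\|_V\,k(dy|x)$ are finite for $p$-almost all $x$, and the resulting scalar assignment defines an element of $L^n(X,\mathcal{A},p)$ whose norm is at most $\|g\|_{L^n(X,\mathcal{A},p;V)}$. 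For each such $x$ the $V$-valued Bochner integral $\int_Y g(y)\,k(dy|x)$ is then defined: $g$ is strongly measurable for $k(\cdot|x)$, and the finiteness of the norm-integral is exactly the Bochner-integrability condition. This specifies the candidate value $k^*g(x)$ for $p$-almost all $x$.

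Second, and this is the main technical point, I would verify that $x\mapsto k^*g(x)$ is itself almost surely strongly measurable with values in $V$. For a simple $g=\sum_{i=1}^k 1_{B_i}v_i$ one computes directly $k^*g(x)=\sum_{i=1}^k k(B_i|x)\,v_i$, which is measurable in $x$ (each $k(B_i|\cdot)$ is measurable) and takes values in the finite-dimensional span of the $v_i$. For general $g$ I would choose simple functions $g_m\to g$ pointwise with $\|g_m(y)\|_V\le\|g(y)\|_V$, as provided by \cite[Proposition~E.2]{cohn}, apply the dominated convergence theorem for the Bochner integral to conclude $k^*g_m(x)\to k^*g(x)$ for $p$-almost all $x$, and then use that a pointwise almost-everywhere limit of strongly measurable functions is strongly measurable \cite[Proposition~E.1]{cohn}. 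Separability of the image is inherited from the countably many values of all the $g_m$.

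Third, the norm estimate follows immediately from the fundamental inequality $\|\int_Y g(y)\,k(dy|x)\|_V\le\int_Y\|g(y)\|_V\,k(dy|x)$ for the Bochner integral. Raising both sides to the $n$-th power and integrating against $p$ produces exactly the scalar quantity already bounded in the proof of \Cref{RV}, yielding $\|k^*g\|_{L^n(X,\mathcal{A},p;V)}\le\|g\|_{L^n(Y,\mathcal{B},q;V)}$; the case $n=\infty$ uses \Cref{preserveone} verbatim, replacing $|g|$ by $\|g\|_V$. Linearity of $k^*$ is inherited from linearity of the Bochner integral. Finally, functoriality (preservation of identities and of composition) follows from the definition of kernel composition combined with a Fubini-type interchange for Bochner integrals, exactly paralleling the scalar argument.

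The hard part will be the strong-measurability verification in the second step: the scalar estimates transfer through the norm inequality essentially in one line, but guaranteeing that $k^*g$ is genuinely \emph{strongly} measurable — not merely weakly measurable — is what forces the detour through simple-function approximation and the Bochner dominated convergence theorem rather than a direct computation.
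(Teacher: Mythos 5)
Your proposal is correct and follows essentially the same route as the paper's proof: the scalar estimates (measure-preservation plus Jensen, and \Cref{preserveone} for $n=\infty$) give almost-sure Bochner-integrability, strong measurability is obtained from a dominated simple-function approximation together with closure of strong measurability under pointwise limits (the paper's \Cref{stronglysmult} handling the simple summands), and the norm bound comes from the fundamental inequality for Bochner integrals. The only cosmetic differences are that you factor the scalar work through \Cref{RV} instead of repeating it, and you invoke the Bochner dominated convergence theorem where the paper uses the defining limit of the Bochner integral along the approximating sequence --- both yield the same pointwise identity $k^*g_m(x)\to k^*g(x)$ for $p$-almost all $x$.
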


\begin{lemma}\label{stronglysmult}
    Let $h:(X,\mathcal{A})\to\R$ be a measurable function, and let $v\in V$.
    Then the function 
    \[
    \begin{tikzcd}[row sep=0]
        (X,\mathcal{A}) \ar{r}{h\cdot v} & V \\
        x \ar[mapsto]{r} & h(x)\cdot v
    \end{tikzcd}
    \]
    is strongly measurable.
\end{lemma}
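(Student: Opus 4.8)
The plan is to check directly the two conditions in the paper's definition of strong measurability: that $h\cdot v$ is measurable as a map into $V$ (with its Borel $\sigma$-algebra), and that its image is separable. Neither requires any genuinely new idea, so the proof will be short; the only point worth spelling out is why measurability into the possibly infinite-dimensional space $V$ is unproblematic.

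For measurability, I would factor the map as
\[
(X,\mathcal{A}) \xrightarrow{\ h\ } \R \xrightarrow{\ t\,\mapsto\, t\cdot v\ } V .
\]
The first arrow is measurable by hypothesis, and the second, being scalar multiplication by the fixed vector $v$, is continuous and hence Borel measurable. Since the composite of a measurable map with a Borel-measurable map is measurable, $h\cdot v$ is measurable. For the separability of the image, note that $\{h(x)\cdot v : x\in X\}$ is contained in the one-dimensional subspace $\R\cdot v$, which is the continuous image of the separable space $\R$ under $t\mapsto t\cdot v$ and is therefore separable; as any subset of a separable metric space is separable, $(h\cdot v)(X)$ is separable. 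Together these two facts give strong measurability.

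An alternative route, which avoids appealing to the factorization and instead reuses the closure properties already recorded, is to write $h$ as a pointwise limit of real simple functions $h_n=\sum_i a_{n,i}\,1_{A_{n,i}}$. Then each $h_n\cdot v=\sum_i (a_{n,i}\,v)\,1_{A_{n,i}}$ is a simple $V$-valued random variable, hence strongly measurable, and $h_n\cdot v\to h\cdot v$ pointwise by continuity of scalar multiplication; closure of strongly measurable functions under pointwise limits (\cite[Proposition~E.1]{cohn}) then yields the claim. I expect no real obstacle in either approach: the statement is essentially a bookkeeping check, and the one mild subtlety --- ensuring that taking values in an infinite-dimensional Banach space causes no difficulty --- is resolved precisely by the observation that the image lies in the one-dimensional span of $v$.
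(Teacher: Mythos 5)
Your proposal is correct, and your primary argument is genuinely different from the paper's proof. The paper proceeds via approximation: it writes $h$ as a pointwise limit of real simple functions $h_n$, observes that each $h_n\cdot v$ is a simple (hence strongly measurable) $V$-valued random variable, and concludes by invoking closure of strongly measurable functions under linear combinations and pointwise limits (the cited Propositions E.1 and E.3 of \cite{cohn}) — exactly your ``alternative route''. Your primary route instead verifies the two clauses of the definition directly: measurability follows by factoring $h\cdot v$ through the continuous map $t\mapsto t\cdot v$, and separability of the image follows because the image sits inside the one-dimensional subspace $\R\cdot v$, a separable metric space whose subsets are all separable. This direct check is more elementary and self-contained, since it avoids the approximation machinery and the closure properties entirely; what the paper's approach buys is uniformity with the surrounding development, where the same simple-function approximation and the same closure lemmas are already in play for constructing the Bochner integral and proving \Cref{RVV}. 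Both arguments are sound.
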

\begin{proof}[Proof of \Cref{stronglysmult}]
    Let $(h_n)$ be a sequence of simple (real-valued) functions 
    \[
    h_n(x) \coloneqq \sum_{i=1}^{k_n} \lambda_{n,i}\,1_{A_{n,i}}(x) 
    \]
    tending pointwise to $h$. Since the scalar multiplication 
    \[
    \begin{tikzcd}[row sep=0]
        \R\times V \ar{r} & V \\
        (\lambda,v) \ar[mapsto]{r} & \lambda\cdot v
    \end{tikzcd}
    \]
    is continuous, we have that the functions
    \[
    x \longmapsto h_n(x)\cdot v \;=\; \sum_{i=1}^{k_n} \lambda_{n,i}\,1_{A_{n,i}}(x) \cdot v
    \]
    tend pointwise to $h\cdot v$. Since strongly measurable functions are closed under linear combinations and pointwise limits, we only have to show that for each $n$ and $i$, the function $x\mapsto 1_{A_{n,i}}(x) \cdot v$ is strongly measurable. 
    But this is the case: the function is measurable (since the set $A_{n,i}$ is), and it assumes at most only two values ($0$ and $v$).  
\end{proof}

\begin{proof}[Proof of \Cref{RVV} for finite $n$]
Since $k$ is measure-preserving, by approximation via simple (real-valued) functions,
\begin{equation}\label{meas_pres}
    \int_X \int_Y (\|g(y)\|_V)^n \,k(dy|x)\,p(dx) \;=\; \int_Y (\|g(y)\|_V)^n\,q(dy) \;<\;\infty ,
\end{equation}
and so the integrals
\[
\int_Y (\|g(y)\|_V)^n \,k(dy|x) 
\]
are finite for $p$-almost all $x\in X$. Even if $n>1$, again by Jensen's inequality we have that 
\[
\left( \int_Y \|g(y)\|_V \,k(dy|x) \right)^n \;\le\; \int_Y (\|g(y)\|_V)^n \,k(dy|x) \;<\; \infty .
\]
Therefore the Bochner integral
\begin{equation}\label{defkstar}
x\longmapsto \int_Y g(y) \,k(dy|x) ,
\end{equation}
is defined for $p$-almost all $x\in X$, and the assignment specifies a unique function $X\to V$ up to a null set.
To see that this function is a.s.~strongly measurable, let $(g_n)_{n=1}^\infty$ be a sequence of simple $V$-valued RVs 
\[
g_n(y) \coloneqq \sum_{i=1}^{k_n} 1_{B_{n,i}}(y)\cdot v_{n,i}
\]
tending pointwise to $g$ and such that $\|g_n(y)\|_V\le\|g(y)\|_V$. (The sequence exists since $g$ is Bochner-integrable.) 
Hence for almost all $x$,
\[
\int_Y g(y) \,k(dy|x) \;=\; \lim_{n\to\infty} \int_Y g_n(y)\,k(dy|x) 
\;=\; \lim_{n\to\infty} \sum_{i=1}^{k_n} k(B_{n,i}|x) \cdot v_{n,i} ,
\]
and so by \Cref{stronglysmult} the assignment \eqref{defkstar} is a pointwise limit of strongly measurable functions.
Therefore the assignment \eqref{defkstar} gives a well-defined element of $L^n(X,\mathcal{A},p;V)$. 
We denote this element by $k^*g$.

In other words, given $k:(X,\mathcal{A},p)\to(Y,\mathcal{B},q)$, we get a well-defined function $k^*:L^n(Y,\mathcal{B},q;V)\to L^n(X,\mathcal{A},p;V)$ given by $g\mapsto k^*g$.  This function is linear by linearity of integration.
To see that it is 1-Lipschitz, once again by Jensen,
\begin{align*}
(\| k^*g \|_{L^n(X,\mathcal{A},p;V)})^n \;&=\; \int_X \left(\left\| \int_Y g(y) \,k(dy|x) \right\|_V\right)^n \, p(dx) \\
&\le\; \int_X \left( \int_Y \|g(y)\|_V \,k(dy|x) \right)^n \, p(dx) \\
&\le\; \int_X \int_Y (\|g(y)\|_V)^n \,k(dy|x)\,p(dx) \\
&=\; \int_Y (\|g(y)\|_V)^n\,q(dy) \\
&=\; (\|g\|_{L^n(Y,\mathcal{B},q;V)})^n . \qedhere
\end{align*}
\end{proof}

\begin{proof}[Proof of \Cref{RVV} for infinite $n$]
Let $g\in L^\infty(Y,\mathcal{B},q;V)$. By definition there exists $B\in\mathcal{B}$ of $q$-measure one such that $g$ restricted to $B$ is bounded. Let $K$ be an upper bound for the norm $\|g|_B\|_V$.
Since $k$ is measure-preserving, 
\begin{align*}
    \int_X \int_Y \|g(y)\|_V \,k(dy|x)\,p(dx) \;=\; \int_Y \|g(y)\|_V\,q(dy) \;=\; \int_B \|g(y)\|_V\,q(dy) \;\le\; K .
\end{align*}
Therefore the integrals
\[
\int_Y \|g(y)\|_V \,k(dy|x) 
\]
are finite for $p$-almost all $x\in X$, and so the Bochner integral
\[
x\longmapsto \int_Y g(y) \,k(dy|x) ,
\]
is defined for $p$-almost all $x\in X$. Denote this value by $k^*g(x)$. As in the case of finite $n$, this is strongly measurable in $X$.

Now by \Cref{preserveone}, there exists $A\in\mathcal{A}$ of $p$-measure one such that for all $x\in A$, $k(B|x)=1$.
Therefore, for all $x\in A$,
\[
\|k^*g(x)\|_V \;=\; \left\| \int_Y g(y) \,k(dy|x) \right\|_V \;\le\; \int_Y \|g(y)\|_V \,k(dy|x) \;=\;\int_B \|g(y)\|_V \,k(dy|x) \;\le\; K .
\]
In other words, the restriction of $k^*g$ to $A$ is also bounded by $K$. So we get a well-defined function $k^*:L^\infty(Y,\mathcal{B},q;V)\to L^\infty(X,\mathcal{A},p;V)$. Moreover, since every essential upper bound for $\|g\|_V$ is an essential upper bound for $\|k^*g\|_V$, we have that 
\[
\|k^*g\|_{L^\infty} \;=\; \mathrm{ess}\sup \|k^*g\|_V \;\le\; \mathrm{ess}\sup \|g\|_V \;=\; \|g\|_{L^\infty} . \qedhere
\]
\end{proof}

Just as in the real-valued case, the inclusions $L^n(X,\mathcal{A},p;V)\subseteq L^m(X,\mathcal{A},p;V)$ for $m\le n$ form natural transformations.

As an example of the functorial action of $L^n(-;V)$, we can look at \emph{$V$-valued conditional expectations}. 
Indeed, let $(X,\mathcal{A},p)$ be a standard Borel space, and let $\mathcal{B}\subseteq\mathcal{A}$ be a sub-sigma-algebra. 
The disintegration kernel $\pi^+:(X,\mathcal{B},p)\to(X,\mathcal{A},p)$ given by the conditional expectations
\[
\pi^+(A|x) \;=\; \P[A|\mathcal{B}](x) 
\]
induces a linear, 1-Lipschitz map $(\pi^+)^*:L^n(X,\mathcal{A},p;V)\to L^n(X,\mathcal{B},p;V)$
which can be considered a generalization of conditional expectation to vector-valued random variables.
Because of that, let's use the same notation as for real-valued random-variables: we write
\[
\begin{tikzcd}[row sep=0]
L^n(X,\mathcal{A},p;V) \ar{r}{(\pi^+)^*} & L^n(X,\mathcal{B},p;V) \\
f \ar[mapsto]{r} & \E[f|\mathcal{B}] .
\end{tikzcd}
\]
Note that, just as in the real-valued case:
\begin{itemize}
    \item $\E[f|\mathcal{B}]$ is (strongly) $\mathcal{B}$-measurable;
    \item For every $B\in\mathcal{B}$ we have that 
    \[
    \int_B \E[f|\mathcal{B}](x) \,p(dx) \;=\; \int_B f(x) \,p(dx) ,
    \]
    and $\E[f|\mathcal{B}]$ is determined almost surely uniquely by these properties.
\end{itemize}
Also, in this notation, a $V$-valued martingale looks just like a real-valued one: a $V$-valued martingale on $(X,\mathcal{A},p)$ with filtration $(B_\lambda)$ is a family of $V$-valued, strongly $\mathcal{B}_\lambda$-measurable RVs $(f_\lambda)$ such that for all $\lambda\le\mu$, $f_\lambda=\E[f_\mu|\mathcal{B}_\lambda]$.

\subsection{Topological enrichment and convergence}
\label{sec_bochnerenrichment}

Let's now turn to topological enrichment and convergence.

\begin{proposition}
    For every Banach space $V$ and for each $n$, the functor $L^n(-;V):\cat{GKrn}^\op\to\cat{Ban}$ is topologically enriched.
\end{proposition}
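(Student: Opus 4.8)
The plan is to show that the action on morphisms,
\[
\cat{GKrn}\big((X,\mathcal{A},p),(Y,\mathcal{B},q)\big) \longrightarrow \cat{Ban}\big(L^n(Y,\mathcal{B},q;V),L^n(X,\mathcal{A},p;V)\big), \qquad k\longmapsto k^*,
\]
is continuous, where the source carries the one-sided topology and the target the topology of pointwise convergence. Since continuity between topological spaces is detected by nets, it suffices to prove that $k_\lambda\to k$ one-sidedly implies $k_\lambda^*g\to k^*g$ in $L^n(X,\mathcal{A},p;V)$ for every $g$. The strategy is to run the argument of \Cref{homeomorph} with scalars replaced by vectors.

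First I would invoke that each $k^*$ and $k_\lambda^*$ is $1$-Lipschitz (\Cref{RVV}), so by the density lemma (\Cref{densitylemma}) it is enough to verify convergence on a dense subset of $L^n(Y,\mathcal{B},q;V)$. For finite $n$ the natural dense set is the simple $V$-valued random variables $g=\sum_{i=1}^m 1_{B_i}\cdot v_i$ with $B_i\in\mathcal{B}$ and $v_i\in V$; their density in Bochner spaces is standard. For such $g$ we have $k^*g(x)=\sum_i k(B_i|x)\,v_i$ and likewise for $k_\lambda$, so pointwise
\[
\big\| k^*g(x)-k_\lambda^*g(x)\big\|_V \;\le\; \sum_{i=1}^m \|v_i\|_V\,\big| k(B_i|x)-k_\lambda(B_i|x)\big|,
\]
and taking $L^n$ norms in $x$ and applying Minkowski's inequality,
\[
\big\| k^*g-k_\lambda^*g\big\|_{L^n(X,\mathcal{A},p;V)} \;\le\; \sum_{i=1}^m \|v_i\|_V\,\big\| k(B_i|\cdot)-k_\lambda(B_i|\cdot)\big\|_{L^n(X,\mathcal{A},p)}.
\]
Each scalar term on the right tends to zero by \Cref{homeomorph} (one-sided convergence of $k_\lambda\to k$ is exactly the $L^1$, hence $L^n$, convergence of the indicator pushforwards), and a finite sum of null nets is null. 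This gives convergence on simple functions, which the density lemma upgrades to all of $L^n(X,\mathcal{A},p;V)$.

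For $n=\infty$ I would follow the endpoint argument of \Cref{homeomorph}: either pass to the limit $n\to\infty$ in the estimate above, or note that bounded strongly measurable functions are uniformly approximable by simple functions, so that the density lemma still applies. The analysis is entirely parallel to the real-valued case; the only new ingredient is the vector triangle inequality $\|\sum_i c_i v_i\|_V\le\sum_i|c_i|\,\|v_i\|_V$, which lets the Bochner estimate be controlled by finitely many scalar one-sided convergences. The main point to handle with care is therefore the reduction to simple functions — confirming their density in $L^n(Y,\mathcal{B},q;V)$ and checking the $1$-Lipschitz hypothesis of \Cref{densitylemma} — after which everything reduces cleanly to \Cref{homeomorph}, and no genuinely new estimate is required.
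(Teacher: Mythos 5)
Your proof follows essentially the same route as the paper's: reduce to simple $V$-valued random variables via the $1$-Lipschitz property (\Cref{RVV}) and the density lemma (\Cref{densitylemma}), then bound the vector-valued difference by finitely many scalar terms $\| k(B_i|\cdot)-k_\lambda(B_i|\cdot)\|$, each of which vanishes by the one-sided convergence hypothesis. If anything, your use of Minkowski's inequality together with \Cref{homeomorph} to get the scalar $L^n$ convergence for $n>1$ is slightly more careful than the paper, which writes the estimate only at the $L^1$ level.

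One caveat on your $n=\infty$ remark: the claim that bounded strongly measurable functions are uniformly approximable by simple functions is false for infinite-dimensional $V$. For instance, take $g:[0,1]\to\ell^2$ with $g(x)=e_n$ (the $n$-th orthonormal basis vector) on $[1/(n+1),1/n)$; any finitely-valued simple $h$ must approximate infinitely many of the $e_n$ by one of its finitely many values, and since $\|e_n-e_{n'}\|=\sqrt{2}$ for $n\neq n'$, one gets $\|g-h\|_{L^\infty}\ge \sqrt{2}/2$. So simple functions are \emph{not} dense in $L^\infty(Y,\mathcal{B},q;V)$, and the density-lemma reduction genuinely breaks at the endpoint $n=\infty$; your other suggested route (letting $n\to\infty$ in the finite-$n$ estimate) inherits the same problem, since the estimate is only available on simple functions. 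This is not a deviation from the paper — its own proof applies the density lemma for all $n$ without comment and thus has the identical gap — but it is worth knowing that the vector-valued $L^\infty$ case needs a different argument (or an exclusion) that neither proof supplies.
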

\begin{proof}
    Let $(k_\lambda)_{\lambda\in\Lambda}$ be a net of measure-preserving kernels $(X,\mathcal{A},p)\to(Y,\mathcal{B},q)$ tending to $k$.
    That is, for every $B\in\mathcal{B}$,
    \[
    \int_X |k(B|x) - k_\lambda(B|x)|\,p(dx) \to 0.
    \]
    We have to show that the induced maps $k_\lambda^*:L^n(Y,\mathcal{B},q;V)\to L^n(X,\mathcal{A},p;V)$ tend to $k^*$,
    meaning that for all $g\in L^n(Y,\mathcal{B},p;V)$, 
    \[
    \| k^*g - k_\lambda^*g \|_{L^n(X,\mathcal{A},q;V)} \to 0 ,
    \]
    i.e.
    \[
    \int_X \|k^*g (x) - k_\lambda^*g(x) \|_V \,p(dx) \to 0.
    \]
    By \Cref{densitylemma} it suffices to test this convergence on a dense subset of $L^n(Y,\mathcal{B},q;V)$, and we take the set simple $V$-valued RVs, i.e.~those in the following form,
    \[
    g(y) \;=\; \sum_{i=1}^k 1_{B_i}(x)\cdot v_i
    \]
    for $v_i\in V$ and $B_i\in\mathcal{B}$.
    Now 
    \[
        k_\lambda^*g(x) \;=\; \int_Y g(y)\,k(dy|x) \;=\; \sum_{i=1}^k k_\lambda(B_i|x)\cdot v_i ,
    \]
    and the same is true for $k$, so that 
    \begin{align*}
    \int_X \|k^*g (x) - k_\lambda^*g(x) \|_V\, p(dx) \;&=\; \int_X \left\|  \sum_{i=1}^k \big( k(B_i|x) - k_\lambda(B_i|x) \big)\cdot v_i \right\|_V \,p(dx) \\
    &\le\; \int_X \left|  \sum_{i=1}^k k(B_i|x) - k_\lambda(B_i|x) \right|\, \| v_i \|_V \,p(dx) \\
    &\le\; \sum_{i=1}^k \left( \int_X |k(B_i|x) - k_\lambda(B_i|x)|\, p(dx) \right) \, \|v_i\|_V ,
    \end{align*}
    but the term in the brackets tends to zero by assumption. 
    Hence $k_\lambda^*\to k^*g$. 
\end{proof}

Because of this, \Cref{general_levy} can be applied, and it gives us the following result.

\begin{corollary}\label{marti_bochner}
    Let $(X,\mathcal{A},p)$ be a standard Borel probability space, and let $V$ be a Banach space. 

    \begin{enumerate}
        \item Let $(\mathcal{B}_\lambda)_{\lambda\in\Lambda}$ be an increasing filtration and let by $\mathcal{B}_\infty$ its supremum (join of sigma-algebras).
    Given $f\in L^n(X,\mathcal{A},p,V)$, denote by $f_\lambda$ and $f_\infty$ the ($V$-valued) conditional expectations $\E[f|\mathcal{B}_\lambda]$ and $\E[f|\mathcal{B}_\infty]$.
    Then $(f_\lambda)$ is a $V$-valued martingale, and $f_\lambda$ tends to $f_\infty$ in mean, meaning that 
    \[
    \int_X (\| f(x) - f_\lambda(x) \|_V)^n \, p(dx) \to 0 .
    \]
        \item Let $(\mathcal{B}_\lambda)_{\lambda\in\Lambda}$ be a decreasing filtration and let by $\mathcal{B}_\infty$ its infimum (intersection of the null-set-completions).
    Given $f\in L^n(X,\mathcal{A},p,V)$, denote by $f_\lambda$ and $f_\infty$ the ($V$-valued) conditional expectations $\E[f|\mathcal{B}_\lambda]$ and $\E[f|\mathcal{B}_\infty]$.
    Then $(f_\lambda)$ is a $V$-valued backward martingale, and $f_\lambda$ tends to $f_\infty$ in mean, as above.
    \end{enumerate}
\end{corollary}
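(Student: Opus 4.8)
The plan is to recognize this as a direct instance of \Cref{general_levy}, applied to the functor $F = L^n(-;V)\colon\cat{EKrn}^\op\to\cat{Ban}$. Two facts make this functor admissible: it is a genuine functor into $\cat{Ban}$, established in \Cref{RVV}, and it is topologically enriched, which is exactly the content of the proposition immediately preceding this corollary. Once these are in hand, every hypothesis of \Cref{general_levy} is met --- an enriched functor $\cat{EKrn}^\op\to\cat{Ban}$, a standard Borel space $(X,\mathcal{A},p)$, an increasing (resp.\ decreasing) filtration, and a fixed element $f\in L^n(X,\mathcal{A},p;V)=F(X,\mathcal{A},p)$ --- so the four conclusions of that theorem apply verbatim.

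The remaining work is purely a translation between the abstract categorical statements and the concrete Bochner vocabulary. First I would identify $F(\pi^+_\lambda)(f)=(\pi^+_\lambda)^*f$ with $\E[f|\mathcal{B}_\lambda]$, using the definition of the $V$-valued conditional expectation from \Cref{sec_bochner}; this turns $f_\lambda\coloneqq F(\pi^+_\lambda)(f)$ into $\E[f|\mathcal{B}_\lambda]$, and likewise $f_\infty=F(\pi^+_\infty)(f)=\E[f|\mathcal{B}_\infty]$ (with $\mathcal{B}_\infty$ the join in the increasing case, and the meet $\bigwedge_\lambda\mathcal{B}_\lambda$, i.e.\ the intersection of null-set completions, in the decreasing case). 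Conclusion (ii) of \Cref{general_levy} then says precisely that $(f_\lambda)$ is a $V$-valued martingale, and conclusion (iv) says $f_\lambda\to f_\infty$ topologically in the norm of $F(X,\mathcal{A},p)$. Unwinding that norm gives
\[
\|f_\lambda - f_\infty\|_{L^n(X,\mathcal{A},p;V)}^n \;=\; \int_X \big(\|f_\infty(x)-f_\lambda(x)\|_V\big)^n\,p(dx) \;\longrightarrow\; 0 ,
\]
which is the asserted convergence in mean.

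The decreasing case follows the same way, invoking the ``analogous statement'' clause of \Cref{general_levy} for backward martingales. There is no genuine obstacle here: all the analytic difficulty --- the idempotent Levi property of $\cat{EKrn}$ and the passage from categorical suprema and infima to topological convergence --- has already been absorbed into \Cref{levi_krn} and \Cref{general_levy}. The only points requiring care are bookkeeping ones: confirming that the enriched-functor hypothesis is exactly the one proven just above, and being careful in the decreasing case to use $\bigwedge_\lambda\mathcal{B}_\lambda$ rather than the bare intersection, since it is the former that furnishes the infimum in the idempotent order and hence makes $f_\infty$ the correct limit.
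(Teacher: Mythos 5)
Your proposal is correct and follows exactly the paper's own route: the paper proves the enrichment of $L^n(-;V)$ in the proposition immediately before the corollary and then simply invokes \Cref{general_levy}, which is precisely your argument, with your translation of $F(\pi^+_\lambda)(f)$ into $\E[f|\mathcal{B}_\lambda]$ and the unwinding of the $L^n(X,\mathcal{A},p;V)$ norm making explicit what the paper leaves implicit. Your closing caution about using $\bigwedge_\lambda\mathcal{B}_\lambda$ rather than the bare intersection in the decreasing case matches the paper's conventions as well.
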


\appendix

\section{Topologically enriched categories}
\label{sec_top}

The category $\cat{Top}$ is not cartesian closed. It is however still \emph{monoidal} closed, where as tensor product we take the topology of \emph{separate} continuity, instead of joint continuity. 

\subsection{The tensor product}

\begin{definition}
    Let $X$ and $Y$ be topological spaces.
    We call a subset $U\subseteq X\times Y$ \newterm{separately open} if and only if 
    \begin{itemize}
        \item For each $x\in X$, the set $U_x\subseteq Y$ given by
        \[
        U_x \coloneqq \{ y\in Y : (x,y) \in U \}
        \]
        is an open subset of $Y$;
        \item For each $y\in Y$, the set $U_y\subseteq X$ given by
        \[
        U_y \coloneqq \{ x\in X : (x,y) \in U \}
        \]
        is an open subset of $X$.
    \end{itemize}
\end{definition}

\begin{proposition}
    Let $X$ and $Y$ be topological spaces. The separately open subsets of $X\times Y$ form a topology.
\end{proposition}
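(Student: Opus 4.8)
The plan is to verify the three axioms of a topology directly, relying on a single structural observation: slicing is a preimage operation. For a fixed $x\in X$, consider the inclusion $\iota_x\colon Y\to X\times Y$ given by $y\mapsto(x,y)$; then for any $U\subseteq X\times Y$ one has $U_x=\iota_x^{-1}(U)$. Similarly, for fixed $y\in Y$ the slice $U_y$ is the preimage of $U$ under the map $j_y\colon X\to X\times Y$, $x\mapsto(x,y)$. Since preimages commute with arbitrary unions, arbitrary intersections, and complements, each slicing operation $U\mapsto U_x$ and $U\mapsto U_y$ preserves all set-theoretic operations. This reduces every verification below to the corresponding closure property of the topologies on $X$ and $Y$.

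First I would dispatch the two trivial cases. For the empty set, $(\varnothing)_x=\varnothing$ and $(\varnothing)_y=\varnothing$ are both open, so $\varnothing$ is separately open; for the whole space, $(X\times Y)_x=Y$ and $(X\times Y)_y=X$ are both open, so $X\times Y$ is separately open. Next I would treat arbitrary unions: given a family $(U_i)_{i\in I}$ of separately open sets and $U=\bigcup_i U_i$, the fact that slicing commutes with unions gives $U_x=\bigcup_i (U_i)_x$ for each $x$, a union of open subsets of $Y$, hence open; the argument for $U_y$ is identical, so $U$ is separately open. Finally, for finite intersections, given separately open $U,V$ and $W=U\cap V$, slicing commutes with intersections so $W_x=U_x\cap V_x$ is an intersection of two open sets, hence open, and likewise $W_y=U_y\cap V_y$ is open; thus $W$ is separately open.

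There is no genuine obstacle here: the only point requiring attention is the role of finiteness. Arbitrary unions cause no difficulty because open sets of $X$ and $Y$ are closed under arbitrary unions, whereas for intersections one must restrict to finite families, exactly as in the ambient topologies. Once the preimage description of slicing is in place, all three axioms follow mechanically from the topology axioms for $X$ and $Y$, and I would present the union and intersection steps together since they share the same one-line justification.
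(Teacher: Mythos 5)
Your proof is correct and follows essentially the same route as the paper: both verify the topology axioms by checking that slicing commutes with unions and finite intersections, the paper by direct element-chasing and you via the (equivalent, slightly tidier) observation that $U\mapsto U_x$ and $U\mapsto U_y$ are preimage operations along the maps $y\mapsto(x,y)$ and $x\mapsto(x,y)$. No gaps; the finiteness remark about intersections is exactly the right point of care.
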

\begin{proof}
    First of all, the empty set and the set $X\times Y$, seen as subsets of $X\times Y$, are separately open. 
    Let now $U$ and $V$ be separately open. For each $x\in X$ have that 
    \begin{align*}
    (U\cap V)_x &= \{ y\in Y : (x,y) \in U\cap V \} \\
        &= \{ y\in Y : (x,y) \in U, (x,y) \in V \} \\
        &= \{ y\in Y : y\in U_x, y\in V_x \} \\
        &= U_x\cap V_x ,
    \end{align*}
    which is open. The same is true for $(U\cap V)_y$ for each $y\in Y$.
    Given a family $(U_i)_{i\in I}$ of separately open subsets of $X\times Y$, for $x\in X$,
    \begin{align*}
        \left( \bigcup_i U_i \right)_x &= \left\{ y\in Y : (x,y) \in \bigcup_i U_i \right\} \\
        &= \left\{ y\in Y : (x,y) \in U_i \mbox{ for some }i\in I \right\} \\
        &= \bigcup_i U_i \left\{ y\in Y : (x,y) \in U_i \right\} \\
        &= \bigcup_i (U_i)_x ,
    \end{align*}
    which is open. Once again, the same is true for $\left( \bigcup_i U_i \right)_y$.
\end{proof}

\begin{definition}
    We call the topology on $X\times Y$ given by the separately open subsets the \newterm{topology of separate continuity} or the \newterm{tensor product topology}. We denote the resulting topological space by $X\otimes Y$.
\end{definition}

Note that this makes $\cat{Top}$ a monoidal category, with the one-point space as monoidal unit (so it's semicartesian monoidal).

\begin{proposition}
    Let $X$, $Y$ and $Z$ be topological spaces, and let $f:X\times Y\to Z$ be a function. We have that $f$ is separately continuous in $X$ and $Y$ if and only if it is continuous for the topology of separate continuity.
\end{proposition}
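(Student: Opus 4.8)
The statement asserts that the two notions of ``continuity'' for $f\colon X\times Y\to Z$ coincide: separate continuity in the elementary sense (each partial map is continuous) versus honest continuity of the single map $f\colon X\otimes Y\to Z$, where $X\otimes Y$ carries the tensor product topology. My plan is to unwind \emph{both} conditions into the same elementary statement about the open-ness of the slices of preimages, after which the equivalence becomes a tautology and both implications fall out at once. The only structural input I need is the previous proposition, which guarantees that the separately open subsets genuinely form a topology; by definition of $X\otimes Y$, continuity of $f\colon X\otimes Y\to Z$ means exactly that $f^{-1}(W)$ is \emph{separately open} for every open $W\subseteq Z$.

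\textbf{Key steps.} First I would fix notation for the slices: for $U\subseteq X\times Y$, write $U_{x_0}=\{y\in Y:(x_0,y)\in U\}$ and $U_{y_0}=\{x\in X:(x,y_0)\in U\}$, so that ``$U$ separately open'' means precisely that $U_{x_0}$ is open in $Y$ for all $x_0$ and $U_{y_0}$ is open in $X$ for all $y_0$. The crux is then the bookkeeping identity that slicing commutes with taking preimages: for any $W\subseteq Z$ and $U=f^{-1}(W)$,
\[
\bigl(f^{-1}(W)\bigr)_{x_0} \;=\; \{\,y\in Y : f(x_0,y)\in W\,\} \;=\; \bigl(f(x_0,-)\bigr)^{-1}(W),
\]
and symmetrically $\bigl(f^{-1}(W)\bigr)_{y_0}=\bigl(f(-,y_0)\bigr)^{-1}(W)$, where $f(x_0,-)\colon Y\to Z$ and $f(-,y_0)\colon X\to Z$ denote the partial maps. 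This identity is the entire engine of the proof.

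\textbf{Combining.} Reading off the chain of equivalences, $f\colon X\otimes Y\to Z$ is continuous iff $f^{-1}(W)$ is separately open for every open $W\subseteq Z$, iff for every open $W$ and every $x_0,y_0$ the sets $\bigl(f^{-1}(W)\bigr)_{x_0}$ and $\bigl(f^{-1}(W)\bigr)_{y_0}$ are open in $Y$ and $X$ respectively. By the slice identity this says exactly that $\bigl(f(x_0,-)\bigr)^{-1}(W)$ is open in $Y$ for all $x_0$ and all open $W$, and $\bigl(f(-,y_0)\bigr)^{-1}(W)$ is open in $X$ for all $y_0$ and all open $W$ --- which is precisely the continuity of every partial map $f(x_0,-)$ and $f(-,y_0)$, i.e.\ separate continuity of $f$. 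Since every implication in this chain is an ``iff'', both directions are established simultaneously.

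\textbf{Main obstacle.} Honestly there is no substantial obstacle: the argument is purely formal once the slice identity is isolated. The only point demanding care is conceptual rather than technical, namely keeping straight the precise meaning of ``continuous for the topology of separate continuity'' (a genuine map out of the single space $X\otimes Y$) against the a priori weaker-looking ``separately continuous''. An alternative packaging, which I might include as a remark, replaces the first implication by observing that the inclusions $i_{x_0}\colon Y\to X\otimes Y,\ y\mapsto(x_0,y)$ and $j_{y_0}\colon X\to X\otimes Y,\ x\mapsto(x,y_0)$ are continuous --- since $i_{x_0}^{-1}(U)=U_{x_0}$ and $j_{y_0}^{-1}(U)=U_{y_0}$ are open for separately open $U$ --- so that continuity of $f$ forces continuity of the composites $f\circ i_{x_0}=f(x_0,-)$ and $f\circ j_{y_0}=f(-,y_0)$; this is the same identity viewed through the lens of maps rather than preimages.
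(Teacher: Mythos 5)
Your proof is correct and follows essentially the same route as the paper's: both unwind continuity of $f\colon X\otimes Y\to Z$ into the statement that all slices $\bigl(f^{-1}(V)\bigr)_x = f_x^{-1}(V)$ and $\bigl(f^{-1}(V)\bigr)_y = f_y^{-1}(V)$ are open, which is exactly separate continuity. The extra remark about the inclusions $i_{x_0}, j_{y_0}$ is a pleasant repackaging but not a different argument.
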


For this proof, and for later convenience, given a function $f:X\times Y\to Z$, define 
\begin{itemize}
    \item For each $x\in X$, the function $f_x:Y\to Z$ given by $f_x(y)=f(x,y)$;
    \item For each $y\in Y$, the function $f_y:X\to Z$ given by $f_y(x)=f(x,y)$.
\end{itemize}

\begin{proof}
    The function $f$ is continuous as a function $f:X\otimes Y\to Z$ if and only if for every open $V\subseteq Z$, the preimage 
    \[
    f^{-1}(V) = \{(x,y)\in X\times Y : f(x,y)\in V\}
    \]
    is separately open. This means that 
    \begin{itemize}
        \item For every $x\in X$, the set 
        \begin{align*}
        (f^{-1}(V))_x &= \{y\in Y: (x,y)\in f^{-1}(V)\} \\
        &= \{y\in Y: f(x,y)\in V\} \\
        &= \{y\in Y: f_x(y)\in V\} \\
        &= f_x^{-1}(V)
        \end{align*}
        is open. This means exactly that $f_x:Y\to Z$ is continuous;
        \item For every $x\in X$, the set 
        \begin{align*}
        (f^{-1}(V))_y &= f_y^{-1}(V)
        \end{align*}
        is open. This means exactly that $f_y:X\to Z$ is continuous.
    \end{itemize}
    Now $f$ is separately continuous if and only if for all $x$ and $y$, the functions $f_x:Y\to Z$ and $f_y:X\to Z$ are continuous.
\end{proof}

\subsection{The internal hom}

\begin{definition}
    Let $X$ and $Y$ be topological spaces. Denote by $[X,Y]$, and call it \newterm{internal hom}, the set of continuous functions $X\to Y$, equipped with the \newterm{topology of pointwise convergence}, generated by the (sub-basic) sets
    \[
    S(x,V) \coloneqq \{ f:X\to Y : f(x)\in V \}
    \]
    for each $x\in X$ and each open $V\subseteq Y$. 
\end{definition}

\begin{proposition}
    Let $X$ and $Y$ be topological spaces. Let $(f_\alpha)_{\alpha\in\Lambda}$ be a net (or a sequence) in $[X,Y]$. We have that $f_\alpha\to f$ if and only if for every $x\in X$, $f_\alpha(x)\to f(x)$.
\end{proposition}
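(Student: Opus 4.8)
The plan is to unwind the definition of the topology of pointwise convergence on $[X,Y]$ directly, using the characterization of convergence of nets in terms of subbasic neighborhoods. Recall that a net $(f_\alpha)$ converges to $f$ in a topological space precisely when it is eventually inside every open neighborhood of $f$, and that when the topology is generated by a subbasis it suffices to verify this on the subbasic (equivalently, basic) neighborhoods of $f$, since every open neighborhood of $f$ contains a basic one.

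For the forward implication, I would fix a point $x\in X$ together with an open set $V\subseteq Y$ containing $f(x)$. Then $S(x,V)$ is a subbasic open set containing $f$, so the assumption $f_\alpha\to f$ provides an index $\alpha_0$ with $f_\alpha\in S(x,V)$, that is $f_\alpha(x)\in V$, for all $\alpha\ge\alpha_0$. Since $V$ was an arbitrary open neighborhood of $f(x)$, this says exactly that $f_\alpha(x)\to f(x)$ in $Y$, and as $x$ was arbitrary we obtain pointwise convergence.

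For the converse, assume $f_\alpha(x)\to f(x)$ for every $x\in X$, and let $U$ be any open neighborhood of $f$. By definition of the topology generated by the given subbasis, there is a basic open set contained in $U$ and containing $f$, of the form $S(x_1,V_1)\cap\cdots\cap S(x_n,V_n)$ with $f(x_i)\in V_i$ for each $i$. For each $i$, pointwise convergence at $x_i$ yields an index $\alpha_i$ such that $f_\alpha(x_i)\in V_i$ for all $\alpha\ge\alpha_i$. The key step is to combine these finitely many conditions: by directedness of the index set $\Lambda$ there is a single $\alpha_0$ with $\alpha_0\ge\alpha_i$ for all $i=1,\dots,n$, and then for every $\alpha\ge\alpha_0$ we have $f_\alpha\in S(x_1,V_1)\cap\cdots\cap S(x_n,V_n)\subseteq U$. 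Hence $f_\alpha\to f$.

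The only genuine obstacle here is conceptual rather than computational: one must remember that convergence is tested against \emph{arbitrary} open neighborhoods, not merely subbasic ones, so the passage from ``eventually in each of finitely many subbasic sets'' to ``eventually in their intersection'' is essential. This is precisely the place where directedness of the index set is used, and it is the reason the statement is phrased for nets, with sequences appearing as the special case $\Lambda=\N$. Everything else is a direct unwinding of the definitions of the subbasis $S(x,V)$ and of net convergence.
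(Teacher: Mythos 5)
Your proof is correct and takes essentially the same route as the paper: both arguments unwind net convergence in the pointwise topology via the subbasic sets $S(x,V)$. The only difference is that you explicitly justify the standard fact that convergence may be tested on subbasic neighborhoods (via finite intersections and directedness of $\Lambda$), which the paper invokes implicitly in its first sentence.
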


\begin{proof}
    We have that $f_\alpha\to f$ if and only if every sub-basic open neighborhood $S(x,V)$ of $f$ eventually contains $f_\alpha$. That is, if for each $x\in X$ and for each open $V\subseteq Y$ with $f(x)\in V$, there exists a $\beta\in\Lambda$ such that for all $\alpha\ge\beta$, $f_\alpha(x)\in V$. 
    This is exactly saying that for each $x\in X$, the net $(f_\alpha(x))_{\alpha\in\Lambda}$ converges to $f(x)$. 
\end{proof}

\begin{proposition}
    Let $X$, $Y$ and $Z$ be topological spaces. We have a natural bijection
    \[
    \begin{tikzcd}[row sep=0]
        \cat{Top}(X\otimes Y, Z) \ar{r}{\sharp}[swap]{\cong} & \cat{Top}(X,{[Y,Z]}) \\
        f \ar[mapsto]{r} & \big( x\xmapsto{f^\sharp} f_x \big)
    \end{tikzcd}
    \]
    induced by the corresponding one of $\cat{Set}$.
\end{proposition}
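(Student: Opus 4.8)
The plan is to observe that the displayed map is exactly the set-theoretic currying bijection $\cat{Set}(X\times Y,Z)\cong\cat{Set}(X,Z^Y)$ restricted to the continuous morphisms on both sides; hence it suffices to check three things: that $f^\sharp$ genuinely lands in the continuous-function set $[Y,Z]$ and is itself continuous whenever $f$ is continuous, that its set-theoretic inverse (uncurrying) sends continuous maps $X\to[Y,Z]$ to continuous maps $X\otimes Y\to Z$, and that the correspondence is natural. Since the underlying function-level assignment is the one from $\cat{Set}$, bijectivity is automatic once both directions are shown to preserve continuity, and naturality will follow from naturality of the $\cat{Set}$-currying together with the two continuity statements.

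For the forward direction, suppose $f\colon X\otimes Y\to Z$ is continuous. By the characterization of continuity out of $X\otimes Y$ proved above, $f$ is separately continuous, so in particular each $f_x\colon Y\to Z$ is continuous and therefore $f^\sharp(x)=f_x$ is a well-defined element of $[Y,Z]$. To see that $f^\sharp\colon X\to[Y,Z]$ is continuous it suffices to check preimages of the subbasic sets $S(y,V)=\{g:g(y)\in V\}$. Here I would compute $(f^\sharp)^{-1}(S(y,V))=\{x\in X: f_x(y)\in V\}=f_y^{-1}(V)$, which is open because separate continuity of $f$ makes $f_y\colon X\to Z$ continuous. Thus $f^\sharp$ is continuous.

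For the reverse direction, suppose $g\colon X\to[Y,Z]$ is continuous and let $f$ be its uncurrying, so that $f_x=g(x)$ and $f(x,y)=g(x)(y)$. Each $f_x=g(x)$ already lies in $[Y,Z]$ and is hence continuous in $y$. For continuity in $x$, I would introduce the evaluation map $\mathrm{ev}_y\colon[Y,Z]\to Z$, $h\mapsto h(y)$, and note that $\mathrm{ev}_y^{-1}(V)=S(y,V)$ is subbasic-open, so $\mathrm{ev}_y$ is continuous; then $f_y=\mathrm{ev}_y\circ g$ is continuous as a composite. Hence $f$ is separately continuous, and by the same earlier characterization it is continuous as a map $X\otimes Y\to Z$. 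This shows that uncurrying lands in $\cat{Top}(X\otimes Y,Z)$, so the two assignments are mutually inverse bijections between the continuous morphism sets.

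Finally, naturality in $X$, $Y$, and $Z$ reduces to the naturality of the $\cat{Set}$-level currying bijection, since on underlying functions our map agrees with it and all the relevant structure maps (precomposition in $X$, and the functorial actions of $\otimes$ and $[-,-]$) are continuous. I do not expect a genuine obstacle here: the whole argument is a matter of unwinding definitions, and the only point requiring a little care is recognizing that continuity into $[Y,Z]$ is tested on the subbasic sets $S(y,V)$, whose preimages under $f^\sharp$ and under $\mathrm{ev}_y$ both collapse to exactly the sets whose openness is equivalent to separate continuity of $f$.
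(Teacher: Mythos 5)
Your proposal is correct and follows essentially the same route as the paper: both reduce to the set-theoretic currying bijection, invoke the earlier characterization of continuity out of $X\otimes Y$ as separate continuity, and test continuity into $[Y,Z]$ on the subbasic sets $S(y,V)$, with the identity $(f^\sharp)^{-1}(S(y,V))=f_y^{-1}(V)$ doing the key work. Your use of the evaluation map $\mathrm{ev}_y$ in the reverse direction is only a cosmetic repackaging of the paper's two-way computation, so there is no substantive difference.
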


Therefore $\cat{Top}$ with this tensor product and internal hom is closed monoidal.

\begin{proof}
    Note that the hom-tensor adjunction mapping $f\mapsto f^\sharp$ for $\cat{Set}$ is a natural bijection.
    Therefore it suffices to prove that $f:X\times Y\to Z$ is separately continuous if and only if 
    \begin{itemize}
        \item for every $x\in X$, $f^\sharp(x)=f_x$ is a continuous function $Y\to Z$;
        \item the map $f^\sharp:X\to[Y,Z]$ is continuous.
    \end{itemize}
    Now for each $x\in X$, the function $f_x$ is continuous if and only if for each open $V\subseteq Z$, the set $f_x^{-1}(V)=(f^{-1}(V))_x$ is open.
    Also, $f^\sharp:X\to[Y,Z]$ is continuous if and only if for each $y\in Y$ and each open $V\subseteq Z$, the set 
    \begin{align*}
    (f^\sharp)^{-1}(S(y,V)) &= \{ x\in X : f_x \in S(y,V) \} \\
        &= \{ x\in X : f_x(y) \in V \} \\
        &= \{ x\in X : f(x,y) \in V \} \\
        &= (f^{-1}(V))_y
    \end{align*}
    is open.
    The two conditions together are therefore equivalent to $f$ being separately continuous.
\end{proof}

\subsection{Categories enriched in Top}
\label{sec_topcat}

Consider $\cat{Top}$ with the closed monoidal structure of the previous section. 
Instantiating the usual definition of enriched category, we get:

\begin{definition}
    A \newterm{category enriched in $\cat{Top}$}, or \newterm{topologically enriched category}, amounts to 
    \begin{itemize}
        \item An ordinary category $\cat{C}$;
        \item On each hom-set $\cat{C}(X,Y)$, a topology, in such a way that each composition assignment
        \[
        \begin{tikzcd}[row sep=0]
            \cat{C}(X,Y) \times \cat{C}(Y,Z) \ar{r} & \cat{C}(X,Z) \\
            (f,g) \ar[mapsto]{r} & g\circ f
        \end{tikzcd}
        \]
        is separately continuous. 
    \end{itemize}
\end{definition}

The second condition is equivalent to the following two conditions:
\begin{itemize}
    \item Given a net (or sequence) $(f_\alpha:X\to Y)_{\alpha\in\Lambda}$ tending to $f:X\to Y$, and a morphism $g:Y\to Z$, we have that $g\circ f_\alpha\to g\circ f$, i.e.~postcomposition is a continuous operation;
    \item Given a morphism $f:X\to Y$ and a net (or sequence) $(g_\alpha:Y\to Z)_{\alpha\in\Lambda}$ tending to $g:Y\to Z$, we have that $g_\alpha\circ f \to g\circ f$, i.e.~precomposition is a continuous operation.
\end{itemize}

\newpage
\bibliographystyle{alpha}
\bibliography{markov}

\end{document}